\newtheorem{theorem}{Theorem}[section]
\newtheorem{corollary}[theorem]{Corollary}
\newtheorem{lemma}[theorem]{Lemma}
\newtheorem{proposition}[theorem]{Proposition}
\theoremstyle{definition}
\newtheorem{definition}[theorem]{Definition}
\newtheorem{remark}[theorem]{Remark}
\newtheorem{example}[theorem]{Example}
\numberwithin{equation}{section}
\DeclareMathOperator{\Alex}{Alex}
\DeclareMathOperator{\Alg}{Alg}
\DeclareMathOperator{\Cat}{Cat}
\DeclareMathOperator{\Hom}{Hom}
\DeclareMathOperator{\id}{id}
\DeclareMathOperator{\im}{im}
\DeclareMathOperator{\Ob}{Ob}
\DeclareMathOperator{\op}{op}
\DeclareMathOperator{\Path}{Path}
\DeclareMathOperator{\Quiv}{Quiv}
\DeclareMathOperator{\un}{un}
\newcommand{\A}{\mathcal{A}}
\newcommand{\B}{\mathcal{B}}
\newcommand{\Bcl}[1]{B^{\mathcal{CL}}_{#1}(\Sc)}
\newcommand{\Cc}{\mathcal{C}}
\newcommand{\CL}[1]{\mathcal{CL}_{#1}}
\newcommand{\de}{\partial}
\newcommand{\F}{\mathbb{F}}
\newcommand{\gammahole}[2]{\gamma_{#1,#2}}
\newcommand{\gloneone}{\mathcal{U}_q(\mathfrak{gl}(1|1))}
\newcommand{\Ib}{\mathbf{I}}
\newcommand{\IdemRing}{\mathbb{I}}
\newcommand{\Jb}{\mathbf{J}}
\newcommand{\lda}{\lambda}
\newcommand{\m}{\mathfrak{m}}
\newcommand{\mc}{\mathcal}
\newcommand{\mi}{\mu}
\newcommand{\ring}{\Bbbk}
\newcommand{\sac}[3]{\mathcal{A}(#1, #2, #3)}
\newcommand{\Sc}{\mathcal{S}}
\newcommand{\set}[1]{\left\{#1\right\}}
\newcommand{\sltwo}{\mathcal{U}_q(\mathfrak{sl}(2))}
\newcommand{\sm}{\setminus}
\newcommand{\td}{\widetilde}
\newcommand{\x}{\mathbf{x}}
\newcommand{\y}{\mathbf{y}}
\newcommand{\z}{\mathbf{z}}
\newcommand{\Z}{\mathbb{Z}}
\renewcommand{\emptyset}{\varnothing}
\renewcommand{\tilde}{\widetilde}
\renewcommand{\theta}{\vartheta}
\begin{document}

\author[Andrew Manion]{Andrew Manion}
\thanks {AM was supported by an NSF MSPRF fellowship, grant number DMS-1502686.}
\address{Department of Mathematics, USC, 3620 S. Vermont Ave., Los Angeles, CA 90089}
\email{amanion@usc.edu}

\author[Marco Marengon]{Marco Marengon}
\address {Department of Mathematics, UCLA, 520 Portola Plaza, Los Angeles, CA 90095}
\email {marengon@math.ucla.edu}

\author[Michael Willis]{Michael Willis}
\thanks {MW was supported by the NSF grant DMS-1563615.}
\address {Department of Mathematics, UCLA, 520 Portola Plaza, Los Angeles, CA 90095}
\email {mike.willis@math.ucla.edu}

\title[Generators, relations, and homology for Ozsv{\'a}th--Szab{\'o}'s algebras]{Generators, relations, and homology for Ozsv{\'a}th--Szab{\'o}'s Kauffman-states algebras}

\date{}

\begin{abstract} 
We give a generators-and-relations description of differential graded algebras recently introduced by Ozsv{\'a}th and Szab{\'o} for the computation of knot Floer homology. We also compute the homology of these algebras and determine when they are formal.
\end{abstract}
\maketitle

\section{Introduction}

Heegaard Floer homology \cite{HFOrig, PropsApps} is a powerful family of invariants for $3$- and $4$-manifolds. It originated from the study of Seiberg--Witten theory and Donaldson theory, although its methods involve holomorphic curves rather than gauge theory, and it shares these theories' applicability to the exotic world of smooth $4$-manifolds. Compared with its gauge-theoretic relatives, Heegaard Floer homology is often the easiest for computations, and many forms of Heegaard Floer homology have now been given combinatorial definitions.

One form of Heegaard Floer homology, called knot Floer homology (or $HFK$), assigns graded abelian groups to knots and links in $3$-manifolds \cite{OSzHFK, RasmussenThesis}. Like Khovanov homology \cite{KhOrig}, $HFK$ is especially well-adapted to the study of problems in knot theory with a $4$-dimensional character, such as the structure of the knot concordance group. There are many interesting similarities between $HFK$ and Khovanov homology; for example, while the Euler characteristic of Khovanov homology is the Jones polynomial, the Euler characteristic of $HFK$ is the Alexander polynomial. 

Combined with constructions of the Jones and Alexander polynomial from the representation theory of $\sltwo$ and $\gloneone$ respectively, this analogy suggests a close link between Heegaard Floer homology and categorifications of the Witten--Reshetikhin--Turaev topological quantum field theory (TQFT) invariants, see e.g. \cite{KPKH, DGR}. Indeed, both Donaldson--Floer theories in $4$ dimensions and Witten--Reshetikhin--Turaev theories in $3$ dimensions were initial motivations for the mathematical study of TQFTs, and Heegaard Floer homology offers a promising framework for understanding the relationship between these two types of theories.

Among Heegaard Floer theories, $HFK$ admits an especially wide variety of combinatorial descriptions, some allowing very fast computations. In particular, Ozsv{\'a}th--Szab{\'o} have a computer program \cite{HFKCalc} that can compete with Bar-Natan's fast Khovanov homology program \cite{FastKh}. Ozsv{\'a}th--Szab{\'o}'s program can quickly compute $HFK$ for most knots with up to around $40$ or $50$ crossings, and can even handle the larger $90+$ crossing examples from the paper \cite{ManMachine}. 

Ozsv{\'a}th--Szab{\'o}'s program is based on an exciting new description of $HFK$ \cite{OSzNew,OSzNewer,OSzHolo,OSzPong} in the algebraic language of bordered Floer homology, an extended TQFT approach to Heegaard Floer homology. We will refer to Ozsv{\'a}th--Szab{\'o}'s theory here as the \emph{Kauffman-states functor}, since to tangles it assigns bimodules whose tensor product for a closed knot projection is a complex with generators in bijection with Kauffman states for the projection as defined in \cite{FKT}.

The Kauffman states are a very natural set of generators; some readers may be more familiar with them as spanning trees of the Tait graph of a knot projection. Ozsv{\'a}th--Szab{\'o}'s crossing bimodules have an equally natural set of generators: by the results of \cite{ManionDecat}, they are in bijection with nonzero matrix entries in a certain canonical-basis representation of the $\gloneone$-linear map associated to the crossing. Thus, the Kauffman-states functor yields a categorification of this $\gloneone$ representation theory that is ``minimal'' in some sense. 

In this paper, we study the algebras $\B(n,k,\Sc)$ over which Ozsv{\'a}th--Szab{\'o} define their tangle bimodules in \cite{OSzNew}. These are defined by taking quotients of and adjoining variables to a set of algebras that Ozsv\'ath--Szab\'o call $\B_0(n,k)$; the algebras $\B_0(n,k)$ also appear in Alishahi--Dowlin's recent work \cite{AlishahiDowlin}. We start by giving a description of $\B_0(n,k)$ in terms of generators and relations.
\begin{theorem}\label{thm:IntroQuiverDescriptionB0}
The algebra $\B_0(n,k)$ is isomorphic to the path algebra of the quiver $\Gamma(n,k)$ of Definition~\ref{def:B_0 Quiver Algebra} modulo the two-sided ideal generated by the set of relations given there.
\end{theorem}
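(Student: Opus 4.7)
The plan is to define an explicit algebra homomorphism $\Phi : \ring\Gamma(n,k) \to \B_0(n,k)$, verify that the stated relations lie in its kernel so that it descends to a map $\overline{\Phi}$ on the quotient, and then establish that $\overline{\Phi}$ is both surjective and injective. On vertices, $\Phi$ should send each idempotent $e_I$ of $\Gamma(n,k)$ to the primitive idempotent of $\B_0(n,k)$ labeled by the same combinatorial datum (a $k$-element subset of positions, or the analogous I-state in Ozsv\'ath--Szab\'o's conventions). On arrows, $\Phi$ should send each generating arrow of $\Gamma(n,k)$ to the corresponding elementary strand-type generator of $\B_0(n,k)$ that raises or lowers a single position. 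Compatibility with the source/target idempotent structure is then immediate from the way $\Gamma(n,k)$ is defined.

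Next I would check that each relation of Definition~\ref{def:B_0 Quiver Algebra} holds after applying $\Phi$. Since every relation is local, involving only a handful of arrows near a few positions, this amounts to a finite case analysis in $\B_0(n,k)$: idempotent/orthogonality relations, commutation relations for arrows acting on disjoint positions, and collision relations when arrows act at adjacent or identical positions. Each case reduces to a direct computation in Ozsv\'ath--Szab\'o's original definition. Surjectivity of $\overline{\Phi}$ then follows from the fact that $\B_0(n,k)$ is generated as an algebra by its primitive idempotents together with its elementary strand generators, all of which are by construction in the image of $\Phi$.

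The main obstacle, as is typical for presentations of this kind, is injectivity. The strategy is a basis comparison. I would first show that the relations in the ideal are strong enough to rewrite every path in $\ring\Gamma(n,k)$ into a canonical normal form, for instance by ordering the arrows according to the position at which they act and using the commutation and collision relations as left-to-right rewriting rules. A diamond-lemma argument, applied to the finitely many overlap configurations of the local relations, would establish confluence of this rewriting system and hence show that the normal forms span the quotient $\ring\Gamma(n,k)/I$. To conclude, I would put the resulting set of normal forms in explicit bijection with a known additive basis of $\B_0(n,k)$ refined by idempotent summands, such as the one arising from Ozsv\'ath--Szab\'o's original combinatorial description (or equivalently the one used in Alishahi--Dowlin's work). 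Matching the bases one idempotent summand at a time then gives injectivity and completes the proof.
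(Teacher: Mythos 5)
Your forward map is exactly the one the paper uses (each arrow $R_i$ or $L_i$ from $\x$ to $\y$ goes to $f_{\x,\y}$, each $U_i$ loop to $U_i f_{\x,\x}$), and the relation check is the same finite case analysis via Lemma~\ref{lem:vi=Ri-Li} and subadditivity of $|v|_i$. Where you diverge is injectivity: you propose a rewriting system with a diamond-lemma confluence check and a basis comparison against the free $\F_2[U_1,\ldots,U_n]$-module basis $\{U_1^{r_1}\cdots U_n^{r_n}f_{\x,\y}\}$ of each summand $\Ib_{\x}\B_0(n,k)\Ib_{\y}$, whereas the paper instead constructs an explicit two-sided inverse $G$ by choosing a canonical path $\gamma_{\x,\y}$ for every pair of I-states (Definition~\ref{def:Recursive}) and proving multiplicativity, $\gamma_{\x,\y}\gamma_{\y,\z}=U^{(\cdot)}\gamma_{\x,\z}$, by a case analysis on products of $R$- and $L$-segments (Lemmas~\ref{lem:BothFactorsAreSegments}--\ref{lem:LeftFactorSegmentRightFactorArbitrary}, Proposition~\ref{prop:BothFactorsArbitrary}). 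The two routes carry comparable combinatorial weight: your overlap-ambiguity check and the claim that every path reduces to a normal form of the shape ($U$-monomial)$\cdot\gamma_{\x,\y}$ encode essentially the same segment-rearrangement arguments that the paper packages into the well-definedness of $G$. Your approach has the advantage of not requiring a clever global choice of $\gamma_{\x,\y}$ up front (confluence produces the normal form for you), at the cost of having to fix a termination order and resolve all overlaps such as $R_iL_iR_i$ and the interactions between loop and commutation relations; the paper's approach yields the inverse isomorphism $G$ itself, which is then reused for $\B(n,k)$, $\B(n,k,\Sc)$, and the symmetries.

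One point to tighten: your surjectivity step quotes as a ``fact'' that $\B_0(n,k)$ is generated by idempotents and elementary generators, but the defining generators of $\B_0(n,k)$ are \emph{all} the $f_{\x,\y}$, so this needs proof. Knowing only that $f_{\x,\x_1}f_{\x_1,\x_2}\cdots = U^m f_{\x,\y}$ for some $m$ is not enough, since $\Ib_{\x}\B_0(n,k)\Ib_{\y}$ is a free $\F_2[U_1,\ldots,U_n]$-module on $f_{\x,\y}$; you must exhibit a factorization along a geodesic chain of I-states for which the $U$-exponents all vanish. This is the content of the paper's Lemmas~\ref{lem:RSegmentImpliesMissingDots} and \ref{lem:LSegmentImpliesMissingDots} together with Corollaries~\ref{cor:RSegment_fFactors} and \ref{cor:LSegment_fFactors}, and a version of that computation would have to appear in your write-up as well.
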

We prove Theorem~\ref{thm:IntroQuiverDescriptionB0} by defining explicit isomorphisms between the two sides, illustrated with figures. From this description of $\B_0(n,k)$ we deduce a description of the algebras $\B(n,k,\Sc)$, stated below.
\begin{theorem}\label{thm:IntroQuiverDescription}
The (differential graded) algebra $\B(n,k,\Sc)$ is isomorphic to the path algebra of the quiver $\Gamma(n,k,\Sc)$ of Definition~\ref{def:BnksQuiverDescription}, modulo the two-sided ideal generated by the set of relations given in Definitions~\ref{def:B_0 Quiver Algebra}, \ref{def:B Quiver Algebra}, and \ref{def:BnksQuiverDescription}, with differential given in Definition~\ref{def:BnksQuiverDescription} and gradings given in Section~\ref{sec:OSz gradings}.
\end{theorem}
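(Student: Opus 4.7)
The strategy is to bootstrap from Theorem~\ref{thm:IntroQuiverDescriptionB0}. Since $\B(n,k,\Sc)$ is obtained from $\B_0(n,k)$ by adjoining a finite set of new generators (the ``variables'' indexed by data in $\Sc$) and then passing to a quotient by explicit Ozsv\'ath--Szab\'o relations, the natural plan is to interpret each of these two operations directly on the quiver side.

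First, I would fix the isomorphism $\varphi \colon \B_0(n,k) \to \ring\Gamma(n,k)/I_0$ supplied by Theorem~\ref{thm:IntroQuiverDescriptionB0}, where $I_0$ is the two-sided ideal generated by the relations of Definition~\ref{def:B_0 Quiver Algebra}. I would then describe $\B(n,k,\Sc)$ as generated by the image of $\B_0(n,k)$ together with adjoined variables (one for each appropriate position in $\Sc$); under $\varphi$ each such variable corresponds to a new loop or arrow added to $\Gamma(n,k)$, producing precisely the quiver $\Gamma(n,k,\Sc)$ of Definition~\ref{def:BnksQuiverDescription}. This step requires nothing more than a careful bookkeeping of idempotent sources and targets, and it extends $\varphi$ to a surjective $\ring$-algebra homomorphism $\Phi$ from the free path algebra of $\Gamma(n,k,\Sc)$ onto $\B(n,k,\Sc)$.

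Next I would match relations. The Ozsv\'ath--Szab\'o defining relations for $\B(n,k,\Sc)$ split into two classes: (i) internal relations governing only the new generators and idempotents, which become the relations of Definition~\ref{def:B Quiver Algebra}, and (ii) interaction relations between the adjoined variables and the generators of $\B_0(n,k)$, which become the relations of Definition~\ref{def:BnksQuiverDescription}. I would verify that every relation on each side lies in the kernel of the other, producing a well-defined homomorphism $\Phi \colon \ring\Gamma(n,k,\Sc)/I \to \B(n,k,\Sc)$ in one direction and an inverse on generators in the other; checking that these are mutually inverse then reduces to a comparison on generators.

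Finally, I would transport the differential and the Alexander/Maslov bigrading across $\Phi$. Since both are defined on generators and extended by the Leibniz rule (respectively, additively on paths), it suffices to match them on the arrows of $\Gamma(n,k,\Sc)$; this is immediate from the assignments in Definition~\ref{def:BnksQuiverDescription} and Section~\ref{sec:OSz gradings}. The main obstacle I expect is not the formal bookkeeping but the well-definedness of $\Phi$: one must verify that every Ozsv\'ath--Szab\'o relation, when written as a path in $\Gamma(n,k,\Sc)$, actually follows from the listed quiver relations. The hardest such verifications will be for long path words that mix the ``old'' $\B_0$-arrows with the ``new'' variables, and I would handle these by a normal-form argument: reducing each side to a canonical product of an idempotent, a monomial in the adjoined variables, and a reduced word in the $\B_0(n,k)$-arrows, then applying Theorem~\ref{thm:IntroQuiverDescriptionB0} to identify the two normal forms.
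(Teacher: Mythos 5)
Your proposal is correct and follows essentially the same route as the paper: extend the isomorphism of Theorem~\ref{thm:IntroQuiverDescriptionB0} to the enlarged quiver, check that the two-sided ideals on each side match, and transport the differential and gradings by comparing them on arrows (this is the content of Lemma~\ref{lem:B Equivalence}, Proposition~\ref{prop:OSzQuiverEquivDifferential}, and Corollary~\ref{cor:OSzQuiverEquivDG}). Two small corrections: the relations of Definition~\ref{def:B Quiver Algebra} are not ``internal relations on the new generators'' but the two-line pass and $U$ vanishing relations cutting $\B_0(n,k)$ down to the quotient $\B(n,k)$, and the paper performs this quotient \emph{before} adjoining the exterior variables $C_i$, whose own relations appear only in Definition~\ref{def:BnksQuiverDescription}. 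Also, the normal-form argument you anticipate for mixed words is not needed: the $C$ central relations make every $C_i$ central, so $\B(n,k,\Sc)$ is literally $\B(n,k)$ tensored with an exterior algebra on the $C_i$ (with $\de C_i = U_i$), and the verification that the extended maps $F$ and $G$ are mutually inverse and respect the new relations is immediate on generators.
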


Theorem~\ref{thm:IntroQuiverDescription} generalizes the path-algebra descriptions of $\B(n,k,\Sc)$ for $n = 1,2$ given in \cite[Section 3.5]{OSzNew}. Theorems~\ref{thm:IntroQuiverDescriptionB0} and \ref{thm:IntroQuiverDescription} have already seen use in \cite{ManionDecat, ManionKS}, as well as in \cite[Section 4.1]{AlishahiDowlin}. Theorem~\ref{thm:IntroQuiverDescription} will be especially useful in \cite{MMW2}, where we use it to define a quasi-isomorphism from $\B(n,k,\Sc)$ to a certain generalized strands algebra $\A(n,k,\Sc)$ as discussed in the motivational section below. 

We show how to define Ozsv{\'a}th--Szab{\'o}'s two algebra symmetries $\mathcal{R}$ and $o$ in terms of quiver generators and relations. We also give quiver descriptions for idempotent-truncated versions of Ozsv{\'a}th--Szab{\'o}'s algebras; see Proposition~\ref{prop:QuiverDescriptionTruncated}. Derived categories of these truncations were shown to categorify representations of $\gloneone$ in \cite{ManionDecat}.

Our next result computes the homology of $\B(n,k,\Sc)$.
\begin{theorem}\label{thm:IntroHomology}
As a chain complex, $\B(n,k,\Sc)$ is a direct sum of complexes $\Ib_{\x} \B(n,k,\Sc) \Ib_{\y}$ for $\x,\y$ in a finite set $V(n,k)$ defined in Section~\ref{sec:Istates}, and a basis for $H_*(\Ib_{\x} \B(n,k,\Sc) \Ib_{\y})$ is as described in Theorem~\ref{thm:OSzHomology}.
\end{theorem}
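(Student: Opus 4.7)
The direct-sum decomposition $\B(n,k,\Sc) = \bigoplus_{\x,\y \in V(n,k)} \Ib_{\x} \B(n,k,\Sc) \Ib_{\y}$ is a formal consequence of the quiver presentation of Theorem~\ref{thm:IntroQuiverDescription}: since the vertex set of $\Gamma(n,k,\Sc)$ is indexed by $V(n,k)$ and the $\Ib_{\x}$ are the corresponding vertex idempotents, every path has a unique source and target, and the differential $\de$ respects this since it sends each generator to a sum of parallel paths. The real work lies in computing the homology of the individual summands.

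First I would fix, for each ordered pair $(\x,\y)$, an explicit $\ring$-basis of $\Ib_{\x} \B(n,k,\Sc) \Ib_{\y}$ consisting of paths in $\Gamma(n,k,\Sc)$ in a convenient normal form modulo the relations of Definitions~\ref{def:B_0 Quiver Algebra}, \ref{def:B Quiver Algebra}, and \ref{def:BnksQuiverDescription}. Concretely, one orders the arrows so that the $\B_0$-type relations and the $\Sc$-truncation relations become terminating rewrite rules, and verifies confluence to obtain a Gr\"obner-style normal form; the surviving normal-form elements should factor canonically as a path in the underlying quiver $\Gamma(n,k)$ decorated by a monomial in the variables adjoined at the $\Sc$-data. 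This reduces the problem to finite, vertex-local combinatorics on the upward and downward arrows of $\Gamma(n,k,\Sc)$.

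Next I would analyze $\de$ on this basis. Following the shape of the bordered-algebra differentials of Ozsv\'ath--Szab\'o, the differential of each generator should be a signed sum of two-arrow resolutions weighted by an adjoined variable, extended by the Leibniz rule. To extract the homology I would filter $\Ib_{\x} \B(n,k,\Sc) \Ib_{\y}$ by total polynomial degree in the adjoined variables and run an algebraic discrete Morse matching on the normal-form basis: pair each basis element of the form ``crossing resolution times monomial'' with a canonical preimage $a'$ satisfying $\de a' = a + (\text{lower-filtration terms})$. The unmatched basis elements then form a basis of $H_*(\Ib_{\x} \B(n,k,\Sc) \Ib_{\y})$, and a direct matching of this combinatorial set with the basis given in Theorem~\ref{thm:OSzHomology} finishes the argument.

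The main obstacle will be the interplay between the quiver relations and $\de$: the differential may a priori carry a normal-form monomial to an element whose reduction to normal form invokes many relations, and one must check that these rewrites do not destroy the pairing used in the matching. I expect that choosing the filtration by degree in the adjoined variables is the right hypothesis, since the $\B_0$-relations are internal to the underlying quiver and so do not move degree, while $\de$ strictly raises it; with this in place, each matched pair should form a genuine acyclic two-term subcomplex and the surviving classes can be read off directly. The remainder of the proof then becomes a combinatorial bookkeeping exercise on $\Gamma(n,k,\Sc)$ verifying that the set of unmatched paths is exactly the basis of Theorem~\ref{thm:OSzHomology}.
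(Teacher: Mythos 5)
Your opening paragraph is fine: the decomposition into summands $\Ib_{\x}\B(n,k,\Sc)\Ib_{\y}$ is indeed a formal consequence of the orthogonal idempotents and the fact that $\de$ preserves them. The homology computation, however, has concrete problems. First, you have mischaracterized the differential: in $\B(n,k,\Sc)$ there are no ``two-arrow resolutions'' of crossings --- the generators labeled $R_i$, $L_i$, $U_i$ are all closed, and the entire differential is $\de(C_i)=U_i$ extended by the Leibniz rule (Definition~\ref{def:BnksQuiverDescription}). Consequently your filtration claim is backwards and, worse, vacuous: $\de$ is homogeneous of degree $-1$ in the number of $C$-letters (the relations $C_i^2=0$ and $C_iA=AC_i$ never change that count), so it strictly \emph{lowers} the degree in the adjoined variables, there are no ``lower-filtration terms'' in $\de a'$, and the associated graded of your filtration carries the zero differential. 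Second, and more seriously, the entire content of Theorem~\ref{thm:OSzHomology} lies in deciding which elements survive, and you defer exactly that to ``combinatorial bookkeeping.'' The delicate classes are those of the form $[C_{i}\,U_{j+1}\cdots\widehat{U_{i}}\cdots U_{j+l}\,p]$ attached to a generating interval $[j+1,j+l]$ meeting $\Sc$: these are cycles precisely because multiplication by $U_{i}$ has nonzero kernel in the truncated ring $\F_2[U_{j+1},\ldots,U_{j+l}]/(U_{j+1}\cdots U_{j+l})$, and showing they are not boundaries while everything else cancels is the real work (in the paper this is Lemma~\ref{lem:OSzGenIntHomology}, proved by induction on $|\Sc|$ via mapping cones of $U_i$). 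Without specifying the matching and proving its acyclicity and the identification of the unmatched set, the proposal restates the problem rather than solving it. Your Gr\"obner/normal-form step also silently re-derives Proposition~\ref{prop:OSzBasis} (Ozsv\'ath--Szab\'o's Proposition 3.7), which is itself nontrivial because the generating-interval relations $p_G=0$ emerge only from a global interaction of the loop, two-line pass, and $U$-vanishing relations; the paper takes that basis as given.

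For comparison, the paper's route avoids any Morse matching: it uses Propositions~\ref{prop:OSzBasis} and \ref{prop:QuartumNonDatur} to split $\Ib_{\x}\B(n,k,\Sc)\Ib_{\y}$ as a tensor product of chain complexes indexed by crossed lines, generating intervals, and edge intervals (Corollary~\ref{cor:IBItoTensorProduct}), computes the homology of each small factor by hand (polynomial and exterior factors directly, generating-interval factors by the mapping-cone induction), and assembles the answer with the K\"unneth theorem. If you want to pursue the discrete Morse route, you should work within that same tensor decomposition, where the matching reduces to the single-generating-interval case and the unmatched generators can actually be enumerated.
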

Theorem~\ref{thm:IntroHomology} allows us to compute the homology of the truncated algebras as well. Finally, we determine when $\B(n,k,\Sc)$ is formal.

\begin{theorem}\label{thm:IntroFormality}
The differential graded algebra $\B(n,k,\Sc)$ is formal if and only if $\Sc = \varnothing$ or $k \in \{0,n,n+1\}$.
\end{theorem}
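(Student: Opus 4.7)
The plan is to handle the two implications separately, using the quiver description from Theorem~\ref{thm:IntroQuiverDescription} and the explicit homology basis from Theorem~\ref{thm:IntroHomology}.

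For the \emph{if} direction, the case $\Sc = \varnothing$ is immediate, since by Definition~\ref{def:BnksQuiverDescription} the algebra $\B(n,k,\varnothing) = \B_0(n,k)$ carries the zero differential and therefore coincides with its own homology. For $k \in \{0, n+1\}$, there is a unique $I$-state, so the quiver collapses to loops in commuting $U$- and $C$-type variables; using Theorem~\ref{thm:OSzHomology} one can write down an explicit algebra section $H_*(\B(n,k,\Sc)) \hookrightarrow \B(n,k,\Sc)$ of the projection to homology and verify it is a DGA map, yielding formality directly. The case $k = n$ is handled by the same kind of explicit retraction onto a subalgebra with zero induced differential, with slightly more bookkeeping coming from the extra arrows present once the $I$-state omits a single index.

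For the \emph{only if} direction, suppose $\Sc \neq \varnothing$ and $k \in \{1, \ldots, n-1\}$. The strategy is to exhibit a non-vanishing Massey triple product in $H_*(\B(n,k,\Sc))$, which is a classical obstruction to formality. Fixing a point $s \in \Sc$ together with an $I$-state $\x \in V(n,k)$ whose support meets both sides of the position of $s$ in $\{0,\ldots,n\}$ (this is where the restriction on $k$ is used), we construct three cycle classes $\alpha, \beta, \gamma$ corresponding to paths in $\Gamma(n,k,\Sc)$ that wind around $s$, chosen so that $\alpha \beta = 0$ and $\beta \gamma = 0$ in $H_*(\B(n,k,\Sc))$. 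Choosing chain-level primitives $u, v$ with $du = \alpha\beta$ and $dv = \beta\gamma$, the Massey cocycle $u\gamma \pm \alpha v$ represents a class whose image in the quotient by the indeterminacy $\alpha \cdot H_*(\B(n,k,\Sc)) + H_*(\B(n,k,\Sc)) \cdot \gamma$ is non-zero, because the primitives are forced to contain a $U$-variable factor labelled by $s$ that the indeterminacy cannot absorb.

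The main obstacle will be precisely this last verification: identifying explicit candidates for $\alpha, \beta, \gamma$ and their primitives, and then matching the resulting Massey cocycle against the basis of Theorem~\ref{thm:OSzHomology} to certify non-vanishing modulo the indeterminacy. Once $\x$ and $s$ are fixed, the differential from Definition~\ref{def:BnksQuiverDescription} pins down the primitives essentially uniquely up to lower-order cycles, so the computation reduces to a finite combinatorial check; keeping track of signs, gradings, and the various relative positions of $s$ within $\Sc$ is where the bulk of the work lies, but Theorem~\ref{thm:IntroQuiverDescription} reduces every required check to explicit computation in the path algebra.
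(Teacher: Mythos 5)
Your high-level strategy agrees with the paper's (zero differential or an explicit quasi-isomorphism in the formal cases, non-vanishing triple Massey products otherwise), but two steps have genuine gaps.

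In the \emph{if} direction, the case $k=n$ cannot be handled by ``a retraction onto a subalgebra with zero induced differential.'' When $k=n$ the homology $\Ib_{\x}H_*(\B(n,n,\Sc))\Ib_{\y}$ is spanned by classes $[\phi(p)]$ with $p$ a monomial in the $U_i$ for $i\notin\Sc$, but the span of such chain-level representatives is \emph{not} a subalgebra: a product $f_{\x,\y}f_{\y,\x}$ produces the factors $U_i$ for the crossed lines $i$, which may lie in $\Sc$. The paper instead constructs a DGA \emph{surjection} $\B(n,n,\Sc)\to H_*(\B(n,n,\Sc))$ killing every $C_i$; this is well defined only because no generating intervals exist when $k=n$, so by Theorem~\ref{thm:OSzHomology} no $C_i$ survives in homology. (Your treatment of $k=0$ and $k=n+1$ is fine, though note $\B(n,k,\varnothing)=\B(n,k)$, not $\B_0(n,k)$.)

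In the \emph{only if} direction — the substance of the theorem — you have deferred precisely the step that carries the proof: you never specify $\alpha,\beta,\gamma$, never verify $\alpha\beta=\beta\gamma=0$ in homology, and never certify non-vanishing modulo indeterminacy. The paper's Lemma~\ref{lem:SomeMasseyProds} does this explicitly: for $i\in\Sc$ with $i\le n-1$, take $([L_i],[R_i],[R_{i+1}])$ based at an idempotent $\x$ with $\x\cap[i-1,i+1]=\{i\}$ (existence of such $\x$ is exactly where $1\le k\le n-1$ enters); the chain-level primitive of $L_iR_i=U_i$ is $C_i$ — a $C$-variable, not ``a $U$-variable factor labelled by $s$'' as you assert — and the resulting product $[C_iR_{i+1}]$ is a basis element of homology because $[i,i]$ is a generating interval meeting $\Sc$. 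Two further points your sketch misses: (1) a single family of sequences does not cover all $\Sc$ — e.g.\ for $\Sc=\{n\}$ one must use the mirror sequence $([R_{i+1}],[L_{i+1}],[L_i])$ with $i=n-1$, so the boundary positions of $s$ in $[1,n]$ need separate treatment; (2) well-definedness of the triple product (vanishing of $\Ib_{\x}H_*(\B(n,k,\Sc))\Ib_{\x}$ in homological degree $-1$ and Alexander degree $e_i$, and vanishing of the far summand between the outer idempotents) is a nontrivial degree computation, not an automatic consequence of the quiver description. Until these verifications are carried out, the non-formality half of the theorem is unproved.
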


We have similar results for the truncated algebras, which are a bit more interesting; see Theorems~\ref{thm:RightTruncationFormality}, \ref{thm:LeftTruncationFormality}, and \ref{thm:DoubleTruncationFormality}. In the cases where $\B(n,k,\Sc)$ or its truncations are not formal, we give examples of higher $\A_{\infty}$ actions on their homology which must be nonzero, but we do not attempt to characterize all such actions. An explicit description of these actions might be useful for the further algebraic study of $\B(n,k,\Sc)$.

\subsection*{Motivation and further directions}

This paper is the first in a series of at least three, including \cite{MMW2} and \cite{MMW3}. The Kauffman-states functor is motivated by holomorphic curve counting as in bordered Floer homology, and such counting can be used to prove its relationship with $HFK$ as Ozsv{\'a}th--Szab{\'o} will show in \cite{OSzHolo}. The Heegaard diagrams in which one counts these curves can be viewed in terms of a natural topological framework generalizing Zarev's bordered sutured Floer homology \cite{BSFH}. No attempt has been made to define bordered Floer homology analytically in this level of generality; this is expected to be quite difficult, with the Kauffman-states functor and Lipshitz--Ozsv{\'a}th--Thurston's forthcoming ``bordered $HF^-$'' theory for $3$-manifolds \cite{LOTMinus} with torus boundary arising as special cases.

Unlike in \cite{LOTMinus}, $\A_{\infty}$ deformations are not required for the algebras in \cite{OSzNew}, suggesting that the generalized bordered sutured theory hypothesized above should assign a reasonable generalization of the usual bordered strands algebras to the topological data motivating the algebras $\B(n,k,\Sc)$. However, this reasonable generalization gives algebras that are larger than $\B(n,k,\Sc)$, with nontrivial differential even when $\Sc = \varnothing$ (the meaning of $\Sc$ will be discussed below in Section~\ref{sec:GeneralOrientations}). 

In \cite{MMW2}, we construct the reasonably-generalized strands algebra mentioned above, in the case relevant for the Kauffman-states functor (this algebra is a special case of more general strands algebras that will be constructed by Rapha{\"e}l Rouquier and the first named author in \cite{ManionRouquier}). We call this algebra $\sac nk\Sc$, and we prove some useful properties about it. We define gradings on $\sac nk\Sc$ combinatorially and show how these gradings arise naturally from the group-valued gradings typical of the general bordered Floer setup.

We then exhibit a quasi-isomorphism $\Phi$ from $\B(n,k,\Sc)$ to $\sac nk\Sc$, giving evidence that the algebraic structure of the Kauffman-states functor may indeed be part of a generalization of bordered sutured Floer homology as mentioned above. Theorems~\ref{thm:IntroQuiverDescription} and \ref{thm:IntroHomology} in this paper are key elements of the construction; we use the generators-and-relations description of $\B(n,k,\Sc)$ to define the homomorphism $\Phi$ out of it, and we use Theorem~\ref{thm:IntroHomology} to help show $\Phi$ is a quasi-isomorphism. We also define symmetries on $\sac nk\Sc$ analogous to Ozsv{\'a}th--Szab{\'o}'s and show that $\Phi$ preserves them.

In \cite{MMW3}, which is in preparation, we will discuss bimodules in the context of \cite{MMW2}. In the language of bordered Floer homology, we will construct $DA$ bimodules for positive and negative crossings such that, after applying induction and restriction functors appropriately, we have a homotopy equivalence between the $DA$ bimodules we construct and the ones constructed in \cite{OSzNew}.

In this paper as well as \cite{MMW2, MMW3}, we work with the algebras of \cite{OSzNew}. Ozsv{\'a}th--Szab{\'o} use algebras that are related, but different to varying degrees, in \cite{OSzNewer,OSzHolo,OSzPong}. It would be very interesting to find strands algebra interpretations for any of these relatives of $\B(n,k,\Sc)$; to us, it seems like the strands interpretation is most immediate for the original algebra $\B(n,k,\Sc)$.

Following \cite{OSzNew} as well as the general convention in bordered Floer homology, we will work over the field $\F_2$. There are serious analytic difficulties that arise in bordered Floer homology when working over $\Z$. While it is plausible that our algebraic results could be formulated over $\Z$, the $\F_2$ versions would still be more directly comparable to a generalized bordered sutured theory as discussed above, unless one could also formulate that theory over $\Z$.

\subsection*{Organization}

In Section \ref{sec:OSz} we give an alternative quiver description of Ozsv\'ath--Szab\'o's algebra $\B_0(n,k)$, and prove Theorem \ref{thm:IntroQuiverDescriptionB0}. We discuss the quotient $\B(n,k)$ of $\B_0(n,k)$ and the more general algebra $\B(n,k,\Sc)$ in Section~\ref{sec:OSzB}. Corollary~\ref{cor:OSzQuiverEquivDG} concludes the proof of Theorem~\ref{thm:IntroQuiverDescription}. 

When working with algebras (like $\B_0(n,k)$ and $\B(n,k,\Sc)$) that come with a distinguished collection of idempotents, we freely make use of the perspective of differential graded categories. For the reader's convenience, a review of the relevant category theory is included in Appendix~\ref{app:Algebra}. 
In Section~\ref{sec:OSzStructure}, we use Ozsv{\'a}th--Szab{\'o}'s notion of ``generating intervals'' to give a decomposition theorem (Corollary~\ref{cor:IBItoTensorProduct}) for Hom-spaces in the category associated to $\B(n,k,\Sc)$. In Section \ref{sec:HomologyAndFormality} we use this decomposition theorem to compute the homology of $\B(n,k,\Sc)$, proving Theorem \ref{thm:IntroHomology}. We also investigate formality in Section~\ref{sec:HomologyAndFormality}, proving Theorem~\ref{thm:IntroFormality} and its analogues for the truncated algebras.

\subsection*{Acknowledgments}

The authors would like to thank Francis Bonahon, Ko Honda, Aaron Lauda, Robert Lipshitz, Ciprian Manolescu, Peter Ozsv{\'a}th, Rapha{\"e}l Rouquier, and Zolt{\'a}n Szab{\'o} for many useful conversations. The first named author would especially like to thank Zolt{\'a}n Szab{\'o} for teaching him about the Kauffman-states functor.
\section{Quiver descriptions of Ozsv\'ath-Szab\'o's algebra \texorpdfstring{$\B_0$}{B0}}
\label{sec:OSz}

\subsection{Quiver algebras}\label{sec:QuiverAlgs}

\begin{definition}\label{def:Paths}
Let $\Gamma$ be a finite directed graph, allowed to have loops and multi-edges, and let $V$ and $E$ denote the sets of vertices and edges (or arrows) of $\Gamma$ respectively. A \emph{path} $\gamma$ in $\Gamma$ is given by a finite sequence of edges, written $\gamma = (\gamma_1, \ldots, \gamma_l)$ (when $l = 1$ we omit the parentheses), such that for all $i = 1, \ldots, l-1$, the ending vertex of $\gamma_i$ coincides with the starting vertex of $\gamma_{i+1}$.  The \emph{start} of a path $\gamma = (\gamma_1, \ldots, \gamma_l)$ is the starting vertex of $\gamma_1$, denoted by $v_1$. Likewise the \emph{end} of $\gamma$ is the ending vertex of $\gamma_l$, denoted by $v_{l+1}$. The number $l$ is called the \emph{length} of $\gamma$.
\end{definition}

For every vertex $v \in V$, there is a distinguished path $I_v$ from $v$ to $v$ of length $0$, given by the empty sequence of edges. 

Given a finite directed graph $\Gamma$, one can construct the path algebra over $\Gamma$ with coefficients in a commutative ring $\ring$ (in this paper, $\ring$ will always be either the two element field $\F_2$ or a polynomial ring over $\F_2$). The path $I_v$ induces a (distinguished) idempotent in this algebra. In order to remember that this algebra comes with a set of distinguished idempotents, we can use the category defined below. For a review of some definitions concerning algebras and categories (e.g. $\ring$-linear category), see Appendix \ref{app:Algebra}. Note that for us a $\ring$-algebra is a ring $\mc A$ equipped with a ring homomorphism $\ring \to \mc A$; see Remark \ref{rem:SAS}.

\begin{definition}\label{def:CatOfAQuiver}
Let $\Gamma$ be a finite directed graph as above and let $\ring$ be a commutative ring. Define $\ring\Gamma$ to be the $\ring$-linear category whose objects are vertices $v \in V$ of $\Gamma$ and such that for two vertices $v_1, v_2 \in V$, $\Hom_{\ring\Gamma}(v_2,v_1)$ is the free $\ring$-module formally spanned by all paths in $\Gamma$ from $v_1$ to $v_2$. Composition of morphisms in $\Gamma$ is given by concatenation of paths, extended linearly over $\ring$, and identity morphisms $\Ib_v$ for $v \in V$ are given by the ``empty'' paths $I_v$.
\end{definition}

\begin{remark}
The reversal of directions in Definition~\ref{def:CatOfAQuiver} is intentional; one wants compositions $fg$ in a category, thought of as ``$f$ after $g$,'' to agree with multiplications $ab$ in a path algebra, thought of as ``$b$ after $a$'' and determined by edges $v_1 \xrightarrow{a} v_2 \xrightarrow{b} v_3$ in $\Gamma$.
\end{remark}

As defined in Section~\ref{sec:AlgsAndCats}, we have an $\IdemRing$-algebra $\Alg_{\ring \Gamma}$, where $\IdemRing = \ring^V$. This algebra is called the \emph{path algebra} of $\Gamma$; we will denote it by $\Path(\Gamma)$. When the coefficient ring $\ring$ is not clear from the context, we will denote it by $\Path_{\ring}(\Gamma)$. The set $\set{\Ib_v \,\middle|\, v \in V}$ is a set of pairwise orthogonal idempotents in $\Path(\Gamma)$: for all $v$, $v' \in V$ we have
\[
\Ib_v \cdot \Ib_{v'} =
\begin{cases}
\Ib_v & \text{if $v = v'$} \\
0 & \text{otherwise.}
\end{cases}
\]
The identity element of $\Path(\Gamma)$ is $\sum_{v \in V} \Ib_v$.

The edges of $\Gamma$ give a natural set of multiplicative generators for $\Path(\Gamma)$; paths in $\Gamma$ as defined in Definition~\ref{def:Paths} give a basis for $\Path(\Gamma)$ as a free module over $\ring$.

More generally, as explained in detail in Appendix \ref{app:Algebra}, given a $\ring$-linear category $\mathcal C$ with (finite) object set $V$, one can form a corresponding $\IdemRing$-algebra $\Alg_{\mathcal C}$ by summing over the morphism spaces. The composition of the inclusion of constant functions $\ring \to \IdemRing$ with the ring homomorphism $\IdemRing \to \Alg_{\mc C}$ has image contained in the center of $\Alg_{\mc C}$. Vice versa, given an $\IdemRing$-algebra $\mc A$ such that the natural map $\ring \to \IdemRing \to \mc A$ has image in $Z(\mc A)$, one can form a $\ring$-linear category $\Cat_{\mc A}$ with object set $V$. Moreover, functors between $\ring$-linear categories that are the identity on objects correspond to $\IdemRing$-algebra homomorphisms.  

\begin{remark}
We will often refer to certain functors as being equivalent to algebra homomorphisms; we assume without further mention that all functors discussed in this context are the identity on objects. Also, when discussing algebras $\A$ over $\IdemRing = \ring^V$ for a finite set $V$, we will assume that the natural map $\ring \to \IdemRing \to \A$ has image in $Z(\A)$.
\end{remark}

\begin{definition}\label{def:AlgOfAQuiver}
Let $\Gamma$ be a directed graph with vertex set $V$. For $v,v' \in V$, let $\mc R_{v,v'}$ be a subset of $\Hom_{\ring\Gamma}(v',v)$. Let $\mc R$ be the union of $\mc R_{v,v'}$ over all $v,v' \in V$, viewed as a subset of $\Path(\Gamma)$; we will call $\mc R$ a set of \emph{relations}. Let $\IdemRing = \ring^V$ as usual. We define the quiver algebra with relations $\Quiv(\Gamma, \mc R)$ to be the $\IdemRing$-algebra
\[
\Quiv(\Gamma, \mc R) = \frac{\Path(\Gamma)}{\mc I_{\mc R}},
\]
where $\mc I_{\mc R}$ is the two-sided ideal generated by the relation set $\mc R$. We have a corresponding category $\Cat_{\Quiv(\Gamma, \mc R)}$ with object set $V$.
\end{definition}

The edges of $\Gamma$ still give a natural set of multiplicative generators for $\Quiv(\Gamma, \mc R)$. Paths in $\Gamma$ give a spanning set for $\Quiv(\Gamma, \mc R)$ over $\ring$; since we have imposed relations, the set of paths in $\Gamma$ might no longer be linearly independent. We will refer to elements of this spanning set as \emph{path-like elements} or \emph{additive generators} of $\Quiv(\Gamma, \mc R)$.

The following proposition is standard.

\begin{proposition}\label{prop:QuiverAlgUniversalProp}
Let $\Gamma$ be a finite directed graph with vertex set $V$ and edge set $E$. Let $\mc C$ be any $\ring$-linear category whose set of objects is $V$. Suppose that for each edge $\gamma \in E$ starting at $v_1$ and ending at $v_2$, we have a morphism $F(\gamma): v_2 \to v_1$ in $\mc C$. Then, there is a unique $\ring$-linear functor from $\ring\Gamma$ to $\mc C$ sending $v$ to $v$ for all $v \in V$ and sending $\gamma$ to $F(\gamma)$ for all $\gamma \in E$. 
\end{proposition}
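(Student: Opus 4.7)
The plan is to define the functor explicitly on the basis of paths in $\ring\Gamma$ and then verify the functor axioms. Since the putative functor must send $v$ to $v$ on objects and $\gamma$ to $F(\gamma)$ on edges, the only freedom is in how to extend to longer paths, but the functor axioms dictate the extension: identities must go to identities, and compositions to compositions. So the essential content is to check that the dictated extension is well-defined and $\ring$-linear, and that it is indeed a functor.

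Concretely, for a path $\gamma = (\gamma_1, \ldots, \gamma_l)$ with $\gamma_i$ an edge from $v_i$ to $v_{i+1}$, I would set
\[
\td F(\gamma) := F(\gamma_1) \circ F(\gamma_2) \circ \cdots \circ F(\gamma_l),
\]
which is a morphism $v_{l+1} \to v_1$ in $\mc C$ by the reversal convention noted after Definition~\ref{def:CatOfAQuiver}, consistent with the $\Hom$-space in $\ring\Gamma$ containing $\gamma$. For each empty path $I_v$, I would set $\td F(I_v) := \id_v$. Since by Definition~\ref{def:CatOfAQuiver} each $\Hom_{\ring\Gamma}(v_2, v_1)$ is the free $\ring$-module on paths from $v_1$ to $v_2$, these assignments extend uniquely $\ring$-linearly to a map of Hom-spaces $\td F \colon \Hom_{\ring\Gamma}(v_2,v_1) \to \Hom_{\mc C}(v_2,v_1)$. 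Preservation of identities then holds by construction, and preservation of composition reduces to showing that the image of a concatenation of paths equals the composition of their images, which follows directly from associativity of composition in $\mc C$ applied to the defining formula for $\td F$; this then extends $\ring$-bilinearly to arbitrary morphisms of $\ring\Gamma$.

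For uniqueness, I would observe that any $\ring$-linear functor $G$ which is the identity on objects and satisfies $G(\gamma) = F(\gamma)$ on every edge $\gamma \in E$ is forced by the functor axioms to send $I_v$ to $\id_v$ and to send each length-$l$ path to the composition prescribed above; it is then determined on all Hom-spaces by $\ring$-linearity. Hence $G = \td F$. The only care point — not really an obstacle — is tracking the reversal convention so that the composition in $\mc C$ lands in the expected $\Hom$-space; once the conventions are aligned, the argument is mechanical and of the usual universal-property flavor typical of free constructions.
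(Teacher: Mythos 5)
Your argument is correct and is exactly the standard free-construction argument the paper has in mind (the paper simply states the proposition as ``standard'' and omits the proof). You handle the reversal convention correctly: for $\gamma_i$ an edge $v_i \to v_{i+1}$ the composite $F(\gamma_1)\circ\cdots\circ F(\gamma_l)$ is indeed a morphism $v_{l+1}\to v_1$, matching $\Hom_{\ring\Gamma}(v_{l+1},v_1)$, and freeness of the Hom-modules on paths gives both existence and uniqueness of the $\ring$-linear extension.
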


This functor gives us a homomorphism of $\IdemRing$-algebras from $\Path(\Gamma)$ to $\Alg_{\mc C}$. If this homomorphism sends $\mc R \subset \Path(\Gamma)$ to zero, we get a homomorphism of $\IdemRing$-algebras from $\Quiv(\Gamma, \mc R)$ to $\Alg_{\mc C}$, or equivalently a functor from $\Cat_{\Quiv(\Gamma, \mc R)}$ to $\mc C$.

It will be convenient to have dg (i.e. differential graded) versions of the above constructions; the proofs of the below propositions are left to the reader. We discuss gradings first.
\begin{proposition}\label{prop:GradedQuiverAlg}
Let $\Gamma$, $V$, and $\mc R$ be as in Definition~\ref{def:AlgOfAQuiver}, and let $G$ be a group. Suppose that for each edge $\gamma$ of $\Gamma$, we are given an element $\deg(\gamma) \in G$. Extend $\deg$ multiplicatively to a map from paths in $\Gamma$ to $G$. Assume that each relation $r \in \mc R$ is a sum of paths with the same degree. Then $\deg$ gives $\Quiv(\Gamma,\mc R)$ the structure of a $G$-graded $\IdemRing$-algebra (see Appendix~\ref{sec:AppendixAlgebras} for a brief review of our grading conventions).
\end{proposition}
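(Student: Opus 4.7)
The plan is to first equip the path algebra $\Path(\Gamma)$ with a $G$-grading in a canonical way, then show that the ideal of relations is a homogeneous (graded) ideal, and finally transfer the grading to the quotient $\Quiv(\Gamma,\mc R)$. Since we must also respect the fact that $\Quiv(\Gamma,\mc R)$ is an $\IdemRing$-algebra, we need to make sure the distinguished idempotents $\Ib_v$ all lie in degree $e \in G$, the identity element.

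The first step is to extend $\deg$ to a well-defined map on all paths, with each empty path $I_v$ assigned $\deg(I_v) = e$, and then define $\Path(\Gamma)_g$ to be the free $\ring$-submodule spanned by paths $\gamma$ with $\deg(\gamma) = g$. Because paths form an $\ring$-basis of $\Path(\Gamma)$, this yields a direct-sum decomposition $\Path(\Gamma) = \bigoplus_{g \in G} \Path(\Gamma)_g$ as an $\ring$-module. Next I would verify compatibility with multiplication: for composable paths $\gamma, \gamma'$ of degrees $g, g'$, the concatenation has degree $g g'$ by the multiplicative extension, while non-composable products vanish; extending $\ring$-linearly gives $\Path(\Gamma)_g \cdot \Path(\Gamma)_{g'} \subseteq \Path(\Gamma)_{g g'}$. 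Since every $\Ib_v$ lies in $\Path(\Gamma)_e$, the structural map $\IdemRing \to \Path(\Gamma)$ lands in the degree-$e$ part, so $\Path(\Gamma)$ is a $G$-graded $\IdemRing$-algebra.

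The key step, and where the homogeneity hypothesis on $\mc R$ is used, is to show that $\mc I_{\mc R}$ is a graded (two-sided) ideal. As an $\ring$-module, $\mc I_{\mc R}$ is spanned by elements of the form $p \cdot r \cdot p'$ where $r \in \mc R$ and $p, p'$ are paths in $\Gamma$. By hypothesis each $r \in \mc R$ is a sum of paths all of the same degree, so $r \in \Path(\Gamma)_{\deg(r)}$ is itself homogeneous. The compatibility $\Path(\Gamma)_{g_1} \cdot \Path(\Gamma)_{g_2} \cdot \Path(\Gamma)_{g_3} \subseteq \Path(\Gamma)_{g_1 g_2 g_3}$ then shows that each spanning element $p \cdot r \cdot p'$ of $\mc I_{\mc R}$ lies in a single graded piece $\Path(\Gamma)_{\deg(p)\deg(r)\deg(p')}$. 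It follows that $\mc I_{\mc R} = \bigoplus_{g \in G} \bigl(\mc I_{\mc R} \cap \Path(\Gamma)_g\bigr)$, i.e., $\mc I_{\mc R}$ is a homogeneous ideal.

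Finally, the quotient inherits a $G$-grading by setting $\Quiv(\Gamma,\mc R)_g := \Path(\Gamma)_g / (\mc I_{\mc R} \cap \Path(\Gamma)_g)$; the direct-sum decomposition of $\mc I_{\mc R}$ ensures $\Quiv(\Gamma,\mc R) = \bigoplus_{g \in G} \Quiv(\Gamma,\mc R)_g$, and the compatibility with multiplication and with the image of $\IdemRing$ descends from $\Path(\Gamma)$. The main obstacle is really just the homogeneity of $\mc I_{\mc R}$; everything else is formal bookkeeping, and the homogeneity follows cleanly from the hypothesis that each relation is a sum of paths of a common degree, since this is exactly what is needed to keep the two-sided closure of $\mc R$ inside a direct sum of graded components.
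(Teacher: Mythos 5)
Your proof is correct, and since the paper explicitly leaves the proofs of Propositions~\ref{prop:GradedQuiverAlg} and \ref{prop:DGAlgFromQuiver} to the reader, yours is exactly the standard argument intended: grade $\Path(\Gamma)$ by the multiplicatively extended degree (with $\deg(I_v)=e$ so the image of $\IdemRing$ sits in degree $e$), observe that the homogeneity hypothesis on $\mc R$ makes $\mc I_{\mc R}$ a graded ideal because it is spanned by the homogeneous elements $p\cdot r\cdot p'$, and descend the grading to the quotient. No gaps.
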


Next we discuss the differential.
\begin{proposition}\label{prop:DGAlgFromQuiver}
Let $\Gamma$, $V$, and $\mc R$ be as in Definition~\ref{def:AlgOfAQuiver}. Suppose we are given an element $\partial(\gamma)$ of $\Ib_{\x} \Path(\Gamma) \Ib_{\y}$ for each edge $\gamma$ of $\Gamma$ from a vertex $\x$ to another vertex $\y$.  Extend $\partial$ to $\Path(\Gamma)$ linearly and using the Leibniz rule. Assume that $\partial(\mc R) = 0$ and that $\partial^2(\gamma) = 0$ for each edge $\gamma$. Then $\partial$ gives $\Quiv(\Gamma, \mc R)$ the structure of a differential $\IdemRing$-algebra.

If $\Quiv(\Gamma, \mc R)$ has a $G$-grading for some group $G$, we have $\lambda \in Z(G)$, and $\partial$ is homogeneous of degree $\lambda^{-1}$, then $\partial$ gives $\Quiv(\Gamma, \mc R)$ the structure of a $(G,\lambda)$-graded dg $\IdemRing$-algebra. If the $G$-grading on $\Quiv(\Gamma, \mc R)$ comes from Proposition~\ref{prop:GradedQuiverAlg}, then $\de$ is homogeneous of degree $\lambda^{-1}$ as long as $\partial(\gamma)$ is homogeneous of degree $\lambda^{-1} \deg(\gamma)$ for each edge $\gamma$ of $\Gamma$.
\end{proposition}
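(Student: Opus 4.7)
My plan is to verify three things: that $\partial$ descends to a well-defined map on the quotient $\Quiv(\Gamma,\mc R)$, that the descended map squares to zero, and, under the graded hypotheses, that it is homogeneous of the specified degree. The Leibniz rule and $\IdemRing$-linearity then pass to the quotient formally, yielding the differential (and graded) $\IdemRing$-algebra structures. Throughout I work over $\F_2$, in which no graded signs appear; the extended $\partial$ on $\Path(\Gamma)$ is understood to vanish on every idempotent $\Ib_v$ and to be determined on longer paths by iterating Leibniz on a product of edges.

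For the descent, note that $\mc I_{\mc R}$ is spanned by elements $arb$ with $a,b \in \Path(\Gamma)$ and $r \in \mc R$. Leibniz gives
\[
\partial(arb) = \partial(a)\, r\, b + a\, \partial(r)\, b + a\, r\, \partial(b);
\]
the middle term vanishes by the assumption $\partial(\mc R) = 0$, while the outer terms lie in $\mc I_{\mc R}$ since this ideal is two-sided. Hence $\partial(\mc I_{\mc R}) \subset \mc I_{\mc R}$, so $\partial$ descends. For the identity $\partial^2 = 0$, I first observe that $\partial^2$ is itself a derivation on $\Path(\Gamma)$: direct computation gives
\[
\partial^2(ab) = \partial\bigl(\partial(a)b + a\partial(b)\bigr) = \partial^2(a)\, b + \partial(a)\partial(b) + \partial(a)\partial(b) + a\, \partial^2(b) = \partial^2(a)\, b + a\, \partial^2(b),
\]
so a derivation vanishing on a multiplicative generating set vanishes on all of $\Path(\Gamma)$. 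The generating set here is $\{\Ib_v \,:\, v \in V\} \cup E$; on the idempotents $\partial^2$ vanishes because $\partial$ does, and on each edge $\partial^2(\gamma) = 0$ is hypothesis.

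For the grading statement, $\ring$-linearity reduces the problem to checking that $\partial$ shifts the degree of each path-like element by $\lambda^{-1}$. This is vacuous on idempotents, holds by hypothesis on single edges, and for a path $\gamma_1\cdots\gamma_l$ of positive length the Leibniz expansion $\partial(\gamma_1\cdots\gamma_l) = \sum_i \gamma_1 \cdots \partial(\gamma_i) \cdots \gamma_l$ has each summand of degree
\[
\deg(\gamma_1) \cdots \bigl(\lambda^{-1}\deg(\gamma_i)\bigr) \cdots \deg(\gamma_l) = \lambda^{-1}\deg(\gamma_1\cdots\gamma_l),
\]
where centrality of $\lambda$ in $G$ is used to pull $\lambda^{-1}$ past the preceding factors. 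I expect no real obstacle beyond bookkeeping, since everything follows from Leibniz together with the stated hypotheses; the only subtlety worth flagging is the convention that $\partial$ annihilates the identity idempotents, which is built into the phrase ``extend linearly and using the Leibniz rule'' and is what makes the induction on path length in the grading step line up.
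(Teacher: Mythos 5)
Your proof is correct; the paper explicitly leaves this proposition to the reader, and your argument (descent of $\partial$ via Leibniz and $\partial(\mc R)=0$, the observation that $\partial^2$ is a derivation in characteristic $2$ and so vanishes once it vanishes on idempotents and edges, and the degree count using centrality of $\lambda$) is exactly the standard verification intended. The one convention you flag --- that $\partial$ annihilates the $\Ib_v$, which gives $\IdemRing$-linearity and anchors the induction on path length --- is indeed the only point needing care, and you handle it correctly.
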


There are graded, differential, and dg analogues of Proposition~\ref{prop:QuiverAlgUniversalProp}.

\subsection{The algebra \texorpdfstring{$\B_0(n,k)$}{B0(n,k)}}\label{sec:B0Section}
In this section we present two quiver descriptions for the algebra $\B_0(n,k)$ from \cite{OSzNew}.  The first of these will be a direct translation of the definition in that paper.  Proving that the second description is equivalent to the first will be the goal of Section \ref{sec:Eqv of descriptions}.

\subsubsection{I-states}\label{sec:Istates}

Throughout the paper, let $[0,n]$ denote the set $\{0,1,2,\ldots,n\}$. Ozsv{\'a}th--Szab{\'o} \cite[Section 3.1]{OSzNew} define an \emph{I-state} to be a subset $\x$ of $[0,n]$ with $|x| = k$. By convention, we write the elements of an I-state $\x$ in increasing order as $\x = \{x_1,\ldots,x_k\}$ with $x_1 < \cdots < x_k$. Let $V(n,k)$ be the set of I-states for a given $n$ and $k$. The algebras $\B_0(n,k)$ and $\B(n,k)$ can be viewed as algebras over $\mathbb{F}_2^{V(n,k)}$; we will call $\mathbb{F}_2^{V(n,k)}$ the \emph{ring of idempotents} and denote it by $\Ib(n,k)$.

\begin{definition}[{\cite[Section 3.1]{OSzNew}}]
\label{def:rel weight vec}
Let $\x, \y \in V(n,k)$. The \emph{minimal relative weight vector} $v(\x,\y)=(v_1(\x,\y),\ldots,v_n(\x,\y))\in\Z^n$ is defined by the formula
\begin{equation*}
v_i(\x,\y)= |\y\cap[i,n]| - |\x\cap[i,n]|.
\end{equation*}
The \emph{minimal relative grading vector} $|v|(\x,\y)$ is defined by $|v|_i(\x,\y) = |v_i(\x,\y)|$.
\end{definition}

It is straightforward to check that, for $\x, \y, \z \in V(n,k)$, we have
\begin{equation}
\label{eq:Additivity of v}
v(\x, \z) = v(\x, \y) + v(\y, \z).
\end{equation}

The functions $|v|_i$ are not additive in general, but they are subadditive.
\begin{proposition}[{\cite[Section 3.1]{OSzNew}}]
\label{prop:AbsViSubadditive}
For $\x,\y,\z \in V(n,k)$ and $1 \leq i \leq n$, $|v|_i(\y,\z) - |v|_i(\x,\z) + |v|_i(\x,\y)$ is a nonnegative even integer.
\end{proposition}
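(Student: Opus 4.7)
The plan is to reduce the statement to an elementary fact about integers by using the additivity of the minimal relative weight vector already recorded in equation~(\ref{eq:Additivity of v}). By that additivity, setting $a = v_i(\x,\y)$ and $b = v_i(\y,\z)$, we have $v_i(\x,\z) = a + b$, so the expression
\[
|v|_i(\y,\z) - |v|_i(\x,\z) + |v|_i(\x,\y) = |b| - |a+b| + |a|
\]
becomes a purely numerical quantity depending on two integers $a,b \in \Z$. Thus the proposition is equivalent to the claim: for any integers $a,b$, the quantity $|a| + |b| - |a+b|$ is a nonnegative even integer.

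For nonnegativity, I would simply invoke the triangle inequality $|a+b| \leq |a| + |b|$. For evenness, I would use the fact that for every integer $m$ we have $|m| \equiv m \pmod{2}$, so that
\[
|a| + |b| - |a+b| \equiv a + b - (a+b) \equiv 0 \pmod{2}.
\]
Combining these two observations gives the desired conclusion.

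I do not anticipate any real obstacle here; the content is essentially a one-line reduction to the triangle inequality together with the parity observation above. The only thing worth being slightly careful about is verifying that the additivity statement in~(\ref{eq:Additivity of v}) is being quoted correctly per component (which is immediate from the definition of $v_i$ as a difference of cardinalities). One could also just note that each $v_i$ is $\Z$-valued and that the map $\x \mapsto |\x \cap [i,n]|$ gives a homomorphism-like behavior on differences, but the cleanest route is the one above.
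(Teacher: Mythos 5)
Your proof is correct. The paper states this proposition without proof, simply citing \cite[Section 3.1]{OSzNew}, and your argument — reducing via the additivity in equation~(\ref{eq:Additivity of v}) to the elementary fact that $|a|+|b|-|a+b|$ is nonnegative (triangle inequality) and even (since $|m|\equiv m \pmod 2$) — is exactly the natural justification one would supply.
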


\subsubsection{The algebras}

We start with a simple rephrasing of Ozsv{\'a}th--Szab{\'o}'s definition of $\B_0(n,k)$ (see \cite[Section 3.1]{OSzNew}).
\begin{definition}\label{def:OSzStyleDef}
Let $K(n,k)$ denote the complete directed graph with vertex set $V = V(n,k)$ (this graph has a unique edge from $\x$ to $\y$ for every ordered pair $(\x,\y) \in V^2$). Denote the edge in $K(n,k)$ from $\x$ to $\y$ by $f_{\x,\y}$. Write $\Path(K(n,k))$ for $\Alg_{\F_2[U_1,\ldots,U_n] K(n,k)}$. Let $\mc R_K \subset \Path(K(n,k))$ denote the set of elements
\[
f_{\x,\y} f_{\y,\z} - \prod_{i=1}^n U_i^{(|v|_i(\y,\z) - |v|_i(\x,\z) + |v|_i(\x,\y))/2} f_{\x,\z}
\]
for all ordered triples $(\x,\y,\z) \in V^3$. Define
\[
\B_0(n,k) := \Quiv(K(n,k), \mc R_K),
\]
an algebra over $\F_2[U_1,\ldots,U_n]^{V(n,k)}$; we have an $\F_2[U_1,\ldots,U_n]$-linear category $\Cat_{\B_0(n,k)}$ with set of objects $V(n,k)$. One can check that for $\x,\y \in V(n,k)$, the $\F_2[U_1,\ldots,U_n]$-linear map $\phi^{\x,\y}$ from $\F_2[U_1,\ldots,U_n]$ to $\Ib_{\x} \B_0(n,k) \Ib_{\y}$ sending $1$ to $f_{\x,\y}$ is a bijection. Thus, $\Quiv(K(n,k), \mc R_K)$ agrees with Ozsv{\'a}th--Szab{\'o}'s definition of $\B_0(n,k)$.
\end{definition}

We now give an alternate definition of $\B_0(n,k)$; we will prove below that the algebra $\Quiv(\Gamma(n,k), \mc R)$ constructed in the following definition is isomorphic to $\B_0(n,k)$.

\begin{definition}\label{def:B_0 Quiver Algebra}
The directed graph $\Gamma(n,k)$ has vertex set $V = V(n,k)$. Its arrows are given as follows:
\begin{itemize}
\item For vertices $\x$ with $i-1 \in \x$ and $i \notin \x$, there is an arrow from $\x$ to $(\x \setminus \set{i-1}) \cup \set{i}$, said to have label $R_i$.
\item For vertices $\x$ with $i \in \x$ and $i-1 \notin \x$, there is an arrow from $\x$ to $(\x \setminus \set{i}) \cup \set{i-1}$, said to have label $L_i$.
\item For all vertices $\x$ and all $i$ between $1$ and $n$, there is an arrow from $\x$ to $\x$, said to have label $U_i$.
\end{itemize}
We write $\Path(\Gamma(n,k))$ for $\Alg_{\F_2 \Gamma(n,k)}$. To each basis element $\gamma$ of $\Path(\Gamma(n,k))$, we can associate a (non-commutative) monomial $\mu(\gamma)$ in the letters $R_i$, $L_i$, and $U_i$ for $1 \leq i \leq n$. Note that two paths $\gamma, \gamma'$ with the same monomial and starting at the same vertex are equal. We then extend $\mi$ $\F_2$-linearly to each element of $\Path(\Gamma(n,k))$. For every pair of vertices $\x$ and $\y$, we define $\mc R_{\x,\y} \subset \Hom(\y, \x) \subset \Path(\Gamma(n,k))$ to be the set of elements $\gamma \in \Hom(\y, \x)$ such that $\mu(\gamma)$ is equal to one of the following:
\begin{enumerate}
\item\label{rel:central} $R_i U_j - U_j R_i$, \,\, $L_i U_j - U_j L_i$, or $U_i U_j - U_j U_i$ (the ``$U$ central relations''),
\item $R_i L_i - U_i$ or $L_i R_i - U_i$ (the ``loop relations''),
\item $R_i R_j - R_j R_i$, \,\, $L_i L_j - L_j L_i$, or $R_i L_j - L_j R_i$ for $|i-j|>1$ (the ``distant commutation relations'').
\end{enumerate}
The minus signs could equally well be plus signs, since we are working over $\F_2$. We have an $\Ib(n,k)$-algebra $\Quiv(\Gamma(n,k), \mc R)$, where $\mc R = \bigcup_{\x, \y} \mc R_{\x, \y}$, and an $\F_2$-linear category $\Cat_{\Quiv(\Gamma(n,k), \mc R)}$ with object set $V(n,k)$. Using the edges of $\Gamma(n,k)$ with label $U_i$, we can give $\Quiv(\Gamma(n,k), \mc R)$ the structure of an algebra over $\F_2[U_1,\ldots,U_n]^{V(n,k)}$. The relations \eqref{rel:central} imply that the natural map $\F_2[U_1,\ldots,U_n] \to \F_2[U_1,\ldots,U_n]^{V(n,k)} \to \Quiv(\Gamma(n,k), \mc R)$ has image in the center of $\Quiv(\Gamma(n,k), \mc R)$. Equivalently, we can view $\Cat_{\Quiv(\Gamma(n,k),\mc R)}$ as an $\F_2[U_1,\ldots,U_n]$-linear category.
\end{definition}

\subsection{Graphical interpretations}\label{sec:GraphicalInterp}

\begin{figure}
\includegraphics[scale=0.5]{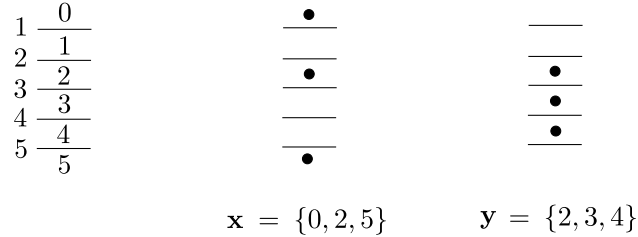}
\caption{Elements $\x$ and $\y$ of $V(5,3)$ viewed as dots occupying regions.}
\label{fig:IStates}
\end{figure}

In this section we will give graphical interpretations of the algebras $\Quiv(K(n,k), \mc R_K)$ and $\Quiv(\Gamma(n,k), \mc R)$. In both interpretations we follow \cite{OSzNew} and interpret an I-state $\x$ as a choice of ``occupied" regions between $n$ lines, illustrated with dots as in Figure~\ref{fig:IStates}.

\begin{remark}\label{rem:NinetyDegRot}
Our graphical conventions can be obtained from those used in \cite{OSzNew,OSzNewer} by $90^{\circ}$ rotation clockwise. We perform this rotation to match Lipshitz--Ozsv{\'a}th--Thurston and Zarev's conventions for strands pictures in bordered Floer homology; see \cite{MMW2} where we construct a quasi-isomorphism from $\B(n,k,\Sc)$ to a generalized strands algebra.
\end{remark}

\subsubsection{Graphical interpretation of $\Quiv(K(n,k), \mc R_K)$}

\begin{figure}
\includegraphics[scale=0.5]{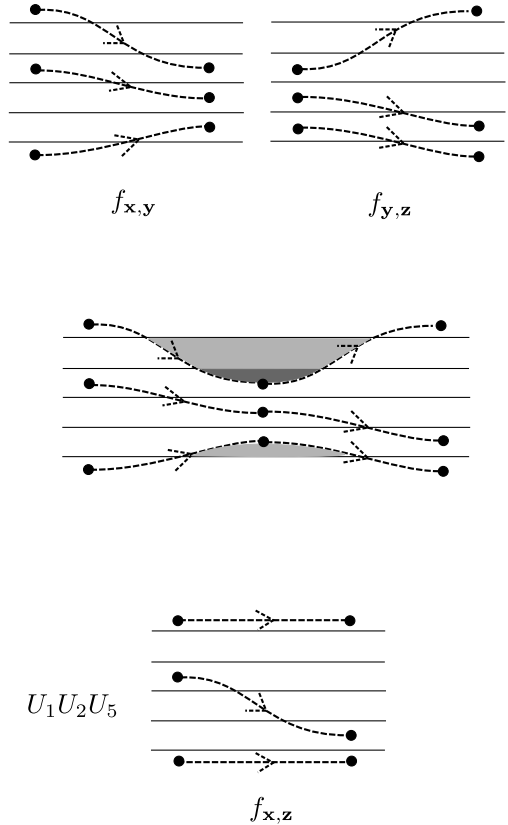}
\caption{Graphical interpretation of $\Quiv(K(n,k), \mc R_K)$, for $n = 5$ and $k = 3$. We have $\x = \{0,2,5\}$, $\y = \{2,3,4\}$, and $\z = \{0,4,5\}$.}
\label{fig:KInterpretation}
\end{figure}

We start with an interpretation of the algebra $\Quiv(K(n,k), \mc R_K)$. The generator $f_{\x,\y}$ of $K(n,k)$ is interpreted as a motion ``all at once'' of the dots comprising $\x$ to the dots comprising $\y$; see the top line of Figure~\ref{fig:KInterpretation}. Multiplying $f_{\x,\y}$ with $f_{\y,\z}$ always gives $f_{\x,\z}$ times a monomial $p$ in the $U_i$ variables which can be described graphically as follows. Draw the picture for $f_{\x,\y}$ on the left of the picture for $f_{\y,\z}$. The power of $U_i$ in $p$ equals the number of distinct bigons with one edge on line $i$ and the other edge on a motion of a dot in the concatenated picture. See Figure~\ref{fig:KInterpretation} for an illustration.

\subsubsection{Graphical interpretation of $\Quiv(\Gamma(n,k), \mc R)$}

\begin{figure}
\includegraphics[scale=0.5]{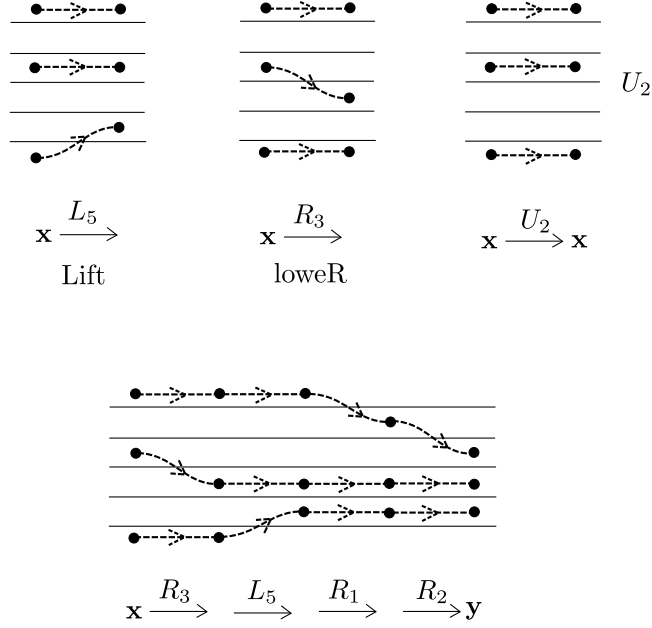}
\caption{Graphical interpretation of $\Quiv(\Gamma(n,k), \mc R)$, for $n = 5$ and $k = 3$.}
\label{fig:GammaInterpretation}
\end{figure}

Now we give a graphical interpretation of the algebra $\Quiv(\Gamma(n,k), \mc R)$; see Figure~\ref{fig:GammaInterpretation}. A path in $\Gamma(n,k)$ from $\x$ to $\y$ is interpreted as a motion ``one dot-step at a time'' of the dots comprising $\x$ to the dots comprising $\y$. An edge labeled $R_i$ moves a dot downwards one step; an edge labeled $L_i$ moves a dot upwards one step (we suggest the mnemonics ``Lift'' and ``loweR''). An edge labeled $U_i$ does not move the dots at all, but we record that a $U_i$ edge has been traversed. The basic idempotent $\Ib_{\x}$ can be interpreted as a stationary motion of the dots from $\x$ to itself. See also \cite[Remark 3.1]{OSzNew}.

\begin{figure}
\includegraphics[scale=0.5]{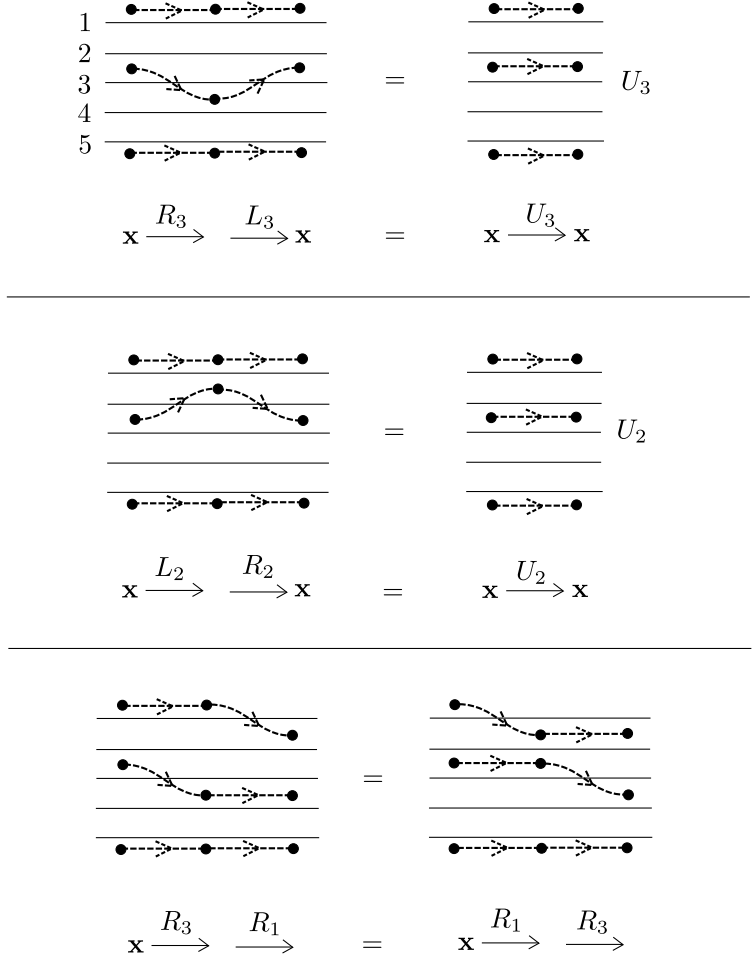}
\caption{Relations in $\mc R$, viewed as motions as in Figure~\ref{fig:GammaInterpretation}.}
\label{fig:IdemsAndMotionsPart2}
\end{figure}

Note that during the motion of dots represented by any path $\gamma$ in $\Gamma(n,k)$, there are never two dots in the same region between lines at any moment in the motion. The relations $\mc R \subset \Path(\Gamma)$ can be described in terms of motions as follows (see Figure~\ref{fig:IdemsAndMotionsPart2}):
\begin{enumerate}
\item Each $U_i$ loop commutes with all other moves (this is the meaning of the $U$ central relations),
\item A dot passing back and forth through the $i^\text{th}$ line is equivalent to a $U_i$ loop (this is the meaning of the loop relations),
\item If two dots can each be moved by one slot independently of each other, then the order in which they are moved does not matter and thus they can be viewed as moving at the same time (this is the meaning of the distant commutation relations).
\end{enumerate}
The next lemma says that the relative weight vector $v(\x,\y)$ counts (with sign) the number of dots passing through each line in any path $\gamma$ from $\x$ to $\y$.
\begin{lemma}
\label{lem:vi=Ri-Li}
Let $\x,\y \in V(n,k)$. Given any path $\gamma$ in $\Gamma(n,k)$ from $\x$ to $\y$, we have
\[
v_i(\x,\y) = \rho_i(\gamma) - \lambda_i(\gamma)
\]
where $\rho_i(\gamma)$ (respectively $\lambda_i(\gamma)$) counts the number of edges labeled $R_i$ (respectively $L_i$) in the path $\gamma$.
\end{lemma}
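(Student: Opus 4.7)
The plan is to reduce the lemma to a per-edge check via additivity, and then verify the three edge types in turn.

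First I would observe that both sides of the claimed equation behave additively under concatenation of paths. For the left side, if $\gamma$ is the concatenation of paths from $\x$ to $\y$ and from $\y$ to $\z$, then the identity $v(\x,\z) = v(\x,\y) + v(\y,\z)$ from \eqref{eq:Additivity of v} gives $v_i(\x,\z) = v_i(\x,\y) + v_i(\y,\z)$. For the right side, the edge counts $\rho_i$ and $\lambda_i$ are manifestly additive under concatenation. Together with the trivial base case of a length-$0$ path at a single vertex $\x$ (where both sides are $0$), an easy induction on the length of $\gamma$ reduces the lemma to the case when $\gamma$ is a single edge.

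Next I would handle the three edge types from Definition~\ref{def:B_0 Quiver Algebra}. For a $U_j$-edge, the source and target vertices coincide, so both sides vanish. For an $R_j$-edge from $\x$ to $\x' = (\x \sm \{j-1\}) \cup \{j\}$, I would compute $v_i(\x,\x') = |\x'\cap[i,n]| - |\x\cap[i,n]|$ by cases: if $i \leq j-1$ both $j-1$ and $j$ lie in $[i,n]$ and the count is unchanged; if $i = j$ the element $j-1$ leaves $[i,n]$ and $j$ stays in it, contributing $+1$; if $i > j$ neither $j-1$ nor $j$ lies in $[i,n]$. Hence $v_i(\x,\x') = \delta_{ij}$, which matches $\rho_i - \lambda_i = \delta_{ij}$ for this edge. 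The $L_j$ case is entirely symmetric and yields $v_i = -\delta_{ij} = \rho_i - \lambda_i$.

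There is no real obstacle here; the only thing that needs mild care is making sure the case analysis for the $R_j$ and $L_j$ edges correctly tracks which of $j-1, j$ lies in $[i,n]$ as $i$ varies, but this is immediate from the definition of the interval $[i,n]$. The additivity step is clean because both the relative weight vector and the edge-count functions $\rho_i,\lambda_i$ are additive on concatenation, so no independent verification of that is required beyond citing \eqref{eq:Additivity of v}.
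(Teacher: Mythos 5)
Your proof is correct and follows the same route as the paper: reduce to a single edge via additivity of both sides (citing \eqref{eq:Additivity of v}) and then verify the formula for edges labeled $R_j$, $L_j$, and $U_j$. The paper leaves the per-edge check to the reader, so your explicit case analysis is a fine (indeed slightly more complete) version of the intended argument.
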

\begin{proof}
By additivity of the right hand side of the formula under concatenation of paths, and by equation \eqref{eq:Additivity of v}, we can assume that the path $\gamma$ consists of a single edge. The formula is true when this edge has label $R_j$, $L_j$, or $U_j$ (for both $j=i$ and $j\neq i$), as one can easily check from Definition \ref{def:B_0 Quiver Algebra}.
\end{proof}

For a path $\gamma$ in $\Gamma(n,k)$, represented visually by a motion of dots occurring one dot at a time, let $F(\gamma)$ denote the same motion with all dots moving simultaneously (we will define $F$ rigorously below). If $\x \xrightarrow{\gamma_1} \y \xrightarrow{\gamma_2} \z$ are minimal-length paths in $\Gamma(n,k)$, then Lemma~\ref{lem:vi=Ri-Li} implies that the quantity $(|v|_i(\y,\z) - |v|_i(\x,\z) + |v|_i(\x,\y))/2$ counts extraneous pairs $R_i, L_i$ in the concatenation $\gamma_1 \gamma_2$ (which the relations declare should be equivalent to $U_i$ loops). This quantity is also the exponent of $U_i$ appearing in the relations of Definition~\ref{def:OSzStyleDef} for $\B_0(n,k)$ when one computes the product $F(\gamma_1) F(\gamma_2)$. 

Thus it is visually plausible that the ``forget-the-ordering'' map $F$ should be an algebra homomorphism from $\Quiv(\Gamma(n,k), \mc R_K)$ to $\B_0(n,k)$ (we will prove this fact in Proposition~\ref{prop:FRespectsRelations}). To prove that $F$ is a surjection, we just need to exhibit a path from $\x$ to $\y$ in $\Gamma(n,k)$, for all $\x,\y \in V(n,k)$, whose image under $F$ is the generator $f_{\x,\y}$ of $\B_0(n,k)$; the existence of such a path is visually clear. 

We will prove that $F$ is injective by constructing an inverse function $G$ to $F$. For each pair $\x,\y$ of elements of $V(n,k)$, we will need to pick an explicit path $\gamma_{\x,\y}$ in the preimage $F^{-1}(f_{\x,\y})$, and we will need to show that the function $G(f_{\x,\y}) := \gamma_{\x,\y}$ respects the relations of Definition~\ref{def:OSzStyleDef}. These will be the main technical tasks required to prove that $\B_0(n,k)$ and $\Quiv(\Gamma(n,k),\mc R)$ describe the same algebra, which is the goal of Section \ref{sec:Eqv of descriptions}.

\subsection{Equivalence of descriptions}\label{sec:Eqv of descriptions}

\subsubsection{An intermediate description}

We want to show that $\B_0(n,k)$ and $\Quiv(\Gamma(n,k), \mc R)$ are isomorphic as $\F_2[U_1,\ldots,U_n]^{V(n,k)}$-algebras. To do so, it is convenient to introduce a third description of the same algebra.

\begin{definition}\label{def:IntermediateQuiver}
Let $\Gamma_U(n,k)$ be the directed graph $\Gamma(n,k)$ with all edges labeled $U_i$ removed. Let $\mc R_U \subset \Path_{\F_2[U_1, \ldots, U_n]}(\Gamma_U(n,k))$ be defined analogously to $\mc R \subset \Path_{\F_2}(\Gamma(n,k))$ from Definition~\ref{def:B_0 Quiver Algebra}, without elements of type \eqref{rel:central}, and interpreting any instance of $U_i$ in a loop relation as a coefficient in $\mathbb{F}_2[U_1,\ldots,U_n]$ rather than a label for an edge.
\end{definition}

Recall the discussion after Definition \ref{def:B_0 Quiver Algebra} about viewing $\Cat_{\Quiv(\Gamma(n,k), \mc R)}$ as being linear over $\F_2[U_1,\ldots,U_n]$.  The following lemma essentially says that this viewpoint is equivalent to building the category using $\Quiv(\Gamma_U(n,k), \mc R_U)$ rather than $\Quiv(\Gamma(n,k), \mc R)$.

\begin{lemma}\label{lem:IntermediateQuiverEqv}
We have an isomorphism of $\F_2[U_1,\ldots,U_n]$-linear categories
\[
\Cat_{\Quiv(\Gamma(n,k), \mc R)} \xrightarrow{\cong} \Cat_{\Quiv(\Gamma_U(n,k), \mc R_U)}.
\]
Equivalently, we have an isomorphism of $\F_2[U_1,\ldots,U_n]^{V(n,k)}$-algebras 
\[
\Quiv(\Gamma(n,k), \mc R) \xrightarrow{\cong} \Quiv(\Gamma_U(n,k), \mc R_U).
\]
\end{lemma}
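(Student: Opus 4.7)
The plan is to construct the isomorphism and its inverse directly using the universal property of quiver algebras (Proposition~\ref{prop:QuiverAlgUniversalProp}), and to check that both functors respect the relations. The underlying heuristic is that the $U$ central relations in $\mc R$ say precisely that the $U_i$ loops are central; absorbing them into the coefficient ring turns them into scalars, which is exactly the passage from $\F_2\Gamma(n,k)$ to $\F_2[U_1,\ldots,U_n]\Gamma_U(n,k)$.

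First I would construct a functor $\Phi: \F_2\Gamma(n,k) \to \Cat_{\Quiv(\Gamma_U(n,k), \mc R_U)}$ by the universal property. On vertices $\Phi$ is the identity. On edges, $R_i$ and $L_i$ edges of $\Gamma(n,k)$ are sent to the corresponding edges in $\Gamma_U(n,k)$, while each $U_i$ loop at a vertex $\x$ is sent to $U_i \cdot \Ib_{\x}$ (here we use the $\F_2[U_1,\ldots,U_n]$-linear structure on the target). The central relations in $\mc R$ go to zero because the $U_i$ scalars are central in the target; the loop relations $R_i L_i = U_i$ and $L_i R_i = U_i$ are imposed by $\mc R_U$; and the distant commutation relations are imposed in $\mc R_U$ as well. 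Hence $\Phi$ descends to an $\F_2[U_1,\ldots,U_n]$-linear functor $\overline{\Phi}: \Cat_{\Quiv(\Gamma(n,k),\mc R)} \to \Cat_{\Quiv(\Gamma_U(n,k), \mc R_U)}$; to make sense of $\F_2[U_1,\ldots,U_n]$-linearity one uses the remark after Definition~\ref{def:B_0 Quiver Algebra} placing the polynomial ring in the center of the source.

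Next I would construct a candidate inverse $\Psi: \F_2[U_1,\ldots,U_n]\Gamma_U(n,k) \to \Cat_{\Quiv(\Gamma(n,k), \mc R)}$ by the $\F_2[U_1,\ldots,U_n]$-linear version of the universal property. Again $\Psi$ is the identity on vertices; on $R_i$ and $L_i$ edges it is the obvious assignment, and the polynomial coefficients act via the $\F_2[U_1,\ldots,U_n]$-module structure coming from the central image of $\F_2[U_1,\ldots,U_n]$. The relations in $\mc R_U$ (loop relations and distant commutations) are precisely the images of the corresponding relations in $\mc R$, so $\Psi$ descends to $\overline{\Psi}: \Cat_{\Quiv(\Gamma_U(n,k),\mc R_U)} \to \Cat_{\Quiv(\Gamma(n,k),\mc R)}$.

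Finally I would check that $\overline{\Phi}$ and $\overline{\Psi}$ are mutually inverse. By the universal property it is enough to verify this on vertices and on generating edges, and both checks are immediate: on $R_i$ and $L_i$ edges each composition is the identity, on a $U_i$ loop $\overline{\Psi}\,\overline{\Phi}$ sends it to $U_i \cdot \Ib_{\x}$, which is equal to the $U_i$ loop in $\Quiv(\Gamma(n,k), \mc R)$ by the way we declared the $\F_2[U_1,\ldots,U_n]$-structure. The main obstacle, such as it is, is just a careful bookkeeping check that $\overline{\Phi}$ and $\overline{\Psi}$ truly are inverse algebra homomorphisms and not merely module maps; this amounts to confirming that the $U_i$-loops in $\Quiv(\Gamma(n,k),\mc R)$ are identified with scalar multiplication by $U_i$, which is exactly the content of the central relations in $\mc R$. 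The equivalent statement for $\IdemRing$-algebras then follows via $\Alg_{(-)}$ as described in Appendix~\ref{app:Algebra}.
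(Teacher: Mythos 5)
Your proposal is correct and follows essentially the same route as the paper: the paper's proof likewise builds the two functors via the universal property (its $\zeta$ sends each $U_i$ loop to the empty path with coefficient $U_i$, and its $\xi$ reinterprets $U_i$ coefficients as concatenation with $U_i$-labeled edges, using the central relations for well-definedness), verifies the relations on both sides, and concludes the functors are mutually inverse by checking generators.
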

\begin{proof}
By Proposition~\ref{prop:QuiverAlgUniversalProp}, we have an $\F_2$-linear functor 
\[\zeta: \F_2 \Gamma(n,k) \to \Cat_{\Quiv(\Gamma_U(n,k), \mc R_U)}\]
defined as the identity on objects and by sending any edge (from some vertex $v$ to itself) labeled by $U_i$ to the corresponding empty path $I_v$ in $\Quiv(\Gamma_U(n,k), \mc R_U)$ with coefficient $U_i$, which gives the morphism $v\stackrel{U_i}{\longrightarrow} v$ in $\Cat_{\Quiv(\Gamma_U(n,k), \mc R_U)}$. The functor $\zeta$ is defined to be the identity on all other edges. Incorporating the relations, $\zeta$ induces an $\F_2$-linear functor 
\[\tilde{\zeta}: \Cat_{\Quiv(\Gamma(n,k), \mc R)} \to \Cat_{\Quiv(\Gamma_U(n,k), \mc R_U)};\] one can check that $\tilde{\zeta}$ is in fact $\F_2[U_1,\ldots,U_n]$-linear when viewing $\Cat_{\Quiv(\Gamma(n,k), \mc R)}$ as an $\F_2[U_1,\ldots,U_n]$-linear category as discussed after Definition~\ref{def:B_0 Quiver Algebra}.

Similarly, we can build an $\F_2$-linear functor
\[\xi: \F_2[U_1,\ldots,U_n] \Gamma_U(n,k) \to \Cat_{\Quiv(\Gamma(n,k), \mc R)}\]
by sending all objects $v$ to themselves, and sending any morphism in $\F_2[U_1,\ldots,U_n] \Gamma_U(n,k)$ (which is an $\F_2[U_1,\ldots,U_n]$-linear combination of paths) to the corresponding morphism in $\Cat_{\Quiv(\Gamma(n,k), \mc R)}$, except that coefficients $U_i$ are reinterpreted as concatenation with an edge labeled $U_i$.  Because the $U_i$-edges exist at every vertex and commute with all others in $\Quiv(\Gamma(n,k), \mc R)$, this assignment is well-defined independently of the choice of ordering.  Our functor $\xi$ induces an $\F_2$-linear functor 
\[\tilde{\xi}: \Cat_{\Quiv(\Gamma_U(n,k), \mc R_U)} \to \Cat_{\Quiv(\Gamma(n,k), \mc R)}\]
and again one can check that $\tilde{\xi}$ is $\F_2[U_1,\ldots,U_n]$-linear when viewing $\Cat_{\Quiv(\Gamma(n,k), \mc R)}$ as an $\F_2[U_1,\ldots,U_n]$-linear category.

By construction, $\tilde{\zeta}$ and $\tilde{\xi}$ are inverse isomorphisms of $\F_2[U_1,\ldots,U_n]$-linear categories between $\Cat_{\Quiv(\Gamma(n,k), \mc R)}$ and $\Cat_{\Quiv(\Gamma_U(n,k), \mc R_U)}$, proving the claim.
\end{proof}

Thus, it suffices to show that $\B_0(n,k)$ and $\Quiv(\Gamma_U(n,k), \mc R_U)$ are isomorphic as algebras over $\F_2[U_1,\ldots,U_n]^{V(n,k)}$.

\subsubsection{Constructing the forward homomorphism}\label{sec:DefinitionOfF}

Let $\x,\y \in V(n,k)$. For any edge $e$ of $\Gamma_U(n,k)$ from $\x$ to $\y$, define $F(e) = f_{\x,\y} \in \Hom_{\Cat_{\B_0(n,k)}}(\y,\x)$. By Proposition~\ref{prop:QuiverAlgUniversalProp}, $F$ extends uniquely to an $\F_2[U_1, \ldots, U_n]$-linear functor from $\F_2[U_1,\ldots,U_n] \Gamma_U(n,k)$ to $\Cat_{\B_0(n,k)}$, or equivalently a homomorphism of $\F_2[U_1,\ldots,U_n]^{V(n,k)}$-algebras from $\Path(\Gamma_U(n,k))$ to $\B_0(n,k)$.

\begin{proposition}\label{prop:FRespectsRelations}
The homomorphism of $\F_2[U_1,\ldots,U_n]^{V(n,k)}$-algebras 
\[
F: \Path(\Gamma_U(n,k)) \to \B_0(n,k)
\]
sends each element of $\mc R_U \subset \Path(\Gamma_U(n,k))$ to zero. 
\end{proposition}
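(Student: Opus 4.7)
The plan is to verify directly, for each type of generator in $\mc R_U$, that its image under $F$ vanishes, using the explicit multiplication formula for $\B_0(n,k)$ given in Definition~\ref{def:OSzStyleDef}. Because $F$ is already an algebra homomorphism from $\Path(\Gamma_U(n,k))$ to $\B_0(n,k)$, and each element of $\mc R_U$ is supported at a single pair of idempotents, this reduces to a handful of small computations of products $f_{\x,\y} f_{\y,\z}$, where each such product is controlled by the minimal relative grading vectors $|v|_i$ of Definition~\ref{def:rel weight vec}.

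For the loop relations, consider an edge labeled $R_i$ from $\x$ to $\x' = (\x \setminus \{i-1\}) \cup \{i\}$, so $F(R_i) = f_{\x,\x'}$, and the corresponding $L_i$ back at $\x'$ with $F(L_i) = f_{\x',\x}$. The key computation is that $|v|_j(\x,\x') = |v|_j(\x',\x) = \delta_{ij}$ and $|v|_j(\x,\x) = 0$, so the exponent $(|v|_j(\x',\x) - |v|_j(\x,\x) + |v|_j(\x,\x'))/2$ appearing in Definition~\ref{def:OSzStyleDef} equals $1$ when $j=i$ and $0$ otherwise. Hence $F(R_i L_i) = U_i f_{\x,\x} = U_i \Ib_\x$, matching $F(U_i)$ (viewed as the coefficient $U_i$ times the identity path at $\x$). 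The argument for $L_i R_i - U_i$ is identical up to swapping the roles of $\x$ and $\x'$.

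For the distant commutation relations, fix $|i-j|>1$ and consider first $R_i R_j - R_j R_i$ at a vertex $\x$ with $i-1, j-1 \in \x$ and $i, j \notin \x$. Write $\x' = (\x \setminus \{i-1\}) \cup \{i\}$, $\x'' = (\x \setminus \{j-1\}) \cup \{j\}$, and $\y = (\x \setminus \{i-1, j-1\}) \cup \{i, j\}$. A direct count, organized by comparing $|\{i,j\}\cap [k,n]|$ with $|\{i-1,j-1\}\cap [k,n]|$, shows that $|v|_k(\x,\y)$ equals $1$ for $k \in \{i,j\}$ and $0$ otherwise, while $|v|_k(\x,\x')$ and $|v|_k(\x',\y)$ are supported on $\{i\}$ and $\{j\}$ respectively (and symmetrically for $\x''$). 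Consequently each of the expressions $(|v|_k(\x',\y) - |v|_k(\x,\y) + |v|_k(\x,\x'))/2$ and $(|v|_k(\x'',\y) - |v|_k(\x,\y) + |v|_k(\x,\x''))/2$ vanishes for every $k$, so $F(R_i R_j) = f_{\x,\y} = F(R_j R_i)$. The cases $L_i L_j - L_j L_i$ and $R_i L_j - L_j R_i$ are handled by the same bookkeeping, replacing $R$'s by $L$'s and swapping the roles of $\{i-1,j-1\}$ and $\{i,j\}$ as needed; the hypothesis $|i-j|>1$ is used precisely to ensure that the two moves occupy disjoint sets of lines so that all the $|v|_k$ contributions decouple.

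I expect no real obstacle here: the content of the proposition is entirely a direct check, and the only mildly delicate point is organizing the case split (loop relation at each of the two possible idempotent configurations, and three flavors of distant commutation). The conceptual input is entirely captured by Lemma~\ref{lem:vi=Ri-Li}, which guarantees that the $|v|_k$'s record exactly the net $R_k$–$L_k$ traffic along any path, so that unexpected $U$-factors cannot arise from the moves in $\mc R_U$.
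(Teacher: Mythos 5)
Your proposal is correct and follows essentially the same route as the paper: a direct case-by-case verification that, for each loop relation and each distant commutation relation, the exponents $(|v|_i(\y,\z) - |v|_i(\x,\z) + |v|_i(\x,\y))/2$ from Definition~\ref{def:OSzStyleDef} come out as expected, with Lemma~\ref{lem:vi=Ri-Li} supplying the values of the $|v|_i$'s. The paper's proof is organized identically (explicit computation for the $R_iL_i$ loop relation and the $R_iR_j$ commutation, with the remaining cases declared similar), so there is nothing to add.
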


\begin{proof}
First, consider an element of $\mc R_U$ of the form $(\gamma_1, \gamma_2) - U_i I_{\x}$, where $\gamma_1$ is an edge from $\x$ to $\y$ with label $R_i$ and $\gamma_2$ is an edge from $\y$ to $\x$ with label $L_i$.  By Lemma \ref{lem:vi=Ri-Li}, we have $|v|_i(\x,\y) = |v|_i(\y,\x) = 1$ and $|v|_j(\x,\y) = |v|_j(\y,\x) = 0$ for $j \neq i$.  Since $|v_j|(\x,\x)=0$ for all $j$,  we have
\[
F(\gamma_1) F(\gamma_2) = f_{\x,\y} f_{\y,\x} = U_i f_{\x,\x} = U_i F(I_{\x}).
\]
The argument for elements of $\mc R_U$ of the form $(\gamma_1, \gamma_2) - U_i I_{\x}$ where $\gamma_1$ has label $L_i$ and $\gamma_2$ has label $R_i$ is similar.

Next, consider an element of $\mc R_U$ of the form $(\gamma_1, \gamma_2) - (\gamma_3, \gamma_4)$ where $\gamma_1$ is an edge from $\x$ to $\y$ with label $R_i$, $\gamma_2$ is an edge from $\y$ to $\z$ with label $R_j$, $\gamma_3$ is an edge from $\x$ to $\y'$ with label $R_j$, $\gamma_4$ is an edge from $\y'$ to $\z$ with label $R_i$, and $|i-j| > 1$.  Again, by Lemma \ref{lem:vi=Ri-Li} we have 
\[
|v|_i(\x,\y) = |v|_j(\y,\z) = |v|_i(\x,\z) = |v|_j(\x,\z) = 1.
\]
By the same lemma we also have $|v|_j(\x,\y) = |v|_i(\y,\z) = 0$ as well as $|v|_l(\x,\y)=|v|_l(\x,\z)=|v|_l(\y,\z)=0$ for all $l\neq i,j$.  It follows that
\[
F(\gamma_1)F(\gamma_2) = f_{\x,\y} f_{\y,\z} = f_{\x,\z}.
\]
A parallel argument shows that $F(\gamma_3)F(\gamma_4) = f_{\x,\z}$, so we have $F(\gamma_1)F(\gamma_2) = F(\gamma_3)F(\gamma_4)$. The rest of the cases are similar to this one.
\end{proof}

As a result we have a homomorphism of $\F_2[U_1,\ldots,U_n]^{V(n,k)}$-algebras 
\[
F: \Quiv(\Gamma_U(n,k), \mc R_U) \to \B_0(n,k).
\]

\subsubsection{Constructing the inverse homomorphism}\label{sec:DefinitionOfGPart1}

In this section, we will define a homomorphism of $\F_2[U_1,\ldots,U_n]^{V(n,k)}$-algebras $G: \B_0(n,k) \to \Quiv(\Gamma_U(n,k), \mc R_U)$, or equivalently an $\F_2[U_1,\ldots,U_n]$-linear functor from $\Cat_{\B_0(n,k)}$ to $\Cat_{\Quiv(\Gamma_U(n,k), \mc R_U)}$. 

We start by defining an $\F_2[U_1,\ldots,U_n]$-linear functor 
\[
G: \F_2[U_1,\ldots,U_n] K(n,k) \to \Cat_{\Quiv(\Gamma_U(n,k), \mc R_U)}.
\]
By Proposition~\ref{prop:QuiverAlgUniversalProp}, it suffices to choose, for all pairs $(\x,\y) \in V(n,k)^2$, a path $\gamma_{\x,\y}$ in $\Gamma_U(n,k)$ from $\x$ to $\y$. We will further require that $F(\gamma_{\x,\y}) = f_{\x,\y}$. We will choose $\gamma_{\x,\y}$ recursively, but we need a few results first. The proof of the following lemma is left to the reader.

\begin{lemma}\label{lem:UniquePathForRightMovingDot}
For $\x \in V(n,k)$, suppose $l \in \x$ and $l < m \leq n$ such that $\x \cap [l,m] = \{l\}$. Let $\x' = (\x \setminus \{l\}) \cup \{m\}$. There exists a path $\gamma$ from $\x$ to $\x'$ in $\Gamma_U(n,k)$ whose edges are labeled $R_{l+1},\ldots,R_m$ in order. The path $\gamma$ is the unique path from $\x$ to $\x'$ with this property.
\end{lemma}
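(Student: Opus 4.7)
The plan is to prove existence by explicit inductive construction and uniqueness by exploiting the fact that in $\Gamma_U(n,k)$ a label together with a starting vertex determines an edge whenever such an edge exists.

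For existence, I would define the intermediate I-states $\x_0 := \x$ and $\x_j := (\x \setminus \{l\}) \cup \{l+j\}$ for $j = 1, \ldots, m-l$, so that $\x_{m-l} = \x'$. The hypothesis $\x \cap [l,m] = \{l\}$ gives $l+j \notin \x$ for $j = 1, \ldots, m-l$, and since the move only affects the position of the dot originally at $l$, for each $j$ the I-state $\x_j$ satisfies $l+j \in \x_j$ and $l+j+1 \notin \x_j$ (for $j < m-l$). According to Definition~\ref{def:B_0 Quiver Algebra}, this is exactly the condition guaranteeing the existence of an $R_{l+j+1}$-labeled edge from $\x_j$ to $\x_{j+1}$. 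Concatenating these edges in order gives a path $\gamma$ from $\x$ to $\x'$ whose label sequence is $R_{l+1}, \ldots, R_m$.

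For uniqueness, I would observe that in $\Gamma_U(n,k)$, for any vertex $\y$ and any index $i$, there is \emph{at most one} edge with label $R_i$ starting at $\y$: such an edge requires $i-1 \in \y$ and $i \notin \y$, and when these hold the target vertex is forced to be $(\y \setminus \{i-1\}) \cup \{i\}$. Consequently, given the starting vertex $\x$ and the prescribed label sequence $R_{l+1}, \ldots, R_m$, one can reconstruct the intermediate vertices one edge at a time, and the resulting path (if it exists) is unique. This also matches the general remark in Definition~\ref{def:B_0 Quiver Algebra} that two paths with the same associated monomial $\mu(\gamma)$ and the same starting vertex must coincide.

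The argument is almost entirely routine bookkeeping; the only place where one has to be careful is in verifying at each step $j$ that the hypothesis $\x \cap [l,m] = \{l\}$ still forbids the presence of a second dot at position $l+j+1$, so that the next $R$-edge is legitimately available. This is immediate from the fact that each step only relocates the single dot originating at $l$, so I would expect no real obstacle beyond writing out this inductive verification cleanly.
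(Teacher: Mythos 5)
Your proof is correct, and since the paper explicitly leaves this lemma to the reader, your explicit inductive construction of the intermediate I-states $\x_j = (\x \setminus \{l\}) \cup \{l+j\}$ together with the observation that a starting vertex and label sequence determine a path (already noted in Definition~\ref{def:B_0 Quiver Algebra}) is exactly the intended routine argument.
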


\begin{lemma}\label{lem:RSegmentFactoring}
Under the assumptions of Lemma~\ref{lem:UniquePathForRightMovingDot}, let $\x = \x^1, \x^2, \ldots, \x^p = \x'$ be the sequence of vertices traversed in the path $\gamma$ of that lemma. We have $f_{\x,\x'} = f_{\x^1,\x^2} \cdots f_{\x^{p-1},\x^p}$.
\end{lemma}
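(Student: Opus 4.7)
My plan is to deduce the factorization directly from the multiplication rule in $\B_0(n,k)$ given in Definition~\ref{def:OSzStyleDef}, by induction on the number of factors. Recall that for $\a,\b,\c \in V(n,k)$ we have
\[
f_{\a,\b} \, f_{\b,\c} = \prod_{i=1}^n U_i^{e_i(\a,\b,\c)} f_{\a,\c}, \qquad e_i(\a,\b,\c) = \tfrac{1}{2}\bigl(|v|_i(\b,\c) - |v|_i(\a,\c) + |v|_i(\a,\b)\bigr),
\]
so the task reduces to showing $e_i(\x^1,\x^j,\x^{j+1}) = 0$ for all $i$ and all $1 \le j \le p-1$.

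The key observation is that along $\gamma$ only one dot moves, and it moves monotonically to the right. Concretely, $\x^j = (\x \sm \{l\}) \cup \{l+j-1\}$, so a direct computation (or an application of Lemma~\ref{lem:vi=Ri-Li} to the evident sub-paths of $\gamma$) gives
\[
v_i(\x^1,\x^j) = \begin{cases} 1 & l+1 \le i \le l+j-1 \\ 0 & \text{else} \end{cases}, \qquad v_i(\x^j,\x^{j+1}) = \begin{cases} 1 & i = l+j \\ 0 & \text{else.} \end{cases}
\]
These entries are all nonnegative, so $|v|_i$ coincides with $v_i$ on each pair. Combined with the additivity relation \eqref{eq:Additivity of v}, this yields $|v|_i(\x^1,\x^j) + |v|_i(\x^j,\x^{j+1}) = |v|_i(\x^1,\x^{j+1})$ for every $i$, which is exactly the statement that $e_i(\x^1,\x^j,\x^{j+1}) = 0$.

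Armed with this vanishing, I induct on $j$: the base case $j=2$ is trivial, and for the inductive step I use the inductive hypothesis $f_{\x^1,\x^2}\cdots f_{\x^{j-1},\x^j} = f_{\x^1,\x^j}$ together with the just-established identity $f_{\x^1,\x^j} \, f_{\x^j,\x^{j+1}} = f_{\x^1,\x^{j+1}}$ to conclude. Taking $j = p$ gives the desired equality. I do not foresee any real obstacle here; the only thing to be careful about is confirming that along the monotone path $\gamma$ all of the relevant weight vectors have nonnegative (indeed $0/1$) entries, which is precisely the reason Ozsv{\'a}th--Szab{\'o}'s quadratic correction $e_i$ vanishes.
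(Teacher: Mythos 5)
Your proof is correct and follows essentially the same route as the paper's: an induction reducing to a single product of consecutive $f$'s, verified by computing the relevant $|v|_i$ values (all $0$ or $1$ and additive along the monotone path) so that the $U_i$-exponent in the multiplication rule vanishes. The only cosmetic difference is that you peel factors off from the left while the paper peels from the right.
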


\begin{proof}
Induct on $p \geq 2$; the case $p=2$ is tautological. Assume that 
\[
f_{\x^1,\x^{p-1}} = f_{\x^1,\x^2}, \cdots, f_{\x^{p-2},\x^{p-1}}.
\]
It suffices to show that $f_{\x^1,\x^p} = f_{\x^1,\x^{p-1}} f_{\x^{p-1},\x^p}$. For $l+1 \leq i \leq m-1$, we have $|v|_i(\x^1,\x^p) = 1$, $|v|_i(\x^1,\x^{p-1}) = 1$, and $|v|_i(\x^{p-1},\x^p) = 0$ by Lemma~\ref{lem:vi=Ri-Li}. For $i = m$, we have $|v|_i(\x^1,\x^p) = 1$, $|v|_i(\x^1,\x^{p-1}) = 0$, and $|v|_i(\x^{p-1},\x^p) = 1$. For $i \notin [l+1,m]$, we have $|v|_i(\x^1,\x^p) = |v|_i(\x^1,\x^{p-1}) = |v|_i(\x^{p-1},\x^p) = 0$. Thus, the lemma follows from the relations defining multiplication in $\B_0(n,k)$.
\end{proof}

\begin{corollary}\label{cor:WhereFSendsRSegment}
Under the assumptions of Lemma~\ref{lem:UniquePathForRightMovingDot}, we have $F(\gamma) = f_{\x,\x'}$, where $\gamma$ is the path constructed in that lemma and $F$ was defined in Section~\ref{sec:DefinitionOfF}.
\end{corollary}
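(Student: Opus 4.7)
The plan is essentially to chain together the definition of $F$ on edges with the multiplicativity result just established in Lemma~\ref{lem:RSegmentFactoring}. Write the path $\gamma$ as the concatenation of its edges, $\gamma = (\gamma_1, \gamma_2, \ldots, \gamma_{p-1})$, where $\gamma_i$ is the unique edge in $\Gamma_U(n,k)$ from $\x^i$ to $\x^{i+1}$ with label $R_{l+i}$, using the notation of Lemma~\ref{lem:RSegmentFactoring}.

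First I would recall that $F$ was defined in Section~\ref{sec:DefinitionOfF} as the $\F_2[U_1,\ldots,U_n]$-linear functor determined, via Proposition~\ref{prop:QuiverAlgUniversalProp}, by sending each edge $e$ from a vertex $\a$ to a vertex $\b$ to the generator $f_{\a,\b}$. Since $F$ is a functor (equivalently, an $\F_2[U_1,\ldots,U_n]^{V(n,k)}$-algebra homomorphism), it respects composition of paths, so
\[
F(\gamma) = F(\gamma_1) F(\gamma_2) \cdots F(\gamma_{p-1}) = f_{\x^1,\x^2} f_{\x^2,\x^3} \cdots f_{\x^{p-1},\x^p}.
\]

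Next I would invoke Lemma~\ref{lem:RSegmentFactoring}, which gives exactly the identity $f_{\x^1,\x^2} \cdots f_{\x^{p-1},\x^p} = f_{\x,\x'}$ in $\B_0(n,k)$, to conclude $F(\gamma) = f_{\x,\x'}$, as desired.

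There is no real obstacle here: the corollary is an immediate consequence of the functoriality of $F$ and the preceding lemma. The only thing to be careful about is indexing, namely checking that the intermediate vertices $\x^i$ appearing in the path $\gamma$ from Lemma~\ref{lem:UniquePathForRightMovingDot} match those appearing in Lemma~\ref{lem:RSegmentFactoring}, which is true by construction since both statements reference the same path.
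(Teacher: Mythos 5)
Your proof is correct and is essentially identical to the paper's: both apply the multiplicativity of $F$ edge by edge to reduce to $f_{\x^1,\x^2} \cdots f_{\x^{p-1},\x^p}$ and then conclude by Lemma~\ref{lem:RSegmentFactoring}. The only difference is cosmetic indexing of the edges.
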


\begin{proof}
Write $\gamma = (\gamma_{l+1},\ldots,\gamma_m)$ where $\gamma_i$ has label $R_i$. We have $F(\gamma) = F(\gamma_{l+1}) \cdots F(\gamma_m) = f_{\x^1,\x^2} \cdots f_{\x^{p-1}, \x^p}$, which equals $f_{\x,\x'}$ by Lemma~\ref{lem:RSegmentFactoring}.
\end{proof}

\begin{lemma}\label{lem:RSegmentImpliesMissingDots}
Let $\x,\y \in V(n,k)$ with $x_a < y_a$ for some $a\in[1,k]$; let $a$ be the maximal such index. We have $\x \cap [x_a,y_a] = \{x_a\}$.
\end{lemma}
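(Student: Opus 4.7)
The plan is to argue directly from the ordering conventions on $\x$ and $\y$, together with the maximality of $a$. Since $x_a$ itself clearly lies in $\x \cap [x_a, y_a]$, it suffices to show that no other element $x_j$ of $\x$ lies in the interval $[x_a, y_a]$. I will split into the two cases $j < a$ and $j > a$.

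For $j < a$, the strict increase of the $x_j$'s (by convention on how I-states are written in Section~\ref{sec:Istates}) gives $x_j < x_a$, so $x_j \notin [x_a, y_a]$. For $j > a$, the maximality of $a$ says that $x_j < y_j$ fails, i.e.\ $x_j \geq y_j$. Since $\y$ is strictly increasing and $j \geq a+1$, we also have $y_j \geq y_{a+1} > y_a$. Chaining these gives $x_j \geq y_j > y_a$, so $x_j \notin [x_a, y_a]$ in this case either.

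Combining, the only element of $\x$ inside $[x_a, y_a]$ is $x_a$ itself, proving $\x \cap [x_a, y_a] = \{x_a\}$. There is no real obstacle here; the lemma is a bookkeeping consequence of the maximality of $a$ combined with the strict monotonicity of both I-state enumerations, and the proof requires no machinery beyond the definitions.
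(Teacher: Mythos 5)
Your proof is correct and is essentially the paper's argument: both rest on the strict monotonicity of the two enumerations together with the maximality of $a$ (the paper phrases it as a contradiction, assuming some $x_b \in (x_a, y_a]$ and deducing $b > a$ and $x_b \leq y_a < y_b$, while you argue directly by cases on $j$). No substantive difference.
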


\begin{proof}
Suppose $x_b \in \x$ satisfies $x_a < x_b \leq y_a$. We have $a < b$, so $y_a < y_b$. Thus, $x_b \leq y_a < y_b$, contradicting the maximality of $a$.
\end{proof}

\begin{corollary}\label{cor:RSegment_fFactors}
Under the assumptions of Lemma~\ref{lem:RSegmentImpliesMissingDots}, let $\x' = (\x \setminus \{x_a\}) \cup \{y_a\}$. We have $f_{\x,\y} = f_{\x,\x'} f_{\x',\y}$.
\end{corollary}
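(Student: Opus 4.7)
The plan is to verify the claim directly from the multiplication formula in $\B_0(n,k)$ given in Definition~\ref{def:OSzStyleDef}. Writing $f_{\x,\x'} \cdot f_{\x',\y} = \left(\prod_i U_i^{e_i}\right) f_{\x,\y}$ with
\[
e_i = \tfrac{1}{2}\bigl(|v|_i(\x',\y) - |v|_i(\x,\y) + |v|_i(\x,\x')\bigr),
\]
it suffices to show that $e_i = 0$ for every $i \in \{1,\ldots,n\}$.

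First I would compute $v_i(\x,\x')$ directly from Definition~\ref{def:rel weight vec}. Since $\x$ and $\x'$ differ only in that $x_a$ is replaced by $y_a$, a short count gives $v_i(\x,\x') = 1$ when $x_a < i \leq y_a$ and $v_i(\x,\x') = 0$ otherwise. Using the additivity identity \eqref{eq:Additivity of v}, this yields $v_i(\x',\y) = v_i(\x,\y) - v_i(\x,\x')$. For indices $i$ with $i \leq x_a$ or $i > y_a$, the formula collapses immediately to $|v|_i(\x',\y) = |v|_i(\x,\y)$ and $|v|_i(\x,\x') = 0$, so $e_i = 0$ is automatic.

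The only remaining range is $x_a < i \leq y_a$, where $v_i(\x,\x') = 1$ and the condition $e_i = 0$ reduces to showing $|v_i(\x,\y) - 1| = |v_i(\x,\y)| - 1$, i.e., $v_i(\x,\y) \geq 1$. This is the single nontrivial step, and I would prove it via explicit counting using the key hypothesis supplied by Lemma~\ref{lem:RSegmentImpliesMissingDots}, namely $\x \cap [x_a, y_a] = \{x_a\}$. This hypothesis forces $x_b > y_a$ for every $b > a$ (since $x_b > x_a$ and $x_b \notin (x_a, y_a]$), so for any $i$ with $x_a < i \leq y_a$ one has $|\x \cap [i,n]| = |\{x_{a+1},\ldots,x_k\}| = k-a$. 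Meanwhile, $y_a, y_{a+1}, \ldots, y_k$ are all $\geq y_a \geq i$, so $|\y \cap [i,n]| \geq k-a+1$. Subtracting gives $v_i(\x,\y) \geq 1$, which is exactly what is needed.

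The main obstacle in the argument is precisely this counting step; once it is established, the corollary follows by a bookkeeping check across the three ranges $i \leq x_a$, $x_a < i \leq y_a$, and $i > y_a$. Everything else is immediate from additivity of $v$ and the definition of the multiplication in $\B_0(n,k)$.
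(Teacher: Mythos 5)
Your proof is correct and follows essentially the same route as the paper's: both reduce to the identity $|v|_i(\x,\y) = |v|_i(\x,\x') + |v|_i(\x',\y)$, dispose of the indices outside $[x_a+1,y_a]$ immediately, and settle the remaining range by a counting argument based on $\x \cap [x_a,y_a] = \{x_a\}$. The only cosmetic difference is that you show $v_i(\x,\y) \geq 1$ by counting in $\x$ directly, whereas the paper shows $v_i(\x',\y) \geq 0$ by the analogous count in $\x'$ and then invokes additivity; these are equivalent.
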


\begin{proof}
For $1 \leq i \leq n$, we want to show that $|v|_i(\x,\y) = |v|_i(\x,\x') + |v|_i(\x',\y)$.  First, note that if $i\notin[x_a+1,y_a]$, then $|\x'\cap[i,n]|=|\x\cap[i,n]|$ so that $v_i(\x,\x')=0$ and
\[|v_i|(\x,\y) = \Big| |\y\cap[i,n]| - |\x\cap[i,n]|\Big| = \Big| |\y\cap[i,n]| - |\x'\cap[i,n]|\Big| = |v_i|(\x',\y)+0\]
as desired.  Meanwhile, if $i\in[x_a+1,y_a]$, we must have $v_i(\x,\x')=1$.  Furthermore, Lemma \ref{lem:RSegmentImpliesMissingDots} ensures that $x_a'=y_a$ is the smallest element of $\x'$ that is greater than or equal to $i$, which in turn forces $v_i(\x',\y)\geq0$.  Thus the additivity of $v_i$ shows that $v_i(\x,\y)\geq 0$ as well, and then the desired equality is immediate from the additivity of $v_i$.
\end{proof}

There are also ``upward-moving'' versions of the above results involving edges labeled $L_i$ rather than $R_i$.

\begin{lemma}\label{lem:UniquePathForLeftMovingDot}
For $\x \in V(n,k)$, suppose $l \in \x$ and $0 \leq m < l$ such that $\x \cap [m,l] = \{l\}$. Let $\x' = (\x \setminus \{l\}) \cup \{m\}$. There exists a path $\gamma$ from $\x$ to $\x'$ in $\Gamma_U(n,k)$ whose edges are labeled $L_l, \ldots, L_{m+1}$ in order. The path $\gamma$ is the unique path from $\x$ to $\x'$ with this property.
\end{lemma}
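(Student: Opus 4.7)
The plan is to mirror the ``downward'' version of the statement, Lemma~\ref{lem:UniquePathForRightMovingDot}, by handling existence through an explicit construction of the path and uniqueness by observing that an $L$-labeled edge of $\Gamma_U(n,k)$ is entirely determined by its starting vertex and label.

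For existence, I would define the sequence of intermediate I-states $\x^j := (\x \sm \set{l}) \cup \set{l-j+1}$ for $1 \leq j \leq l-m+1$, so that $\x^1 = \x$ and $\x^{l-m+1} = \x'$. The key step is to verify that each transition $\x^j \to \x^{j+1}$ is realized by an edge labeled $L_{l-j+1}$ in the sense of Definition~\ref{def:B_0 Quiver Algebra}, which requires $l-j+1 \in \x^j$ (immediate from the definition of $\x^j$) and $l-j \notin \x^j$. For the latter, I would note that $l-j$ lies in $[m, l-1]$ whenever $1 \leq j \leq l-m$, so $l-j \notin \x$ by the hypothesis $\x \cap [m,l] = \set{l}$, and since $l-j \neq l$ we also have $l-j \notin \x^j$. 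Concatenating these edges yields a path $\gamma$ with the desired label sequence $L_l, L_{l-1}, \ldots, L_{m+1}$.

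For uniqueness, I would use that an $L_i$ edge in $\Gamma_U(n,k)$ starting at a vertex $\y$ exists only when $i \in \y$ and $i-1 \notin \y$, in which case its endpoint $(\y \sm \set{i}) \cup \set{i-1}$ is forced. Thus the label sequence $L_l, L_{l-1}, \ldots, L_{m+1}$ combined with the starting vertex $\x$ determines $\gamma$ inductively, edge by edge: the first edge is forced by $\x$ and $L_l$, which pins down $\x^2$, and so on, giving uniqueness.

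The main obstacle is simply the bookkeeping that $l-j \in [m,l-1]$ at each step of the construction, which is needed to invoke the hypothesis $\x \cap [m,l] = \set{l}$ correctly; once one has the analog of Lemma~\ref{lem:UniquePathForRightMovingDot} in mind, the argument is essentially a direct dualization, interchanging ``below'' and ``above'' and swapping $R$-labels for $L$-labels.
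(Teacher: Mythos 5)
Your proof is correct and is exactly the argument the paper intends (the paper leaves both this lemma and its $R$-labelled counterpart, Lemma~\ref{lem:UniquePathForRightMovingDot}, to the reader). The explicit intermediate states $\x^j$, the check that $l-j\in[m,l-1]$ so the hypothesis $\x\cap[m,l]=\set{l}$ applies, and the observation that an $L_i$-edge is determined by its source vertex together give both existence and uniqueness as required.
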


\begin{lemma}\label{lem:LSegmentFactoring}
Under the assumptions of Lemma~\ref{lem:UniquePathForLeftMovingDot}, let $\x = \x^1, \x^2, \ldots, \x^p = \x'$ be the sequence of vertices traversed in the path $\gamma$ of that lemma. We have $f_{\x,\x'} = f_{\x^1,\x^2} \cdots f_{\x^{p-1},\x^p}$.
\end{lemma}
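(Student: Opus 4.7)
The plan is to mirror the proof of Lemma~\ref{lem:RSegmentFactoring} essentially verbatim, simply replacing $R$-edges by $L$-edges and reversing the direction of motion for the single moving dot. In particular, I would induct on $p \geq 2$; the base case $p = 2$ is tautological. For the inductive step, I assume
\[
f_{\x^1, \x^{p-1}} = f_{\x^1, \x^2} \cdots f_{\x^{p-2}, \x^{p-1}}
\]
and reduce to showing $f_{\x^1, \x^p} = f_{\x^1, \x^{p-1}} f_{\x^{p-1}, \x^p}$. Given the defining relations of $\B_0(n,k)$ from Definition~\ref{def:OSzStyleDef}, this reduces to checking the additivity
\[
|v|_i(\x^1, \x^p) \;=\; |v|_i(\x^1, \x^{p-1}) + |v|_i(\x^{p-1}, \x^p)
\]
for every $1 \leq i \leq n$, so that the exponent of each $U_i$ in the product relation vanishes.

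To verify this additivity, I would use the fact that $\x^1, \ldots, \x^p$ differ only in the location of a single dot, which occupies position $l$ in $\x^1$, position $m+1$ in $\x^{p-1}$, and position $m$ in $\x^p$, while all other dots stay fixed. A direct unwinding of Definition~\ref{def:rel weight vec} (or application of Lemma~\ref{lem:vi=Ri-Li}, noting that the path from $\x^1$ to $\x^p$ uses only $L$-edges with distinct labels $L_l,\ldots,L_{m+1}$) shows that $|v|_i(\x^1, \x^p) = 1$ for $i \in [m+1, l]$ and is $0$ otherwise; $|v|_i(\x^1, \x^{p-1}) = 1$ for $i \in [m+2, l]$ and $0$ otherwise; and $|v|_i(\x^{p-1}, \x^p) = 1$ for $i = m+1$ and $0$ otherwise. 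Summing the last two recovers the first, which finishes the inductive step.

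There is no genuine obstacle here beyond being careful with the index range shift. The only difference from Lemma~\ref{lem:RSegmentFactoring} is that $L$-motion decreases indices, so the relevant interval is $[m+1, l]$ instead of $[l+1, m]$, and the ``last step'' edge $L_{m+1}$ contributes to $|v|_{m+1}$ rather than $|v|_m$. Once this bookkeeping is set up, the $U_i$-exponents automatically cancel and the factorization in $\B_0(n,k)$ follows.
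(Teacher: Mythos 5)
Your proposal is correct and matches the paper's (implicit) argument: the paper omits the proof of this $L$-version, noting only that it mirrors Lemma~\ref{lem:RSegmentFactoring}, and your induction with the adjusted index range $[m+1,l]$ and the final edge $L_{m+1}$ contributing to $|v|_{m+1}$ is exactly that mirrored argument. The $|v|_i$ bookkeeping you give checks out against Lemma~\ref{lem:vi=Ri-Li}, so the $U_i$-exponents in the defining relation of $\B_0(n,k)$ all vanish as claimed.
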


\begin{corollary}\label{cor:WhereFSendsLSegment}
Under the assumptions of Lemma~\ref{lem:UniquePathForLeftMovingDot}, we have $F(\gamma) = f_{\x,\x'}$, where $\gamma$ is the path constructed in that lemma.
\end{corollary}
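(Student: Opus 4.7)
The plan is to mirror the proof of Corollary~\ref{cor:WhereFSendsRSegment} line for line, replacing the role of $R_i$-edges with $L_i$-edges throughout. The essential work has already been accomplished: Lemma~\ref{lem:UniquePathForLeftMovingDot} identifies the unique path $\gamma$ as having edges labeled $L_l, L_{l-1}, \ldots, L_{m+1}$ in order, and Lemma~\ref{lem:LSegmentFactoring} establishes that the generator $f_{\x,\x'}$ telescopes along the intermediate vertices $\x = \x^1, \x^2, \ldots, \x^p = \x'$ as $f_{\x,\x'} = f_{\x^1,\x^2} \cdots f_{\x^{p-1},\x^p}$.

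With those two ingredients in hand, I would write $\gamma = (\gamma_l, \gamma_{l-1}, \ldots, \gamma_{m+1})$, where $\gamma_i$ denotes the edge labeled $L_i$ running between consecutive intermediate I-states. Since $F$ is defined (in Section~\ref{sec:DefinitionOfF}) by sending each edge from $\x^j$ to $\x^{j+1}$ to the generator $f_{\x^j,\x^{j+1}}$, and since $F$ is a homomorphism, concatenation gives
\[
F(\gamma) = F(\gamma_l) F(\gamma_{l-1}) \cdots F(\gamma_{m+1}) = f_{\x^1,\x^2} \cdots f_{\x^{p-1},\x^p},
\]
and the right-hand side equals $f_{\x,\x'}$ by Lemma~\ref{lem:LSegmentFactoring}. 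The only ``obstacle'' here is purely notational: the indices on the $L_i$ labels decrease rather than increase along $\gamma$, but this has no bearing on the argument, since the product is evaluated in the order dictated by the traversal of vertices rather than by the label indices. No further case analysis of relative weight vectors is required, as that bookkeeping has already been done inside the proof of Lemma~\ref{lem:LSegmentFactoring}.
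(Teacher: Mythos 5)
Your proposal is correct and matches the paper's (implicit) argument exactly: the paper omits the proof precisely because it is the line-for-line mirror of Corollary~\ref{cor:WhereFSendsRSegment}, combining the path description from Lemma~\ref{lem:UniquePathForLeftMovingDot} with the telescoping factorization of Lemma~\ref{lem:LSegmentFactoring} and the multiplicativity of $F$. Your remark that the decreasing label indices are a purely notational matter, since the product is taken in traversal order, is the right observation.
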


\begin{lemma}\label{lem:LSegmentImpliesMissingDots}
Let $\x,\y \in V(n,k)$ with $x_a > y_a$ for some $a\in[1,k]$; let $a$ be the minimal such index. We have $\x \cap [y_a,x_a] = \{x_a\}$.
\end{lemma}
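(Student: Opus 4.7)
The plan is to mirror the proof of Lemma~\ref{lem:RSegmentImpliesMissingDots}, with the roles of ``maximal'' and ``minimal'' and the direction of the comparisons swapped. This is the natural strategy because the lemma is the ``upward-moving'' $L_i$ analogue of the ``downward-moving'' $R_i$ statement in Lemma~\ref{lem:RSegmentImpliesMissingDots}, and the graphical pictures from Section~\ref{sec:GraphicalInterp} are symmetric under the reflection that exchanges $R_i$ and $L_i$.

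Concretely, I would argue by contradiction. Suppose there exists $x_b \in \x$ with $y_a \leq x_b < x_a$. Because the elements of $\x$ are listed in strictly increasing order, $x_b < x_a$ forces $b < a$. By the minimality of $a$ among indices with $x_a > y_a$, every index $b < a$ must satisfy $x_b \leq y_b$. Since $b < a$ and the $y_i$ are strictly increasing, $y_b < y_a$. Chaining these inequalities gives $x_b \leq y_b < y_a$, which contradicts the assumption $y_a \leq x_b$. Hence no such $x_b$ exists strictly below $x_a$, and since $x_a \in \x \cap [y_a, x_a]$ trivially, we conclude $\x \cap [y_a, x_a] = \{x_a\}$.

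There is no real obstacle here; this is a pure combinatorial book-keeping argument about strictly increasing sequences, completely parallel to Lemma~\ref{lem:RSegmentImpliesMissingDots}, and the only thing to be careful about is the correct direction of the inequalities when translating ``maximal $a$ with $x_a < y_a$'' into ``minimal $a$ with $x_a > y_a$.''
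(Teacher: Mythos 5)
Your proof is correct and is exactly the mirror-image of the paper's argument for Lemma~\ref{lem:RSegmentImpliesMissingDots}; the paper itself omits the proof of the $L$-version, leaving it as the evident symmetric variant, and your inequalities are chained in the right direction.
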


\begin{corollary}\label{cor:LSegment_fFactors}
Under the assumptions of Lemma~\ref{lem:LSegmentImpliesMissingDots}, let $\x' = (\x \setminus \{x_a\}) \cup \{y_a\}$. We have $f_{\x,\y} = f_{\x,\x'} f_{\x',\y}$.
\end{corollary}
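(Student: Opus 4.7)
The proof should parallel that of Corollary~\ref{cor:RSegment_fFactors}, since Corollary~\ref{cor:LSegment_fFactors} is simply the left-moving mirror. The plan is to verify that, for every $1\leq i\leq n$,
\[
|v|_i(\x,\y) = |v|_i(\x,\x') + |v|_i(\x',\y),
\]
and then invoke the defining relations of $\B_0(n,k)$ in Definition~\ref{def:OSzStyleDef} to conclude $f_{\x,\y} = f_{\x,\x'}f_{\x',\y}$ (the power of each $U_i$ in the product is $(|v|_i(\x,\x')+|v|_i(\x',\y)-|v|_i(\x,\y))/2 = 0$).

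First I would record the structure of $\x'$. By Lemma~\ref{lem:LSegmentImpliesMissingDots} we have $\x\cap[y_a,x_a]=\{x_a\}$, so in particular $x_{a-1} < y_a$ (taking $x_0=-\infty$ if $a=1$), while $x_{a+1} > x_a > y_a$. Hence after reordering we have $x'_b=x_b$ for $b\neq a$ and $x'_a = y_a$, so that in particular $\x'$ agrees with $\x$ outside the interval $[y_a,x_a]$.

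Next I would split the verification of additivity into two cases. For $i\notin[y_a+1,x_a]$, the sets $\x\cap[i,n]$ and $\x'\cap[i,n]$ coincide, so $v_i(\x,\x')=0$ and the identity is immediate from $|v|_i(\x,\y)=|v|_i(\x',\y)$. For $i\in[y_a+1,x_a]$, we have $x_a\in[i,n]$ but $y_a=x'_a\notin[i,n]$, so $|\x'\cap[i,n]|=|\x\cap[i,n]|-1$, giving $v_i(\x,\x')=-1$ and $|v|_i(\x,\x')=1$. Moreover $\x\cap[i,n]=\{x_a,x_{a+1},\ldots,x_k\}$, hence $|\x\cap[i,n]|=k-a+1$, while $\x'\cap[i,n]=\{x_{a+1},\ldots,x_k\}$ has size $k-a$. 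Since $y_a<i$ and $y_{a-1}<y_a$, we have $\y\cap[i,n]\subset\{y_{a+1},\ldots,y_k\}$, so $|\y\cap[i,n]|\leq k-a$; thus both $v_i(\x,\y)$ and $v_i(\x',\y)$ are nonpositive, and a direct computation gives
\[
|v|_i(\x,\y) = (k-a+1)-|\y\cap[i,n]| = 1 + \bigl((k-a)-|\y\cap[i,n]|\bigr) = |v|_i(\x,\x')+|v|_i(\x',\y),
\]
as desired.

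The only mild subtlety, and the step I would be most careful about, is confirming in the second case that $v_i(\x',\y)\leq 0$ so that taking absolute values behaves additively; this is where the minimality of $a$ and the location of $y_a$ relative to $i$ enter. Once these inequalities are pinned down the additivity identity is formal, and the multiplication relations in $\B_0(n,k)$ from Definition~\ref{def:OSzStyleDef} yield $f_{\x,\y}=f_{\x,\x'}f_{\x',\y}$ directly.
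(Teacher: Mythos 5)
Your proof is correct and follows exactly the strategy the paper uses for the right-moving analogue, Corollary~\ref{cor:RSegment_fFactors}: verify additivity of $|v|_i$ by splitting into $i\notin[y_a+1,x_a]$ and $i\in[y_a+1,x_a]$, using Lemma~\ref{lem:LSegmentImpliesMissingDots} and the minimality of $a$ to pin down the sign of $v_i(\x',\y)$. The paper omits the proof of the $L$-segment version precisely because it is this mirror argument, so your write-up matches the intended proof.
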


Now we recursively define a path $\gamma_{\x,\y}$ in $\Gamma_U(n,k)$ from $\x$ to $\y$.
\begin{definition}\label{def:Recursive}
Our recursion scheme will involve the quantity $k - |\x \cap \y|$. If this quantity is zero, then $\x = \y$. Define $\gamma_{\x,\x} = I_{\x}$, the identity path at $\x$. Since $F$ sends identity morphisms to identity morphisms, we have $F(\gamma_{\x,\x}) = f_{\x,\x}$.

Now suppose we have $\x$ and $\y$, and that we have constructed $\gamma_{\x',\y'}$ such that $F(\gamma_{\x',\y'}) = f_{\x',\y'}$ for all $(\x',\y')$ with $k - |\x' \cap \y'| < k - |\x \cap \y|$. To define $\gamma_{\x,\y}$, first suppose that $x_a < y_a$ for some $a\in[1,k]$. Let $a$ be the maximal such index. By Lemma~\ref{lem:RSegmentImpliesMissingDots}, we have $\x \cap [x_a,y_a] = \{x_a\}$. Let $\x' = (\x \setminus \{x_a\}) \cup \{y_a\}$. By Lemma~\ref{lem:UniquePathForRightMovingDot}, there exists a unique path $\gamma$ in $\Gamma_U(n,k)$ from $\x$ to $\x'$ with edges labeled $R_{x_a + 1}, \ldots, R_{y_a}$ in order, and we have $F(\gamma) = f_{\x,\x'}$ by Corollary~\ref{cor:WhereFSendsRSegment}. Since $k - |\x' \cap \y| < k - |\x \cap \y|$, we have already constructed a path $\gamma_{\x',\y}$ from $\x'$ to $\y$ with $F(\gamma_{\x',\y}) = f_{\x',\y}$. Define
\[
\gamma_{\x,\y} := \gamma \cdot \gamma_{\x',\y}.
\]
We have $F(\gamma_{\x,\y}) = F(\gamma) F(\gamma_{\x',\y}) = f_{\x,\x'} f_{\x',\y}$, which equals $f_{\x,\y}$ by Corollary~\ref{cor:RSegment_fFactors}.

If $x_a \geq y_a$ for all $a$ but $\x \neq \y$, let $a$ be the minimal index such that $x_a > y_a$. By Lemma~\ref{lem:LSegmentImpliesMissingDots}, we have $\x \cap [y_a,x_a] = \{x_a\}$. By Lemma~\ref{lem:UniquePathForLeftMovingDot}, there exists a unique path $\gamma$ from $\x$ to $\x'$ with edges labeled $L_{x_a}, \ldots, L_{y_a + 1}$ in order, and we have $F(\gamma) = f_{\x,\x'}$ by Corollary~\ref{cor:WhereFSendsLSegment}. As before, we have already constructed a path $\gamma_{\x',\y}$ from $\x'$ to $\y$ with $F(\gamma_{\x',\y}) = f_{\x',\y}$. Define
\[
\gamma_{\x,\y} := \gamma \cdot \gamma_{\x',\y}.
\]
We have $F(\gamma_{\x,\y}) = F(\gamma) F(\gamma_{\x',\y}) = f_{\x,\x'} f_{\x',\y}$ which equals $f_{\x,\y}$ by Corollary~\ref{cor:LSegment_fFactors}.
\end{definition}

\begin{figure}
\includegraphics[scale=0.5]{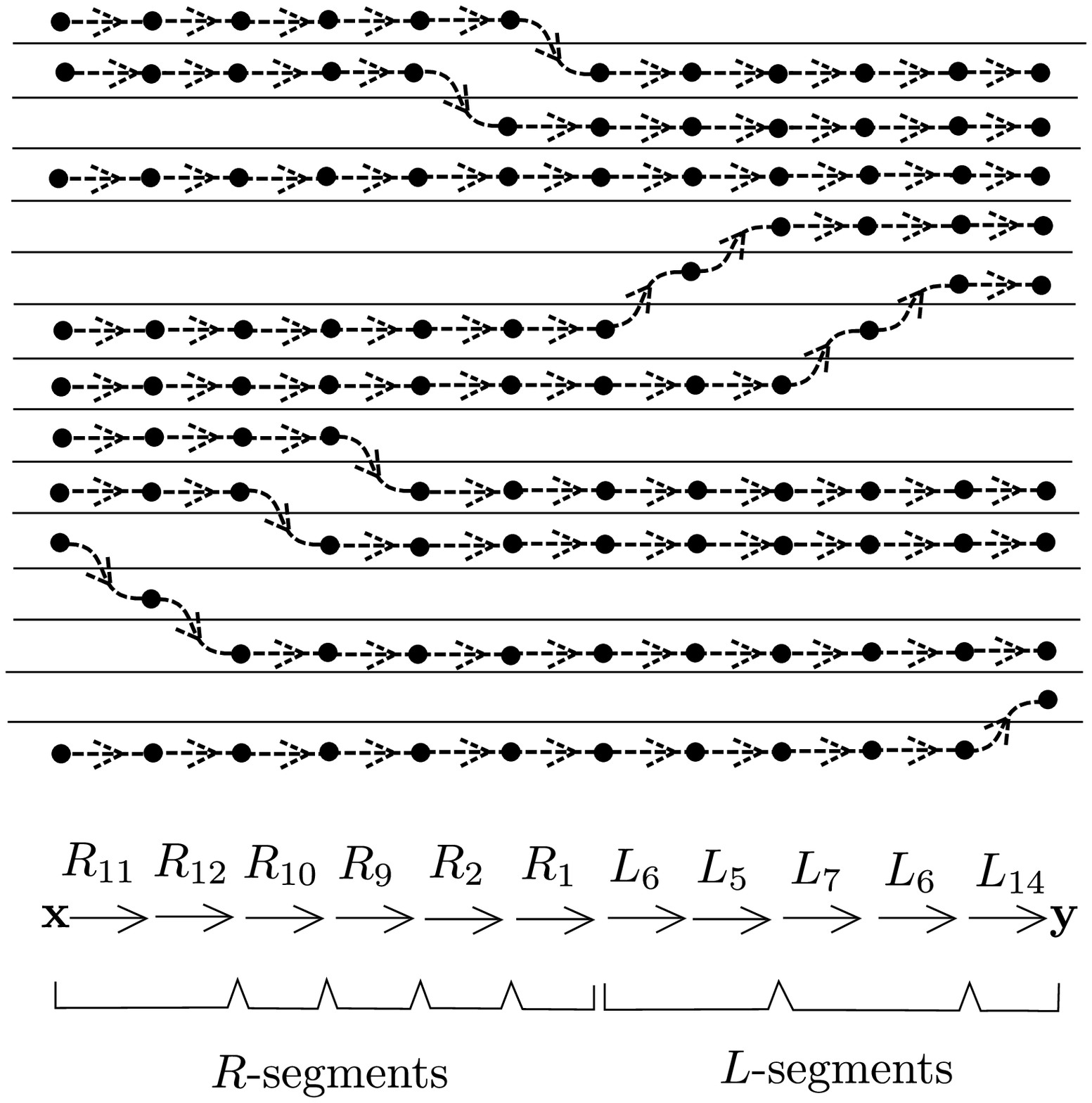}
\caption{The path $\gamma_{\x,\y}$ in Example~\ref{ex:GammaXYExample}.}
\label{fig:RecursiveDefExample}
\end{figure}

\begin{example}\label{ex:GammaXYExample}
Let $n = 14$ and $k = 9$. For the elements $\x = \{0,1,3,6,7,8,9,10,14\}$ and $\y = \{1,2,3,4,5,9,10,12,13\}$ of $V(n,k)$, the path $\gamma_{\x,\y}$ is shown in Figure~\ref{fig:RecursiveDefExample} in the graphical notation of Section~\ref{sec:GraphicalInterp}. Visually speaking, downward motions of dots happen first in $\gamma_{\x,\y}$, one dot at a time from bottom to top, and then upward motions of dots happen one dot at a time from top to bottom.
\end{example}

\begin{definition}\label{def:B0InverseFunctor}
Let 
\[
G: \F_2[U_1,\ldots,U_n] K(n,k) \to  \Cat_{\Quiv(\Gamma_U(n,k), \mc R_U)}
\]
be the unique $\F_2[U_1,\ldots,U_n]$-linear functor that is the identity on objects and such that $G(f_{\x,\y})$ is the morphism in $\Cat_{\Quiv(\Gamma_U(n,k), \mc R_U)}$ represented by $\gamma_{\x,\y}$ for all edges $f_{\x,\y}$ in $K(n,k)$. We have a corresponding homomorphism 
\[
G: \Path(K(n,k)) \to \Quiv(\Gamma_U(n,k), \mc R_U)
\]
of $\F_2[U_1,\ldots,U_n]^{V(n,k)}$-algebras.
\end{definition}

\subsubsection{Proving that the inverse homomorphism is well-defined}\label{sec:DefinitionOfGPart2}

In this section, we will show that the homomorphism $G$ of Definition~\ref{def:B0InverseFunctor} descends to a homomorphism $G: \B_0(n,k) \to \Quiv(\Gamma_U(n,k), \mc R_U)$; in Section~\ref{sec:B0EquivalenceOfDescriptions} below, we will show that $G$ is the inverse of $F: \Quiv(\Gamma_U(n,k), \mc R_U) \to \B_0(n,k)$. We start with some definitions and basic lemmas.

\begin{definition}\label{def:RLSegment}
If $\x, \y \in V(n,k)$ with $x_a < y_a$ for some $a\in[1,k]$ and $x_b = y_b$ for all $b \neq a$, we will call $\gamma_{\x,\y}$ an $R$-\emph{segment}. We define $L$-\emph{segments} similarly.
\end{definition}

We note the following consequences of Definitions~\ref{def:Recursive} and \ref{def:RLSegment}.
\begin{corollary}
If $\gamma_{\x,\y}$ is an $R$-segment or an $L$-segment, then $\gamma_{\x,\y}$ is the path constructed in Lemma~\ref{lem:UniquePathForRightMovingDot} or \ref{lem:UniquePathForLeftMovingDot} respectively.
\end{corollary}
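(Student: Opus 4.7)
The plan is to unwind the recursive construction in Definition~\ref{def:Recursive} a single step and observe that for an $R$-segment the recursion terminates immediately. Specifically, suppose $\gamma_{\x,\y}$ is an $R$-segment, so there is a unique index $a \in [1,k]$ with $x_a < y_a$ and $x_b = y_b$ for all $b \neq a$. Because there is only one such index, it is trivially the maximal one, so the first branch of Definition~\ref{def:Recursive} applies. Lemma~\ref{lem:RSegmentImpliesMissingDots} then gives $\x \cap [x_a, y_a] = \{x_a\}$, so Lemma~\ref{lem:UniquePathForRightMovingDot} supplies a unique path $\gamma$ from $\x$ to $\x' := (\x \sm \{x_a\}) \cup \{y_a\}$ whose edges are labeled $R_{x_a+1}, \ldots, R_{y_a}$ in order.

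The key observation is that $\x' = \y$: the swap replaces $x_a$ by $y_a$ while leaving the other coordinates alone, and those other coordinates already match $\y$ by hypothesis. Hence in the recursive formula $\gamma_{\x,\y} = \gamma \cdot \gamma_{\x',\y}$ the second factor is $\gamma_{\y,\y} = I_\y$, so $\gamma_{\x,\y} = \gamma$. This is precisely the path from Lemma~\ref{lem:UniquePathForRightMovingDot} (with $l = x_a$ and $m = y_a$), proving the claim.

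For an $L$-segment the argument is the mirror image: the hypothesis $x_a > y_a$ at a unique index $a$, with $x_b = y_b$ otherwise, rules out the first branch of the recursion (no $b$ satisfies $x_b < y_b$) and forces the second branch with $a$ the minimal (and only) index where $x_a > y_a$. Lemma~\ref{lem:LSegmentImpliesMissingDots} together with Lemma~\ref{lem:UniquePathForLeftMovingDot} then yields the path with edge labels $L_{x_a}, \ldots, L_{y_a+1}$, and again $\x' = \y$ collapses the recursion to one step.

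There is no real obstacle here; the corollary is a bookkeeping check that the single-dot case of the recursive construction reproduces the paths from Lemmas~\ref{lem:UniquePathForRightMovingDot} and \ref{lem:UniquePathForLeftMovingDot}. The only thing to be careful about is confirming that the relevant branch of the recursion is selected and that the resulting intermediate vertex $\x'$ coincides with $\y$, both of which follow immediately from the single-index hypothesis defining an $R$- or $L$-segment.
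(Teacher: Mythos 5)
Your proof is correct and matches the paper's intent: the paper states this corollary without proof as an immediate consequence of Definitions~\ref{def:Recursive} and \ref{def:RLSegment}, and your argument — checking that the single-index hypothesis selects the right branch of the recursion and forces $\x' = \y$, collapsing the recursion to one step — is exactly the verification being left implicit.
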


\begin{corollary}\label{cor:GammaXYDecomposition}
For any $\x,\y \in V(n,k)$, either $\gamma_{\x,\y}$ is trivial or we can write $\gamma_{\x,\y} = \gamma_{\x,\x'} \gamma_{\x',\y}$ where:
\begin{enumerate}
\item $\gamma_{\x,\x'}$ is an $R$-segment or an $L$-segment;
\item $|\x' \cap \y| > |\x \cap \y|$;
\item\label{item:maximality} if $\gamma_{\x,\x'}$ is an $R$-segment then $y_a = x'_a$ for the index $a$ with $x_a < x'_a$, and $y_b \leq x'_b$ for all $b > a$;
\item\label{item:GammaXYDecomp L means no Rs} if $\gamma_{\x,\x'}$ is an $L$-segment then $y_a = x'_a$ for the index $a$ with $x'_a < x_a$, $y_b \leq x'_b$ for all $b$, and $y_b = x'_b$ for all $b < a$.
\end{enumerate}
\end{corollary}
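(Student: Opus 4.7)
The plan is to read the decomposition directly off Definition~\ref{def:Recursive}. If $|\x \cap \y| = k$, then $\x = \y$ and $\gamma_{\x,\y} = I_\x$ is trivial; otherwise, the recursion already writes $\gamma_{\x,\y} = \gamma \cdot \gamma_{\x',\y}$, where $\gamma$ is either the $R$-segment of Lemma~\ref{lem:UniquePathForRightMovingDot} or the $L$-segment of Lemma~\ref{lem:UniquePathForLeftMovingDot}, and $\x' = (\x \setminus \set{x_a}) \cup \set{y_a}$ for the index $a$ chosen by the recursion. This produces both the decomposition and condition (1) for free.

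For (3), I would focus on the $R$-segment branch, in which the recursion picks $a$ to be the maximal index with $x_a < y_a$. Lemma~\ref{lem:RSegmentImpliesMissingDots} gives $\x \cap [x_a, y_a] = \set{x_a}$, so combined with the strict ordering of $\x$ we have $x_{a-1} < x_a < y_a < x_{a+1}$ (with the endpoints only when they exist), meaning $y_a$ lands in position $a$ of $\x'$. Thus $x'_a = y_a$ and $x'_b = x_b$ for $b \neq a$, so the unique index with $x_b < x'_b$ is $a$ and $y_a = x'_a$ holds on the nose. The maximality of $a$ gives $x_b \geq y_b$ for all $b > a$, hence $y_b \leq x_b = x'_b$. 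Condition (4) follows by a symmetric analysis of the $L$-segment branch, using Lemma~\ref{lem:LSegmentImpliesMissingDots}, the minimality of $a$, and the defining hypothesis $x_c \geq y_c$ for every $c$ in this branch: this immediately gives $x_b = y_b$ (and so $x'_b = y_b$) for $b < a$ and $x'_b \geq y_b$ for all other $b$.

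The main obstacle is (2). Lemma~\ref{lem:RSegmentImpliesMissingDots} or \ref{lem:LSegmentImpliesMissingDots} shows $y_a \notin \x$, so $y_a$ contributes a fresh element to $\x' \cap \y$; the only candidate for an offsetting loss is $x_a$ itself. The delicate point is to rule out $x_a \in \y$, which would cancel the gain. I would exploit the extremality of $a$: in the $R$-branch, the plan is to show that maximality of $a$ combined with the orderings of $\x$ and $\y$ is incompatible with $x_a = y_c$ for any $c$, and symmetrically for the $L$-branch via minimality. I expect this combinatorial compatibility check to be the most technical step of the proof; as a fallback, one can induct on the total displacement $\sum_i |x_i - y_i|$ (which drops by $|y_a - x_a| > 0$ at each step of the recursion) in place of the measure $k - |\x \cap \y|$ when invoking the statement downstream.
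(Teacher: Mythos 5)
The paper gives no written proof of this corollary---it is stated as an immediate consequence of Definitions~\ref{def:Recursive} and \ref{def:RLSegment}---so you are supplying details the authors omitted. Your handling of (1), (3), and (4) is correct and is surely the intended argument: the decomposition is just the first step of the recursion, the slotting observation $x_{a-1} < x_a < y_a < x_{a+1}$ (from Lemma~\ref{lem:RSegmentImpliesMissingDots}) shows $x'_a = y_a$ and $x'_b = x_b$ for $b \neq a$, and the extremality of $a$ gives the remaining inequalities in each branch.

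The gap is exactly where you feared. Your plan for (2) is to rule out $x_a \in \y$ using the extremality of $a$, but that cannot work because $x_a \in \y$ genuinely occurs. Take $n = k = 2$, $\x = \set{0,1}$, $\y = \set{1,2}$ (these are not even far). The maximal $a$ with $x_a < y_a$ is $a = 2$, and $x_2 = 1 = y_1 \in \y$; the recursion gives $\x' = \set{0,2}$, so $|\x' \cap \y| = 1 = |\x \cap \y|$ and the strict inequality in (2) fails. Read literally, with set-theoretic intersection, item (2) is therefore false, and the same example breaks the line ``Since $k - |\x' \cap \y| < k - |\x \cap \y|$'' inside Definition~\ref{def:Recursive} itself. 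Your fallback is the correct repair rather than a mere convenience: the recursion and the downstream inductions in Lemma~\ref{lem:LeftFactorSegmentRightFactorArbitrary} and Proposition~\ref{prop:BothFactorsArbitrary} should be run on $\sum_a |x_a - y_a|$, or equivalently on the number of indices $a$ with $x_a = y_a$, which increases by exactly one at each step (one coordinate is corrected and, by your arguments for (3) and (4), no other coordinate moves); this positional count is presumably what the authors intended $|\x \cap \y|$ to record. So: do not try to prove (2) as stated---replace it by the positional version and adjust the induction measure accordingly.
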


\begin{figure}
\includegraphics[scale=0.38]{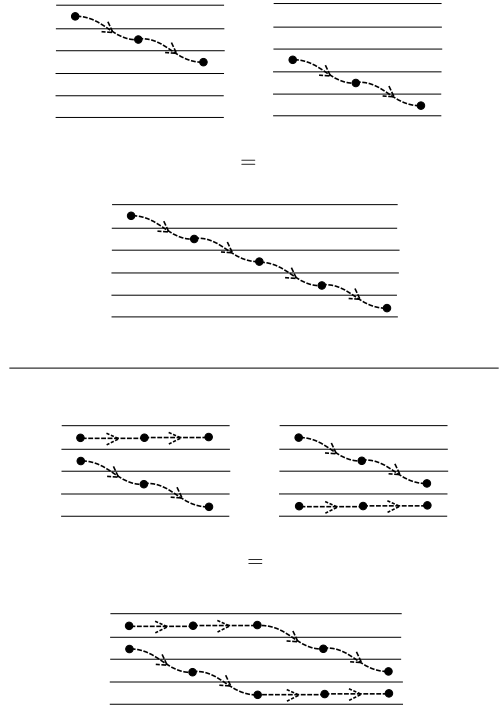}
\caption{Products of two $R$-segments in Lemma~\ref{lem:BothFactorsAreSegments}: first two cases.}
\label{fig:RandLsegmentsPart1}
\end{figure}

\begin{figure}
\includegraphics[scale=0.62]{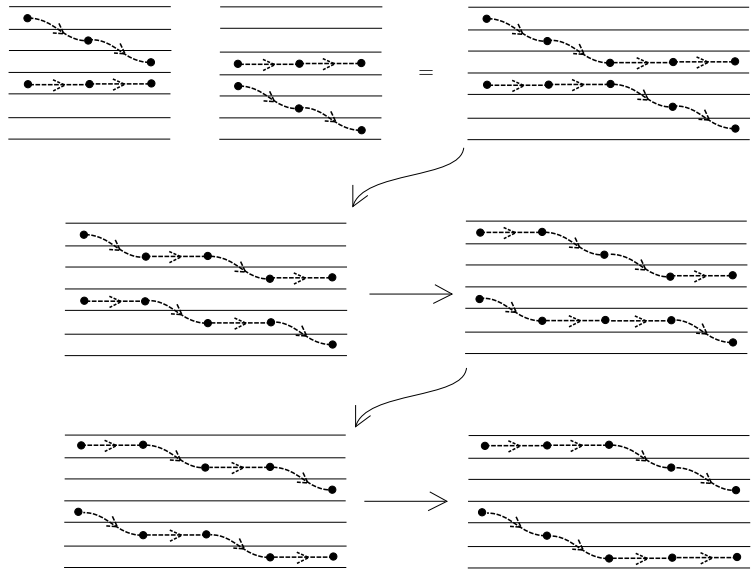}
\caption{Products of two $R$-segments in Lemma~\ref{lem:BothFactorsAreSegments}: final case.}
\label{fig:RandLsegmentsPart2}
\end{figure}

\begin{figure}
\includegraphics[scale=0.5]{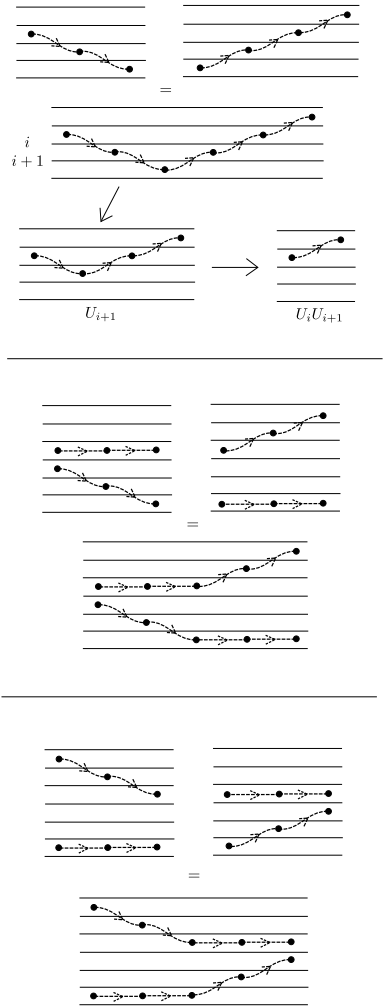}
\caption{Products of an $R$-segment and an $L$-segment in Lemma~\ref{lem:BothFactorsAreSegments}.}
\label{fig:RandLsegmentsPart3}
\end{figure}

\begin{figure}
\includegraphics[scale=0.5]{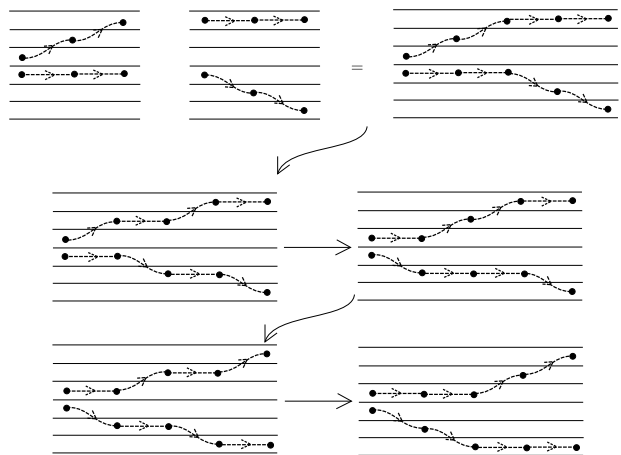}
\caption{Products of an $L$-segment and an $R$-segment in Lemma~\ref{lem:BothFactorsAreSegments}: one case.}
\label{fig:RandLsegmentsPart4}
\end{figure}

\begin{lemma}\label{lem:BothFactorsAreSegments}
If $\gamma_{\x,\x'}$ and $\gamma_{\x',\x''}$ are both $R$-segments, then in $\Quiv(\Gamma_U(n,k), \mc R_U)$, the element $\gamma_{\x,\x'} \gamma_{\x',\x''}$ is equal to a monomial in the $U_i$ variables times $\gamma_{\x,\x''}$. The same statement holds if $\gamma_{\x,\x'}$ and/or $\gamma_{\x',\x''}$ is an $L$-segment rather than an $R$-segment.
\end{lemma}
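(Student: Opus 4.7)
The plan is to argue by exhaustive case analysis on the types of the two segments ($R$ or $L$) and on the indices $a, b \in [1,k]$ each one moves. Throughout, the key combinatorial input is that the segment hypothesis $\x \cap [x_a, y_a] = \{x_a\}$ (or its $L$-analogue) from Lemmas~\ref{lem:UniquePathForRightMovingDot} and \ref{lem:UniquePathForLeftMovingDot} constrains the relative positions of the moving dots in both $\x$ and $\x'$, which in turn controls the labels appearing in the two segments.

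For two $R$-segments (the $LL$ case is entirely symmetric), if both move the same index $a$, then the edges of the concatenation $\gamma_{\x,\x'}\gamma_{\x',\x''}$ read $R_{x_a+1}, \ldots, R_{y_a}, R_{y_a+1}, \ldots, R_{z_a}$, which is precisely the canonical sequence of $\gamma_{\x,\x''}$ from Definition~\ref{def:Recursive}; no relations are needed. If the segments move distinct indices $a \neq b$, I would first use the segment conditions to verify that when $a < b$ the label intervals $[x_a+1, y_a]$ and $[x_b+1, z_b]$ are disjoint with the gap between their closest labels at least $2$, so that all pairs of edges from different segments commute via the distant commutation relations; and when $a > b$, the two segments are already in the canonical recursive order (largest index first) so $\gamma_{\x,\x''} = \gamma_{\x,\x'}\gamma_{\x',\x''}$ directly, even if labels happen to overlap.

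The mixed case ($R$ then $L$, or $L$ then $R$) splits similarly. When both segments move the same index $a$, the concatenation takes the form $R_{x_a+1} \cdots R_{y_a} L_{y_a} \cdots L_{z_a+1}$ (or the analogous $L$-first expression), and I would iteratively peel off matched pairs $R_i L_i = U_i$ at the interface using the loop relations, moving each $U_i$ to the left by centrality. Depending on the sign of $z_a - x_a$, what remains is either an $R$-segment, the identity, or an $L$-segment, and in each instance matches $\gamma_{\x,\x''}$ on the nose. When $a \neq b$, the segment conditions again force the two label intervals to be separated by at least one unused line, so distant commutation applies; a direct check against the recursive construction of $\gamma_{\x,\x''}$ (which performs all $R$-segments before all $L$-segments, in decreasing and then increasing index order) shows that the concatenation rearranges to $\gamma_{\x,\x''}$ with no $U$-factors needed.

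The main obstacle is bookkeeping rather than ingenuity. One must verify in every subcase that the label intervals are genuinely separated enough for distant commutation, and that the recursive construction of $\gamma_{\x,\x''}$ yields exactly the claimed decomposition into segments. Both points reduce to extracting specific inequalities among $x_a, y_a, x_b, x'_b$, and $z_b$ from the two segment hypotheses (for instance, showing that $a < b$ forces $y_a < x_b$ in the $RR$ case, and that $y_a \notin [z_b, x_b]$ in the $RL$ case), after which the relations in $\mc R_U$ finish each case immediately. Drawing the pictures suggested by the figures referenced in the statement makes these inequalities transparent and organizes the case analysis into the four groupings above.
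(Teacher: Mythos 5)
Your proposal is correct and follows exactly the approach the paper takes: the paper's proof is a case-by-case analysis explicitly left to the reader and illustrated by Figures~\ref{fig:RandLsegmentsPart1}--\ref{fig:RandLsegmentsPart4}, and your four groupings (same index giving concatenation or cancelling $R_iL_i=U_i$ pairs; different indices handled either by observing the order is already the canonical one from Definition~\ref{def:Recursive} or by distant commutation after checking the label intervals are separated by at least two) are precisely the cases those figures depict. The only cosmetic point is that in the $R$-then-$L$ different-index case no commutation is actually needed, since the recursion performs all $R$-segments before all $L$-segments regardless of index order, but your argument covers this correctly.
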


\begin{proof}
The proof is a case-by-case analysis that is left to the reader. See Figures~\ref{fig:RandLsegmentsPart1} and \ref{fig:RandLsegmentsPart2} when multiplying two $R$-segments, Figure~\ref{fig:RandLsegmentsPart3} when multiplying an $R$-segment by an $L$-segment, and Figure~\ref{fig:RandLsegmentsPart4} for one case of multiplying an $L$-segment by an $R$-segment. Multiplying two $L$-segments is similar to multiplying two $R$-segments.
\end{proof}

\begin{lemma}\label{lem:LeftFactorSegmentRightFactorArbitrary}
If $\gamma_{\x,\x'}$ is an $R$-segment or an $L$-segment and $\y \in V(n,k)$ is arbitrary, then in $\Quiv(\Gamma_U(n,k), \mc R_U)$, the element $\gamma_{\x,\x'} \gamma_{\x',\y}$ is equal to a monomial in the $U_i$ variables times $\gamma_{\x,\y}$. 
\end{lemma}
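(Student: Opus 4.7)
The plan is to prove the statement by induction on $m := k - |\x' \cap \y|$, which by Corollary~\ref{cor:GammaXYDecomposition} equals the number of segments in the canonical decomposition of $\gamma_{\x', \y}$. The base case $m = 0$ is immediate, since then $\gamma_{\x', \y}$ is trivial and $\gamma_{\x, \x'} \gamma_{\x', \y} = \gamma_{\x, \x'} = \gamma_{\x, \y}$.

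For $m \geq 1$, apply Corollary~\ref{cor:GammaXYDecomposition} to factor $\gamma_{\x', \y} = \gamma_{\x', \x''} \cdot \gamma_{\x'', \y}$, where $\gamma_{\x', \x''}$ is a segment and $k - |\x'' \cap \y| = m - 1$. Since $\gamma_{\x, \x'}$ and $\gamma_{\x', \x''}$ are both segments, Lemma~\ref{lem:BothFactorsAreSegments} yields a $U$-monomial $u_1$ with $\gamma_{\x, \x'} \gamma_{\x', \x''} = u_1 \gamma_{\x, \x''}$, so that
\[
\gamma_{\x, \x'} \gamma_{\x', \y} = u_1 \cdot \gamma_{\x, \x''} \gamma_{\x'', \y}.
\]
The key observation is that $k - |\x \cap \x''| \leq 2$, since only the two dots moved by $\gamma_{\x, \x'}$ and $\gamma_{\x', \x''}$ can distinguish $\x$ from $\x''$. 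Hence the canonical path $\gamma_{\x, \x''}$ consists of at most two segments.

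If $\gamma_{\x, \x''}$ has zero or one segment, the inductive hypothesis applied to the smaller right factor $\gamma_{\x'', \y}$ of size $m - 1$ gives $\gamma_{\x, \x''} \gamma_{\x'', \y} = u_2 \gamma_{\x, \y}$, completing the proof in these cases. If $\gamma_{\x, \x''}$ has exactly two segments, write $\gamma_{\x, \x''} = \tau_1 \tau_2$; apply the inductive hypothesis to $\tau_2 \gamma_{\x'', \y}$ to obtain $u_2 \gamma_{\mathbf{v}, \y}$, where $\mathbf{v}$ is the vertex between $\tau_1$ and $\tau_2$, and then try to apply the inductive hypothesis once more to $\tau_1 \gamma_{\mathbf{v}, \y}$.

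The hardest part will be this final two-segment case, where one must verify that $k - |\mathbf{v} \cap \y| < m$ in order to close the induction. This forces a case analysis paralleling the sub-cases of Lemma~\ref{lem:BothFactorsAreSegments} (matching the figures there) together with the maximality conditions in Corollary~\ref{cor:GammaXYDecomposition} to pin down $\mathbf{v}$ and the positions of the relevant dots. The degenerate sub-case $\mathbf{v} = \x'$, in which Lemma~\ref{lem:BothFactorsAreSegments} reduces to the tautology $\gamma_{\x, \x'} \gamma_{\x', \x''} = \gamma_{\x, \x''}$ with $u_1 = 1$ and no progress is made at this step, will have to be handled separately: the fallback is to further decompose $\gamma_{\x'', \y}$ by Corollary~\ref{cor:GammaXYDecomposition} and iteratively apply Lemma~\ref{lem:BothFactorsAreSegments} to successive segments, using the fact that the total number of $R$ and $L$ edges in the product is non-negative and each truly nontrivial application of Lemma~\ref{lem:BothFactorsAreSegments} (one where a loop relation fires) strictly decreases this count, so the process must terminate.
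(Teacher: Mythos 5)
Your overall skeleton matches the paper's: the same induction on $m = k - |\x' \cap \y|$, the same peeling-off of the first segment of $\gamma_{\x',\y}$ via Corollary~\ref{cor:GammaXYDecomposition}, the same absorption step via Lemma~\ref{lem:BothFactorsAreSegments}, and the same easy disposal of the cases where $\gamma_{\x,\x''}$ has at most one segment. The gap is in how you close the two-segment case. The inequality $k - |\mathbf{v} \cap \y| < m$ that you need for the second application of the inductive hypothesis is false in general, and not only in the degenerate sub-case $\mathbf{v} = \x'$ that you flag. Concretely, take $n=5$, $k=2$, $\x = \{0,2\}$, $\x' = \{1,2\}$ (so $\gamma_{\x,\x'} = R_1$), and $\y = \{1,4\}$. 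Then $m = 1$, the first segment of $\gamma_{\x',\y}$ is $R_3 R_4$ with $\x'' = \y$, and the canonical decomposition of $\gamma_{\x,\x''}$ is $\tau_1 \tau_2 = (R_3R_4)(R_1)$ with intermediate vertex $\mathbf{v} = \{0,4\} \neq \x'$; here $k - |\mathbf{v} \cap \y| = 1 = m$, so your induction does not close. Your fallback measure (the total number of $R$- and $L$-labeled edges) does not rescue the argument either: applications of Lemma~\ref{lem:BothFactorsAreSegments} that are pure reorderings, as in this example, fire no loop relation and leave that count unchanged, so neither termination nor arrival at a monomial multiple of $\gamma_{\x,\y}$ is established.

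The paper's proof never makes a second inductive call. After applying the inductive hypothesis once to $\gamma_{\tilde{\x}',\x''}\,\gamma_{\x'',\y}$ (legitimate, since $k - |\x'' \cap \y| = m-1$), it uses the maximality conditions, items~\eqref{item:maximality} and \eqref{item:GammaXYDecomp L means no Rs} of Corollary~\ref{cor:GammaXYDecomposition}, to verify that $\gamma_{\x,\tilde{\x}'}$ is precisely the first segment produced by the recursive definition of $\gamma_{\x,\y}$, so that the concatenation $\gamma_{\x,\tilde{\x}'}\,\gamma_{\tilde{\x}',\y}$ \emph{is} the path $\gamma_{\x,\y}$ and no further reduction is needed (in the $R$-segment-then-$L$-segment case the original product is already canonical and there is nothing to reorder). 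Replacing your second inductive call with this direct ``the recombined path is canonical'' verification is the missing idea; note that the sub-case in which $\gamma_{\x,\x'}\gamma_{\x',\x''}$ is already in canonical order (so $\tilde{\x}' = \x'$ and Lemma~\ref{lem:BothFactorsAreSegments} makes no progress) still requires its own argument, since there the canonicality of $\gamma_{\x,\tilde{\x}'}$ as the first segment of $\gamma_{\x,\y}$ is exactly what must be checked against the tail $\gamma_{\x'',\y}$.
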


\begin{proof}
We will induct on $k - |\x' \cap \y|$. When $\x' = \y$, we have $\gamma_{\x,\x'} \gamma_{\x',\y} = \gamma_{\x,\x'} = \gamma_{\x,\y}$. 

For the inductive step, decompose $\gamma_{\x',\y}$ as $\gamma_{\x',\x''} \gamma_{\x'',\y}$ as in Corollary~\ref{cor:GammaXYDecomposition}. Since $\gamma_{\x,\x'}$ and $\gamma_{\x',\x''}$ are $R$-segments or $L$-segments, Lemma~\ref{lem:BothFactorsAreSegments} implies that $\gamma_{\x,\x'} \gamma_{\x',\x''}$ equals a monomial in the $U_i$ variables times $\gamma_{\x,\x''}$ in $\Quiv(\Gamma_U(n,k), \mc R_U)$. If $\gamma_{\x,\x''}$ is trivial, an $R$-segment, or an $L$-segment, we are done by induction because $|\x'' \cap \y| > |\x' \cap \y|$. Otherwise $\gamma_{\x,\x'} \gamma_{\x',\x''}$ is a product of two $R$-segments, two $L$-segments, an $R$-segment and an $L$-segment, or an $L$-segment and an $R$-segment.

First assume that $\gamma_{\x,\x'}$ and $\gamma_{\x',\x''}$ are $R$-segments but that $\gamma_{\x,\x''}$ is not an $R$-segment. We can write $\gamma_{\x,\x''} = \gamma_{\x,\tilde{\x}'} \gamma_{\tilde{\x}', \x''}$ where both factors are $R$-segments and we have $x_a < \tilde{x}'_a$, $\tilde{x}'_b < x''_b$, and $b < a$. By induction, $\gamma_{\tilde{\x}',\x''} \gamma_{\x'',\y}$ is equal to a monomial in the $U_i$ variables times $\gamma_{\tilde{\x}',\y}$ in $\Quiv(\Gamma_U(n,k), \mc R_U)$. Note that if $c > a$, then $y_c \leq x''_c = \tilde{x}'_c$ by item \eqref{item:maximality} of Corollary~\ref{cor:GammaXYDecomposition}. Thus, we have $\gamma_{\x,\y} = \gamma_{\x,\tilde{\x}'} \gamma_{\tilde{\x}',\y}$ by the recursive definition of $\gamma_{\x,\y}$. The case when $\gamma_{\x,\x'}$ and $\gamma_{\x',\x''}$ are both $L$-segments but $\gamma_{\x,\x''}$ is not an $L$-segment is similar.

Next, suppose that $\gamma_{\x,\x'}$ is an $R$-segment, $\gamma_{\x',\x''}$ is an $L$-segment, and $\gamma_{\x,\x''}$ is 
\begin{itemize}
\item nontrivial, 
\item not an $R$-segment, and
\item not an $L$-segment.
\end{itemize}
In this case we have $\gamma_{\x,\y} = \gamma_{\x,\x'} \gamma_{\x',\y}$ by item~\eqref{item:GammaXYDecomp L means no Rs} of Corollary~\ref{cor:GammaXYDecomposition} and the recursive definition of $\gamma_{\x,\y}$.

Finally, suppose that $\gamma_{\x,\x'}$ is an $L$-segment, $\gamma_{\x',\x''}$ is an $R$-segment, and $\gamma_{\x,\x''}$ is
\begin{itemize}
\item nontrivial,
\item not an $R$-segment, and
\item not an $L$-segment.
\end{itemize}
We can write $\gamma_{\x,\x''} = \gamma_{\x,\tilde{\x}'} \gamma_{\tilde{\x}',\x''}$ where $\gamma_{\x,\tilde{\x}'}$ is an $R$-segment, $\gamma_{\tilde{\x}',\x''}$ is an $L$-segment, $x_a < \tilde{x}'_a$, and $x''_b < \tilde{x}'_b$. By induction, $\gamma_{\tilde{\x}',\x''} \gamma_{\x'',\y}$ is equal to a monomial in the $U_i$ variables times $\gamma_{\tilde{\x}',\y}$ in $\Quiv(\Gamma_U(n,k), \mc R_U)$. For $c > a$, we have $y_c \leq x''_c \leq \tilde{x}'_c$ by item \eqref{item:maximality} of Corollary~\ref{cor:GammaXYDecomposition} together with the fact that $\gamma_{\tilde{\x}',\x''}$ is an $L$-segment. Thus, $\gamma_{\x,\y} = \gamma_{\x,\tilde{\x}'} \gamma_{\tilde{\x}',\y}$ by the recursive definition of $\gamma_{\x,\y}$, proving the lemma.
\end{proof}

\begin{proposition}\label{prop:BothFactorsArbitrary}
If $\x,\y,\z \in V(n,k)$, then $\gamma_{\x,\y} \gamma_{\y,\z}$ is equal to a monomial in the $U_i$ variables times $\gamma_{\x,\z}$ in $\Quiv(\Gamma_U(n,k), \mc R_U)$. 
\end{proposition}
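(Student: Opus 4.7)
My plan is to prove this by induction on the quantity $k - |\x \cap \y|$, which measures the complexity of $\gamma_{\x,\y}$ via its segment decomposition. The base case is $\x = \y$, where $\gamma_{\x,\y} = I_{\x}$ is trivial and the claim is immediate since $\gamma_{\x,\y}\gamma_{\y,\z} = \gamma_{\y,\z} = \gamma_{\x,\z}$.

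For the inductive step, I would use Corollary~\ref{cor:GammaXYDecomposition} to write $\gamma_{\x,\y} = \gamma_{\x,\x'} \gamma_{\x',\y}$, where $\gamma_{\x,\x'}$ is an $R$-segment or an $L$-segment and $|\x' \cap \y| > |\x \cap \y|$, so $k - |\x' \cap \y| < k - |\x \cap \y|$. The inductive hypothesis then applies to the triple $(\x', \y, \z)$, giving a $U$-monomial $p_1$ with
\[
\gamma_{\x',\y} \gamma_{\y,\z} = p_1 \cdot \gamma_{\x',\z} \quad \text{in } \Quiv(\Gamma_U(n,k), \mc R_U).
\]

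Substituting this into the decomposition and using the fact that the $U_i$'s lie in the coefficient ring $\F_2[U_1,\ldots,U_n]$ and are therefore automatically central, I obtain
\[
\gamma_{\x,\y} \gamma_{\y,\z} = \gamma_{\x,\x'} \bigl( \gamma_{\x',\y} \gamma_{\y,\z} \bigr) = \gamma_{\x,\x'} \cdot p_1 \cdot \gamma_{\x',\z} = p_1 \cdot \gamma_{\x,\x'} \gamma_{\x',\z}.
\]
Now $\gamma_{\x,\x'}$ is a single $R$- or $L$-segment while $\z$ is arbitrary, so Lemma~\ref{lem:LeftFactorSegmentRightFactorArbitrary} applies directly, yielding a $U$-monomial $p_2$ with $\gamma_{\x,\x'} \gamma_{\x',\z} = p_2 \cdot \gamma_{\x,\z}$. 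Combining, $\gamma_{\x,\y}\gamma_{\y,\z} = p_1 p_2 \cdot \gamma_{\x,\z}$, which completes the induction.

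There is essentially no obstacle here: all the combinatorial heavy lifting was done in Lemma~\ref{lem:BothFactorsAreSegments} and its consequence Lemma~\ref{lem:LeftFactorSegmentRightFactorArbitrary}, which handled the delicate case analyses of interacting segments. The present proposition is a clean reduction: shave off one segment from the left factor, invoke induction on the smaller remainder, exploit centrality of the $U_i$ to move the resulting monomial past the segment, and finish with the single-segment lemma. The only point requiring a moment's care is ensuring the induction is well-founded, which is guaranteed by the strict inequality $|\x' \cap \y| > |\x \cap \y|$ from Corollary~\ref{cor:GammaXYDecomposition}.
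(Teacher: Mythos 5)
Your proof is correct and follows exactly the paper's argument: induction on $k - |\x \cap \y|$, splitting off the first segment of $\gamma_{\x,\y}$ via Corollary~\ref{cor:GammaXYDecomposition}, applying the inductive hypothesis to $\gamma_{\x',\y}\gamma_{\y,\z}$, and finishing with Lemma~\ref{lem:LeftFactorSegmentRightFactorArbitrary}. The remark about centrality of the $U_i$ coefficients is a valid (and implicit in the paper) justification for collecting the monomials.
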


\begin{proof}
We will induct on $k - |\x \cap \y|$. When $\x = \y$, we have $\gamma_{\x,\y} \gamma_{\y,\z} = \gamma_{\y,\z} = \gamma_{\x,\z}$. For the inductive step, decompose $\gamma_{\x,\y}$ as $\gamma_{\x,\y} = \gamma_{\x,\x'} \gamma_{\x',\y}$ using Corollary~\ref{cor:GammaXYDecomposition}; in particular, $\gamma_{\x,\x'}$ is an $R$-segment or an $L$-segment. We have $|\x' \cap \y| > |\x \cap \y|$, so by induction, $\gamma_{\x',\y} \gamma_{\y,\z}$ equals a monomial in the $U_i$ variables times $\gamma_{\x',\z}$ in $\Quiv(\Gamma_U(n,k), \mc R_U)$. By Lemma~\ref{lem:LeftFactorSegmentRightFactorArbitrary}, $\gamma_{\x,\x'} \gamma_{\x',\z}$ equals a monomial in the $U_i$ variables times $\gamma_{\x,\z}$ in $\Quiv(\Gamma_U(n,k), \mc R_U)$.
\end{proof}

\begin{corollary}
The monomial in Proposition~\ref{prop:BothFactorsArbitrary} is $\prod_{i=1}^n U_i^{(|v|_i(\y,\z) - |v|_i(\x,\z) + |v|_i(\x,\y))/2}$.
\end{corollary}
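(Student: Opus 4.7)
The plan is to pin down the monomial by pushing the equality in $\Quiv(\Gamma_U(n,k), \mc R_U)$ forward along the homomorphism $F$ to the Ozsv\'ath--Szab\'o algebra $\B_0(n,k)$, where we already know both the multiplication rule and that the coefficient is uniquely determined.

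More precisely, let $p$ denote the monomial in the $U_i$'s furnished by Proposition~\ref{prop:BothFactorsArbitrary}, so that
\[
\gamma_{\x,\y} \gamma_{\y,\z} = p \cdot \gamma_{\x,\z}
\]
in $\Quiv(\Gamma_U(n,k), \mc R_U)$. First, I would apply the $\F_2[U_1,\ldots,U_n]^{V(n,k)}$-algebra homomorphism $F: \Quiv(\Gamma_U(n,k), \mc R_U) \to \B_0(n,k)$ from Section~\ref{sec:DefinitionOfF} to both sides. By the recursive construction of the paths $\gamma_{\x,\y}$ in Definition~\ref{def:Recursive}, we have $F(\gamma_{\x,\y}) = f_{\x,\y}$, $F(\gamma_{\y,\z}) = f_{\y,\z}$, and $F(\gamma_{\x,\z}) = f_{\x,\z}$, so the equation becomes
\[
f_{\x,\y} f_{\y,\z} = p \cdot f_{\x,\z}
\]
in $\B_0(n,k)$.

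Next, I would invoke the defining relations of $\B_0(n,k)$ from Definition~\ref{def:OSzStyleDef}, which give
\[
f_{\x,\y} f_{\y,\z} = \prod_{i=1}^n U_i^{(|v|_i(\y,\z) - |v|_i(\x,\z) + |v|_i(\x,\y))/2} f_{\x,\z}.
\]
Combining the two displayed equalities yields
\[
p \cdot f_{\x,\z} = \prod_{i=1}^n U_i^{(|v|_i(\y,\z) - |v|_i(\x,\z) + |v|_i(\x,\y))/2} f_{\x,\z}
\]
inside $\Ib_{\x} \B_0(n,k) \Ib_{\z}$. Finally, the bijection $\phi^{\x,\z}: \F_2[U_1,\ldots,U_n] \xrightarrow{\cong} \Ib_{\x} \B_0(n,k) \Ib_{\z}$ from Definition~\ref{def:OSzStyleDef} lets us cancel $f_{\x,\z}$ and conclude that $p$ equals the stated monomial as an element of $\F_2[U_1,\ldots,U_n]$, hence also as a coefficient on $\gamma_{\x,\z}$ in $\Quiv(\Gamma_U(n,k), \mc R_U)$.

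There is essentially no obstacle here once one recognizes that $F$ and $\phi^{\x,\z}$ do the work: every ingredient (the identities $F(\gamma_{\x,\y}) = f_{\x,\y}$, the multiplication formula in $\B_0(n,k)$, and the faithfulness of the $U$-action on $\Ib_{\x} \B_0(n,k) \Ib_{\z}$) has already been established. Note that this argument does not circularly depend on $F$ being an isomorphism; we only need $F$ to be a well-defined homomorphism sending the chosen paths $\gamma_{\x,\y}$ to the generators $f_{\x,\y}$, which was verified in Section~\ref{sec:DefinitionOfGPart1}.
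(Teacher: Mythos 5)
Your proposal is correct and follows essentially the same route as the paper: apply $F$ to the identity from Proposition~\ref{prop:BothFactorsArbitrary}, compare with the defining relation of $\B_0(n,k)$, and use that $f_{\x,\z}$ freely generates $\Ib_{\x}\B_0(n,k)\Ib_{\z}$ over $\F_2[U_1,\ldots,U_n]$ to identify the monomial. Your closing remark that only well-definedness of $F$ (not its invertibility) is needed is a nice clarification, but the argument is otherwise identical to the paper's.
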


\begin{proof}
Let $M$ denote the monomial in question. Applying the functor $F$ from Section~\ref{sec:DefinitionOfF} in Proposition~\ref{prop:BothFactorsArbitrary}, we get $F(\gamma_{\x,\y}) F(\gamma_{\y,\z}) = MF(\gamma_{\x,\z})$ in $\Cat_{\B_0(n,k)}$, i.e. $f_{\x,\y} f_{\y,\z} = Mf_{\x,\z}$. We also know that 
\[
f_{\x,\y} f_{\y,\z} = \prod_{i=1}^n U_i^{(|v|_i(\y,\z) - |v|_i(\x,\z) + |v|_i(\x,\y))/2} f_{\x,\z}
\]
by the relations defining $\B_0(n,k)$. Since $\{f_{\x,\z}\}$ is a basis for 
\[
\Hom_{\Cat_{\B_0(n,k)}}(\z,\x) \cong \F_2[U_1,\ldots, U_n]
\]
as a free module over $\F_2[U_1,\dots,U_n]$, we conclude that $M$ is given by the stated formula.
\end{proof}

\begin{corollary}
The homomorphism $G: \Path(K(n,k)) \to \Quiv(\Gamma_U(n,k), \mc R_U)$ descends to a homomorphism of $\F_2[U_1,\ldots,U_n]^{V(n,k)}$-algebras $G: \B_0(n,k) \to \Quiv(\Gamma_U(n,k), \mc R_U)$.
\end{corollary}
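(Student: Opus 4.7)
The plan is to invoke the universal property of the quotient $\B_0(n,k) = \Path(K(n,k))/\mc I_{\mc R_K}$: it suffices to check that the homomorphism $G: \Path(K(n,k)) \to \Quiv(\Gamma_U(n,k), \mc R_U)$ of Definition~\ref{def:B0InverseFunctor} sends every element of $\mc R_K$ to zero. Since $\mc R_K$ generates $\mc I_{\mc R_K}$ as a two-sided ideal and $G$ is already a ring homomorphism, once we have this vanishing on generators, $\mc I_{\mc R_K}$ lies in $\ker G$ and $G$ descends to $\B_0(n,k)$.

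Recall that a typical generator of $\mc R_K$ has the form
\[
r_{\x,\y,\z} = f_{\x,\y} f_{\y,\z} - \prod_{i=1}^n U_i^{(|v|_i(\y,\z) - |v|_i(\x,\z) + |v|_i(\x,\y))/2} f_{\x,\z}.
\]
Since $G$ sends each $f_{\x,\y}$ to $\gamma_{\x,\y}$ and is $\F_2[U_1,\ldots,U_n]$-linear, applying $G$ yields
\[
G(r_{\x,\y,\z}) = \gamma_{\x,\y}\gamma_{\y,\z} - \prod_{i=1}^n U_i^{(|v|_i(\y,\z) - |v|_i(\x,\z) + |v|_i(\x,\y))/2} \gamma_{\x,\z}.
\]
By Proposition~\ref{prop:BothFactorsArbitrary} together with the corollary immediately preceding the present statement, the first summand equals exactly that same product of $U_i$ powers times $\gamma_{\x,\z}$ in $\Quiv(\Gamma_U(n,k), \mc R_U)$. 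Hence $G(r_{\x,\y,\z}) = 0$, as required.

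Finally, because $G$ is already $\F_2[U_1,\ldots,U_n]^{V(n,k)}$-linear on $\Path(K(n,k))$ (it was defined as such in Definition~\ref{def:B0InverseFunctor}), the induced map on the quotient is automatically $\F_2[U_1,\ldots,U_n]^{V(n,k)}$-linear. The real work of this corollary was carried out already in Proposition~\ref{prop:BothFactorsArbitrary} and the preceding corollary, where one proves that the product $\gamma_{\x,\y}\gamma_{\y,\z}$ equals the correct monomial multiple of $\gamma_{\x,\z}$; there is no further obstacle, only the formal application of the universal property.
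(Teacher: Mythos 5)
Your proof is correct and is exactly the argument the paper intends (the paper states this corollary without proof, treating it as immediate from Proposition~\ref{prop:BothFactorsArbitrary} and the corollary identifying the monomial as $\prod_{i=1}^n U_i^{(|v|_i(\y,\z) - |v|_i(\x,\z) + |v|_i(\x,\y))/2}$). You have simply written out the routine verification that the generators of $\mc R_K$ are killed by $G$, which is the intended reading.
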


Equivalently, we have a $\F_2[U_1,\ldots,U_n]$-linear functor $G: \Cat_{\B_0(n,k)} \to \Cat_{\Quiv(\Gamma_U(n,k), \mc R_U)}$.

\subsubsection{Equivalence of descriptions}\label{sec:B0EquivalenceOfDescriptions}
The following corollary concludes the proof of Theorem \ref{thm:IntroQuiverDescriptionB0}.

\begin{corollary}\label{cor:B_0 Equivalence}
The homomorphisms 
\[
F: \Quiv(\Gamma_U(n,k), \mc R_U) \to \B_0(n,k)
\]
from Section~\ref{sec:DefinitionOfF} and 
\[
G: \B_0(n,k) \to \Quiv(\Gamma_U(n,k), \mc R_U)
\]
from Sections~\ref{sec:DefinitionOfGPart1} and \ref{sec:DefinitionOfGPart2} are inverse isomorphisms of $\F_2[U_1,\ldots,U_n]^{V(n,k)}$-algebras. 

Equivalently, $F$ and $G$ can be viewed as inverse isomorphisms of $\F_2[U_1,\ldots,U_n]$-linear categories. 
\end{corollary}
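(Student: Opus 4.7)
My plan is to show that $F$ and $G$ are mutually inverse by reducing to a check on algebra generators. Both maps are already known to be $\F_2[U_1,\ldots,U_n]^{V(n,k)}$-algebra homomorphisms: $F$ by Proposition~\ref{prop:FRespectsRelations}, and $G$ by the corollary that concludes Section~\ref{sec:DefinitionOfGPart2} (which in turn rests on the identification of the monomial in Proposition~\ref{prop:BothFactorsArbitrary} with the one appearing in the defining relations of $\B_0(n,k)$). Once we know $F$ and $G$ are well-defined homomorphisms, to prove $F \circ G = \id_{\B_0(n,k)}$ and $G \circ F = \id_{\Quiv(\Gamma_U(n,k),\mc R_U)}$ it is enough to verify these identities on multiplicative generating sets for the respective algebras.

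For $F \circ G = \id$, I would use the fact that $\B_0(n,k)$ is generated as an $\F_2[U_1,\ldots,U_n]^{V(n,k)}$-algebra by the edges $f_{\x,\y}$ of $K(n,k)$, as is evident from Definition~\ref{def:OSzStyleDef}. By the construction of the paths $\gamma_{\x,\y}$ in Definition~\ref{def:Recursive}, we have $F(\gamma_{\x,\y}) = f_{\x,\y}$ for every pair $(\x,\y)$, so $(F \circ G)(f_{\x,\y}) = F(\gamma_{\x,\y}) = f_{\x,\y}$, and $F \circ G$ agrees with the identity on generators.

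For $G \circ F = \id$, I would use that $\Quiv(\Gamma_U(n,k), \mc R_U)$ is generated as an $\F_2[U_1,\ldots,U_n]$-algebra by the edges of $\Gamma_U(n,k)$, namely the $R_i$ and $L_i$ arrows. For an $R_i$ arrow $e$ from $\x$ to $\y = (\x \setminus \set{i-1}) \cup \set{i}$, we have $F(e) = f_{\x,\y}$, and then $G(f_{\x,\y}) = \gamma_{\x,\y}$. Since $\x$ and $\y$ differ only in the single coordinate $x_a = i-1$ being replaced by $y_a = i$, the recursive rule of Definition~\ref{def:Recursive} selects the $R$-segment branch and invokes Lemma~\ref{lem:UniquePathForRightMovingDot} to produce the unique length-one path from $\x$ to $\y$ labeled $R_i$, which is exactly the edge $e$. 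The same argument with Lemma~\ref{lem:UniquePathForLeftMovingDot} handles the $L_i$ edges. Hence $G \circ F$ is the identity on generators, and therefore on the whole algebra.

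I do not expect any serious obstacle here: the substantive technical content lives in the earlier results (Proposition~\ref{prop:FRespectsRelations}, Lemma~\ref{lem:BothFactorsAreSegments}, Lemma~\ref{lem:LeftFactorSegmentRightFactorArbitrary}, and Proposition~\ref{prop:BothFactorsArbitrary}), which guarantee that $F$ and $G$ are well-defined on the relation ideals. Given those, the corollary itself is a short generator check, and the second, equivalent formulation in terms of $\F_2[U_1,\ldots,U_n]$-linear categories follows immediately via the correspondence between $\IdemRing$-algebra homomorphisms and identity-on-objects functors recalled in Appendix~\ref{app:Algebra}.
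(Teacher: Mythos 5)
Your proposal is correct and follows essentially the same route as the paper: both reduce the claim to checking $FG(f_{\x,\y}) = f_{\x,\y}$ (immediate from the construction of $\gamma_{\x,\y}$ in Definition~\ref{def:Recursive}) and $GF(e) = e$ on single edges $e$ of $\Gamma_U(n,k)$, using that $\gamma_{\x,\y}$ equals the edge itself when $\x$ and $\y$ are adjacent. Your spelled-out verification that the recursion returns the length-one path is just a slightly more explicit version of the paper's one-line observation.
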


\begin{proof}
By Definitions~\ref{def:Recursive} and \ref{def:B0InverseFunctor}, we have $FG(f_{\x,\y}) = f_{\x, \y}$ for all $\x,\y \in V(n,k)$, so it suffices to show that $GF(\gamma) = \gamma$ for every edge $\gamma$ of $\Gamma_U(n,k)$. If $\x$ and $\y$ are connected by an edge $\gamma$, then $\gamma_{\x,\y} = \gamma$. Thus, we have $GF(\gamma) = G(f_{\x,\y}) = \gamma_{\x,\y} = \gamma$ as desired. 
\end{proof}

By Lemma~\ref{lem:IntermediateQuiverEqv}, $F$ and $G$ give us inverse isomorphisms of $\F_2[U_1, \ldots, U_n]^{V(n,k)}$-algebras between $\B_0(n,k)$ and $\Quiv(\Gamma(n,k), \mc R)$. 
\begin{remark}\label{rem:FGNotationAbuse}
Below, we will often abuse notation and write $F$ and $G$ for these isomorphisms, rather than working with $\Gamma_U(n,k)$.
\end{remark}

\section{A quiver description of Ozsv\'ath--Szab\'o's algebra \texorpdfstring{$\B$}{B}}
\label{sec:OSzB}

\subsection{The algebras \texorpdfstring{$\B(n,k)$}{B(n,k)}}

While the algebra $\B_0(n,k)$ can be useful on its own (it is related to the degree-zero part of Ozsv{\'a}th--Szab{\'o}'s ``Pong algebra'' \cite{OSzPong} and appears in \cite{AlishahiDowlin}), Ozsv{\'a}th--Szab{\'o} work primarily with a quotient $\B(n,k)$ of $\B_0(n,k)$ in \cite{OSzNew}. They define this quotient in \cite[Definition 3.4]{OSzNew}; we will give an equivalent description in terms of the quiver $\Gamma(n,k)$. First, we review Ozsv{\'a}th--Szab{\'o}'s definition.

\begin{definition}[page 1115 of \cite{OSzNew}]\label{def:OSzStyleRLU}
Define an element $R_i$ of $\B_0(n,k)$ by
\[
R_i := \sum_{\x \in V(n,k), \,\, \x \cap \{i-1,i\} = \{i-1\}} f_{\x,\y},
\]
where for a given $\x$, we take $\y = (\x \setminus \{i-1\}) \cup \{i\}$. Similarly, define
\[
L_i := \sum_{\x \in V(n,k), \,\, \x \cap \{i-1,i\} = \{i\}} f_{\x,\y},
\]
where for a given $\x$, we take $\y = (\x \setminus \{i\}) \cup \{i-1\}$. Define
\[
U_i := \sum_{\x \in V(n,k)} U_i f_{\x,\x}.
\]
\end{definition}

\begin{remark}
We abuse notation by writing $U_i$ both for an element of $\B_0(n,k)$ and for an element of $\F_2[U_1,\ldots,U_n]$, on top of our further use of $R_i$, $L_i$, and $U_i$ for labels of edges in $\Gamma(n,k)$. We think this notation is well-motivated despite the potential risk of confusion.
\end{remark}

\begin{definition}[Definition 3.4 of \cite{OSzNew}]\label{def:OSzStyleBnk}
The algebra $\B(n,k)$ is the quotient of $\B_0(n,k)$ by the two-sided ideal generated by the following elements:
\begin{itemize}
\item $R_i R_{i+1}$ and $L_{i+1} L_i$ for $1 \leq i \leq n-1$
\item $\Ib_{\x} U_i$ for $1 \leq i \leq n$ and $\x \in V(n,k)$ with $\x \cap \{i-1,i\} = \varnothing$.
\end{itemize}
\end{definition}
More specifically, $\B(n,k)$ may be viewed as an algebra over $\F_2[U_1,\ldots,U_n]^{V(n,k)}$, since $\B_0(n,k)$ has this structure.

We now give a quiver description for $\B(n,k)$. Recall that to a path $\gamma$ in $\Gamma(n,k)$, we associate a noncommutative monomial $\mu(\gamma)$ in the letters $R_i$, $L_i$, and $U_i$ for $1 \leq i \leq n$.
\begin{definition}\label{def:B Quiver Algebra}
Define $\tilde{\mc R}$ to be the union of $\mc R \subset \Path(\Gamma(n,k))$ with the set of paths $\gamma$ such that $\mu(\gamma)$ is one of the following monomials for some $i$:
\begin{enumerate}
\item $R_i R_{i+1}$ or $L_{i+1} L_i$ (the ``two-line pass relations''),
\item\label{it:IsolatedUIsZero} $U_i$ if $\gamma$ is a loop at a vertex $\x \in V(n,k)$ with $\x\cap\{i-1,i\}=\emptyset$ (the ``$U$ vanishing relations'').
\end{enumerate}
\end{definition}
We can view the quotient $\Quiv(\Gamma(n,k), \td{\mc R})$ of $\Path(\Gamma(n,k))$ by the two-sided ideal generated by $\td{\mc R}$ as an algebra over $\F_2[U_1,\ldots,U_n]^{V(n,k)}$.

\begin{lemma}\label{lem:B Equivalence}
The isomorphisms $F$ and $G$ from Corollary~\ref{cor:B_0 Equivalence} (see also Remark~\ref{rem:FGNotationAbuse}) descend to isomorphisms of $\F_2[U_1,\ldots,U_n]^{V(n,k)}$-algebras between $\Quiv(\Gamma(n,k), \td{\mc R})$ and $\B(n,k)$.
\end{lemma}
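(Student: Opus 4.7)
The plan is to verify that the isomorphisms $F$ and $G$ of Corollary~\ref{cor:B_0 Equivalence} (after identifying $\Quiv(\Gamma_U(n,k),\mc R_U)$ with $\Quiv(\Gamma(n,k),\mc R)$ via Lemma~\ref{lem:IntermediateQuiverEqv}) each carry the extra relations on their source side into the two-sided ideal on their target side. This will let them descend to mutually inverse homomorphisms between $\Quiv(\Gamma(n,k),\td{\mc R})$ and $\B(n,k)$.

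First I would check that $F$ sends every element of $\td{\mc R}\setminus\mc R$ into the defining ideal of $\B(n,k)$ inside $\B_0(n,k)$. For a path $\gamma$ from $\x$ through $\y$ to $\z$ with $\mu(\gamma)=R_iR_{i+1}$, the vertices are forced by the edge labels: $i-1\in\x$, $i,i+1\notin\x$, $\y=(\x\setminus\{i-1\})\cup\{i\}$, and $\z=(\x\setminus\{i-1\})\cup\{i+1\}$. A quick computation of the minimal relative weight vectors shows $|v|_j$ is supported only in coordinates $i,i+1$ and takes values in $\{0,1\}$, so all the $U$-exponents in Definition~\ref{def:OSzStyleDef} vanish, giving $F(\gamma)=f_{\x,\y}f_{\y,\z}=f_{\x,\z}$. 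By Definition~\ref{def:OSzStyleRLU}, this is exactly $\Ib_{\x}(R_iR_{i+1})\Ib_{\z}$, hence lies in the ideal. The case $\mu(\gamma)=L_{i+1}L_i$ is symmetric. For a $U_i$-loop $\gamma$ at $\x$ with $\x\cap\{i-1,i\}=\varnothing$, we get $F(\gamma)=U_if_{\x,\x}=\Ib_{\x}U_i$, a generator of the ideal by Definition~\ref{def:OSzStyleBnk}.

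Next I would check that $G$ sends each generator of the defining ideal of $\B(n,k)$ to zero in $\Quiv(\Gamma(n,k),\td{\mc R})$. Expanding via Definition~\ref{def:OSzStyleRLU} and the first step, $R_iR_{i+1}=\sum_\x f_{\x,\z}$ over those $\x$ with $\x\cap\{i-1,i,i+1\}=\{i-1\}$, where $\z=(\x\setminus\{i-1\})\cup\{i+1\}$. For each such $\x$, the recursive construction of Definition~\ref{def:Recursive} produces a single $R$-segment $\gamma_{\x,\z}$ with $\mu(\gamma_{\x,\z})=R_iR_{i+1}$, which is a two-line pass relation in $\td{\mc R}$. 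The case $L_{i+1}L_i$ is parallel. For $\Ib_{\x}U_i$ with $\x\cap\{i-1,i\}=\varnothing$, write $\Ib_{\x}U_i=U_if_{\x,\x}$ and compute $G(U_if_{\x,\x})=U_i\cdot\gamma_{\x,\x}=U_i\cdot I_{\x}$; under Lemma~\ref{lem:IntermediateQuiverEqv} this is precisely the $U_i$-loop at $\x$, which lies in $\td{\mc R}$ via the $U$ vanishing relations.

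Since $F$ and $G$ are mutually inverse before passing to quotients and each carries its source ideal into its target ideal, they descend to mutually inverse $\F_2[U_1,\ldots,U_n]^{V(n,k)}$-algebra isomorphisms between $\Quiv(\Gamma(n,k),\td{\mc R})$ and $\B(n,k)$. The only mildly delicate point is the $|v|_j$ bookkeeping for two-line pass paths: without confirming that no $U$-factors appear in the products $f_{\x,\y}f_{\y,\z}$, one could not conclude that $F(\gamma)$ equals a single summand of $R_iR_{i+1}$ rather than some $U$-multiple of it; this is really the only step of the verification that is not immediate from the definitions.
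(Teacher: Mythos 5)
Your proof is correct and follows essentially the same route as the paper: both arguments reduce to checking that, under the isomorphism $F$ of Corollary~\ref{cor:B_0 Equivalence}, the two-sided ideal generated by the new relations of Definition~\ref{def:B Quiver Algebra} coincides with the ideal of Definition~\ref{def:OSzStyleBnk} (the paper phrases this as ``the Ozsv\'ath--Szab\'o generators are sums of generators of $\mc I_{\td{\mc R}}$, and conversely each generator of $\mc I_{\td{\mc R}}$ is an idempotent truncation of one of them,'' while you verify the two containments by explicit computation with $F$ and $G$). Your flagged ``delicate point'' about the vanishing of the $U$-exponents is in fact not load-bearing for membership in the ideal, since a $U$-multiple of an ideal element is still in the ideal, but the computation is correct and does no harm.
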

\begin{proof}
By Corollary \ref{cor:B_0 Equivalence}, $F$ and $G$ descend to isomorphisms between $\Quiv(\Gamma(n,k), \td{\mc R})$ and the quotient of $\B_0(n,k)$ by the two-sided ideal ${\mc I}_{\td{\mc R}}$ generated by the image under $F$ of $\td{\mc R} \subset \Path(\Gamma(n,k))$. The elements listed in Definition~\ref{def:OSzStyleBnk} are sums of generators of ${\mc I}_{\td{\mc R}}$, so they are in ${\mc I}_{\td{\mc R}}$. Conversely, any generator of ${\mc I}_{\td{\mc R}}$ can be obtained from an element listed in Definition~\ref{def:OSzStyleBnk} via left multiplication by $\Ib_{\x}$ for some $\x \in V(n,k)$. 
\end{proof}

Definition \ref{def:B Quiver Algebra} can be understood visually in the same way as Definition \ref{def:B_0 Quiver Algebra}, with the new relations imposing the following new restrictions:
\begin{enumerate}
\item If a dot moves twice in the same direction, then the result is zero (the two-line pass relations),
\item $U_i$ loops are zero at vertices $\x$ having $\x\cap\{i-1,i\}=\emptyset$ (the $U$ vanishing relations).
\end{enumerate}
See Figure \ref{fig:IdemsAndMotionsPart3} for an illustration.

\begin{figure}
\includegraphics[scale=0.5]{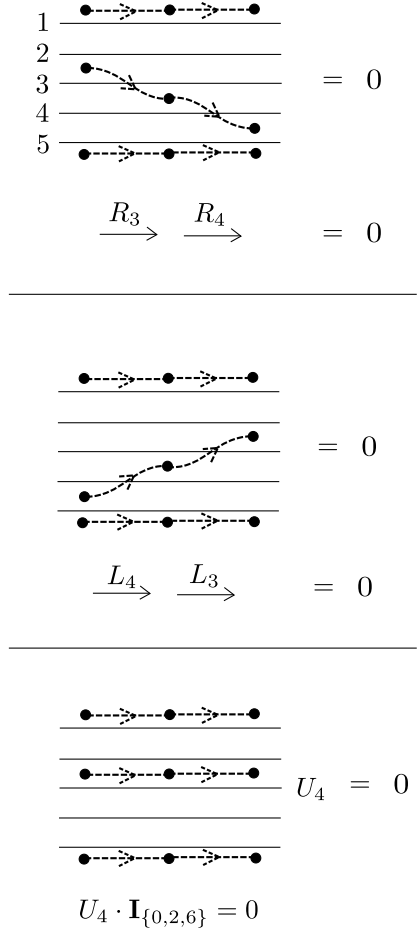}
\caption{Relations defining $\B(n,k)$ as a quotient of $\B_0(n,k)$: some examples.}
\label{fig:IdemsAndMotionsPart3}
\end{figure}

\subsection{The algebras \texorpdfstring{$\B(n,k,\Sc)$}{B(n,k,S)} for general orientations}\label{sec:GeneralOrientations}

In \cite{OSzNew}, bimodules over the algebra $\B(n,k)$ are assigned to braids oriented downwards (for us, these braids point leftwards; see Remark~\ref{rem:NinetyDegRot}). For more general orientations, Ozsv{\'a}th--Szab{\'o} define dg algebras $\B(n,k,\Sc)$ in \cite[Section 3.3]{OSzNew}. We review the definitions of these dg algebras below.

Let $\Sc$ be a subset of $[1,n]$; we think of $[1,n]$ as the left or right endpoints of a tangle projection (numbered from top to bottom, or from left to right in Ozsv{\'a}th--Szab{\'o}'s conventions), and then $i \in \Sc$ if and only if the projection is oriented rightwards through point $i$.

\begin{definition}[Section 3.3, \cite{OSzNew}]\label{def:GeneralBnksDef}
For $\Sc \subset [1,n]$ and $0 \leq k \leq n$, the dg algebra $\B(n,k,\Sc)$ is defined to be the tensor product of $\B(n,k)$ with an exterior algebra in variables $C_i$ for $i \in \Sc$, where $\de(C_i) = U_i = \sum_{\x \in V(n,k)} \Ib_{\x} U_i$. More concisely,
\[
\B(n,k,\Sc) := \frac{\B(n,k)[C_i \,|\, i \in \Sc]}{(C_i^2 = 0, \,\, \de(C_i) = U_i)_{i \in \Sc}}.
\]
We may identify $\B(n,k)$ with $\B(n,k,\varnothing)$. Gradings will be defined in Section~\ref{sec:OSz gradings} below.
\end{definition}

Generalizing the description of $\B(n,k)$ as $\Quiv(\Gamma(n,k), \tilde{\mc R})$, we can give a quiver description of $\B(n,k,\Sc)$. 
\begin{definition}\label{def:BnksQuiverDescription}
Let $\Gamma(n,k,\Sc)$ be obtained from $\Gamma(n,k)$ by adding an arrow from $\x$ to itself, for all $\x \in V(n,k)$ and for each $i \in \Sc$, with a new type of label $C_i$. To the relation set $\tilde{\mc R}$, we add the set of elements $\gamma$ such that $\mu(\gamma)$ is equal to one of the following:
\begin{itemize}
\item $C_i^2$ (the ``$C^2$ vanishing relations''),
\item $C_i A + A C_i$ for any label $A = R_j, L_j, U_j$, or $C_j$ (the ``$C$ central relations'').
\end{itemize}
Let $\tilde{\mc R}_\Sc$ denote this new relation set.  We declare that for each arrow labeled $C_i$ at a vertex $\x$, the differential of the corresponding generator of $\Quiv(\Gamma(n,k,\Sc), \tilde{\mc R}_{\Sc})$ is the arrow labeled $U_i$ at the vertex $\x$. By Proposition~\ref{prop:DGAlgFromQuiver}, we get a differential algebra structure on $\Quiv(\Gamma(n,k,\Sc), \tilde{\mc R}_{\Sc})$.
\end{definition}
We will also use the notation $\tilde{\mc R}_{\x,\y,\Sc}:=\tilde{\mc R}_\Sc \cap \Ib_\x \Path(\Gamma(n,k)) \Ib_\y$.
\begin{proposition}\label{prop:OSzQuiverEquivDifferential}
The isomorphisms $F$ and $G$ from Lemma~\ref{lem:B Equivalence} extend to isomorphisms of differential algebras over $\F_2[U_1,\ldots,U_n]^{V(n,k)}$ between $\B(n,k,\Sc)$ and $\Quiv(\Gamma(n,k,\Sc), \tilde{\mc R}_{\Sc})$. 
\end{proposition}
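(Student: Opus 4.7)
The plan is to construct the extension of $F$ and $G$ via the universal property of Proposition~\ref{prop:QuiverAlgUniversalProp} (and its differential analogue), and then to check that the new relations and the differential are respected. First I would define an $\F_2[U_1,\ldots,U_n]^{V(n,k)}$-algebra homomorphism $\bar F: \Path(\Gamma(n,k,\Sc)) \to \B(n,k,\Sc)$ extending $F$ by declaring that each arrow labeled $C_i$ at a vertex $\x$ is sent to $\Ib_\x C_i \in \B(n,k,\Sc)$. Symmetrically, define $\bar G : \B(n,k,\Sc) \to \Quiv(\Gamma(n,k,\Sc), \tilde{\mc R}_\Sc)$ extending $G$ by sending $C_i \in \B(n,k,\Sc)$ to $\sum_{\x \in V(n,k)} \Ib_\x C_i$, the sum over all vertices of the corresponding $C_i$ loops.

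Next, I would check that $\bar F$ annihilates the ideal generated by $\tilde{\mc R}_\Sc$. The elements of the old relation set $\tilde{\mc R}$ already go to zero by Lemma~\ref{lem:B Equivalence}, so only the new families matter. The $C^2$ vanishing relations are immediate since $C_i^2 = 0$ in the exterior factor. The $C$ central relations reduce to showing that the image of $C_i$ commutes with the images of $R_j$, $L_j$, $U_j$, and $C_j$ in $\B(n,k,\Sc)$; but $\B(n,k,\Sc) = \B(n,k) \otimes_{\F_2} \Lambda_{\F_2}[C_i : i \in \Sc]$, and because we work over $\F_2$ the distinction between graded and ungraded tensor products collapses, so $C_i$ is literally central. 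Hence $\bar F$ descends to $\Quiv(\Gamma(n,k,\Sc), \tilde{\mc R}_\Sc)$. A symmetric check with the obvious relations (associativity and the Koszul sign rules in $\B(n,k,\Sc)$, again trivialized by the characteristic) shows that $\bar G$ is well-defined.

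To see that $\bar F$ and $\bar G$ are mutually inverse, it suffices by Corollary~\ref{cor:B_0 Equivalence} and Lemma~\ref{lem:B Equivalence} to check the two new generating families: $\bar F(\bar G(C_i)) = \sum_\x \Ib_\x C_i = C_i$ in $\B(n,k,\Sc)$, and for a $C_i$ loop at $\x$ we have $\bar G(\bar F(C_i\text{ at }\x)) = \bar G(\Ib_\x C_i) = \Ib_\x \sum_\y \Ib_\y C_i = \Ib_\x C_i$, which is the $C_i$ loop at $\x$.

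Finally I would verify compatibility with the differential. On $\B(n,k) \subset \B(n,k,\Sc)$ the differential is zero, and on the quiver side the differential vanishes on every edge not labeled $C_i$; thus the identity $\partial \bar F = \bar F \partial$ holds trivially on old generators. On a $C_i$ loop at $\x$, one computes
\[
\partial\bigl(\bar F(C_i\text{ at }\x)\bigr) = \partial(\Ib_\x C_i) = \Ib_\x U_i = \bar F(U_i\text{ at }\x) = \bar F\bigl(\partial(C_i\text{ at }\x)\bigr),
\]
and the corresponding check for $\bar G$ is analogous, using that $\bar G(U_i) = \sum_\x \Ib_\x U_i$ equals the image of $\partial(\bar G(C_i))$. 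By the Leibniz rule this extends to all of $\Quiv(\Gamma(n,k,\Sc),\tilde{\mc R}_\Sc)$ and $\B(n,k,\Sc)$. The only real content of the argument is centrality of the $C_i$ and the matching of differentials on the new loops; everything else is formal bookkeeping via the universal property, so I expect no serious obstacle.
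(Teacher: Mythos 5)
Your proposal is correct and follows essentially the same route as the paper: extend $F$ on the $C_i$ loops by $C_i\text{ at }\x \mapsto \Ib_\x C_i$, extend $G$ by $C_i \mapsto \sum_\x (C_i\text{ at }\x)$, and then verify the new relations, the mutual-inverse property, and compatibility with $\partial$ via the differential analogue of Proposition~\ref{prop:QuiverAlgUniversalProp}. The paper's proof simply asserts these checks, which you have (correctly) carried out in detail.
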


\begin{proof}
One can extend $F$ by sending the $C_i$ loops at $\x$ in $\Gamma(n,k,\Sc)$ to the elements $I_\x C_i$ in $\B(n,k,\Sc)$, while $G$ sends $C_i\in\B(n,k,\Sc)$ to the sum of $C_i$ loops at all $\x$ in $\Gamma(n,k,\Sc)$. By the differential analogue of Proposition~\ref{prop:QuiverAlgUniversalProp}, the maps $F$ and $G$ are still inverse isomorphisms of differential algebras; the new relations on each side are satisfied on the other and the extended maps $F$ and $G$ respect the differential.
\end{proof}

\subsection{Alexander and Maslov gradings} \label{sec:OSz gradings}

In \cite[Section 3.4]{OSzNew}, Ozsv{\'a}th--Szab{\'o} define an Alexander multi-grading $w$ by $\left(\frac12\Z\right)^n$ and a Maslov grading $\m$ by $\Z$ on $\B(n,k,\Sc)$ which we review below.  However, we begin with an ``unrefined'' version of the Alexander multi-grading which is not mentioned in \cite{OSzNew}.

\begin{definition} \label{def:UnrefinedAlexOSz}
The \emph{unrefined Alexander multi-grading} on $\B(n,k,\Sc)$ is a grading by $\Z^{2n}$ denoted by $w^{\un}$ and defined as follows.  Write $\tau_1, \beta_1, \ldots, \tau_n, \beta_n$ for the standard basis of $\Z^{2n}$. For $1 \leq i \leq n$ and an edge $\gamma$ of $\Gamma(n,k,\Sc)$, we set
\begin{equation*}
\begin{aligned}
w^{\un}(\gamma)&:=\tau_i \textrm{ if } \gamma \textrm{ has label } R_i \\
w^{\un}(\gamma)&:=\beta_i \textrm{ if } \gamma \textrm{ has label } L_i \\
w^{\un}(\gamma)&:=\tau_i + \beta_i \textrm{ if } \gamma \textrm{ has label } U_i \textrm{ or } C_i. \\
\end{aligned}
\end{equation*}
For $\gamma = (\gamma_1, \ldots, \gamma_l)$, we define the unrefined Alexander multi-degree $w^{\un}(\gamma)$ to be 
\[
w^{\un}(\gamma) := \sum_{j=1}^l w^{\un}(\gamma_j).
\]

Since each element of $\tilde{\mc R}_{\Sc} \subset \Path(\Gamma(n,k,\Sc))$ is $w^{\un}$-homogeneous, we get an Alexander multi-grading by $\Z^{2n}$ on $\B(n,k,\Sc) \cong \Quiv(\Gamma(n,k,\Sc), \tilde{\mc R}_{\Sc})$ by Proposition~\ref{prop:GradedQuiverAlg}.
\end{definition}

We can pass from the unrefined Alexander multi-grading to the ``refined'' version of Ozsv{\'a}th--Szab{\'o} as in the following definition.

\begin{definition}[{\cite[Section 3.4]{OSzNew}}] \label{def:RefinedAlexOSz}
The \emph{(refined) Alexander multi-grading} on $\B(n,k,\Sc)$ is a grading by $\left(\frac{1}{2} \Z\right)^n$ denoted by $w$ and defined as follows.  Write $e_1,\ldots,e_n$ for the standard basis elements of $\Z^n$.  Let $\varphi: \Z^{2n} \rightarrow \left( \frac{1}{2} \Z \right)^n$ denote the homomorphism defined by setting $\varphi(\tau_i)=\varphi(\beta_i)=\frac{1}{2}e_i$ for all $1\leq i\leq n$, where $\tau_i,\beta_i$ form the basis for $\Z^{2n}$ as in Definition \ref{def:UnrefinedAlexOSz}.  Then we define $w:=\varphi\circ w^{\un}$.  Explicitly, for an edge $\gamma$ of $\Gamma(n,k,\Sc)$ representing an element of $\B(n,k,\Sc)$, we have
\begin{equation*} \label{eq:Multigrading}
\begin{aligned}
w(\gamma)&:=\frac{1}{2}e_i \textrm{ if } \gamma \textrm{ has label } R_i \textrm{ or } L_i\\
w(\gamma)&:=e_i \textrm{ if } \gamma \textrm{ has label } U_i \textrm{ or } C_i.
\end{aligned}
\end{equation*}
We will also use the notation $w_i(a)$ to denote the coefficient of $w(a)$ on the basis element $e_i$.
\end{definition}

We will often refer to the refined Alexander multi-grading as simply the Alexander multi-grading.

\begin{remark}
In \cite{ManionKS}, the meaning of ``refined'' and ``unrefined'' grading was reversed. Here we use terminology following \cite[Section 3.3]{LOT} in line with \cite{MMW2} where we relate the gradings on Ozsv{\'a}th--Szab{\'o}'s algebras with group-valued gradings on strands algebras.
\end{remark}

Going one step further, we can collapse the Alexander multi-grading to a single Alexander grading by $\frac{1}{2}\Z$.
\begin{definition}[Equation 3.8 of \cite{OSzNew}] \label{def:SingleAlexOSz}
The (single) \emph{Alexander grading} on $\B(n,k,\Sc)$ is a grading by $\frac{1}{2}\Z$ defined by 
\[
\Alex(\gamma) := -\sum_{i \in \Sc} w_i(\gamma) + \sum_{i \notin \Sc} w_i(\gamma).
\]
\end{definition}

Finally, we define homological gradings, also known as Maslov gradings.
\begin{definition}[Equation 3.9 of \cite{OSzNew}] \label{def:OSzMaslovDegrees}
For a path $\gamma$ in $\Gamma(n,k,\Sc)$, we define the \emph{Maslov degree} of $\gamma$ to be 
\[
\m(\gamma) := \#_{C}(\gamma) -2 \sum_{i \in \Sc} w_i(\gamma),
\]
where $\#_{C}(\gamma)$ is the number of edges in $\gamma$ labeled $C_i$ for some $i \in \Sc$. Concretely, if $\gamma$ is a single edge we have:
\begin{itemize}
\item $\m(\gamma) = 0$ if $\gamma$ has label $R_i$, $L_i$, or $U_i$ and $i \notin \Sc$,
\item $\m(\gamma) = -1$ if $\gamma$ has label $R_i$, $L_i$, or $C_i$ and $i \in \Sc$, and
\item $\m(\gamma) = -2$ if $\gamma$ has label $U_i$ and $i \in \Sc$.
\end{itemize}
\end{definition}

The following corollary concludes the proof of Theorem \ref{thm:IntroQuiverDescription}.

\begin{corollary} \label{cor:OSzQuiverEquivDG}
When using the refined or single Alexander gradings, together with the Maslov grading, the isomorphisms $F$ and $G$ from Proposition~\ref{prop:OSzQuiverEquivDifferential} are isomorphisms of dg algebras over $\F_2[U_1,\ldots,U_n]^{V(n,k)}$ between $\B(n,k,\Sc)$ and $\Quiv(\Gamma(n,k,\Sc), \tilde{\mc R}_{\Sc})$.
\end{corollary}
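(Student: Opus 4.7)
The plan is to leverage Proposition~\ref{prop:OSzQuiverEquivDifferential}, which already establishes that $F$ and $G$ are mutually inverse isomorphisms of differential $\F_2[U_1,\ldots,U_n]^{V(n,k)}$-algebras. Hence the only remaining content of the corollary is to verify that $F$ (equivalently $G$) is homogeneous of degree $0$ with respect to each of the refined Alexander multi-grading, the single Alexander grading, and the Maslov grading, and that the differential on each side is homogeneous of the expected degree so that both sides really are dg algebras in the stated sense.

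First I would observe that on both sides each grading is determined by its values on a set of multiplicative generators. On the quiver side, the gradings on $\Quiv(\Gamma(n,k,\Sc), \tilde{\mc R}_{\Sc})$ come from Proposition~\ref{prop:GradedQuiverAlg} applied to the assignment made in Definition~\ref{def:UnrefinedAlexOSz} and Definition~\ref{def:OSzMaslovDegrees}, so they are fully specified once one checks that every element of $\tilde{\mc R}_{\Sc}$ is a sum of paths of the same Alexander multi-degree and the same Maslov degree. This is a short, case-by-case inspection of the relations listed in Definitions~\ref{def:B_0 Quiver Algebra}, \ref{def:B Quiver Algebra}, and \ref{def:BnksQuiverDescription}: for instance, the loop relation $R_i L_i - U_i$ has both terms of refined Alexander degree $e_i$ and Maslov degree $0$ (if $i \notin \Sc$) or $-2$ (if $i \in \Sc$), while the $C$ central relations are homogeneous by symmetry.

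Second I would verify generator-by-generator that $F$ preserves degrees. Under $F$, an edge of $\Gamma(n,k,\Sc)$ labeled $R_i$, $L_i$, $U_i$, or $C_i$ at a vertex $\x$ is sent to the summand $\Ib_{\x} \cdot R_i$, $\Ib_{\x} \cdot L_i$, $\Ib_{\x} \cdot U_i$, or $\Ib_{\x} \cdot C_i$ of the element with the same name in Definition~\ref{def:OSzStyleRLU} (extended to $C_i$ as in the proof of Proposition~\ref{prop:OSzQuiverEquivDifferential}). Comparing Definitions~\ref{def:UnrefinedAlexOSz}--\ref{def:OSzMaslovDegrees} with the edge assignments used in Proposition~\ref{prop:GradedQuiverAlg}, the refined Alexander multi-degree and Maslov degree of each of these summands agree with the degree assigned to the corresponding edge; the single Alexander grading then follows automatically from Definition~\ref{def:SingleAlexOSz}, which is a $\Z$-linear combination of the components of $w$. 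The same check works for $G$, since $G$ sends each Ozsv\'ath--Szab\'o generator to a sum of edges of the corresponding label.

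Finally, to make sure the dg structure matches on both sides, I would check that the differential is homogeneous of the correct degree. Since $\de(C_i) = U_i$, and $w(C_i) = w(U_i) = e_i$ while $\m(U_i) - \m(C_i) = -2 - (-1) = -1$ for $i \in \Sc$, the differential is homogeneous of Alexander multi-degree $0$ and Maslov degree $-1$ on both sides, so the graded version of Proposition~\ref{prop:DGAlgFromQuiver} applies. The main obstacle is really just bookkeeping across the several gradings; no step is deep, but one must be careful to check the homogeneity of every relation in $\tilde{\mc R}_{\Sc}$ and to confirm that the conventions for $C$-labeled edges agree on both sides in both the Alexander and Maslov gradings.
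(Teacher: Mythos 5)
Your proposal is correct and takes essentially the same route as the paper: by Proposition~\ref{prop:OSzQuiverEquivDifferential} the only remaining point is that $F$ is degree-preserving on generators, which the paper verifies by translating Ozsv\'ath--Szab\'o's formula $w_i(U_1^{r_1}\cdots U_n^{r_n}f_{\x,\y})=r_i+\tfrac{|v|_i(\x,\y)}{2}$ (plus the $C_i$ contributions) and comparing with Definitions~\ref{def:RefinedAlexOSz} and~\ref{def:OSzMaslovDegrees}, after which the single Alexander and Maslov cases follow exactly as you say. Your extra verifications (homogeneity of the relations in $\tilde{\mc R}_{\Sc}$ and of the differential) are already packaged into Definition~\ref{def:UnrefinedAlexOSz} and Proposition~\ref{prop:DGAlgFromQuiver}, so they are harmless but not new content.
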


\begin{proof}
By Proposition~\ref{prop:OSzQuiverEquivDifferential}, we only need to check that $F$ preserves gradings (we have $G = F^{-1}$). Translating Ozsv{\'a}th--Szab{\'o}'s definition of the Alexander multi-grading from \cite[Section 3.4]{OSzNew} into our terminology, let 
\[
a = U_1^{r_1} \cdots U_n^{r_n} f_{\x,\y}
\]
be a generator of $\B(n,k)$. Ozsv{\'a}th--Szab{\'o} define the Alexander multi-degree of $a$ to have $i^{th}$ component equal to the quantity they call $w_i(a)$, which in our notation is $r_i + \frac{|v|_i(\x,\y)}{2}$. More generally, for a generator 
\[
a = C_{i_1} \cdots C_{i_l} U_1^{r_1} \cdots U_n^{r_n} f_{\x,\y}
\]
of $\B(n,k,\Sc)$ (with $i_1, \ldots, i_l \in \Sc$), Ozsv{\'a}th--Szabo define the degree of $a$ by declaring that $C_i$ contributes $1$ to the $i^{th}$ component of the Alexander multi-degree and $0$ to all other components. One can check that for an edge $\gamma$ of $\Gamma(n,k,\Sc)$ labeled $R_i$, $L_i$, $U_i$, or $C_i$, the Alexander multi-degree of $\gamma$ from Definition~\ref{def:RefinedAlexOSz} agrees with Ozsv{\'a}th--Szab{\'o}'s Alexander multi-degree of $F(\gamma) \in \B(n,k,\Sc)$; indeed, a similar computation is given in the second paragraph of \cite[Section 3.4]{OSzNew} for the elements $R_i$, $L_i$, and $U_i$ of Definition~\ref{def:OSzStyleRLU}.

Our single Alexander degree and Ozsv{\'a}th--Szab{\'o}'s are obtained from the Alexander multi-degrees by the same specialization, so $F$ preserves single Alexander degrees as well.  Finally, since $F$ preserves Alexander multi-degrees and the number of $C_i$ variables, $F$ also preserves Maslov degrees.
\end{proof}
Ozsv{\'a}th--Szab{\'o} do not discuss the unrefined Alexander multi-grading, so there is no need for a comparison result in this case.

\begin{remark}
Using Propositions~\ref{prop:GradedQuiverAlg} and \ref{prop:DGAlgFromQuiver}, one can package the Maslov grading and the Alexander multi-grading into a grading by $(G,\lambda)$ as in Section~\ref{sec:AppendixAlgebras}, with $G = \Z \oplus \left(\frac12\Z\right)^n$ and $\lambda = (1,0)$. One can also package the Maslov grading with the unrefined grading of Definition~\ref{def:UnrefinedAlexOSz} or the single Alexander grading of Definition~\ref{def:SingleAlexOSz} similarly.
\end{remark}

\subsection{Idempotent-truncated algebras}\label{sec:OSzTruncatedAlgs}

As in \cite[Section 12]{OSzNew}, one can define dg algebras related to $\B(n,k,\Sc)$ by taking full subcategories of $\Cat_{\B(n,k,\Sc)}$.
\begin{definition}\label{def:TruncatedOSzAlgs}
Define algebras $\B_r(n,k,\Sc)$, $\B_l(n,k,\Sc)$, and $\B'(n,k,\Sc)$ as follows:
\begin{itemize}
\item $\B_r(n,k,\Sc) := \Alg_{B_r}$ where $B_r$ is the full dg subcategory of $\Cat_{\B(n,k,\Sc)}$ on objects $\x \in V(n,k)$ with $0 \notin \x$; call the set of these objects $V_r(n,k)$.
\item $\B_l(n,k,\Sc) := \Alg_{B_l}$ where $B_l$ is the full dg subcategory of $\Cat_{\B(n,k,\Sc)}$ on objects $\x \in V(n,k)$ with $n \notin \x$; call the set of these objects $V_l(n,k)$.
\item $\B'(n,k,\Sc) := \Alg_{B'}$ where $B'$ is the full dg subcategory of $\Cat_{\B(n,k,\Sc)}$ on objects $\x \in V(n,k)$ with $0, n \notin \x$; call the set of these objects $V'(n,k)$.
\end{itemize}
\end{definition}

Without reference to categories, we can write $\B_r(n,k,\Sc)$ as
\[
\bigg( \sum_{\x: 0 \notin \x} \Ib_{\x} \bigg) \B(n,k,\Sc) \bigg( \sum_{\x: 0 \notin \x} \Ib_{\x} \bigg),
\]
and similarly for $\B_l(n,k,\Sc)$ and $\B'(n,k,\Sc)$. The gradings on $\B(n,k,\Sc)$ give rise to gradings on the idempotent-truncated algebras $\B_r(n,k,\Sc)$, $\B_l(n,k,\Sc)$, and $\B'(n,k,\Sc)$.

\begin{remark}
In \cite{ManionDecat}, the algebras $\B_{r}(n,k,\Sc)$ and $\B_l(n,k,\Sc)$ were called $\Cc_{r}(n,k,\Sc)$ and $\Cc_l(n,k,\Sc)$, following old notation of Ozsv{\'a}th--Szab{\'o}, and shown to categorify tensor products $V^{\otimes \Sc}$ of the vector representation $V$ of $\gloneone$ and its dual (depending on the orientations $\Sc$).
\end{remark}

Quiver descriptions of $\B_r(n,k,\Sc)$, $\B_l(n,k,\Sc)$, and $\B'(n,k,\Sc)$ will be given in Section~\ref{sec:QuiversForTruncatedAlgs}, once we have reviewed \cite[Proposition 3.7]{OSzNew} (see Proposition~\ref{prop:OSzBasis} below).

\section{The structure of Hom-spaces}\label{sec:OSzStructure}

\subsection{Far pairs of vertices and crossed lines}\label{sec:NotFar}

The visual interpretation of Section \ref{sec:GraphicalInterp} motivates the following definitions.
\begin{definition}[\cite{OSzNew}, Definition 3.5]
\label{def:far}
Vertices $\x,\y \in V(n,k)$ are \emph{far} (from each other) if there is some $a\in[1,k]$ such that $|x_a - y_a| > 1$. Otherwise they are \emph{not far}.
\end{definition}

Note that if $\x$ and $\y$ are not far, then $v_i(\x, \y) \in \set{-1,0,1}$ for all $i\in[1,n]$. It follows from \cite[Proposition 3.7]{OSzNew} that if $\x$ and $\y$ are far from each other then $\Ib_{\x} \B(n,k,\Sc) \Ib_{\y} = 0$; we will review this proposition below.

The terminology in the next definitions is also due to Ozsv{\'a}th--Szab{\'o}, although it does not appear explicitly in \cite{OSzNew}.
\begin{definition}\label{def:CrossedLine}
Let $\x,\y \in V(n,k)$ and suppose $\x$ and $\y$ are not far. In terms of the graphical interpretation from Section~\ref{sec:GraphicalInterp}, indices $i \in [1,n]$ correspond to horizontal lines, arranged in parallel and numbered from top to bottom. We say that line $i$ is a \emph{crossed line} if $v_i(\x,\y) \neq 0$, and we let $\CL{\x,\y}:=\{i\in[1,n] \,|\, v_i(\x,\y)\neq 0\}$ denote the set of crossed lines from $\x$ to $\y$.

By Lemma~\ref{lem:vi=Ri-Li}, if $i\in\CL{\x,\y}$ then every path $\gamma$ in $\Gamma(n,k)$ from $\x$ to $\y$ contains at least one edge labeled $R_i$ or $L_i$; the converse is also true. Thus, graphically speaking, we have $i\in\CL{\x,\y}$ if and only if every motion of dots from $\x$ to $\y$ involves a dot crossing over line $i$.
\end{definition}

\begin{definition}\label{def:FullyUsed}
Let $\x,\y \in V(n,k)$. A coordinate $i \in [0,n] \setminus (\x \cap \y)$ is called a \emph{not-fully-used} coordinate. Coordinates $i \in \x \cap \y$ are called \emph{fully-used} coordinates.
\end{definition}
Visually, elements of $[0,n]$ correspond to the regions between and outside the horizontal lines in Definition~\ref{def:CrossedLine}. A coordinate is fully-used if both $\x$ and $\y$ have a dot in the corresponding region. Note that this does not ensure that this dot was ``stationary'' in a minimal motion from $\x$ to $\y$; algebraically, a coordinate $i \in [0,n]$ is fully used if $i = x_a = y_b$ for some $a,b \in [1,k]$, but we may have $a \neq b$.

\subsection{The structure of \texorpdfstring{$\B(n,k,\Sc)$}{B(n,k,S)} via generating intervals}\label{sec:GenInt}
Given two vertices $\x,\y\in V(n,k)$ that are not far, there is a helpful way of describing the relations in $\Ib_\x \B(n,k,\Sc) \Ib_\y$, as discussed in \cite[Section 3.2]{OSzNew}.  The main idea is as follows.  An additive generator $a\in\Ib_\x \B(n,k,\Sc) \Ib_\y$ can be represented by a path $\gamma\in\Gamma(n,k,\Sc)$ from $\x$ to $\y$.  Modulo the relations, one may be able to replace some $U_i$-loops in $\gamma$ by sequences of edges labeled $(R_i,L_i)$ or $(L_i,R_i)$. In this way, one may get a new path $\gamma'$ representing $a$ that passes through a new vertex $\x'$ (not passed through by $\gamma$). If $\x' \cap\{j-1,j\} = \varnothing$ for some $j$ and $\gamma'$ contains a $U_j$ loop, one can then commute this $U_j$ loop past other edges of $\gamma'$ until it is based at $\x'$, implying that $\gamma' = 0$ and thus $a = 0$. Fortunately, the cases where this occurs can be summarized in a simple way with the help of the following definition.

\begin{definition}[Definition 3.6 of \cite{OSzNew}]\label{def:GenInt}
Let $\x,\y \in V(n,k)$ and suppose that $\x$ and $\y$ are not far. A \emph{generating interval} for $\x$ and $\y$ is a sequence of coordinates $[j+1,j+l] \subset [0,n]$ such that:
\begin{itemize}
\item The coordinates $j$ and $j+l$ are not fully used, but the coordinate $t$ is fully used for $j < t < j+l$.
\item $\CL{\x,\y}\cap[j+1,j+l]=\varnothing$, i.e. all of the dots between lines $j+1$ and $j+l$ can be viewed as ``stationary'' (see Definition \ref{def:CrossedLine}).
\end{itemize}
We say the \emph{length} of a generating interval $G = [j+1,j+l]$ is $l$. If $G = [j+1,j+l]$ is a generating interval, it has an associated (commutative) monomial $p_G$ in the variables $U_1,\ldots,U_n$ defined by $p_G := U_{j+1} \cdots U_{j+l}$.
\end{definition}

Visually, a generating interval is a sequence of lines surrounding stationary dots for the minimal motion from $\x$ to $\y$, with each region on either end of the interval being empty in either $\x$ or $\y$. 

\begin{figure}
\includegraphics[scale=0.5]{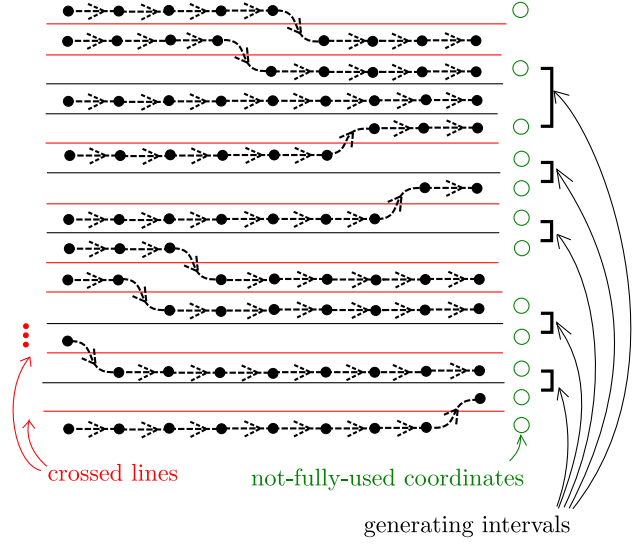}
\caption{Crossed lines, not-fully-used coordinates, and generating intervals in Example~\ref{ex:GenIntsEtc}.}
\label{fig:GenIntsExample}
\end{figure}

\begin{example}\label{ex:GenIntsEtc}
Let $\x = \{0,1,3,5,7,8,9,11,14\}$ and $\y = \{1,2,3,4,6,9,10,12,13\}$; $\x$ and $\y$ are elements of $V(14,9)$ that are not far (the similar-looking elements in Example~\ref{ex:GammaXYExample} are far). Crossed lines between $\x$ and $\y$ are shown in red in Figure~\ref{fig:GenIntsExample}. Not-fully-used coordinates are indicated with green circles to the right of the figure, and the generating intervals are also shown to the right. The monomials $p_G$ for the generating intervals $G$ are $U_3 U_4$, $U_6$, $U_8$, $U_{11}$, and $U_{13}$. 
\end{example}

In analogy to Definition \ref{def:GenInt}, we define a variant of generating intervals, which we call edge intervals.

\begin{definition}\label{def:LeftEdgeInterval}
Let $\x, \y \in V(n,k)$ be I-states that are not far. For $1 \leq l < n$, we say that $[[1,l]$ is a \emph{left edge interval} for $\x$ and $\y$ if the coordinate $l$ is not fully used, but the coordinate $t$ is fully used for $0 \leq t < l$. Note that in this case, up to coordinate $l$ there are no crossed lines, i.e. all of the dots above line $l$ can be viewed as stationary. We say the \emph{length} of a left edge interval $G = [[1,l]$ is $l$.
\end{definition}

\begin{definition}\label{def:RightEdgeInterval}
Let $\x, \y \in V(n,k)$ be I-states that are not far. For $1 \leq l < n$, we say that $[n-l+1,n]]$ is a \emph{right edge interval} for $\x$ and $\y$ if the coordinate $n-l$ is not fully used, but the coordinate $t$ is fully used for $n-l+1 \leq t \leq n$. In this case, after coordinate $n-l$ there are no crossed lines, i.e. all of the dots below line $n-l+1$ can be viewed as stationary. We say the \emph{length} of a right edge interval $G = [n-l+1,n]]$ is $l$.
\end{definition}

\begin{definition}
\label{def:TwoFacedEdgeInterval}
Let $\x=\y=[0,n]\in V(n,n+1)$.  We say that $[[1,n]]$ is a \emph{two-faced edge interval} for $\x$ and $\y$, with length $n+1$.
\end{definition}
Note that for $\x = \y = [0,n]$, all coordinates $t \in [0,n]$ are fully-used, and there are no crossed lines from $\x$ to $\y$.

\begin{proposition}
\label{prop:QuartumNonDatur}
Given $\x, \y \in V(n,k)$ not far, for each $i \in [1,n]$ exactly one of the following is true:
\begin{enumerate}
\item \label{it:QND1} $i\in\CL{\x,\y}$ (line $i$ is crossed);
\item \label{it:QND2} there exists a unique generating interval $G$ such that $i \in G$;
\item \label{it:QND3} there exists a unique (left, right, or two-faced) edge interval $G$ such that $i \in G$.
\end{enumerate}
\end{proposition}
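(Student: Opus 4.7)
The plan is to reduce the proposition to a clean combinatorial argument via the following transmission principle: if coordinate $t$ is fully used (i.e.\ $t \in \x \cap \y$), then $v_t(\x,\y) = v_{t+1}(\x,\y)$, since both $|\x \cap [t,n]|$ and $|\y \cap [t,n]|$ equal one plus their respective counts for $[t+1,n]$. Consequently, line $t$ is crossed if and only if line $t+1$ is crossed. The contrapositive---coordinate $t$ fully used together with line $t+1$ uncrossed forces line $t$ uncrossed---will be the key tool throughout.

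Given $i \in [1,n]$, if $i \in \CL{\x,\y}$ we are in case~\eqref{it:QND1}, and cases~\eqref{it:QND2} and~\eqref{it:QND3} are automatically excluded since generating and edge intervals admit no crossed lines. Otherwise, I would establish existence by defining $[j+1, j+l]$ to be the maximal interval of consecutive lines containing $i$ such that every line in $[j+1, j+l]$ is uncrossed and every interior coordinate $t$ with $j < t < j+l$ is fully used. By the transmission principle, maximality at the left endpoint forces either $j = 0$ or coordinate $j$ to be not fully used: otherwise, coordinate $j$ fully used together with line $j+1$ uncrossed would yield line $j$ uncrossed, allowing further extension; the right endpoint is symmetric. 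One then categorizes by the boundary configuration. If both endpoints are strictly interior and the boundary coordinates are not fully used, we obtain a generating interval; the case $j = 0$ with coordinate $0$ fully used (respectively, $j+l = n$ with coordinate $n$ fully used) gives a left (respectively, right) edge interval, where the interior conditions automatically supply the chain of fully used coordinates required by Definitions~\ref{def:LeftEdgeInterval} and~\ref{def:RightEdgeInterval}; the remaining degenerate possibility $[j+1, j+l] = [1,n]$ with coordinates $0$ and $n$ both fully used forces $[0,n] \subseteq \x \cap \y$, hence $\x = \y = [0,n]$, and produces the two-faced edge interval.

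Uniqueness and mutual exclusivity then follow directly from the construction. Any interval of type~\eqref{it:QND2} or~\eqref{it:QND3} containing $i$ must obey the defining conditions (no crossed lines inside, interior coordinates fully used) and hence be contained in the maximal interval $[j+1, j+l]$; the boundary conditions of its type then force equality. For \eqref{it:QND2} versus~\eqref{it:QND3}, the distinction is clean: a generating interval requires both boundary coordinates $j$ and $j+l$ to be not fully used, while each kind of edge interval requires at least one of the relevant boundary coordinates to be fully used (both, in the two-faced case). The main obstacle I anticipate is purely bookkeeping---carefully enumerating the possible boundary configurations of $[j+1, j+l] \subseteq [0,n]$ and confirming that each yields exactly one of the three cases---rather than any deep ingredient beyond the transmission principle above.
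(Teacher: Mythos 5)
Your proposal is correct and takes essentially the same route as the paper: your ``transmission principle'' is exactly the paper's observation that $v_t(\x,\y)=v_{t+1}(\x,\y)$ whenever $t$ is fully used, and the boundary coordinates of your maximal interval are precisely the paper's $j=\max\set{t\le i-1 \mid t \text{ not fully used}}$ and $j+l=\min\set{t\ge i \mid t \text{ not fully used}}$ from Lemma~\ref{lem:QuartumNonDatur}, with the same four-way case split on whether these exist. One bookkeeping correction: a generating interval need not have strictly interior endpoints (e.g.\ $[1,l]$ with coordinate $0$ not fully used is a legitimate generating interval), so the existence classification should be governed solely by whether the boundary coordinates are fully used --- the dichotomy you already state correctly in your uniqueness paragraph.
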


We will prove Proposition~\ref{prop:QuartumNonDatur} with the help of the following lemma.

\begin{lemma}
\label{lem:QuartumNonDatur}
Suppose that $\x, \y \in V(n,k)$ are not far and fix $i\in[1,n]\setminus\CL{\x,\y}$. Then:
\begin{enumerate}
\item \label{it:ncl1} $i$ belongs to a generating interval $[j+1,j+l]$ if and only if there exist a coordinate $t \leq i-1$ and a coordinate $t' \geq i$ that are not fully used. In this case we have 
\begin{align}
\label{eq:iandj}
&j = \max\set{t \leq i-1 \,\middle|\, \text{$t$ is not fully used}}, \\
&l = \min\set{t \geq i \,\middle|\, \text{$t$ is not fully used}} - j. \nonumber
\end{align}

\item \label{it:ncl2} $i$ belongs to a left edge interval $[[1,l]$ if and only if all coordinates $t \leq i-1$ are fully used and there exists a non-fully used coordinate $t' \geq i$. In this case we have
\[
l = \min\set{t \geq i \,\middle|\, \text{$t$ is not fully used}}.
\]
\item \label{it:ncl3} $i$ belongs to a right edge interval $[n-l+1,n]]$ if and only if there exists a non-fully used coordinate $t \leq i-1$ and all coordinates $t' \geq i$ are fully used. In this case we have
\[
l = n - \max\set{t \leq i-1 \,\middle|\, \text{$t$ is not fully used}}.
\]
\item \label{it:ncl4} $i$ belongs to a two-faced edge interval $[[1,n]]$ if and only if all coordinates are fully used.
\end{enumerate}

\end{lemma}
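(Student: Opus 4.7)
The plan is to reduce all four parts of Lemma \ref{lem:QuartumNonDatur} to a single monotonicity observation, applied in parallel to the definitions of generating, left edge, right edge, and two-faced edge intervals.

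The key observation I will establish first is the following: whenever $[j+1, j+l-1] \subseteq \x \cap \y$, $j \leq i-1$, $j+l \geq i$, and $i \notin \CL{\x,\y}$, one has $v_m(\x,\y) = 0$ for every $m \in [j+1, j+l]$. This is a telescoping argument: for $m \leq i$, the block $[m, i-1]$ lies in $\x \cap \y$ and thus contributes equally to $|\x \cap [m,n]|$ and $|\y \cap [m,n]|$, so splitting $[m,n] = [m, i-1] \sqcup [i, n]$ yields $v_m = v_i = 0$; the case $m > i$ is symmetric. The edge analogues are easier: if $[0, l-1] \subseteq \x \cap \y$ then $|\x| = |\y| = k$ forces $|\x \cap [0, l-1]| = l = |\y \cap [0, l-1]|$, hence $v_m = 0$ for all $m \in [1, l]$; the right edge and two-faced cases are handled identically.

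With this claim in hand, each of the four parts follows by a uniform template. In the forward direction, if $i$ lies in an interval of the specified type, the endpoint and interior conditions of its definition immediately yield the existence (or non-existence) of non-fully-used coordinates on each side of $i$, and the extremality of those endpoints pins down $j$ and $l$ via the stated formulas. In the reverse direction, I will define $j$ and $j+l$ (or the one-sided edge analogues) via those formulas, check by maximality and minimality that the interior coordinates are all fully used, and then invoke the monotonicity claim to conclude that no line in the resulting interval is crossed, producing the required interval. Part \eqref{it:ncl4} collapses: if every coordinate is fully used then $\x \cap \y = [0,n]$ forces $\x = \y = [0,n]$ and $k = n+1$, so $[[1,n]]$ is a two-faced edge interval containing $i$. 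Uniqueness in each case is automatic from the extremality formulas.

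The main obstacle is organizational rather than technical: the four cases correspond exactly to the four combinations of whether non-fully-used coordinates exist on each side of $i$, and one must verify both mutual exclusivity and that each extremality formula recovers the unique interval of the stated type. The monotonicity step itself is a one-line computation, but it genuinely uses the hypothesis $i \notin \CL{\x,\y}$ in part \eqref{it:ncl1}; for parts \eqref{it:ncl2}--\eqref{it:ncl4} the absence of crossed lines in the interval is already forced by the coordinate conditions alone and does not invoke this hypothesis.
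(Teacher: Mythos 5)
Your proposal is correct and takes essentially the same route as the paper's proof: the forward directions are read off from the interval definitions, and the converses rest on the same telescoping computation showing that a fully-used block of coordinates propagates $v_i(\x,\y)=0$ to every line of the candidate interval, with the extremality formulas pinning down $j$ and $l$. The only cosmetic difference is that the paper writes out case \eqref{it:ncl1} and calls the rest ``straightforward variations,'' whereas you isolate the shared computation as a single claim and note explicitly that the edge cases do not even need the hypothesis $i\notin\CL{\x,\y}$.
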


\begin{proof}
We only prove \eqref{it:ncl1}, since the proof in the other cases requires only straightforward variations. First, if $i$ belongs to a generating interval $[j+1,j+l]$, then $j$ and $j+l$ are non-fully-used coordinates satisfying $j \leq i-1$ and $j+l\geq i$. Moreover, from Definition~\ref{def:GenInt}, $j$ and $l$ must be given by the formulas in equation \eqref{eq:iandj}.

Conversely, suppose that there exist a coordinate $t \leq i-1$ and a coordinate $t' \geq i$ that are not fully used. Then $j$ and $l$ from equation \eqref{eq:iandj} are well defined. Recall that line $i$ is not crossed, that is, $v_i(\x, \y) = 0$. It follows that $v_{t}(\x, \y) = 0$ for all $t \in [j+1, i-1]$, since $v_i(\x, \y) = 0$ and all coordinates between $j+1$ and $i-1$ are fully used. Analogously, $v_{t}(\x, \y) = 0$ for all $t \in [i,j+l]$. Then, by definition, the interval $[j+1, j+l]$ is a generating interval containing $i$.
\end{proof}

\begin{proof}[{Proof of Proposition \ref{prop:QuartumNonDatur}}]
If line $i$ is crossed, then, by Definitions \ref{def:GenInt}, \ref{def:LeftEdgeInterval}, \ref{def:RightEdgeInterval} and \ref{def:TwoFacedEdgeInterval}, $i$ does not belong to any generating or edge interval. Thus, \eqref{it:QND1} is true and \eqref{it:QND2} and \eqref{it:QND3} are false in this case.

Now suppose that line $i$ is not crossed. Note that we must be in exactly one of the following 4 cases:
\begin{enumerate}
\item there exist a coordinate $t \leq i-1$ and a coordinate $t' \geq i$ that are not fully used;
\item all coordinates $t \leq i-1$ are fully used and there exists a non-fully used coordinate $t'\geq i$;
\item there exists a non-fully used coordinate $t\leq i-1$ and all coordinates $t'\geq i$ are fully used;
\item all coordinates are fully used.
\end{enumerate}
By Lemma \ref{lem:QuartumNonDatur} there exists exactly one generating or edge interval containing $i$, so we are done.
\end{proof}

The following proposition from \cite{OSzNew} shows that generating intervals provide all the relations within $\Ib_\x \B(n,k) \Ib_\y$.
\begin{proposition}[{\cite[Proposition 3.7]{OSzNew}}]\label{prop:OSzBasis}
\label{prop:Generating Intervals}
For $\x,\y \in V(n,k)$, let $\phi = \phi^{\x,\y}$ be the isomorphism of $\F_2[U_1,\ldots,U_n]$-modules from Definition~\ref{def:OSzStyleDef}. Its inverse
\[
\phi^{-1} \colon \Ib_\x \B_0(n,k) \Ib_\y \to \F_2[U_1, \ldots, U_n]
\]
induces an isomorphism
\begin{equation*}
\phi^{-1} \colon \Ib_\x \B(n,k) \Ib_\y \stackrel{\sim}{\longrightarrow}
\begin{cases}
0 & \text{if $\x$ and $\y$ are far}\\
\frac{\F_2[U_1, \ldots, U_n]}{\left(\, {p_G \,|\, G \text{ generating interval}} \,\right) } & \text{otherwise.}
\end{cases}
\end{equation*}
Thus, a basis over $\F_2$ for $\Ib_{\x} \B(n,k) \Ib_{\y}$ is given by the elements $\phi(p)$ where $p$ is a monomial in $U_1, \ldots, U_n$ that is not divisible by $p_G$ for any generating interval $G$ for $\x$ and $\y$. It follows that a basis for $\Ib_{\x} \B(n,k,\Sc) \Ib_{\y}$ is given by elements $\phi(p)$ times square-free monomials in variables $C_i$ for $i \in \Sc$.
\end{proposition}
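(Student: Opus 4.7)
The plan is to prove the isomorphism in three stages, working throughout with the quiver description $\B(n,k) \cong \Quiv(\Gamma(n,k), \tilde{\mc R})$ of Lemma~\ref{lem:B Equivalence}. Since every element of $\Ib_\x \B(n,k) \Ib_\y$ is an $\F_2[U_1,\ldots,U_n]$-multiple of $f_{\x,\y}$, surjectivity of both maps is inherited from Definition~\ref{def:OSzStyleDef}; the content of the proposition lies in identifying the kernel.

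First I would dispose of the far case by showing $f_{\x,\y} = 0$ in $\B(n,k)$, from which $\Ib_\x \B(n,k) \Ib_\y = 0$ is immediate. Represent $f_{\x,\y}$ by the path $\gamma_{\x,\y}$ of Definition~\ref{def:Recursive}, a concatenation of $R$- and $L$-segments each handling a single dot in its entirety. If $|x_a - y_a| \geq 2$ for some $a$, a direct inspection of the recursion shows that when the algorithm reaches coordinate $a$ the intermediate value still equals $x_a$ (as dot $a$ has not yet been touched and earlier phases only modify dots at larger positions), so the segment processing dot $a$ has length exactly $|x_a - y_a| \geq 2$. Such a segment contains $R_i R_{i+1}$ or $L_{i+1} L_i$ as consecutive edges and is therefore zero under the two-line pass relations of Definition~\ref{def:B Quiver Algebra}.

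Next, for the non-far case, I would verify the relations $p_G \cdot f_{\x,\y} = 0$ in $\B(n,k)$ for each generating interval $G = [j+1, j+l]$. The plan is to represent this element by the path obtained from $\gamma_{\x,\y}$ by attaching the loops $U_{j+1}, \ldots, U_{j+l}$ at $\y$, and then to rewrite this path using the loop relations so that one of the $U_t$ loops becomes based at a vertex $\x^*$ satisfying $\x^* \cap \{t-1, t\} = \varnothing$, at which point the $U$ vanishing relation collapses everything to zero. Because $j$ and $j+l$ are not fully used while the intermediate coordinates are fully used, either $\x$ or $\y$ lacks a dot at an endpoint of $G$, which lets one expand the corresponding $U$ loop as $L_t R_t$ or $R_t L_t$ and commute past the remaining loops (using distant commutation and the $U$ central relations) to reach the desired configuration. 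A small case analysis on which of $j, j+l$ lie in $\x$ and in $\y$ handles all subcases.

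Finally, I would establish injectivity of the resulting surjection $\F_2[U_1,\ldots,U_n]/\bigl( p_G : G \text{ a generating interval} \bigr) \to \Ib_\x \B(n,k) \Ib_\y$. The cleanest route is a matching of $\F_2$-bases via the unrefined Alexander multi-grading of Definition~\ref{def:UnrefinedAlexOSz}: Proposition~\ref{prop:QuartumNonDatur} partitions $[1,n]$ into crossed lines (contributing a fixed exponent to every basis element), generating intervals (contributing polynomial factors truncated by the relation $p_G = 0$), and edge intervals (contributing free polynomial factors), so the multi-graded Hilbert series of both sides match term-by-term. The extension to $\B(n,k,\Sc)$ is then immediate from the tensor product description of Definition~\ref{def:GeneralBnksDef}, since the exterior algebra in the $C_i$ variables commutes with everything else up to sign (which is trivial over $\F_2$). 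The main obstacle is Step~2: the reshuffling of attached $U$ loops into a position where the $U$ vanishing relation triggers is the only genuinely combinatorial part of the argument, and it must be organized cleanly enough that one can read off exactly the ideal $(p_G : G \text{ a generating interval})$, no more and no less.
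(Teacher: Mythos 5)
First, a point of comparison: the paper does not actually prove this proposition --- it is imported wholesale as \cite[Proposition 3.7]{OSzNew} --- so there is no internal proof to measure yours against. On its own terms, your Steps 1 and 2 are sound, and they closely parallel reasoning the paper does carry out later in the proof of Proposition~\ref{prop:QuiverDescriptionTruncated}: the far case follows because some dot's segment in $\gamma_{\x,\y}$ has length at least $2$ and hence contains a two-line pass relation, and $p_G f_{\x,\y}$ vanishes by expanding $p_G$ as a zig-zag path based at a vertex where a $U$ vanishing relation fires. One caution on Step 2: ``either $\x$ or $\y$ lacks a dot at an endpoint of $G$'' does not by itself let you base the factorization at $\x$ or at $\y$; in the mixed cases $\x \cap \{j,j+l\} = \{j+l\}$, $\y \cap \{j,j+l\} = \{j\}$ (and its reverse) you must travel along $\gamma_{\x,\y}$ to an intermediate vertex $\x'$ with $\x' \cap \{j,j+l\} = \varnothing$, exactly as in the proof of Proposition~\ref{prop:QuiverDescriptionTruncated}.

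The genuine gap is Step 3. Since every relation in Definition~\ref{def:OSzStyleBnk} is homogeneous for the unrefined multi-grading of Definition~\ref{def:UnrefinedAlexOSz} and each graded piece of $\Ib_\x \B_0(n,k) \Ib_\y$ is one-dimensional over $\F_2$, the grading does show that the kernel of the surjection $\F_2[U_1,\ldots,U_n] \to \Ib_\x \B(n,k) \Ib_\y$ is spanned by monomials; but it cannot tell you \emph{which} monomials die. Asserting that ``the multi-graded Hilbert series of both sides match'' presupposes the Hilbert series of the target, which is precisely the unknown: a priori the two-sided ideal $\mc I$ generated by the elements of Definition~\ref{def:OSzStyleBnk} could meet $\Ib_\x \B_0(n,k) \Ib_\y$ in an ideal strictly larger than $(p_G)$. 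Proposition~\ref{prop:QuartumNonDatur} organizes the expected answer but supplies no lower bound on the quotient. To close the gap you must actually compute $\Ib_\x \mc I \Ib_\y$ --- for instance by observing that it is spanned by elements $U^{s} f_{\x,\x'} \cdot r \cdot U^{t} f_{\y',\y}$ with $r$ a listed generator, evaluating each such product in $\B_0(n,k)$, and checking that every resulting monomial is divisible by some $p_G$ --- or, equivalently, by constructing the direct sum of the rings $\F_2[U_1,\ldots,U_n]/(p_G)$ as an algebra and exhibiting a well-defined surjection from $\B(n,k)$ onto it. Without one of these arguments the injectivity claim is unsupported.
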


\begin{corollary}\label{cor:ExplicitQuiverGensForMonomials}
Under the isomorphism 
\[
\frac{\F_2[U_1,\ldots,U_n]}{(p_G)} \xrightarrow{\phi} \Ib_{\x} \B(n,k) \Ib_{\y} \xrightarrow{G} \Ib_{\x} \Quiv(\Gamma(n,k), \tilde{\mathcal{R}}) \Ib_{\y},
\]
a monomial $U_1^{r_1} \cdots U_n^{r_n}$ gets sent to $\gamma_U \cdot \gamma_{\x,\y}$ where $\gamma_{\x,\y}$ is the path from Definition~\ref{def:Recursive} and $\gamma_U$ is a product of $U_i$ loops at $\x$ with multiplicities $r_i$.
\end{corollary}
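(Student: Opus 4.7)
The plan is to unravel each piece of the displayed composition and track where the monomial $U_1^{r_1}\cdots U_n^{r_n}$ goes at every stage. First, by Definition~\ref{def:OSzStyleDef}, $\phi=\phi^{\x,\y}$ is $\F_2[U_1,\ldots,U_n]$-linear and sends $1$ to $f_{\x,\y}$; this map descends to the quotient by $(p_G \mid G \text{ a generating interval})$ via Proposition~\ref{prop:Generating Intervals}, and still takes $U_1^{r_1}\cdots U_n^{r_n}$ to the class of $U_1^{r_1}\cdots U_n^{r_n} f_{\x,\y}$ in $\Ib_{\x}\B(n,k)\Ib_{\y}$.

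Next I would invoke Definition~\ref{def:B0InverseFunctor} together with Lemma~\ref{lem:B Equivalence} (and the notational convention of Remark~\ref{rem:FGNotationAbuse}): the map $G$ is $\F_2[U_1,\ldots,U_n]$-linear as a homomorphism into $\Quiv(\Gamma(n,k),\tilde{\mc R})$ viewed through the intermediate description $\Quiv(\Gamma_U(n,k),\mc R_U)$, and it sends $f_{\x,\y}$ to the path $\gamma_{\x,\y}$ of Definition~\ref{def:Recursive}. Applying $G$ to the previous step therefore yields $U_1^{r_1}\cdots U_n^{r_n}\cdot \gamma_{\x,\y}$, where the scalars act via the $\F_2[U_1,\ldots,U_n]$-module structure on $\Ib_{\x}\Quiv(\Gamma(n,k),\tilde{\mc R})\Ib_{\y}$.

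Finally, I would identify this scalar action with left concatenation by $U_i$-loops. This is exactly the content of the isomorphism $\tilde{\xi}$ from the proof of Lemma~\ref{lem:IntermediateQuiverEqv}: scalar multiplication by $U_i$ in $\Quiv(\Gamma_U(n,k),\mc R_U)$ corresponds to prepending the $U_i$-loop at the source vertex in $\Quiv(\Gamma(n,k),\mc R)$. The $U$ central relations from part \eqref{rel:central} of Definition~\ref{def:B_0 Quiver Algebra} ensure that the resulting $r_1+\cdots+r_n$ loops can be gathered into a single path $\gamma_U$ of $U_i$-loops at $\x$ with multiplicity $r_i$, independent of ordering, so that the image is $\gamma_U\cdot\gamma_{\x,\y}$ as claimed.

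The argument is purely a bookkeeping exercise: the two substantive inputs, namely that $G$ is $\F_2[U_1,\ldots,U_n]$-linear and that $G(f_{\x,\y})=\gamma_{\x,\y}$, are already established in Section~\ref{sec:DefinitionOfGPart2} and Corollary~\ref{cor:B_0 Equivalence}, and nothing further is required beyond invoking Lemma~\ref{lem:IntermediateQuiverEqv} to move $U_i$-scalars into loop edges. There is no real obstacle; the only point requiring care is the distinction between $\Quiv(\Gamma_U(n,k),\mc R_U)$ (where $U_i$ appears as a scalar) and $\Quiv(\Gamma(n,k),\tilde{\mc R})$ (where it appears as a loop edge), which is handled transparently by the identification of Remark~\ref{rem:FGNotationAbuse}.
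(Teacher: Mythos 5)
Your argument is correct and is exactly the bookkeeping the paper intends (the corollary is stated without proof there): $\phi$ sends the monomial to $U_1^{r_1}\cdots U_n^{r_n}f_{\x,\y}$, the $\F_2[U_1,\ldots,U_n]$-linearity of $G$ together with $G(f_{\x,\y})=\gamma_{\x,\y}$ gives the scalar multiple of $\gamma_{\x,\y}$, and the functor $\tilde{\xi}$ of Lemma~\ref{lem:IntermediateQuiverEqv} plus the $U$ central relations convert the scalars into the loop path $\gamma_U$ at $\x$. No gaps.
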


Corollary~\ref{cor:ExplicitQuiverGensForMonomials} will be useful in \cite{MMW2} when proving that the algebra map 
\[
\Phi: \B(n,k,\Sc) \to \sac nk\Sc
\]
constructed in that paper is a quasi-isomorphism in the case $\Sc = \varnothing$.

\subsection{A splitting theorem}\label{sec:OSzSplittingTheorem}

Proposition~\ref{prop:Generating Intervals} implies that $\Ib_{\x} \B(n,k,\Sc) \Ib_{\y}$ decomposes as a tensor product of chain complexes. First, we introduce special cases of Ozsv{\'a}th--Szab{\'o}'s algebras that we will call generating algebras and edge algebras.

\begin{definition}\label{def:GenAlgebra}
For $l \geq 0$ and $\Sc \subset [1,l]$, define the \emph{generating algebra} $\overline{B}(l,\Sc)$ to be
\[
\overline{B}(l,\Sc) := \Ib_{[1,l-1]} \B(l,l-1,\Sc) \Ib_{[1,l-1]}.
\]
Similarly, define the \emph{left edge algebra} $\overline{B}_{\lda}(l,\Sc)$ to be $\Ib_{[0,l-1]} \B(l,l,\Sc) \Ib_{[0,l-1]}$, and define the \emph{right edge algebra} $\overline{B}_{\rho}(l,\Sc)$ to be $\Ib_{[1,l]} \B(l,l,\Sc) \Ib_{[1,l]}$. Define the \emph{two-faced edge algebra} $\overline{B}_{\lda\rho}(l,\Sc)$ to be $\B(l,l+1,\Sc)$.
\end{definition}

Now let $\x, \y \in V(n,k)$ be not far. Based on the structure of the generating intervals and edge intervals for $\x$ and $\y$, we introduce a regrading of the generating and edge algebras $\overline{B}(l,\Sc)$ and $\overline{B}_{\rho}(l,\Sc)$. This regrading will be used primarily in Corollary~\ref{cor:IBItoTensorProduct}.

Let $[j_1+1, j_1+l_1], \ldots, [j_b+1, j_b+l_b]$ be the generating intervals for $\x$ and $\y$ (see Definition \ref{def:GenInt}), of lengths $l_1, \ldots, l_b$ respectively, ordered so that $j_1< \cdots < j_b$.

\begin{definition}\label{def:OSzRegradedGenAlgs}
If $G=[j_a+1, j_a + l_a]$ is a generating interval for $\x$ and $\y$, consider the dg algebra
\begin{equation*}
\Ib_{[j_a+1, j_a +l_a-1]} \B(n,l_a-1,\Sc \cap G) \Ib_{[j_a+1, j_a +l_a-1]}.
\end{equation*}
If $l_a=1$, note that this algebra is $\F_2$. We have a canonical isomorphism
\begin{equation}\label{eq:OSzpsiG for non edge}
\psi_G \colon \Ib_{[j_a+1, j_a +l_a-1]} \B(n,l_a-1,\Sc \cap G) \Ib_{[j_a+1, j_a +l_a-1]} \to \overline{B}(l_a,\Sc_a),
\end{equation}
by a simple re-indexing of the regions $[0,n]$ (omitting the ones outside $[j_a,j_a+l_a]$), where 
\[
\Sc_a := \{i-j_a \,|\, i \in \Sc \cap G\}.
\]
Redefine the Alexander multi-gradings on $\overline{B}(l_a,\Sc_a)$ by shifting the indices by $j_a$, so that $\tau_{i}, \beta_{i} \mapsto \tau_{i+j_a}, \beta_{i+j_a}$ and the isomorphism preserves the Maslov grading and all Alexander gradings from Section~\ref{sec:OSz gradings}. Similarly, if $G = [n-l_{b+1}+1,n]]$ is a right edge interval for $\x$ and $\y$, there is a canonical isomorphism
\begin{equation}\label{eq:OSzpsiG for right edge}
\psi_G \colon \Ib_{[n-l_{b+1}+1, n]} \B(n,l_{b+1},\Sc \cap [n-l_{b+1}+1, n]) \Ib_{[n-l_{b+1}+1,n]} \to \overline{B}_\rho(l_{b+1},\Sc_{b+1}),
\end{equation}
where
\[
\Sc_{b+1} := \{i-n+l_{b+1} \,|\, i \in \Sc \cap [n-l_{b+1}+1, n]\}.
\]
Modify the Alexander gradings on $\overline{B}_\rho(l_{b+1},\Sc_{b+1})$ so that $\psi_G$ preserves them as above.
\end{definition}

If $G = [[1,l_0]$ is a left edge interval for $\x$ and $\y$, then there is a canonical isomorphism
\begin{equation}\label{eq:OSzpsiG for left edge}
\psi_G \colon \Ib_{[0, l_0-1]} \B(n,l_0,\Sc \cap [1, l_0]) \Ib_{[0,l_0-1]} \to \overline{B}_\lda(l_0,\Sc_0),
\end{equation}
defined as above. By analogy with the previous cases, we set $\Sc_{0} := \Sc \cap [1,l_0]$.
Note that there is no need to redefine the Alexander multi-grading on $\overline{B}_\lda (l_0, \Sc_0)$ in this case, because $\psi_G$ already preserves it. If $G = [[1,n]]$ is a two-faced edge interval for $\x$ and $\y$, then $\overline{B}_{\lda\rho}(n,\Sc) = \Ib_{[0, n]} \B(n,n+1,\Sc) \Ib_{[0,n]}$ by definition.

\begin{definition}\label{def:CrossedLinesAlg}
We define a dg algebra for the crossed lines from $\x$ to $\y$ (see Definition \ref{def:CrossedLine}), denoted by $\Bcl{\x,\y}$, as follows:
\[
\Bcl{\x,\y}:=\frac{\F_2[U_i \,|\, i\in\CL{\x,\y}][C_j \,|\, j\in\CL{\x,\y}\cap\Sc]}{(C_j^2, \,\, \de C_j=U_j)}.
\]
The differential is zero on all terms other than $C_j$ as shown.  We define an Alexander multi-grading on $\Bcl{\x,\y}$ by setting
\[
w_i(1) = \begin{cases}
\frac12 & \text{if $i\in\CL{\x,\y}$}\\
0 & \text{otherwise}\\
\end{cases}
\]
and declaring that multiplication by either $U_j$ or $C_j$ increases $w_i$ by $\delta_{i,j}$ for all $j\in\CL{\x,\y}$ ($j\in\CL{\x,\y}\cap\Sc$ for the variables $C_j$) and $i\in[1,n]$.  We define a Maslov grading on $\Bcl{\x,\y}$ by setting
\[
\m(1) = -|\CL{\x,\y}\cap\Sc|
\]
and declaring that multiplication by $U_i$ (respectively $C_i$) decreases $\m$ by $2$ (respectively $1$) if $i\in\CL{\x,\y}\cap\Sc$.  Multiplication by $U_j$ for $j\in\CL{\x,\y}\setminus\Sc$ has no effect on $\m$.
\end{definition}

\begin{corollary}
\label{cor:IBItoTensorProduct}
Let $\x, \y \in V(n,k)$ be not far. Let $[j_1+1, j_1+l_1], \ldots, [j_b+1, j_b+l_b]$ be the generating intervals, of lengths $l_1, \ldots, l_b$ respectively, ordered so that $j_1< \cdots < j_b$.

There is an isomorphism of chain complexes
\[
\psi \colon \Ib_\x \B(n,k,\Sc) \Ib_\y \stackrel{\sim}{\longrightarrow} \Bcl{\x,\y} \otimes \overline{B}_\circ(l_0,\Sc_0) \otimes \overline{B}(l_1,\Sc_1) \otimes \cdots \otimes \overline{B}(l_{b},\Sc_b) \otimes \overline{B}_\circ(l_{b+1},\Sc_{b+1}),
\]
where $\overline{B}_\circ(l_0,\Sc_0)$ and $\overline{B}_\circ(l_{b+1},\Sc_{b+1})$ are defined as follows: 
\begin{itemize}
\item If $[[1,l_0]$ is a left edge interval for $\x$ and $\y$, then we set $\overline{B}_\circ(l_0,\Sc_0) = \overline{B}_\lda(l_0,\Sc_0)$; otherwise we set $\overline{B}_\circ(l_0,\Sc_0) = \F_2$.
\item If $[n-l_{b+1}+1,n]]$ is a right edge interval for $\x$ and $\y$, then we set $\overline{B}_\circ(l_{b+1},\Sc_{b+1}) = \overline{B}_\rho(l_{b+1},\Sc_{b+1})$; otherwise we set $\overline{B}_\circ(l_{b+1},\Sc_{b+1}) = \F_2$.
\item If $\x = \y = [0,n]$ (i.e. $[[1,n]]$ is a two-faced edge interval for $\x$ and $\y$), then we set the target of $\psi$ to be $\overline{B}_{\lda\rho}(n, \Sc)$.
\end{itemize}
The Alexander and Maslov gradings on the right hand side are as specified in Definitions~\ref{def:OSzRegradedGenAlgs} and \ref{def:CrossedLinesAlg}.
\end{corollary}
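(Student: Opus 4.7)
The plan is to define $\psi$ explicitly on the Ozsv{\'a}th--Szab{\'o} basis from Proposition~\ref{prop:OSzBasis}, verify it is an $\F_2$-linear bijection, and then check that it respects the differential and the gradings. By Proposition~\ref{prop:QuartumNonDatur}, every index $i \in [1,n]$ lies in exactly one of the following: $\CL{\x,\y}$, a generating interval $[j_a+1,j_a+l_a]$, or one of the edge intervals (left, right, or two-faced). This partition of indices induces a partition of the variables $\{U_i\}_{i \in [1,n]} \cup \{C_i\}_{i \in \Sc}$ into the variables appearing in $\Bcl{\x,\y}$, those appearing in each $\overline{B}(l_a,\Sc_a)$, and those appearing in $\overline{B}_\circ(l_0,\Sc_0)$ or $\overline{B}_\circ(l_{b+1},\Sc_{b+1})$.

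First I would use Proposition~\ref{prop:OSzBasis} to write the $\F_2$-basis of $\Ib_\x \B(n,k,\Sc) \Ib_\y$ as elements of the form $\phi(U_1^{r_1} \cdots U_n^{r_n}) \cdot \prod_{j \in T} C_j$ where $T \subset \Sc$ and $U_1^{r_1} \cdots U_n^{r_n}$ is not divisible by $p_G$ for any generating interval $G$. Each such basis element factors uniquely according to the partition above, and I define $\psi$ to send it to the corresponding tensor product: the piece in the crossed-line variables lands in $\Bcl{\x,\y}$, and the piece supported in a generating (resp.\ edge) interval $G$ lands in the corresponding tensor factor via the re-indexing isomorphism $\psi_G$ of Definition~\ref{def:OSzRegradedGenAlgs}. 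Applying Proposition~\ref{prop:OSzBasis} to each tensor factor shows that $\psi$ sends the chosen basis bijectively onto the standard basis of the target: the non-divisibility by $p_G$ is precisely the statement that, in $\overline{B}(l_a,\Sc_a)$, the factor is not annihilated by the unique generating interval $[1,l_a]$. Hence $\psi$ is an $\F_2$-linear isomorphism, with the two-faced edge interval case $\x = \y = [0,n]$ being tautological.

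Next I would verify $\psi$ is a chain map. The differential on $\B(n,k,\Sc)$ acts only via $\de(C_i) = U_i$ extended by the Leibniz rule, so on a basis element of the form above, $\de$ splits as a sum indexed by $j \in T$, each term modifying only variables of index $j$. Since $j$ lies in exactly one partition block, this modification is confined to a single tensor factor on the target side, where the differential has the identical form. Hence $\psi \circ \de = \de \circ \psi$ on basis elements and by linearity everywhere.

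Finally, I would check the Alexander multi-grading and Maslov grading are preserved. By Lemma~\ref{lem:vi=Ri-Li} and Definition~\ref{def:RefinedAlexOSz}, the path $\gamma_{\x,\y}$ representing $\phi(1) = f_{\x,\y}$ contributes $w_i(f_{\x,\y}) = 1/2$ precisely for $i \in \CL{\x,\y}$ and $0$ elsewhere; its Maslov grading is $-|\CL{\x,\y} \cap \Sc|$ by Definition~\ref{def:OSzMaslovDegrees}. These match, by design, the gradings of $1 \in \Bcl{\x,\y}$ fixed in Definition~\ref{def:CrossedLinesAlg}, and the tensor factors corresponding to intervals start at grading $0$. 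The re-indexing of the Alexander grading on each $\overline{B}(l_a,\Sc_a)$ and $\overline{B}_\rho(l_{b+1},\Sc_{b+1})$ in Definition~\ref{def:OSzRegradedGenAlgs} is exactly what is needed so that the contributions of $U_i$ and $C_i$ on the two sides agree index by index; the Maslov grading transfers without modification since it is unaffected by the re-indexing. The main obstacle I expect is the careful bookkeeping to confirm the initial grading offsets on $\Bcl{\x,\y}$ are consistent with the grading of $f_{\x,\y}$, and to dispatch the several edge-interval cases (including the exceptional two-faced case) uniformly; once this is in place the rest of the verification is routine.
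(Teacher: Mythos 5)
Your proposal is correct and follows essentially the same route as the paper: both use Proposition~\ref{prop:QuartumNonDatur} to partition the indices $[1,n]$ into crossed lines, generating intervals, and edge intervals, and then apply Proposition~\ref{prop:OSzBasis} on both sides to match the quotient $\F_2[U_1,\ldots,U_n]/(p_G)$ with the tensor product of interval algebras, adjoining the exterior $C_i$-variables and checking the differential and gradings at the end. The paper phrases the splitting at the level of quotient polynomial rings (its equation~\eqref{eq:SplittingCorollaryHelper}) rather than basis element by basis element, but the content is identical.
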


Corollary~\ref{cor:IBItoTensorProduct} is useful below when we compute the homology of $\B(n,k,\Sc)$. We will prove an analogous splitting theorem for the strands algebras $\sac nk\Sc$ in \cite{MMW2} and use it for homology computations in parallel fashion.

\begin{proof}
First suppose $\Sc = \varnothing$. By Proposition~\ref{prop:OSzBasis}, $\Ib_{\x} \B(n,k) \Ib_{\y}$ is isomorphic to $\frac{\F_2[U_1,\ldots,U_n]}{(p_G)}$, and if there are no edge intervals we can write
\[
\F_2[U_1,\ldots,U_n] \cong \F_2[U_i \,|\, i \in \CL{\x,\y}] \otimes \bigotimes_{a=1}^b \F_2[U_{j_a+1},\ldots,U_{j_a+l_a}]
\]
by Proposition \ref{prop:QuartumNonDatur} (when there are left or right edge intervals we have extra polynomial factors on the right hand side of the isomorphism). Since each generator $p_G$ of the ideal $(p_G)$ involves only $U_i$ generators with $j_a + 1 \leq i \leq j_a + l_a$, we have
\begin{equation}\label{eq:SplittingCorollaryHelper}
\frac{\F_2[U_1,\ldots,U_n]}{(p_G)} \cong \F_2[U_i \,|\, i \in \CL{\x,\y}] \otimes \bigotimes_{a=1}^b \frac{\F_2[U_{j_a+1},\ldots,U_{j_a+l_a}]}{(U_{j_a+1} \cdots U_{j_a+l_a})},
\end{equation}
where edge intervals again give extra polynomial factors with no relations. Using Proposition~\ref{prop:OSzBasis} a second time, the tensor product on the right side of \eqref{eq:SplittingCorollaryHelper} is isomorphic to the target of $\psi$ as in the statement of the corollary. 

We can thus define $\psi$ to be the isomorphism of \eqref{eq:SplittingCorollaryHelper}, composed on both sides with isomorphisms from Proposition~\ref{prop:OSzBasis}. By the grading shifts of Definition~\ref{def:OSzRegradedGenAlgs}, $\psi$ is an isomorphism of graded modules. When $\Sc \neq \varnothing$, we define $\psi$ by tensoring both sides with an exterior algebra on $\{C_i \,|\, i \in \Sc\}$ (with appropriate gradings and differentials). We get an isomorphism of chain complexes as desired. 
\end{proof}

\begin{remark}\label{rmk:split B with wun}
The various tensor factors in Corollary~\ref{cor:IBItoTensorProduct} can also be given unrefined Alexander gradings so that the splitting isomorphism $\psi$ respects these gradings as well.
\end{remark}

\subsection{Quiver descriptions of the truncated algebras}\label{sec:QuiversForTruncatedAlgs}

Using Proposition~\ref{prop:Generating Intervals}, we can give quiver descriptions of the truncated algebras $\B_r(n,k,\Sc)$, $\B_l(n,k,\Sc)$, and $\B'(n,k,\Sc)$ from Definition~\ref{def:TruncatedOSzAlgs}.

\begin{definition}
We define quivers and relations for the truncated algebras as follows:
\begin{itemize}
\item Let $\Gamma_r(n,k,\Sc)$ be the subgraph of $\Gamma(n,k,\Sc)$ on vertices $\x \in V_r(n,k)$, and let 
\[
\td{\mc R}_r := \bigcup_{\x,\y \in V_r(n,k)} \td{\mc R}_{\x,\y,\Sc}.
\]
Define $\Gamma_l(n,k,\Sc)$ and $\td{\mc R}_l$ similarly.
\item Let $\Gamma'(n,k,\Sc)$ be the subgraph of $\Gamma(n,k,\Sc)$ on vertices $\x \in V'(n,k)$, and let $\td{\mc R}'$ be the union of $\bigcup_{\x,\y \in V'(n,k)} \td{\mc R}_{\x,\y,\Sc}$ with one additional element $U_1 \cdots U_n$ in $\mc R_{\x,\x,\Sc}$ for $\x = [1,n-1]$ (if $k = n-1$).  
\end{itemize}
\end{definition}

\begin{proposition}\label{prop:QuiverDescriptionTruncated}
The dg algebra homomorphism $F$ from $\Path(\Gamma_r(n,k,\Sc))$ to $\B_r(n,k,\Sc)$ obtained by restricting the map $F$ from Proposition~\ref{prop:OSzQuiverEquivDifferential} to 
\[
\Path(\Gamma_r(n,k,\Sc)) \subset \Path(\Gamma(n,k,\Sc))
\]
induces an isomorphism from $\Quiv(\Gamma_r(n,k,\Sc), \td{\mc R}_r)$ to $\B_r(n,k,\Sc)$. Similar statements hold for $\B_l(n,k,\Sc)$ and $\B'(n,k,\Sc)$.
\end{proposition}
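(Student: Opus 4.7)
The plan is to restrict the isomorphism $F$ of Proposition~\ref{prop:OSzQuiverEquivDifferential} (and its inverse $G$) to the sub-quivers $\Gamma_r(n,k,\Sc)$, $\Gamma_l(n,k,\Sc)$, and $\Gamma'(n,k,\Sc)$. Since every path in $\Gamma_r(n,k,\Sc)$ goes between vertices of $V_r(n,k)$, the restriction of $F$ sends $\Path(\Gamma_r(n,k,\Sc))$ into $\B_r(n,k,\Sc)$, and it kills $\td{\mc R}_r \cap \Path(\Gamma_r(n,k,\Sc))$ as a subset of $\td{\mc R}_\Sc$, so it descends to a dg algebra homomorphism $\Quiv(\Gamma_r(n,k,\Sc), \td{\mc R}_r) \to \B_r(n,k,\Sc)$. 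For the inverse direction, I would check inductively using Definition~\ref{def:Recursive} that when $\x,\y \in V_r$, every intermediate vertex in the recursive construction of $\gamma_{\x,\y}$ also lies in $V_r$: each $R$-move introduces a dot at a position $\geq x_a \geq 1$, and each $L$-move at a position $\geq y_a \geq 1$, so $0$ is avoided throughout. Symmetric bounds (dot positions $\leq n-1$) handle $V_l$, and intersecting handles $V' = V_r \cap V_l$. This lets $G$ restrict as well, and the identities $FG = \id$, $GF = \id$ descend to give inverse isomorphisms in the $\B_r$ and $\B_l$ cases.

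For $\B'(n,k,\Sc)$, the extra relation $U_1 \cdots U_n$ at $\x = [1,n-1]$ (imposed when $k = n-1$) is sent to zero by $F$: by Proposition~\ref{prop:OSzBasis}, $[1,n]$ is a generating interval for $\x = \y = [1,n-1]$ with $p_G = U_1 \cdots U_n$, so this element vanishes in $\B$. The restrictions of $F$ and $G$ give candidate inverse maps, and $FG = \id$ on $\B'$ is immediate. What remains is to verify $GF = \id$ on $\Quiv(\Gamma'(n,k,\Sc), \td{\mc R}')$; equivalently, that the generating-interval relations $p_G \cdot \gamma_{\x,\y} = 0$ for $\x,\y \in V'$ all hold in $\Quiv(\Gamma'(n,k,\Sc), \td{\mc R}')$, using only $\Gamma'$-paths together with the single extra relation.

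The main technical step is a \emph{shift-and-vanish} argument. For a generating interval $G = [j+1,j+l]$ of $(\x,\y)$, one uses $U$-central relations to commute $p_G$ along $\gamma_{\x,\y}$ to act as a loop at a vertex $\z \in V'$ for which $G$ is also a generating interval for $(\z,\z)$, i.e.\ with $j, j+l \notin \z$. When $j \geq 1$, shifting the dots at positions $j+1, \ldots, j+l-1$ leftward one step via $L_{j+1} L_{j+2} \cdots L_{j+l-1}$ keeps all intermediate vertices in $V'$ (introduced positions lie in $[j,j+l-2] \subset [1,n-1]$), reaching a vertex $\z^*$ with $\{j+l-1, j+l\} \cap \z^* = \varnothing$, so that $U_{j+l}$ vanishes at $\z^*$ by the $U$-vanishing relation; commuting this vanishing $U_{j+l}$ through the shift via $U$-central relations and collapsing matched pairs on the reverse shift by the loop relations $R_i L_i = U_i$ yields $p_G \Ib_\z = 0$. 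The case $j+l \leq n-1$ is handled by the symmetric rightward shift. The only generating interval for which neither shift remains in $V'$ is $G = [1,n]$ at $\x = \y = [1,n-1]$, which forces $k = n-1$ and $V' = \{[1,n-1]\}$; this is precisely where we add the explicit extra relation. The main obstacle will be verifying this shift argument carefully, including selecting a valid $\z$ when $\x$ and $\y$ themselves fail; I would do this by case analysis modeled on the example $n=4$, $k=2$, $\x = \{1,2\}$, where $U_1 U_2 U_3 \Ib_\x = R_3 R_2 \cdot U_1 \Ib_{\{2,3\}} \cdot L_2 L_3 = 0$ via $U_1$-vanishing at $\{2,3\}$.
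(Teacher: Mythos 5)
Your treatment of $\B'(n,k,\Sc)$ is essentially the paper's argument: the shift-and-vanish factorization is exactly the paper's rewriting of $p_G$ as $L_{j+1}\cdots L_{j+l-1}\,U_{j+l}\,R_{j+l-1}\cdots R_{j+1}$ (or its mirror) based at a vertex where a $U$-vanishing relation applies, and your identification of $[1,n]$ at $\x=\y=[1,n-1]$ as the unique interval for which neither shift stays in $V'$ is precisely why the extra relation $U_1\cdots U_n$ is imposed. The step you defer (choosing the base vertex $\z$ when $\x$ and $\y$ both meet $\{j,j+l\}$) is handled in the paper by observing that $\gamma_{\x,\y}$ then contains consecutive edges labeled $R_{j+l+1}R_j$ or $L_jL_{j+l+1}$, whose common vertex misses $\{j,j+l\}$; this is a real but fillable step.

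The genuine gap is in your $\B_r$ and $\B_l$ cases. Checking that every intermediate vertex of $\gamma_{\x,\y}$ lies in $V_r$ gives surjectivity of the induced map $\Quiv(\Gamma_r(n,k,\Sc),\td{\mc R}_r)\to\B_r(n,k,\Sc)$, but it does not let $G$ ``restrict'' or the identity $GF=\id$ ``descend.'' The algebra $\B_r$ is defined as an idempotent truncation, not by generators and relations, so to define $G$ on $\B_r$ with target $\Quiv(\Gamma_r,\td{\mc R}_r)$ (rather than the a priori different algebra $\Ib\,\Quiv(\Gamma(n,k,\Sc),\td{\mc R}_\Sc)\,\Ib$, whose relations may use paths through vertices containing $0$) you must verify that every linear relation among the spanning elements $U_1^{r_1}\cdots U_n^{r_n}f_{\x,\y}$ of $\B_r$ — by Proposition~\ref{prop:OSzBasis}, these are generated by the vanishing of the generating-interval monomials $p_G$ — is already a consequence of $\td{\mc R}_r$ alone. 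This is exactly the issue you correctly isolate for $\B'$, and it does not disappear for the one-sided truncations; it is the heart of the proof there too. The fix is the same factorization you use for $\B'$: for $\B_r$ one must use the rightward shift $R_{j+l}\cdots R_{j+2}\,U_{j+1}\,L_{j+2}\cdots L_{j+l}$ (which introduces only positions $\geq j+2$ and hence works even when $j=0$, the case where your leftward shift would leave $V_r$), and for $\B_l$ the leftward one; neither truncation has an exceptional interval, which is why no extra relation is needed there. With that computation supplied, your construction of mutually inverse maps and the paper's surjectivity-plus-injectivity argument coincide.
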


\begin{proof}
Since the relations in $\td{\mc R}_r$ hold in $\B_r(n,k,\Sc)$, it suffices to show that $F$ induces a bijection 
\[
F: \Ib_{\x} \Quiv(\Gamma_r(n,k,\Sc), \td{\mc R}_r) \Ib_{\y} \to \Ib_{\x} \B_r(n,k,\Sc) \Ib_{\y}
\]
for all $\x,\y \in V_r(n,k)$. First, we prove this in the case $\Sc = \varnothing$.

For surjectivity, note that by Proposition~\ref{prop:Generating Intervals} (or by definition), $\Ib_{\x} \B_r(n,k) \Ib_{\y}$ is spanned by products of $f_{\x,\y}$ with $U_i$ generators. One can check that $\gamma_{\x,\y}=F^{-1}(f_{\x,\y})$ is a path in $\Gamma_r(n,k)$, and all loops giving rise to $U_i$ generators are in $\Gamma_r(n,k)$, so $F$ is surjective.

For injectivity, first note that Lemma~\ref{lem:BothFactorsAreSegments}, Lemma~\ref{lem:LeftFactorSegmentRightFactorArbitrary}, and Proposition~\ref{prop:BothFactorsArbitrary} also hold when the elements $\gamma_{\x,\y}$ in these proofs are viewed as elements of $\Quiv(\Gamma_r(n,k),\td{\mc{R}}_r)$ rather than $\Quiv(\Gamma_U(n,k), \mc{R}_U)$. In fact, the case of $\Quiv(\Gamma_r(n,k),\td{\mc{R}}_r)$, like $\Quiv(\Gamma(n,k),\td{\mc{R}})$, is a bit simpler because $R$-segments and $L$-segments with length greater than $1$ are zero. The only thing to check is that all relations used in the proofs of Lemma~\ref{lem:BothFactorsAreSegments}, Lemma~\ref{lem:LeftFactorSegmentRightFactorArbitrary}, and Proposition~\ref{prop:BothFactorsArbitrary} are in $\td{\mc R}_r \subset \td{\mc R}$, and this follows from re-examining the proofs.

Since any path $\gamma$ in $\Gamma_r(n,k)$ can be written as some $U_i$ loops times a product of edges $\gamma_{\x,\y}$, we see from these results that $\gamma$ is equivalent modulo $\td{\mc R}_r$ to $\gamma_{\x,\y}$ times some $U_i$ loops. It follows that as $\F_2[U_1,\ldots,U_n]$-modules, $\Ib_{\x} \Quiv(\Gamma_r(n,k), \td{\mc R}_r) \Ib_{\y}$ is isomorphic to $\frac{\F_2[U_1,\ldots,U_n]}{\mc J}$ where $\mc J$ is an ideal in $\F_2[U_1,\ldots,U_n]$.

Since $F$ is well-defined, $\mc J$ is contained in the ideal generated by the monomials $p_G$ for $G$ a generating interval between $\x$ and $\y$. To show that $F$ is injective, it suffices to show that each of these monomials is in $\mc J$. Indeed, writing $p_G = U_{j+1} \cdots U_{j+l}$, we consider several cases.

If $\x\cap\{j,j+l\} = \varnothing$ (respectively $\y\cap\{j,j+l\} = \varnothing$), then we can factor $p_G$ as the path $R_{j+l} \cdots R_{j+2} U_{j+1} L_{j+2} \cdots L_{j+l}$ in $\Gamma_r(n,k)$ based at $\x$ (respectively at $\y$).  Note that all of the edges used are indeed in $\Gamma_r(n,k)$ even in the extreme case that $j=0$.  Then since $j\notin\x$ (respectively $j\notin\y$), the edge labeled $U_{j+1}$ in this expression is zero by the $U$ vanishing relations.

Since $G$ is a generating interval, the only cases that remain are the cases where 
\[
|\x\cap\{j,j+l\}|=|\y\cap\{j,j+l\}|=1,
\]
but $\x\cap\{j,j+l\}\neq\y\cap\{j,j+l\}$, which we analyze separately below.
\begin{itemize}
\item If $\x\cap\{j,j+l\}=\{j+l\}$ and $\y\cap\{j,j+l\}=\{j\}$, then $v_j(\x,\y) = v_{j+l+1}(\x,\y) = 1$ and the construction of Definition \ref{def:Recursive} ensures that $\gamma_{\x,\y}$ contains a pair of consecutive edges labeled $R_{j+l+1} R_j$. Let $\x'$ be the vertex common to these two edges; we have $\x' \cap \{j,j+l\} = \varnothing$, so we can factorize $p_G$ as a path based at $\x'$ and get zero.

\item If $\x\cap\{j,j+l\}=\{j\}$ and $\y\cap\{j,j+l\}=\{j+l\}$, the argument is similar with $\gamma_{\x,\y}$ containing a pair of consecutive edges labeled $L_j L_{j+l+1}$. Let $\x'$ be the vertex common to these edges; we again have $\x' \cap \{j,j+l\} = \varnothing$, so we can factorize $p_G$ as a path based at $\x'$ and get zero.
\end{itemize}

It follows that
\[
F \colon \Quiv(\Gamma_r(n,k), \td{\mc R}_r) \to \B_r(n,k)
\]
is a dg algebra isomorphism as claimed. By adjoining the $C_i$ generators on both sides, we get the statement for general $\Sc$.

The case of $\B_l(n,k,\Sc)$ is analogous; one uses the factorization
\[
L_{j+1} \cdots L_{j+l-1} U_{j+l} R_{j+l-1} \cdots R_{j+1},
\]
the edges of which are included in $\Gamma_l(n,k)$.

For $\B'(n,k,\Sc)$, the argument needs a minor modification: if $k = n-1$, so $\x = \y = [1,n-1]$, the unique generating interval $G$ between $\x$ and $\y$ has $p_G = U_1 \cdots U_n$, and none of the $U_i$ generators can be factored into $R_i$ and $L_i$ generators in $\Ib_{\x} \Quiv(\Gamma'(n,k), \td{\mc R}') \Ib_{\y}$. However, $U_1 \cdots U_n$ was explicitly added as an element of $\td{\mc R}'$ (and thus $\mc J$), so it still follows that each monomial $p_G$ is in $\mc J$. Thus, $F$ is a dg algebra isomorphism in this case as well.
\end{proof}

\begin{remark}
The proof of Proposition~\ref{prop:QuiverDescriptionTruncated} also gives us an alternate and shorter proof of Proposition~\ref{prop:OSzQuiverEquivDifferential}, relying fundamentally on Proposition~\ref{prop:OSzBasis} (\cite[Proposition 3.7]{OSzNew}).

Our first proof of Proposition~\ref{prop:OSzQuiverEquivDifferential}, though, has the advantage that it comes from a quiver description of the algebra $\B_0(n,k)$, which may be of independent interest and to which \cite[Proposition 3.7]{OSzNew} does not apply.
\end{remark}

\subsection{Symmetries}\label{sec:OSzSymmetries}
In \cite[Section 3.6]{OSzNew}, Ozsv{\'a}th--Szab{\'o} exhibit two symmetries of the algebras $\B(n,k,\Sc)$, taking the form of algebra isomorphisms
\[
\rho: \B(n,k,\Sc) \to \B(n,k,\rho(\Sc))
\]
and
\[
o: \B(n,k,\Sc) \to \B(n,k,\Sc)^{\op},
\]
where
\[
\rho(\Sc) = \set{n+1-i \, \middle| \, i \in \Sc}.
\]
Ozsv\'ath and Szab\'o refer to the first symmetry as $\mc R$ rather than $\rho$. We use the latter to avoid confusion with the relation ideals in the quiver description of $\B(n,k,\Sc)$. In the language of Definition~\ref{def:OSzStyleDef}, we can describe these symmetries as follows; first, define $\rho: V(n,k) \to V(n,k)$ by 
\[
\rho(\x) = \{n-a \,|\, a \in \x\}.
\]
\begin{definition}[Section 3.6 of \cite{OSzNew}]
The isomorphism $\rho: \B(n,k) \to \B(n,k)$ is induced from the isomorphism $\rho: \B_0(n,k) \to \B_0(n,k)$ sending $U_1^{r_1} \cdots U_n^{r_n} f_{\x,\y}$ to $U_1^{r_n} \cdots U_n^{r_1} f_{\rho(\x),\rho(\y)}$. The isomorphism $\rho: \B(n,k,\Sc) \to \B(n,k,\rho(\Sc))$ is induced from $\rho: \B(n,k) \to \B(n,k)$ by sending $C_i$ to $C_{n+1-i}$.

We can view $\rho$ as an isomorphism of $\F_2[U_1,\ldots,U_n]^{V(n,k)}$-algebras if we modify the left and right actions of $\F_2[U_1,\ldots,U_n]^{V(n,k)}$ on $\B(n,k,\rho(\Sc))$ by precomposing them with the endomorphism of $\F_2[U_1,\ldots,U_n]^{V(n,k)}$ sending $U_i$ to $U_{n+1-i}$ and $\x$ to $\rho(\x)$. One can check that $\rho$ is an involution, i.e. $\rho^2 = \id$; this makes sense because $\rho$ is defined for all $\Sc$. 
\end{definition}

\begin{definition}[Section 3.6 of \cite{OSzNew}]
The involution $o: \B(n,k,\Sc) \to \B(n,k,\Sc)^{\op}$ sends 
\[
U_1^{r_1} \cdots U_n^{r_n} f_{\x,\y} \mapsto U_1^{r_1} \cdots U_n^{r_n} f_{\y,\x},
\]
and it sends $C_i$ to $C_i$. Unlike with $\rho$, we can view $o$ as an involution of $\F_2[U_1,\ldots,U_n]^{V(n,k)}$-algebras without modifying the actions on either side. One can check that this involution $o$ sends $R_i$ to $L_i$, $L_i$ to $R_i$, $U_i$ to $U_i$, and $C_i$ to $C_i$ where $R_i$, $L_i$, $U_i$, and $C_i$ are defined as in Definition~\ref{def:OSzStyleRLU}, so that our definition of $o$ agrees with Ozsv{\'a}th--Szab{\'o}'s.
\end{definition}

In fact, we can see both symmetries $\rho$ and $o$ from the perspective of $\Quiv(\Gamma(n,k,\Sc), \td{\mc R}_{\Sc})$.
\begin{definition}\label{def:RhoOnDirectedGraphs}
Define an involution of directed graphs $\rho: \Gamma(n,k,\Sc) \to \Gamma(n,k,\rho(\Sc))$ by sending an edge $\gamma$ from $\x$ to $\y$ in $V(n,k)$ labeled $R_i$, $L_i$, $U_i$, or $C_i$ to the unique edge from $\rho(\x)$ to $\rho(\y)$ labeled $L_{n+1-i}$, $R_{n+1-i}$, $U_{n+1-i}$, or $C_{n+1-i}$ respectively.
\end{definition}

The involution $\rho$ induces an involution of path algebras 
\[
\rho: \Path(\Gamma(n,k,\Sc)) \xrightarrow{\cong} \Path(\Gamma(n,k,\rho(\Sc))),
\]
at least as algebras over $\F_2$. We can view $\rho$ as an involution of $\F_2[U_1,\ldots,U_n]^{V(n,k)}$-algebras if we precompose the usual left and right actions of $\F_2[U_1,\ldots,U_n]^{V(n,k)}$ on $\Path(\Gamma(n,k,\rho(\Sc)))$ with the involution of $\F_2[U_1,\ldots,U_n]^{V(n,k)}$ sending $\x$ to $\rho(\x)$ and sending $U_i$ to $U_{n+1-i}$.

\begin{proposition}\label{prop:RhoPreservesRelations}
The involution $\rho: \Path(\Gamma(n,k,\Sc)) \xrightarrow{\cong} \Path(\Gamma(n,k,\rho(\Sc)))$ sends the relation set $\td{\mc R}_{\Sc}$ to the relation set $\td{\mc R}_{\rho(\Sc)}$.
\end{proposition}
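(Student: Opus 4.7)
The plan is to verify this by checking each type of relation in $\td{\mc R}_{\Sc}$ case by case, showing that $\rho$ sends it to a relation of (possibly different) type in $\td{\mc R}_{\rho(\Sc)}$. This reduces to two kinds of book-keeping: (a) translating labels via $R_i \leftrightarrow L_{n+1-i}$, $U_i \leftrightarrow U_{n+1-i}$, $C_i \leftrightarrow C_{n+1-i}$; and (b) checking that combinatorial conditions on vertices or indices transform correctly under $\x \mapsto \rho(\x)$ and $i \mapsto n+1-i$.

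Concretely, I would proceed as follows. First, the $U$ central relations of type~\eqref{rel:central} and the distant commutation relations from Definition~\ref{def:B_0 Quiver Algebra} are invariant in kind: applying $\rho$ swaps every $R$ with an $L$ and reindexes, sending e.g. $R_i U_j - U_j R_i$ to $L_{n+1-i} U_{n+1-j} - U_{n+1-j} L_{n+1-i}$ and $R_i R_j - R_j R_i$ to $L_{n+1-i} L_{n+1-j} - L_{n+1-j} L_{n+1-i}$; the condition $|i-j|>1$ is preserved because $|(n+1-i)-(n+1-j)| = |i-j|$. The loop relations $R_i L_i - U_i$ and $L_i R_i - U_i$ are interchanged with each other under $\rho$. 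The two-line pass relations transform as $R_i R_{i+1} \mapsto L_{n+1-i} L_{n-i} = L_{(n-i)+1} L_{n-i}$ and $L_{i+1} L_i \mapsto R_{n-i} R_{n-i+1} = R_{(n-i)} R_{(n-i)+1}$, both still two-line pass relations of Definition~\ref{def:B Quiver Algebra}.

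Next, for the $U$ vanishing relations of type~\eqref{it:IsolatedUIsZero}, I would use the key combinatorial identity: for $a \in [0,n]$, $a \in \x \iff n-a \in \rho(\x)$. Hence $\x \cap \{i-1,i\} = \varnothing$ if and only if $\rho(\x) \cap \{n-i+1, n-i\} = \varnothing$, i.e. $\rho(\x) \cap \{(n+1-i)-1, (n+1-i)\} = \varnothing$. Thus a $U$ vanishing relation for the index $i$ at $\x$ is sent to the $U$ vanishing relation for the index $n+1-i$ at $\rho(\x)$. Finally, for relations from Definition~\ref{def:BnksQuiverDescription}: the $C^2$ vanishing relation $C_i^2$ maps to $C_{n+1-i}^2$, and $n+1-i \in \rho(\Sc)$ exactly when $i \in \Sc$; the $C$ central relations $C_i A + A C_i$ map to $C_{n+1-i} A' + A' C_{n+1-i}$ where $A'$ is the $\rho$-image of $A$, which is again a label of the same general type.

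The main step is not technical difficulty but bookkeeping: being careful that the two-line pass relations and the condition defining the $U$ vanishing relations survive the index reversal $i \mapsto n+1-i$ correctly. Since $\rho$ is an involution on labels and a bijection of directed graphs, once each relation type is verified, conclusion follows, and we obtain an induced isomorphism $\rho: \Quiv(\Gamma(n,k,\Sc), \td{\mc R}_{\Sc}) \to \Quiv(\Gamma(n,k,\rho(\Sc)), \td{\mc R}_{\rho(\Sc)})$ which, after transport via $F$ and $G$, recovers the Ozsv{\'a}th--Szab{\'o} symmetry (one can also check that the differentials match since $\rho(C_i) = C_{n+1-i}$ and $\rho(U_i) = U_{n+1-i}$).
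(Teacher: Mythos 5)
Your proposal is correct and follows exactly the paper's approach: the paper's proof simply asserts that one can check, relation type by relation type, that the $\rho$-image of each relation in $\td{\mc R}_{\Sc}$ appears in $\td{\mc R}_{\rho(\Sc)}$, and your write-up carries out that verification explicitly (including the key points that $|i-j|$ and the two-line-pass index pattern survive $i \mapsto n+1-i$, and that $\x \cap \{i-1,i\} = \varnothing$ iff $\rho(\x) \cap \{n-i, n+1-i\} = \varnothing$). No gaps.
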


\begin{proof}
One can check that the image under $\rho$ of each relation in $\td{\mc R}_{\Sc}$ listed in Definitions \ref{def:B_0 Quiver Algebra}, \ref{def:B Quiver Algebra}, and \ref{def:BnksQuiverDescription} is also listed in one of these definitions for $\td{\mc R}_{\rho(\Sc)}$.
\end{proof}

It follows from Proposition~\ref{prop:RhoPreservesRelations} that $\rho$ induces an involution
\[
\rho: \Quiv(\Gamma(n,k,\Sc), \td{\mc R}_{\Sc}) \xrightarrow{\cong} \Quiv(\Gamma(n,k,\rho(\Sc)), \td{\mc R}_{\rho(\Sc)})
\]
of $\F_2$-algebras, or of $\F_2[U_1,\ldots,U_n]^{V(n,k)}$-algebras if we modify the left and right actions on the right-hand side as discussed below Definition~\ref{def:RhoOnDirectedGraphs}. We have $\rho \circ \de = \de \circ \rho$ (it suffices to check that $\rho(\de(C_i)) = \de(\rho(C_i))$), so $\rho$ is an involution of differential algebras.

Finally, Definition~\ref{def:OSzMaslovDegrees} implies that $\rho$ preserves the Maslov grading. If we postcompose the unrefined Alexander multi-degree function on the right-hand side with the involution of $\Z^{2n}$ sending $\tau_i$ to $\beta_{n+1-i}$ and sending $\beta_i$ to $\tau_{n+1-i}$, then $\rho$ preserves the unrefined Alexander multi-grading as well. Similar statements hold for the refined Alexander multi-grading and the single Alexander grading, so that in any case we may view $\rho$ as an involution of dg algebras over $\F_2[U_1,\ldots,U_n]^{V(n,k)}$.

\begin{proposition}\label{prop:RSymmetryQuiverOSz}
The isomorphisms $F$ and $G$ between $\Quiv(\Gamma(n,k,\Sc), \td{\mc R}_{\Sc})$ and $\B(n,k,\Sc)$ from Proposition~\ref{prop:OSzQuiverEquivDifferential} intertwine the symmetry $\rho$ of $\Quiv(\Gamma(n,k,\Sc), \td{\mc R}_{\Sc})$ with Ozsv{\'a}th--Szab{\'o}'s first symmetry of $\B(n,k,\Sc)$, which we also denote by $\rho$ as discussed above.
\end{proposition}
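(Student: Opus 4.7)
The plan is to verify that $F \circ \rho = \rho \circ F$ as maps $\Quiv(\Gamma(n,k,\Sc), \td{\mc R}_{\Sc}) \to \B(n,k,\rho(\Sc))$. Since both compositions are $\F_2$-algebra homomorphisms that are the identity on objects, and since $\Quiv(\Gamma(n,k,\Sc), \td{\mc R}_{\Sc})$ is multiplicatively generated (over the idempotent ring) by the four families of edges labeled $R_i$, $L_i$, $U_i$, and $C_i$, it suffices to check commutativity on each such edge (the compatibility of $\rho$ with $\F_2[U_1,\ldots,U_n]^{V(n,k)}$ actions and with the differential is automatic once this is done, given that the relations and $\de(C_i) = U_i$ are preserved on both sides).

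The easy cases are the loops. For a $U_i$-loop $\gamma$ at $\x$, the quiver $\rho$ sends $\gamma$ to the $U_{n+1-i}$-loop at $\rho(\x)$, and applying $F$ yields $U_{n+1-i}\,\Ib_{\rho(\x)}$; going the other way, $F(\gamma) = U_i \Ib_{\x}$, which under Ozsv\'ath--Szab\'o's $\rho$ becomes $U_{n+1-i}\,\Ib_{\rho(\x)}$ by Definition in Section~3.6 of \cite{OSzNew}. The $C_i$-loop case is identical, using the extension of $F$ specified in the proof of Proposition~\ref{prop:OSzQuiverEquivDifferential} and the defining rule $\rho(C_i) = C_{n+1-i}$.

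The substantive case is an $R_i$-edge $\gamma$ from $\x$ to $\y = (\x \setminus \{i-1\}) \cup \{i\}$. Using $\rho(\x) = \{n-a \colon a \in \x\}$, the conditions $i-1 \in \x$ and $i \notin \x$ become $n-i+1 \in \rho(\x)$ and $n-i \notin \rho(\x)$, and a direct computation shows $\rho(\y) = (\rho(\x) \setminus \{n-i+1\}) \cup \{n-i\}$. Hence there is a unique edge of $\Gamma(n,k,\rho(\Sc))$ from $\rho(\x)$ to $\rho(\y)$, labeled $L_{n+1-i}$, confirming that Definition~\ref{def:RhoOnDirectedGraphs} is consistent with the set-level $\rho$. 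Both compositions now send $\gamma$ to $f_{\rho(\x),\rho(\y)}$: the top-then-right route via $F(L_{n+1-i}\text{-edge}) = f_{\rho(\x),\rho(\y)}$, and the down-then-bottom route via $F(\gamma) = f_{\x,\y} \xmapsto{\rho} f_{\rho(\x),\rho(\y)}$. The $L_i$-edge case is symmetric, swapping $R \leftrightarrow L$.

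The main obstacle, such as it is, is purely bookkeeping: keeping the three involutions $i \mapsto n+1-i$ on edge labels, $a \mapsto n-a$ on coordinates, and $R_i \leftrightarrow L_{n+1-i}$ straight and mutually compatible, so that the computation above for $R_i$-edges gives an honest $L_{n+1-i}$-edge at $\rho(\x)$ (and not, say, a nonexistent edge). Once this index calculus is checked once, the four cases assemble into the proposition.
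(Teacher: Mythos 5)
Your proposal is correct and follows essentially the same route as the paper's proof: both reduce to checking $\rho \circ F = F \circ \rho$ on the multiplicative generators (edges labeled $R_i$, $L_i$, $U_i$, $C_i$), verify that an $R_i$-edge from $\x$ to $\y$ is carried to the unique $L_{n+1-i}$-edge from $\rho(\x)$ to $\rho(\y)$ so that both compositions yield $f_{\rho(\x),\rho(\y)}$, handle the loops directly, and conclude by multiplicativity. The only (trivial) point you leave implicit is that the statement for $G$ follows because $G = F^{-1}$; your explicit index check that $\rho(\y) = (\rho(\x)\setminus\{n-i+1\})\cup\{n-i\}$ is a slightly more detailed version of what the paper asserts.
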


\begin{proof}
We only need to prove the result for $F$, since $G = F^{-1}$. If $\gamma$ is an edge in $\Gamma(n,k,\Sc)$ from $\x$ to $\y$ with label $R_i$, then $F(\gamma) = f_{\x,\y}$, so $\rho(F(\gamma)) = f_{\rho(\x),\rho(\y)}$. The edge $\rho(\gamma)$ in $\Gamma(n,k,\rho(\Sc))$ from $\rho(\x)$ to $\rho(\y)$ is the unique such edge (and its label is $L_{n+1-i}$), so $F(\rho(\gamma)) = f_{\rho(\x),\rho(\y)}$ and we have $\rho(F(\gamma)) = F(\rho(\gamma))$. If $\gamma$ has label $L_i$ rather than $R_i$, the proof is similar.

Let $\gamma$ be an edge in $\Gamma(n,k,\Sc)$ from $\x$ to $\x$ with label $U_i$. We have $F(\gamma) = U_i f_{\x,\x}$, so 
\[
\rho(F(\gamma)) = U_{n+1-i} f_{\rho(\x),\rho(\x)}.
\]
The edge $\rho(\gamma)$ from $\rho(\x)$ to $\rho(\x)$ has label $U_{n+1-i}$, so we have 
\[
F(\rho(\gamma)) = U_{n+1-i} f_{\rho(\x),\rho(\x)}
\]
and thus $\rho(F(\gamma)) = F(\rho(\gamma))$.

Finally, $F$ sends $C_i$ to $C_i$, and $\rho(C_i) = C_{n+1-i}$ on both sides. Since $F$ and $\rho$ are multiplicative, it follows that $\rho \circ F = F \circ \rho$.
\end{proof}

Note that $\x \in V_r(n,k)$ if and only if $\rho(\x) \in V_l(n,k)$; similarly, $\x \in V'(n,k)$ if and only if $\rho(x) \in V'(n,k)$. Thus, the following proposition is immediate.

\begin{proposition} The symmetry $\rho: \B(n,k,\Sc) \to \B(n,k,\rho(\Sc))$ restricts to isomorphisms 
\begin{align*}
&\rho: \B_r(n,k,\Sc) \xrightarrow{\cong} \B_l(n,k,\rho(\Sc)), \\
&\rho: \B_l(n,k,\Sc) \xrightarrow{\cong} \B_r(n,k,\rho(\Sc)), \textrm{ and}\\
&\rho: \B'(n,k,\Sc) \xrightarrow{\cong} \B'(n,k,\rho(\Sc)).
\end{align*}
\end{proposition}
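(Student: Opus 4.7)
The plan is to deduce the statement directly from the fact that $\rho$ is already a (dg) algebra isomorphism $\B(n,k,\Sc) \xrightarrow{\cong} \B(n,k,\rho(\Sc))$, combined with the compatibility of the bijection $\x \mapsto \rho(\x)$ on $V(n,k)$ with the truncation conditions. So the first step is to verify the vertex-level bijections: since $\rho(\x) = \{n-a \mid a \in \x\}$, we have $0 \in \x$ if and only if $n \in \rho(\x)$. Consequently, $\x \in V_r(n,k)$ (i.e.\ $0 \notin \x$) if and only if $\rho(\x) \in V_l(n,k)$ (i.e.\ $n \notin \rho(\x)$); likewise $\rho$ swaps $V_l(n,k)$ with $V_r(n,k)$, and restricts to a bijection of $V'(n,k)$ with itself.

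Next, I would recall from the definition of $\rho$ on $\B_0(n,k)$ that $\rho(\Ib_{\x}) = \rho(f_{\x,\x}) = f_{\rho(\x),\rho(\x)} = \Ib_{\rho(\x)}$, so $\rho$ carries the distinguished idempotent $\Ib_{\x}$ of $\B(n,k,\Sc)$ to the corresponding idempotent $\Ib_{\rho(\x)}$ of $\B(n,k,\rho(\Sc))$. By Definition~\ref{def:TruncatedOSzAlgs}, the algebra $\B_r(n,k,\Sc)$ is
\[
\Bigl(\sum_{\x\in V_r(n,k)} \Ib_{\x}\Bigr)\, \B(n,k,\Sc)\, \Bigl(\sum_{\y\in V_r(n,k)} \Ib_{\y}\Bigr),
\]
and similarly for $\B_l$ and $\B'$. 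Applying $\rho$ to this corner subalgebra and using that $\rho$ sends $V_r(n,k)$ bijectively onto $V_l(n,k)$ (respectively $V_l \to V_r$, $V' \to V'$) shows that $\rho(\B_r(n,k,\Sc)) = \B_l(n,k,\rho(\Sc))$, and analogously in the other two cases.

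Finally, I would note that $\rho$ on the full algebra is already an isomorphism of dg algebras (over $\F_2[U_1,\ldots,U_n]^{V(n,k)}$ with the twisted module structure recalled just before Proposition~\ref{prop:RSymmetryQuiverOSz}), so its restriction to these corner subalgebras inherits all of this structure automatically. The resulting maps have two-sided inverses given by restricting $\rho^{-1} = \rho$ in the opposite direction, so they are isomorphisms. I do not expect any real obstacle here: the only content is the elementary verification of the vertex-level bijection, which was already recorded in the sentence preceding the proposition.
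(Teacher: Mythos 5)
Your proposal is correct and matches the paper's argument exactly: the paper records the same vertex-level observation (that $\x \in V_r(n,k)$ iff $\rho(\x) \in V_l(n,k)$, and that $V'(n,k)$ is preserved) in the sentence preceding the proposition and then declares the result immediate. Your write-up simply spells out the same corner-subalgebra reasoning in more detail.
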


Edges of $\Gamma_r(n,k,\Sc)$ are mapped to edges of $\Gamma_l(n,k,\rho(\Sc))$ by $\rho$ and vice versa; elements of $\td{\mc R}_r$ are mapped to elements of $\td{\mc R}_l$ and vice versa. Thus, the symmetry $\rho$ is apparent in our quiver descriptions of $\B_r(n,k,\Sc)$ and $\B_l(n,k,\rho(\Sc))$; the same is true for $\B'(n,k,\Sc)$.

Now we will consider Ozsv{\'a}th--Szab{\'o}'s second symmetry $o$, relating $\B(n,k,\Sc)$ and its opposite algebra. Note that the opposite $\Gamma^{\op}$ of a directed graph $\Gamma$ can be defined by reversing the orientation of all edges in $\Gamma$. Each edge of $\Gamma^{\op}$ keeps the same label as it had in $\Gamma$. We have identifications $(\F_2\Gamma)^{\op} \cong \F_2(\Gamma^{\op})$ of path categories, and similarly for path algebras.
\begin{definition}\label{def:OOnDirectedGraphs}
Define an automorphism of directed graphs $o: \Gamma(n,k,\Sc) \to \Gamma(n,k,\Sc)^{\op}$ as follows:
\begin{itemize}
\item For a vertex $\x \in V(n,k)$ of $\Gamma(n,k,\Sc)$, define $o(\x) = \x$.
\item For an edge $\gamma$ from $\x$ to $\y$ in $\Gamma(n,k,\Sc)$ labeled $R_i$, $L_i$, $U_i$, or $C_i$, define $o(\gamma)$ to be the unique edge from $\x$ to $\y$ in $\Gamma(n,k,\Sc)^{\op}$ labeled $L_i$, $R_i$, $U_i$, or $C_i$ respectively.
\end{itemize}
As with $\rho$, one can check that $o$ is an involution, so $o$ induces an involution of path categories $o: \F_2\Gamma(n,k,\Sc) \xrightarrow{\cong} \F_2\Gamma(n,k,\Sc)^{\op}$ and thus an involution of path algebras $o: \Path(\Gamma(n,k,\Sc)) \xrightarrow{\cong} \Path(\Gamma(n,k,\Sc)^{\op})$. Unlike with $\rho$, the involution $o$ is $\F_2[U_1,\ldots,U_n]^{V(n,k)}$-linear without modification of the actions on either side.
\end{definition}

Note that we may view $\td{\mc R}_{\Sc}$ as a set of elements in $\Path(\Gamma(n,k,\Sc))^{\op}$, and the quotient of the opposite algebra by the ideal generated by this set can be identified with the opposite algebra of the original quotient. 

\begin{proposition}\label{prop:oPreservesRelations}
The involution $o: \Path(\Gamma(n,k,\Sc)) \xrightarrow{\cong} \Path(\Gamma(n,k,\Sc))^{\op}$ sends the relation set $\td{\mc R}_{\Sc}$ to the relation set $\td{\mc R}_{\Sc}$.
\end{proposition}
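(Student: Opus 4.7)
The plan is to verify Proposition~\ref{prop:oPreservesRelations} by an explicit case-by-case check, going through each family of relations listed in Definitions~\ref{def:B_0 Quiver Algebra}, \ref{def:B Quiver Algebra}, and \ref{def:BnksQuiverDescription}, and showing that the image under $o$ of each relation is again a relation of the same (or dual) type. The essential input is a single observation about how $o$ acts on monomials: given a path $\gamma=(\gamma_1,\ldots,\gamma_l)$ in $\Gamma(n,k,\Sc)$ with associated monomial $\mu(\gamma)=A_1A_2\cdots A_l$ in the letters $R_i,L_i,U_i,C_i$, the image $o(\gamma)$ is a path in $\Gamma(n,k,\Sc)^{\op}$ whose associated monomial is $\overline{A_l}\cdots\overline{A_2}\,\overline{A_1}$, where $\overline{R_i}=L_i$, $\overline{L_i}=R_i$, $\overline{U_i}=U_i$, and $\overline{C_i}=C_i$. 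This follows directly from Definition~\ref{def:OOnDirectedGraphs} together with the convention that composition in the opposite algebra reverses the order of concatenation.

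Once that observation is established, each of the named relation classes can be checked mechanically. I would organize the cases in the same order they appear in the three definitions: first the $U$-central, loop, and distant commutation relations from Definition~\ref{def:B_0 Quiver Algebra}; then the two-line pass and $U$-vanishing relations from Definition~\ref{def:B Quiver Algebra}; then the $C^2$ and $C$-central relations from Definition~\ref{def:BnksQuiverDescription}. For instance, under $o$ the relation $R_iR_{i+1}$ becomes $L_{i+1}L_i$ (another two-line pass relation), $R_iL_i-U_i$ becomes $R_iL_i-U_i$ (the same loop relation), $R_iL_j-L_jR_i$ with $|i-j|>1$ becomes $R_jL_i-L_iR_j$ (still a distant commutation relation), and $C_iR_j+R_jC_i$ becomes $L_jC_i+C_iL_j$ (a $C$-central relation). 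The $U$-vanishing relations at a vertex $\x$ with $\x\cap\{i-1,i\}=\varnothing$ are preserved because $o$ fixes vertices and fixes $U_i$-loops.

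The step I expect to require the most care is not conceptually difficult but bookkeeping-heavy: making sure that the ``source'' and ``target'' conditions imposed by the vertex combinatorics (e.g.\ that an $R_i$-edge exists only from a vertex $\x$ with $i-1\in\x$, $i\notin\x$) are correctly respected after swapping $R\leftrightarrow L$ and reversing order. This is where one verifies that the endpoints of the image path in $\Gamma(n,k,\Sc)^{\op}$ really are the reversed endpoints of the original, and that each rewritten monomial is realizable as an honest path, not merely as a symbolic word. Once one writes down the vertex at which each factor in a given relation begins and ends, this verification is automatic, but it must be carried out for every class. No further obstacles are anticipated, and the proposition follows by aggregating the case checks.
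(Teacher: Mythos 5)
Your proposal is correct and follows essentially the same route as the paper: the paper's proof is a terse "one can check" with the single illustrative example $o(R_iR_{i+1}) = L_{i+1}L_i$, which is exactly your first case. Your preliminary observation that $o$ reverses a word and applies the bar involution $R_i\leftrightarrow L_i$, $U_i\mapsto U_i$, $C_i\mapsto C_i$ is the same mechanism the paper uses implicitly, and your case checks all come out right.
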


\begin{proof}
As with $\rho$, one can check that the image under $o$ of each relation in $\td{\mc R}_{\Sc}$ is also a relation in $\td{\mc R}_{\Sc}$. For example, $o(R_i R_{i+1})$ is $L_i L_{i+1}$ where the product is taken in the opposite algebra; this is the element we would more typically call $L_{i+1} L_i$, and it is an element of $\td{\mc R}_{\Sc}$.
\end{proof}

Thus, $o$ induces an involution
\[
o: \Quiv(\Gamma(n,k,\Sc), \td{\mc R}_{\Sc}) \xrightarrow{\cong} \Quiv(\Gamma(n,k,\Sc), \td{\mc R}_{\Sc})^{\op}
\]
of $\F_2[U_1,\ldots,U_n]^{V(n,k)}$-algebras. We have $o \circ \de = \de \circ o$, so $o$ is an involution of differential algebras, and $o$ preserves the Maslov grading. If we postcompose the unrefined Alexander multi-degree function on the right-hand side with the involution of $\Z^{2n}$ sending $\tau_i$ to $\beta_i$ and sending $\beta_i$ to $\tau_i$, then $o$ preserves the unrefined Alexander multi-grading, so $o$ is an involution of dg algebras over $\F_2[U_1,\ldots,U_n]^{V(n,k)}$. Similar statements hold for the refined and single Alexander gradings.

\begin{proposition}
The isomorphisms $F$ and $G$ between $\Quiv(\Gamma(n,k,\Sc), \td{\mc R}_{\Sc})$ and $\B(n,k,\Sc)$ from Proposition~\ref{prop:OSzQuiverEquivDifferential} intertwine the symmetry $o$ of $\Quiv(\Gamma(n,k,\Sc), \td{\mc R}_{\Sc})$ with Ozsv{\'a}th--Szab{\'o}'s symmetry $o$ of $\B(n,k,\Sc)$.
\end{proposition}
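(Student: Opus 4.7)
The plan is to mirror the strategy used for the analogous statement about $\rho$ in Proposition~\ref{prop:RSymmetryQuiverOSz}. Since $G = F^{-1}$, it suffices to establish the intertwining relation for $F$. The key observation is that, although $o$ is an anti-involution rather than an involution, both compositions $F \circ o_{\mathrm{quiver}}$ and $o_{\B} \circ F$ reverse multiplication in exactly the same way—the first because $o_{\mathrm{quiver}}$ does and $F$ is multiplicative, the second because $o_{\B}$ does and $F$ is multiplicative. Consequently, both compositions define anti-homomorphisms from $\Quiv(\Gamma(n,k,\Sc), \td{\mc R}_{\Sc})$ to $\B(n,k,\Sc)$, and so they agree as soon as they agree on a multiplicative generating set, namely the edges of $\Gamma(n,k,\Sc)$.

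I would then verify the relation edge-by-edge, splitting into four cases according to whether the edge is labeled $R_i$, $L_i$, $U_i$, or $C_i$. For an $R_i$ edge $\gamma$ from $\x$ to $\y$, Definition~\ref{def:OOnDirectedGraphs} says that $o_{\mathrm{quiver}}(\gamma)$ corresponds, under the identification of $\Gamma(n,k,\Sc)^{\op}$-edges with $\Gamma(n,k,\Sc)$-edges of reversed direction, to the $L_i$ edge from $\y$ to $\x$; so $F(o_{\mathrm{quiver}}(\gamma)) = f_{\y,\x}$, while $o_{\B}(F(\gamma)) = o_{\B}(f_{\x,\y}) = f_{\y,\x}$ by Ozsv{\'a}th--Szab{\'o}'s definition. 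The $L_i$ case is entirely parallel. For a $U_i$ loop $\gamma$ at $\x$, both $F(\gamma)$ and $F(o_{\mathrm{quiver}}(\gamma))$ equal $U_i f_{\x,\x}$, which $o_{\B}$ fixes. Finally, $F$ sends each $C_i$ loop at $\x$ to $I_{\x} C_i$, which is fixed by both $o_{\mathrm{quiver}}$ (as a loop at $\x$ with the same label) and $o_{\B}$ (by construction).

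The main obstacle, insofar as there is one, is purely bookkeeping: at each step one must keep track of which algebra is being regarded as opposite, so that the generating-set reduction genuinely applies to anti-homomorphisms. Once $o_{\mathrm{quiver}}$ is set up so that it swaps the labels $R_i \leftrightarrow L_i$ at the level of edges, while $o_{\B}$ swaps $f_{\x,\y} \leftrightarrow f_{\y,\x}$, the edge-by-edge check forces the two maps to agree on all of $\Quiv(\Gamma(n,k,\Sc), \td{\mc R}_{\Sc})$, completing the verification. No case involves any computation beyond what is already recorded in Definitions~\ref{def:OSzStyleRLU}, \ref{def:BnksQuiverDescription}, and \ref{def:OOnDirectedGraphs}, so the argument is as short as the one for $\rho$.
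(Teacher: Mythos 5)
Your proposal is correct and follows essentially the same route as the paper's proof: reduce to $F$ since $G=F^{-1}$, observe that both composites reverse multiplication identically so it suffices to check on edges, and then verify the four label cases $R_i$, $L_i$, $U_i$, $C_i$ exactly as the paper does. Your explicit remark about the anti-homomorphism bookkeeping is just a more careful phrasing of the paper's closing sentence that $o$ and $F$ are multiplicative.
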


\begin{proof}
The proof is similar to that of Proposition~\ref{prop:RSymmetryQuiverOSz}. Again, we only need to prove the result for $F$. If $\gamma$ is an edge in $\Gamma(n,k,\Sc)$ from $\x$ to $\y$ with label $R_i$, then $F(\gamma) = f_{\x,\y}$, so $o(F(\gamma)) = f_{\y,\x}$. The edge $o(\gamma)$ in $\Gamma(n,k,\Sc)^{\op}$ from $\x$ to $\y$ is the unique such edge (and its label is $L_i$), so $F(o(\gamma)) = f_{\y,\x}$ and we have $o(F(\gamma)) = F(o(\gamma))$. If $\gamma$ has label $L_i$ rather than $R_i$, the proof is similar. For an edge $\gamma$ in $\Gamma(n,k,\Sc)$ with label $U_i$ or $C_i$, we have $o(\gamma) = \gamma$ and $o(F(\gamma)) = F(\gamma)$. Since $o$ and $F$ are multiplicative, it follows that $o \circ F = F \circ o$.
\end{proof}

Like with $\rho$, the symmetry $o$ can be restricted to the truncated algebras. 

\begin{proposition}
The symmetry $o: \B(n,k,\Sc) \to \B(n,k,\Sc)^{\op}$ restricts to isomorphisms 
\begin{align*}
&o: \B_r(n,k,\Sc) \xrightarrow{\cong} \B_r(n,k,\Sc)^{\op}, \\
&o: \B_l(n,k,\Sc) \xrightarrow{\cong} \B_l(n,k,\Sc)^{\op}, \textrm{ and}\\
&o: \B'(n,k,\Sc) \xrightarrow{\cong} \B'(n,k,\Sc)^{\op}.
\end{align*}
\end{proposition}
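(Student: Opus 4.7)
The plan is to reduce the statement to the observation that the involution $o$ fixes all vertices while permuting edges and relations internally. Concretely, I will use Ozsv\'ath--Szab\'o's original definition of $o$ (or equivalently Definition~\ref{def:OOnDirectedGraphs}) to show that $o$ preserves the idempotent-truncated subcategories $B_r$, $B_l$, and $B'$ of $\Cat_{\B(n,k,\Sc)}$ on the nose.

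First, I would recall from Definition~\ref{def:OOnDirectedGraphs} that $o$ is the identity on objects. Therefore the subsets $V_r(n,k)$, $V_l(n,k)$, $V'(n,k) \subset V(n,k)$ (defined by the conditions $0 \notin \x$, $n \notin \x$, or both) are preserved by $o$. Equivalently, at the level of Ozsv\'ath--Szab\'o's formula on generators, $o(U_1^{r_1}\cdots U_n^{r_n} f_{\x,\y}) = U_1^{r_1}\cdots U_n^{r_n} f_{\y,\x}$ stays within the appropriate idempotent truncation since $\x, \y$ are swapped but the underlying set of admissible I-states is unchanged. Multiplying by $\sum_{\x \in V_r(n,k)} \Ib_{\x}$ on both sides (and analogously for $V_l$, $V'$) shows that the restriction of $o$ to any of the truncated algebras lands in the corresponding truncated opposite algebra.

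Second, I would verify that $o$ restricted this way remains an isomorphism of dg algebras over $\F_2[U_1,\ldots,U_n]^{V_{\star}(n,k)}$ (for $\star = r, l$, or empty). The edges labeled $R_i$, $L_i$, $U_i$, $C_i$ between two vertices of $V_\star(n,k)$ in $\Gamma(n,k,\Sc)$ are precisely the edges of $\Gamma_\star(n,k,\Sc)$, and the swap $R_i \leftrightarrow L_i$ in Definition~\ref{def:OOnDirectedGraphs} preserves this subgraph. For the relation sets $\td{\mc R}_r$, $\td{\mc R}_l$, and $\td{\mc R}'$, the proof of Proposition~\ref{prop:oPreservesRelations} already shows $o$ permutes the relations of $\td{\mc R}_\Sc$ among themselves; the only extra check is for the relation $U_1 \cdots U_n \in \td{\mc R}'_{\x,\x,\Sc}$ at $\x = [1,n-1]$ (when $k = n-1$), but $o$ fixes this vertex and sends the product $U_1 \cdots U_n$ to $U_n \cdots U_1$ in the opposite algebra, which equals $U_1 \cdots U_n$ because the $U_i$-loops commute with each other by the $U$-central relations.

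The whole argument is essentially formal, so there is no genuinely hard step; the only point to be careful about is that ``restrict $o$ to the truncated algebra'' does not change its behavior on the extra relation $U_1 \cdots U_n$ in the double-truncation case, which is settled by commutativity of the $U_i$'s. Once these checks are made, the previously established properties of $o$ (it preserves differentials and all gradings) pass to the truncations without modification, so $o$ restricts to the three claimed involutive isomorphisms of dg algebras.
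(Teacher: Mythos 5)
Your proposal is correct and matches the paper's (implicit) argument: the paper treats this proposition as immediate precisely because $o$ fixes every idempotent $\Ib_{\x}$, so it carries $e\,\B(n,k,\Sc)\,e$ to $(e\,\B(n,k,\Sc)\,e)^{\op}$ for $e = \sum_{\x \in V_\star(n,k)} \Ib_{\x}$, and the paper separately remarks that $o$ is visible in the quiver descriptions of the truncations. Your extra check on the relation $U_1\cdots U_n$ in $\td{\mc R}'$ (handled via the $U$ central relations at $\x = [1,n-1]$) is a correct, if not strictly necessary, supplement concerning the quiver presentation rather than the restriction of $o$ itself.
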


Again, the symmetry $o$ is apparent in our quiver descriptions of the truncated algebras.
Note that $\rho \circ o = o \circ \rho$, properly interpreted.
In \cite{MMW2}, we will define analogues of the symmetries $\rho$ and $o$ for the strands algebras defined there and interpret them geometrically in terms of rotations of strands pictures.

\section{Homology of Ozsv\'ath-Szab\'o's algebras}
\label{sec:HomologyAndFormality}

When $\Sc \neq \varnothing$, the algebra $\B(n,k,\Sc)$ has a differential; in Section \ref{sec:Homology} we compute its homology, and in Section \ref{sec:Formality} we discuss formality and higher products.

\subsection{Homology computations}
\label{sec:Homology}

Let $\x,\y \in V(n,k)$. By Corollary~\ref{cor:IBItoTensorProduct}, $\Ib_{\x} \B(n,k,\Sc) \Ib_{\y}$ decomposes as a tensor product over $\F_2$ of the crossed line complex $\Bcl{\x,\y}$ with chain complexes $\overline{B}(l_a,\Sc_a)$ for each generating interval $[j_a+1,j_a+l_a]$ as well as $\overline{B}_{\lda}(l_0,\Sc_0)$, $\overline{B}_{\rho}(l_{b+1},\Sc_{b+1})$, and $\overline{B}_{\lda\rho}(n,\Sc)$ for various types of edge intervals.

We will compute the homology of the factors appearing in this tensor product; we start with $\Bcl{\x,\y}$.

\begin{lemma}\label{lem:OSzCLHomology}
Fix $0\leq k \leq n$, $\x,\y\in V(n,k)$, and $\Sc\subset[1,n]$. A basis for the homology of the crossed line complex $\Bcl{\x,\y}$ is given by monomials in the variables $U_i\in\CL{\x,\y}\setminus\Sc$.
\end{lemma}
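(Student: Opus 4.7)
My plan is to recognize $\Bcl{\x,\y}$ as a tensor product of elementary complexes, one for each crossed line, and then apply the Künneth formula (which is trivial over the field $\F_2$). Specifically, since the variables $U_i$ and $C_j$ all commute, and the differential acts only on the $C_j$'s, we have an isomorphism of dg $\F_2$-algebras
\[
\Bcl{\x,\y} \;\cong\; \bigotimes_{i \in \CL{\x,\y} \setminus \Sc} \F_2[U_i] \;\otimes\; \bigotimes_{j \in \CL{\x,\y} \cap \Sc} \F_2[U_j][C_j]/(C_j^2),
\]
where each factor on the right is a sub-dg-algebra of $\Bcl{\x,\y}$, with zero differential on the $\F_2[U_i]$ factors and $\de(C_j) = U_j$ on the remaining factors.

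The first step is to verify this decomposition from Definition~\ref{def:CrossedLinesAlg}, which is immediate since the defining relations $C_j^2 = 0$ and $\de C_j = U_j$ involve only one index $j$ at a time. The second step is the homology computation for each individual factor: the $\F_2[U_i]$ factors have zero differential, so they are equal to their own homology; and for the ``Koszul-type'' factor $\F_2[U_j][C_j]/(C_j^2)$, a general element has the form $U_j^m + C_j U_j^n$ with $\de(U_j^m + C_j U_j^n) = U_j^{n+1}$. Thus the cycles are precisely the polynomials in $U_j$, the boundaries are precisely the polynomials divisible by $U_j$, and the homology is the one-dimensional space spanned by the class of $1$.

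The third step is to apply the Künneth formula over $\F_2$. Because $\F_2$ is a field there are no Tor terms, so
\[
H_*(\Bcl{\x,\y}) \;\cong\; \bigotimes_{i \in \CL{\x,\y} \setminus \Sc} \F_2[U_i] \;\otimes\; \bigotimes_{j \in \CL{\x,\y} \cap \Sc} \F_2 \;\cong\; \F_2[U_i \,|\, i \in \CL{\x,\y} \setminus \Sc],
\]
and a basis is given by monomials in the $U_i$ for $i \in \CL{\x,\y} \setminus \Sc$, as claimed.

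There is no real obstacle here: once the tensor product decomposition of Definition~\ref{def:CrossedLinesAlg} is recognized, every step is either a direct manipulation or an invocation of the Künneth theorem over a field. The only thing that requires a moment's thought is the homology of the single Koszul-type factor, but this is a completely routine calculation. The main value of stating the lemma separately is that it isolates the ``crossed line'' portion of $\Ib_{\x} \B(n,k,\Sc) \Ib_{\y}$ so that, combined with Corollary~\ref{cor:IBItoTensorProduct} and the analogous homology computations for the generating and edge algebras to follow, one obtains the general description of $H_*(\Ib_\x \B(n,k,\Sc) \Ib_\y)$ promised by Theorem~\ref{thm:IntroHomology}.
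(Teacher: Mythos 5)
Your proof is correct and follows essentially the same route as the paper's: decompose $\Bcl{\x,\y}$ as a tensor product of $\F_2[U_i]$ factors and Koszul-type factors $\F_2[U_j,C_j]/(C_j^2,\,\de C_j = U_j)$, compute the homology of each factor, and combine via the K\"unneth theorem over $\F_2$. The paper leaves the single-factor homology computation implicit, whereas you spell it out, but the argument is identical.
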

\begin{proof}
$\Bcl{\x,\y}$ can be decomposed as a tensor product having a factor $\F_2[U_i]$ for each $i\in\CL{\x,\y}\setminus\Sc$ and a factor $\frac{\F_2[U_j,C_j]}{(C_j^2=0, \,\, \de C_j=U_j)}$ for each $j\in\CL{\x,\y}\cap\Sc$.  Both types have easily computable homology; the K{\"u}nneth theorem allows us to combine them to compute $H_*(\Bcl{\x,\y})$ and see that the basis elements are as described. 
\end{proof}

Next we consider the complexes for edge intervals.
\begin{lemma}\label{lem:OSzEdgeIntHomology}
For $n \geq 0$ and $\Sc \subset [1,n]$, a basis for the homology of any of the edge-interval complexes $\overline{B}_{\lda}(n,\Sc)$, $\overline{B}_{\rho}(n,\Sc)$, or $\overline{B}_{\lda\rho}(n,\Sc)$ is given by elements $[\phi(p)]$ (in the notation of Proposition~\ref{prop:OSzBasis}) where $p$ is a monomial in the variables $U_i$ for $i \notin \Sc$.
\end{lemma}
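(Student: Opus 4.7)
The plan is to identify each of the three edge-interval algebras with a standard Koszul-type dg algebra built on a polynomial ring and an exterior algebra, and then compute its homology by a factor-by-factor K\"unneth argument.

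First I would verify that each edge-interval algebra is isomorphic, as a chain complex over $\F_2$, to $\F_2[U_1,\ldots,U_n] \otimes_{\F_2} \Lambda[C_i : i \in \Sc]$ with differential determined by $\de(C_i) = U_i$ and the Leibniz rule. Each of $\overline{B}_\lda(n,\Sc)$, $\overline{B}_\rho(n,\Sc)$, $\overline{B}_{\lda\rho}(n,\Sc)$ has the form $\Ib_\x \B(n,k,\Sc) \Ib_\x$ for a single idempotent $\x$ — concretely $\x = [0,n-1]$, $\x = [1,n]$, or $\x = [0,n]$ respectively. In each case $\x = \y$ has at most one not-fully-used coordinate in $[0,n]$, so by Definition~\ref{def:GenInt} there are no generating intervals for $(\x,\x)$; and one checks that $\x \cap \{i-1,i\} \neq \varnothing$ for every $i \in [1,n]$, so the $U$ vanishing relations of Definition~\ref{def:B Quiver Algebra}\eqref{it:IsolatedUIsZero} are inactive at $\x$. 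Proposition~\ref{prop:OSzBasis} then supplies the claimed isomorphism of chain complexes, where the polynomial-ring factor records the elements $\phi(p)$ and the exterior factor records the square-free monomials in the $C_i$.

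Next I would apply the K\"unneth theorem to the tensor product decomposition indexed by $i\in[1,n]$: for $i \notin \Sc$ the factor is $\F_2[U_i]$ with trivial differential, and for $i \in \Sc$ the factor is $\F_2[U_i, C_i]/(C_i^2)$ with $\de(C_i)=U_i$. A direct calculation shows the second type of factor has homology $\F_2$ concentrated in degree $0$, generated by the class of $1$ (the kernel is $\F_2[U_i]$ and the image is $U_i\F_2[U_i]$). K\"unneth then yields $H_*(\overline{B}_\bullet(n,\Sc)) \cong \F_2[U_i : i \notin \Sc]$, with basis represented in $\overline{B}_\bullet(n,\Sc)$ by the classes $[\phi(p)]$ for $p$ a monomial in the $U_i$ with $i \notin \Sc$, as stated.

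The only place where one has to be slightly careful is the identification in the first step, namely checking that Proposition~\ref{prop:OSzBasis} really does apply at the relevant idempotent in the form needed here (no generating intervals, no $U$-vanishing); but that reduces to an immediate case check on which coordinates of $\x$ are fully used. After this, the homology calculation is standard homological algebra and the basis statement follows directly from the K\"unneth formula.
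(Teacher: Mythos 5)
Your proposal is correct and follows essentially the same route as the paper: identify the edge-interval complex via $\phi$ (Proposition~\ref{prop:OSzBasis}) with a tensor product of factors $\F_2[U_i]$ for $i \notin \Sc$ and Koszul factors $\F_2[U_j,C_j]/(C_j^2, \de C_j = U_j)$ for $j \in \Sc$, then apply the K\"unneth theorem. Your explicit verification that the relevant idempotents admit no generating intervals is the (brief) content the paper leaves implicit, so the extra care there is appropriate rather than a deviation.
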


\begin{proof}
Similarly to Lemma~\ref{lem:OSzCLHomology}, $\overline{B}_{\lda}(n,\Sc)$ is isomorphic via the map $\phi$ of Proposition~\ref{prop:OSzBasis} to a tensor product of complexes $\F_2[U_i]$ for each $i\in[1,n]\setminus\Sc$ and $\frac{\F_2[U_j,C_j]}{(C_j^2=0, \,\, \de C_j=U_j)}$ for each $j\in\Sc$. The same argument works for $\overline{B}_{\rho}(n,\Sc)$ and $\overline{B}_{\lda\rho}(n,\Sc)$.
\end{proof}

We now consider the case of generating intervals, which are a bit more complicated.

\begin{lemma}\label{lem:OSzGenIntHomology}
For $n \geq 0$ and $\Sc = \{i_1,\ldots,i_l\} \subset [1,n]$, a basis for the homology of $\overline{B}(n,\Sc)$ is given by:
\begin{itemize}
\item elements $[\phi(p)]$ where $p$ is a monomial in the variables $U_i$ for $i \notin \Sc$, together with
\item elements $[C_{i_1} \phi(p)]$ where $p$ is $U_1 \cdots \widehat{U_{i_1}} \cdots U_{n}$ times a monomial in the variables $U_i$ for $i \notin \Sc$.
\end{itemize}
For the second type of basis element, we could use any $C_{i_j}$ for $i_j \in \Sc$ in place of $i_1$, removing $U_{i_j}$ instead of $U_{i_1}$ from the monomial $p$; the resulting elements represent the same homology class.
\end{lemma}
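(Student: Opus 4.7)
The plan is to identify $\overline{B}(n,\Sc)$ explicitly as a small tensor-product complex and then compute its homology using a Koszul-type short exact sequence. To begin, I would apply Proposition~\ref{prop:OSzBasis} with $\x = \y = [1,n-1]$: there are no crossed lines, the only not-fully-used coordinates are $0$ and $n$, and $1,\ldots,n-1$ are all fully used, so the unique generating interval is $[1,n]$ with monomial $p_G = U_1 \cdots U_n$. Under the isomorphism $\phi$, this yields
\[
\overline{B}(n,\Sc) \;\cong\; \frac{\F_2[U_1,\ldots,U_n]}{(U_1 \cdots U_n)} \otimes_{\F_2} \Lambda[C_i : i \in \Sc],
\]
with differential determined by $\partial C_i = U_i$ and the Leibniz rule. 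When $\Sc = \varnothing$ the differential vanishes and the claim follows directly from Proposition~\ref{prop:OSzBasis}, so I would henceforth assume $\Sc \neq \varnothing$.

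Next I would set $K := \F_2[U_1,\ldots,U_n] \otimes_{\F_2} \Lambda[C_i : i \in \Sc]$ with the same differential $\partial C_i = U_i$. This is the Koszul complex for the regular sequence $\{U_i : i \in \Sc\}$ in $\F_2[U_1,\ldots,U_n]$, so $H_\ast(K)$ is concentrated in homological degree zero and equals $\F_2[U_i : i \notin \Sc]$. Setting $u := U_1 \cdots U_n$, multiplication by $u$ is an injective chain endomorphism of $K$ whose cokernel is precisely $\overline{B}(n,\Sc)$, giving a short exact sequence
\[
0 \longrightarrow K \xrightarrow{\;\cdot u\;} K \xrightarrow{\;\pi\;} \overline{B}(n,\Sc) \longrightarrow 0.
\]
Extracting the long exact sequence in homology and using $H_k(K) = 0$ for $k \geq 1$, I would immediately obtain $H_k(\overline{B}(n,\Sc)) = 0$ for $k \geq 2$ together with a four-term exact sequence
\[
0 \to H_1(\overline{B}(n,\Sc)) \to H_0(K) \xrightarrow{\;\cdot u\;} H_0(K) \to H_0(\overline{B}(n,\Sc)) \to 0.
\]
The crucial observation is that this middle map, which is multiplication by $u = U_1 \cdots U_n$ on $\F_2[U_i : i \notin \Sc]$, is identically zero because any factor $U_{i_j}$ with $i_j \in \Sc$ already vanishes in the quotient; hence both $H_0(\overline{B}(n,\Sc))$ and $H_1(\overline{B}(n,\Sc))$ become isomorphic to $\F_2[U_i : i \notin \Sc]$.

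Finally, I would exhibit explicit cycles realizing these isomorphisms. The iso $H_0(\overline{B}(n,\Sc)) \cong H_0(K)$ is induced by $\pi$, so monomials $\phi(p)$ in $\{U_i : i \notin \Sc\}$ form the first family of basis elements. For the second family, given a monomial $q$ in $\{U_i : i \notin \Sc\}$ and any $i_j \in \Sc$, I would set $\tilde z := C_{i_j}(U_1 \cdots \widehat{U_{i_j}} \cdots U_n) \cdot q \in K$; its image $\pi(\tilde z) = C_{i_j}\, \phi(U_1 \cdots \widehat{U_{i_j}} \cdots U_n \cdot q)$ is a cycle in $\overline{B}(n,\Sc)$ because $\partial \tilde z = u \cdot q \in uK$. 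The standard diagram chase for the connecting homomorphism $\partial_\ast : H_1(\overline{B}(n,\Sc)) \to H_0(K)$ then gives $\partial_\ast[\pi(\tilde z)] = [q]$, independently of the choice of $i_j \in \Sc$. Since $\partial_\ast$ is an isomorphism, these elements form the claimed basis and the equality of classes for different $i_j$ is automatic. The only nontrivial input in the whole argument is the acyclicity of the Koszul complex for a regular sequence, which is standard, so no single step should be a serious obstacle.
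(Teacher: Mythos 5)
Your argument is correct, but it takes a genuinely different route from the paper's. The paper proves this lemma by induction on $l = |\Sc|$, realizing $\overline{B}(n,\Sc)$ at each stage as the mapping cone of $U_{i_l}$ acting on $\overline{B}(n,\Sc \setminus \{i_l\})$ and analyzing the resulting long exact sequence (the key step being that $[U_{i_l}]$ acts injectively on the homology basis already constructed, so each new homology is a quotient of the previous one). You instead make a single global argument: starting from the same identification $\overline{B}(n,\Sc) \cong \F_2[U_1,\ldots,U_n]/(U_1\cdots U_n) \otimes \Lambda[C_i : i \in \Sc]$ supplied by Proposition~\ref{prop:OSzBasis}, you present this complex as the cokernel of multiplication by $u = U_1\cdots U_n$ on the Koszul complex $K$ of the regular sequence $\{U_i : i \in \Sc\}$, and run one long exact sequence using the acyclicity of $K$ and the vanishing of $u$ on $H_0(K) = \F_2[U_i : i \notin \Sc]$. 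Your route buys a cleaner conceptual picture: the concentration of homology in Koszul degrees $0$ and $1$ is immediate, and the independence of the class $[C_{i_j}\phi(\cdot)]$ from the choice of $i_j \in \Sc$ falls out of the injectivity of the connecting homomorphism rather than requiring a separate verification. The cost is the external input that the Koszul complex of a regular sequence is a resolution, whereas the paper's induction is entirely self-contained and elementary. All the steps you outline check out: multiplication by $u$ is an injective chain endomorphism of $K$ since $K$ is free over the domain $\F_2[U_1,\ldots,U_n]$ and $\partial(u)=0$, its cokernel is indeed $\overline{B}(n,\Sc)$, and your diagram chase for $\partial_*[\pi(\tilde z)] = [q]$ is the standard one.
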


\begin{proof}
Let $l = |\Sc|$. We will induct on $l$; recall that $\overline{B}(n,\varnothing) \cong \F_2[U_1,\ldots,U_n]/(U_1\cdots U_n)$.

For $l = 1$, so $\Sc = \{i\}$ for some $i$, the complex $\overline{B}(n,\Sc)$ is isomorphic to the mapping cone of $U_i: \overline{B}(n,\varnothing) \to \overline{B}(n,\varnothing)$. Since $\overline{B}(n, \varnothing)$ is a complex with vanishing differential, we have
\[H_*(\overline{B}(n,\Sc)) \cong (\overline{B}(n,\varnothing)/\im(U_i)) \oplus \ker(U_i).\]
 A basis for $\overline{B}(n,\varnothing)/\im(U_i)$ is given by monomials in the variables $U_j$ for $j \neq i$. A basis for $\ker(U_i)$ is given by monomials that are divisible by $U_j$ for all $j \neq i$, but not divisible by $U_i$ (otherwise they would be zero). The identification of the homology of the mapping cone with $H_*(\overline{B}(n,\Sc))$ sends these monomials to the basis elements stated in the lemma.

Now assume $l > 1$; write $\Sc = \{i_1,\ldots,i_l\}$ and $\Sc' = \Sc \setminus \{i_l\}$. The complex $\overline{B}(n,\Sc)$ is isomorphic to the mapping cone of $U_{i_l}: \overline{B}(n,\Sc') \to \overline{B}(n,\Sc')$. Thus, we have a long exact sequence in homology
\[
\cdots \to H_*(\overline{B}(n,\Sc')) \to H_*(\overline{B}(n,\Sc)) \to H_*(\overline{B}(n,\Sc')) \to \cdots
\]
with connecting map the map $[U_{i_l}]$ induced by $U_{i_l}$ on homology.

From this long exact sequence, we can extract a short exact sequence
\[
0 \to H_*(\overline{B}(n,\Sc')) /\im([U_{i_l}]) \to H_*(\overline{B}(n,\Sc)) \to \ker([U_{i_l}]) \to 0.
\]
Multiplication by $[U_{i_l}]$ sends a basis vector of $H_*(\overline{B}(n, \Sc'))$ to another such basis vector. Moreover, it is straightforward to check that no two basis vectors are sent to the same one. Thus, $\ker([U_{i_l}])=0$ and
\[
H_*(\overline{B}(n,\Sc)) \cong H_*(\overline{B}(n,\Sc')) /\im([U_{i_l}]).
\]
A basis for this quotient is given by basis elements for $H_*(\overline{B}(n,\Sc'))$ that are not equal to another basis element times $U_{i_l}$; by induction, these are the basis elements stated in the lemma.
\end{proof}

We can assemble the results above to compute the homology of $\B(n,k,\Sc)$ in general.

\begin{theorem}\label{thm:OSzHomology}
Let $\x,\y \in V(n,k)$ be not far. Let $[j_1+1, j_1+l_1], \ldots, [j_b+1, j_b+l_b]$ be the generating intervals from $\x$ to $\y$. For $1 \leq a \leq b$, let $i_a$ be an element of $[j_a+1,j_a+l_a] \cap \Sc$, if such an element exists.

For a generating interval $G = [j_a+1,j_a+l_a]$, write $p_a$ for $p_G$ (as defined above in Section~\ref{sec:GenInt}). Abusing notation slightly, a basis for $\Ib_{\x} H_*(\B(n,k,\Sc)) \Ib_{\y}$ is given by the elements
\[
\phi\left(p\prod_{a=1}^b \bigg(\frac{C_{i_a} p_a}{U_{i_a}}\bigg)^{\varepsilon_a}\right)
\]
where $\varepsilon_a \in \{0,1\}$ is zero if $\Sc \cap [j_a+1,j_a+l_a] = \varnothing$ and $p$ is a monomial in the variables $U_i$ for $i \notin \Sc$, not divisible by $p_G$ for any generating interval $G$ (this condition is only relevant for generating intervals disjoint from $\Sc$).
\end{theorem}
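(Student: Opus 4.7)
The plan is to reduce the theorem to the homology computations of the individual generating, edge, and crossed-line complexes by invoking the splitting theorem, and then translate the resulting basis back through $\phi$. First, if $\x$ and $\y$ are far, then $\Ib_{\x} \B(n,k,\Sc) \Ib_{\y} = 0$ by Proposition~\ref{prop:OSzBasis} and there is nothing to show, so I may assume $\x$ and $\y$ are not far.

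By Corollary~\ref{cor:IBItoTensorProduct}, the chain complex $\Ib_{\x} \B(n,k,\Sc) \Ib_{\y}$ is isomorphic (via $\psi$) to a tensor product over $\F_2$ of $\Bcl{\x,\y}$ with edge-interval algebras at the left and right ends (if any) and with one generating algebra $\overline{B}(l_a, \Sc_a)$ for each generating interval $[j_a+1, j_a+l_a]$. Since we are working over the field $\F_2$, the Künneth theorem gives an isomorphism
\[
H_*\bigl(\Ib_{\x} \B(n,k,\Sc) \Ib_{\y}\bigr) \cong H_*(\Bcl{\x,\y}) \otimes H_*(\overline{B}_\circ(l_0,\Sc_0)) \otimes \bigotimes_{a=1}^{b} H_*(\overline{B}(l_a, \Sc_a)) \otimes H_*(\overline{B}_\circ(l_{b+1},\Sc_{b+1})),
\]
so a basis of the left-hand side is obtained as the tensor product of bases of each factor. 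Lemma~\ref{lem:OSzCLHomology} provides a basis for $H_*(\Bcl{\x,\y})$ consisting of monomials in $\{U_i : i \in \CL{\x,\y} \setminus \Sc\}$; Lemma~\ref{lem:OSzEdgeIntHomology} provides a basis for each edge-interval factor consisting of classes $[\phi(p)]$ with $p$ a monomial in the $U_i$ for $i$ in the relevant edge interval and $i \notin \Sc$; Lemma~\ref{lem:OSzGenIntHomology} provides a basis for each $H_*(\overline{B}(l_a, \Sc_a))$ of two types, depending on whether or not $\Sc \cap [j_a+1, j_a+l_a]$ is empty.

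I would then assemble these local basis elements under $\psi^{-1}$. The first-type basis elements in each factor contribute, via $\phi$, the free monomials in $\{U_i : i \notin \Sc\}$ that avoid divisibility by $p_G$ for any generating interval $G$ with $\Sc \cap G = \varnothing$; these appear as the factor $\phi(p)$. For each generating interval $G = [j_a+1,j_a+l_a]$ with $\Sc \cap G \neq \varnothing$, choosing a representative $i_a \in \Sc \cap G$, the second type of basis element in Lemma~\ref{lem:OSzGenIntHomology} corresponds under $\psi^{-1}$ to multiplication by $C_{i_a} \cdot (p_a / U_{i_a})$, encoded as the exponent $\varepsilon_a = 1$ in the formula. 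The final step is simply to note that changing the choice of $i_a$ within $\Sc \cap G$ does not change the homology class, which is the content of the last sentence of Lemma~\ref{lem:OSzGenIntHomology}. The main (very mild) obstacle is bookkeeping: verifying that the grading shifts and the relabeling of indices built into the maps $\psi_G$ of Definition~\ref{def:OSzRegradedGenAlgs} make the translation from the local bases to the stated global formula compatible, in particular that the monomials $p$ appearing in the theorem are precisely those that survive the quotient by the ideal $(p_G)_G$ from Proposition~\ref{prop:OSzBasis} after removing the intervals handled by $\varepsilon_a = 1$.
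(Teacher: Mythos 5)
Your proposal is correct and follows essentially the same route as the paper: apply the splitting isomorphism of Corollary~\ref{cor:IBItoTensorProduct}, use the K\"unneth theorem over $\F_2$, and tensor together the bases from Lemmas~\ref{lem:OSzCLHomology}, \ref{lem:OSzEdgeIntHomology}, and \ref{lem:OSzGenIntHomology}. The only superfluous step is your treatment of the far case, which is already excluded by the hypothesis of the theorem.
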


Note that, as discussed in Remark \ref{rem:HIAJ=IHAJ} in the appendix, it does not matter whether we write $\Ib_{\x} H_*(\B(n,k,\Sc)) \Ib_{\y}$ or $H_*(\Ib_{\x} \B(n,k,\Sc) \Ib_{\y})$.

\begin{proof}
By the K{\"u}nneth formula for the homology of a tensor product of chain complexes over $\F_2$, the isomorphism $\psi$ of Corollary~\ref{cor:IBItoTensorProduct} induces an isomorphism
\begin{align*}
\psi \colon \Ib_\x H_*(\B(n,k,\Sc)) \Ib_\y \stackrel{\sim}{\longrightarrow} \F_2[U_i \,|\, i\in\CL{\x,\y}\setminus\Sc] &\otimes H_*(\overline{B}_\circ(l_0,\Sc_0)) \otimes H_*(\overline{B}(l_1,\Sc_1)) \otimes \cdots \\
&\otimes H_*(\overline{B}(l_{b},\Sc_b)) \otimes H_*(\overline{B}_\circ(l_{b+1},\Sc_{b+1})),
\end{align*}
where $\CL{\x,\y}$ is the set of crossed lines from $\x$ to $\y$ as usual.  We can get a basis for $\Ib_\x H_*(\B(n,k,\Sc)) \Ib_\y$ by taking the product of bases for each tensor factor. Using the bases of Lemmas \ref{lem:OSzCLHomology}, \ref{lem:OSzEdgeIntHomology}, and \ref{lem:OSzGenIntHomology}, we get the basis stated in the theorem.
\end{proof}

Each summand $\Ib_{\x} \B_r(n,k,\Sc) \Ib_{\y}$ is a summand of $\B(n,k,\Sc)$ (there are just fewer summands in $\B_r(n,k,\Sc)$), and the same is true for the other truncated algebras. Thus, the homology of the truncated algebras follows from Theorem~\ref{thm:OSzHomology}.

\subsection{Formality and Massey products}\label{sec:Formality}
Let $\A$ be a dg algebra. By homological perturbation theory (see e.g. \cite[Corollary 2.1.18]{LOTBimod} and the references therein), $\A$ is $\A_{\infty}$ homotopy equivalent to $H_*(\A)$ where the multiplication on $H_*(\A)$ is supplemented by certain higher $\A_{\infty}$ actions. If one has such an equivalence when taking the extra $\A_{\infty}$ actions on $H_*(\A)$ to be zero, then $\A$ is called \emph{formal}. Instead of an $\A_{\infty}$ homotopy equivalence, one can ask for a zig-zag pattern of dg quasi-isomorphisms connecting $\A$ and $H_*(\A)$. 

\begin{remark}
The equivalence of these two notions of formality is a standard result at least for $\ring$-algebras defined as rings $\A$ equipped with ring homomorphisms from $\ring$ to $Z(\A)$; see \cite[Corollary 2.9]{Lunts}. Presumably the equivalence also holds in our setting, although we do not need this fact for our results. By \cite[Theorem 9.2.0.4, (b)$\Rightarrow$(a)]{KLH}, a dg quasi-isomorphism is an $\A_{\infty}$ homotopy equivalence under our definitions; the same therefore holds for a more general zig-zag of dg quasi-isomorphisms. When proving algebras are formal, we will always exhibit zig-zags of dg quasi-isomorphisms. When obstructing formality, we will always obstruct $\A_{\infty}$ formality.
\end{remark}

The higher $\A_{\infty}$ actions induced on $H_*(\A)$ are not canonical in general. However, certain higher actions on $H_*(\A)$ (``Massey products'') are canonical and thus obstruct formality. We will use the notion of ``Massey admissible sequences'' described by Lipshitz--Ozsv{\'a}th--Thurston in \cite[Definition 2.1.21]{LOTBimod} to identify Massey products on $H_*(\B(n,k,\Sc))$ for some choices of $n$, $k$, and $\Sc$, and we will show that for the remaining choices $\B(n,k,\Sc)$ is formal. We will consider formality for the truncated algebras $\B_r(n,k,\Sc)$, $\B_l(n,k,\Sc)$, and $\B'(n,k,\Sc)$ in Section~\ref{sec:TruncatedFormality}.

\begin{remark}
Throughout this section, we will be using the quiver description for $\B(n,k,\Sc)$ via the graph $\Gamma(n,k,\Sc)$ freely without reference to the isomorphisms $F$ and $G$ from Sections~\ref{sec:OSz} and \ref{sec:OSzB}.
\end{remark} 

\begin{definition}[see Definition 2.1.21 of \cite{LOTBimod}]\label{def:MasseyAdm}
Let $\A$ be a dg algebra (with a homological grading by $\Z$, and possibly with an additional intrinsic grading) over $\F_2^V$ for $V$ finite and write $\overline{\mu}$ for a set of $\A_{\infty}$ operations on $H_*(\A)$ such that $(H_*(\A),\overline{\mu})$ is $\A_{\infty}$ homotopy equivalent to $\A$. Let $(\alpha_1,\ldots,\alpha_m)$ be a finite sequence of elements of $H_*(\A)$ coming from composable morphisms in $\Cat_{H_*(\A)}$.  The sequence is called \emph{Massey admissible} if the following conditions hold for all $1 \leq i < j \leq m$ with $(i,j) \neq (1,m)$:
\begin{itemize}
\item The higher product $\overline{\mu}_{j-i+1}(\alpha_i, \ldots, \alpha_j)$ is zero.
\item If $\x,\y \in V$ denote the left and right idempotents of $\alpha_i$ and $\alpha_j$ respectively, the summand $\Ib_{\x} H_*(\A) \Ib_{\y}$ of the homology algebra $H_*(\A)$ is zero in degree $j-i+\deg(\alpha_i) + \cdots + \deg(\alpha_j)$.
\end{itemize} 
Integers added to degrees are taken to modify the homological degree while leaving the intrinsic degree unchanged. Note that the Massey admissibility condition for sequences of length three does not depend on the choice of $\overline{\mu}$.
\end{definition}

In our case, the homological grading is the Maslov grading of Definition \ref{def:OSzMaslovDegrees}, while the refined Alexander multi-grading of Definition \ref{def:RefinedAlexOSz} will be treated as the additional intrinsic grading (see Remark~\ref{rem:FormalityWithOtherGradings} below for a brief discussion of the other grading possibilities).

\begin{remark}
Definition~\ref{def:MasseyAdm} is a slight modification of \cite[Definition 2.1.21]{LOTBimod} because it takes the idempotent structure into account; one can check that \cite[Lemma 2.1.22]{LOTBimod} still holds in this setting.
\end{remark}

If $(\alpha_1,\ldots,\alpha_m)$ is a Massey admissible sequence, then $\overline{\mu}_m(\alpha_1,\ldots,\alpha_m)$ can be computed as in \cite[Lemma 2.1.22]{LOTBimod}. In many cases the result will be nonzero, implying that $\A$ cannot be formal.

We will not attempt to characterize all Massey admissible sequences in $H_*(\B(n,k,\Sc))$ or to determine the higher multiplication completely. We will content ourselves with determining the cases in which $\B(n,k,\Sc)$ is formal. Certain Massey products of length three commonly appear, and they will help us obstruct formality in many cases.

\begin{lemma}\label{lem:SomeMasseyProds}
For $1 \leq k \leq n-1$ and $1 \leq i \leq n-1$, we have the following Massey admissible sequences in $H_*(\B(n,k,\Sc))$:
\begin{itemize}
\item If $i \in \Sc$, there is a Massey admissible sequence whose elements have labels 
\[
([L_i],[R_i],[R_{i+1}]),
\]
where the brackets $[\cdot]$ denote the homology class of an element of $\B(n,k,\Sc)$ with the given label.

\item If $i+1 \in \Sc$, there is a Massey admissible sequence whose elements have labels 
\[
([R_{i+1}],[L_{i+1}],[L_i]).
\]

\item If $\Sc \cap \{i,i+1\} = \{i\}$, there is a Massey admissible sequence whose elements have labels 
\[
([L_i],[R_i],[U_{i+1}]).
\]

\item If $\Sc \cap \{i,i+1\} = \{i+1\}$, there is a Massey admissible sequence whose elements have labels 
\[
([R_{i+1}],[L_{i+1}],[U_i]).
\]
\end{itemize}
\end{lemma}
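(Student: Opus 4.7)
\bigskip

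The plan is to verify, for each of the four sequences, the two Massey admissibility conditions from Definition~\ref{def:MasseyAdm} for the pairs $(i,j) = (1,2)$ and $(2,3)$ (the case $(i,j) = (1,m) = (1,3)$ is excluded). Throughout, I would fix a vertex $\x \in V(n,k)$ with the appropriate local structure around lines $i, i+1$; such an $\x$ exists precisely because $1 \le k \le n-1$ and $1 \le i \le n-1$. For cases 1 and 3, I would pick $\x$ with $i \in \x$ and $i-1, i+1 \notin \x$; for cases 2 and 4, symmetric conditions involving $i+1$. Having made these choices, cases 2 and 4 then follow formally from cases 1 and 3 by applying the involution $o$ of Section~\ref{sec:OSzSymmetries}, which reverses the order of composable sequences and swaps $R_j \leftrightarrow L_j$ while preserving all gradings.

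For the vanishing of products $[\alpha_i \alpha_{i+1}]$ at the chain level or in homology, I would argue as follows. In case 1, $[L_i R_i] = [U_i]$ is zero in $H_*$ since $\de C_i = U_i$ and $i \in \Sc$, while $[R_i R_{i+1}] = 0$ already in $\B(n,k,\Sc)$ by the two-line pass relations of Definition~\ref{def:B Quiver Algebra}. In case 3, the product $[L_i R_i] = [U_i] = 0$ again by $i \in \Sc$; for the other product $R_i \cdot U_{i+1}$, the key observation is that if $\y = (\x \sm \{i\}) \cup \{i-1\}$, then the coordinates $i$ and $i+1$ are both not fully used by $\y$ and $\x$ and no line between them is crossed, so $[i+1,i+1]$ is a generating interval for $\y,\x$ with $p_G = U_{i+1}$. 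By Proposition~\ref{prop:Generating Intervals}, $U_{i+1} f_{\y,\x} = 0$ in $\B(n,k,\Sc)$, so $R_i U_{i+1} = 0$ even before passing to homology.

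The degree vanishing conditions are the step that needs the most care. For $(i,j) = (2,3)$ in case 1, the idempotents involved are $\y$ and $\x'' := (\x \sm \{i\}) \cup \{i+1\}$, which differ only in one sorted position (containing $i-1$ versus $i+1$), so they are far in the sense of Definition~\ref{def:far} and $\Ib_\y \B(n,k,\Sc) \Ib_{\x''} = 0$ by Proposition~\ref{prop:OSzBasis}. For the remaining degree conditions, I would apply Theorem~\ref{thm:OSzHomology} to describe a basis for the relevant $\Ib_\x H_*(\B(n,k,\Sc)) \Ib_\y$ and check that no basis element has the prescribed Alexander multi-degree. For the $(1,2)$-condition in cases 1 and 3, both idempotents equal $\x$, so there are no crossed lines and the only generating interval containing line $i$ is $[i,i+1]$ with $p_G = U_i U_{i+1}$; the contribution of this interval to the Alexander multi-degree is either $0$ or $e_i + e_{i+1}$, and since $i \in \Sc$ we cannot use $U_i$ as a free factor, so the target degree $e_i$ is unreachable. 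For the $(2,3)$-condition in case 3 the target degree is $\tfrac12 e_i + e_{i+1}$, but $U_{i+1}$ is killed by the generating interval $[i+1,i+1]$ and $C_{i+1}$ does not exist since $i+1 \notin \Sc$, so no basis element of $\Ib_\y H_*(\B(n,k,\Sc))\Ib_\x$ has Alexander component $e_{i+1}$.

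The main obstacle is precisely the bookkeeping in this last step: one must identify exactly which generating intervals appear for each choice of left and right idempotents, use Theorem~\ref{thm:OSzHomology} to enumerate the possible Alexander contributions of the homology basis, and rule out each one using the hypotheses on $\Sc \cap \{i,i+1\}$. Once this is organized, every required vanishing follows from a direct comparison of multi-degrees, and the four cases differ only in which of the elementary building blocks (boundary in $H_*$, two-line pass relation, generating interval annihilation, or farness of idempotents) enters where.
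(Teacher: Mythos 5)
Your treatment of cases 1 and 3 is essentially the paper's own argument: the same choice of $\x$ with $\x\cap[i-1,i+1]=\{i\}$, the same product vanishings ($[L_i][R_i]=[U_i]=[\de C_i]=0$; $R_iR_{i+1}=0$ by the two-line pass relation; $R_iU_{i+1}=0$ because $[i+1,i+1]$ is a generating interval from $\x_2=(\x\sm\{i\})\cup\{i-1\}$ to $\x$), and the same farness argument for the $(2,3)$ condition in case 1. Your degree-vanishing arguments via Theorem~\ref{thm:OSzHomology} are correct, and for the $(2,3)$ condition in case 3 your observation that no homology basis element of $\Ib_{\x_2}H_*(\B(n,k,\Sc))\Ib_{\x}$ has $e_{i+1}$-component equal to $1$ is a clean (and valid) alternative to the paper's argument, which instead notes that any chain of Alexander degree $\tfrac12 e_i+e_{i+1}$ must contain an $R_i$ or $L_i$ edge of Maslov degree $-1$ and hence cannot sit in Maslov degree $0$.

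The one genuine error is the reduction of cases 2 and 4 to cases 1 and 3 ``by applying the involution $o$.'' Since $o$ is an isomorphism onto the opposite algebra, it reverses the order of a composable sequence while swapping $R_j\leftrightarrow L_j$; applied to $([L_i],[R_i],[R_{i+1}])$ it yields $([L_{i+1}],[L_i],[R_i])$, which is not the sequence $([R_{i+1}],[L_{i+1}],[L_i])$ of case 2. Moreover $o$ fixes $\Sc$, so it cannot convert the hypothesis $i\in\Sc$ into $i+1\in\Sc$. The symmetry that does work is $\rho$: apply case 1 with index $n-i$ to $\B(n,k,\rho(\Sc))$ (the hypothesis $n-i\in\rho(\Sc)$ is equivalent to $i+1\in\Sc$) and transport by $\rho$, which sends $L_{n-i}\mapsto R_{i+1}$, $R_{n-i}\mapsto L_{i+1}$, $R_{n-i+1}\mapsto L_i$ and respects the Maslov grading and (after reindexing) the Alexander multi-grading. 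Alternatively, simply repeat the direct verification with the roles of $R$ and $L$ and of $i$ and $i+1$ interchanged, which is what the paper does. With that correction the proof is complete.
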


\begin{proof}
Assuming that $i \in \Sc$, let $\x$ be any element of $V(n,k)$ such that $\x \cap [i-1,i+1] = \{i\}$. We have a sequence $(L_i,R_i,R_{i+1})$ of algebra elements such that the left idempotent of $L_i$ is $\Ib_{\x}$. Let $\x_1 = \x$, $\x_2 = (\x \setminus \{i\}) \cup \{i-1\}$, and $\x_3 = (\x \setminus \{i\}) \cup \{i+1\}$.

The products $[L_i] [R_i]$ and $[R_i] [R_{i+1}]$ are both zero (note that $\overline{\mu}_2$ must agree with the usual multiplication induced on $H_*(\B(n,k,\Sc))$, and $[U_i] = 0$ because $U_i = \de(C_i)$). The homological degrees of $L_i$ and $R_i$ are each $-1$. The Alexander multi-degrees of these elements are each $\frac{e_i}{2}$. 

The homology of $\Ib_{\x_1}\B(n,k,\Sc) \Ib_{\x_1}$ is zero in homological degree $-1$ and Alexander multi-degree $e_i$. Indeed, a basis element for $\Ib_{\x_1} H_*\B(n,k,\Sc) \Ib_{\x_1}$ in these degrees would need to be a single edge with label $C_i$, but we have $\de(C_i) \neq 0$ at the idempotent $\Ib_{\x_1}$.  Meanwhile, the homology of $\Ib_{\x_2} \B(n,k,\Sc) \Ib_{\x_3}$ is zero because $\x_2$ and $\x_3$ are far.  Thus, the sequence $([L_i],[R_i],[R_{i+1}])$ is Massey admissible; the case of $([R_{i+1}],[L_{i+1}],[L_i])$ when $i+1 \in \Sc$ is similar.

Now assume that $\Sc \cap \{i,i+1\} = \{i\}$. Again, let $\x$ be any element of $V(n,k)$ such that $\x \cap [i-1,i+1] = \{i\}$. We have a sequence $(L_i,R_i,U_{i+1})$ of algebra elements such that the left idempotent of $L_i$ is $\Ib_{\x}$. Let $\x_1 = \x$ and $\x_2 = (\x \setminus \{i\}) \cup \{i-1\}$.

As before, the products $[L_i] [R_i]$ and $[R_i] [U_{i+1}]$ are zero; the second product even vanishes in $\B(n,k,\Sc)$ because $[i+1]$ is a generating interval from $\x_2$ to $\x_1$. The homological degrees of $L_i$ and $R_i$ are each $-1$; the homological degree of $U_{i+1}$ is $0$. The Alexander multi-degrees of these elements are $\frac{e_i}{2}$, $\frac{e_i}{2}$, and $e_{i+1}$ respectively.

The homology $\Ib_{\x_1} H_*(\B(n,k,\Sc)) \Ib_{\x_1}$ is zero in homological degree $-1$ and Alexander multi-degree $e_i$ for the same reason as above. Since homological degrees are nonpositive and an edge with Alexander multi-degree $\frac{e_i}{2}$ has strictly negative homological degree, the summand $\Ib_{\x_2} \B(n,k,\Sc) \Ib_{\x_1}$ of $\B(n,k,\Sc)$ is zero in homological degree $0$ and Alexander multi-degree $\frac{e_i}{2} + e_{i+1}$. Thus, $\Ib_{\x_2} H_*(\B(n,k,\Sc)) \Ib_{\x_1}$ is also zero in this degree (note that this argument would not work if $i+1 \in \Sc$, since the relevant homological degree would be $-2$ and $\Ib_{\x_2} H_*(\B(n,k,\Sc)) \Ib_{\x_1}$ would have a basis element labeled $[C_{i+1} R_i]$ in this degree).

Thus, the sequence $([L_i],[R_i],[U_{i+1}])$ is Massey admissible; the case of $([R_{i+1}],[L_{i+1}],[U_i])$ when $\Sc \cap \{i,i+1\} = \{i+1\}$ is similar.
\end{proof}

Using Lemma~\ref{lem:SomeMasseyProds}, we can determine when $\B(n,k,\Sc)$ is formal.
\begin{theorem}\label{thm:UntruncatedFormality}
The dg algebra $\B(n,k,\Sc)$ is formal if and only if $\Sc = \varnothing$ or $k \in \{0,n,n+1\}$.
\end{theorem}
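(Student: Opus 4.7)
The plan is to handle the two implications separately. For the forward direction, I would treat each of the four formal cases. When $\Sc = \varnothing$ the differential vanishes and there is nothing to prove. When $k = 0$, the unique I-state is $\varnothing$, so the $U$ vanishing relations force every $U_i$ to zero at $\Ib_{\varnothing}$; hence $\partial C_i = U_i = 0$, and $\B(n,0,\Sc)$ is an exterior algebra on $\{C_i : i \in \Sc\}$ with vanishing differential. When $k = n+1$, the unique I-state is $[0,n]$, there are no $R_i$ or $L_i$ edges, and the algebra is exactly $\F_2[U_1,\ldots,U_n]$ tensored with an exterior algebra on $\{C_i : i \in \Sc\}$ with $\partial C_i = U_i$; this splits as a tensor product of Koszul-type complexes, each quasi-isomorphic to $\F_2$, so the whole algebra is quasi-isomorphic to the formal algebra $\F_2[U_i : i \notin \Sc]$.

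The case $k = n$ is the heart of the forward direction and the main obstacle. The crucial preliminary observation is that for any $\x,\y \in V(n,n)$ no generating interval exists between them: either $\x = \y$, in which case the only non-fully-used coordinate is the single hole, or $\x \neq \y$ and the two non-fully-used coordinates bracket exactly the interval of crossed lines, precluding any generating interval disjoint from the crossed lines. Granting this, I would define a dg algebra projection
\[
\pi \colon \B(n,n,\Sc) \longrightarrow \B(n,n)/(U_i : i \in \Sc)
\]
by sending $C_i \mapsto 0$ and fixing the other generators. Well-definedness reduces to $\pi(\partial C_i) = \pi(U_i) = 0 = \partial \pi(C_i)$ for $i \in \Sc$, which holds by construction; the target has zero differential, so it suffices to show $\pi$ is a quasi-isomorphism. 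On each summand $\Ib_\x \B(n,n,\Sc) \Ib_\y$, Corollary~\ref{cor:IBItoTensorProduct} factors it as a tensor product of $\Bcl{\x,\y}$ and the appropriate edge algebras, and Lemmas~\ref{lem:OSzCLHomology} and \ref{lem:OSzEdgeIntHomology} combined with the K\"unneth formula identify the homology of this summand with $\F_2[U_i : i \notin \Sc]$. Matching this against the basis of $\Ib_\x \B(n,n) \Ib_\y$ from Proposition~\ref{prop:OSzBasis} (which has no generating interval relations when $k = n$) shows that $\pi$ realizes precisely the projection onto homology.

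For the ``only if'' direction, assume $\Sc \neq \varnothing$ and $k \in [1,n-1]$, which forces $n \geq 2$. If $\Sc \cap [1,n-1] \neq \varnothing$, pick $i \in \Sc \cap [1,n-1]$ and choose $\x \in V(n,k)$ with $\x \cap [i-1,i+1] = \{i\}$, which exists because $k \leq n-1$ leaves room for the other $k-1$ dots outside $[i-1,i+1]$. Lemma~\ref{lem:SomeMasseyProds} Case~(1) gives the Massey admissible sequence $([L_i], [R_i], [R_{i+1}])$ starting at $\x$. Using the standard recipe with $b = \Ib_\x C_i$ (so $\partial b = U_i = L_i R_i$) and $c = 0$ (since $R_i R_{i+1} = 0$) yields the Massey product $[C_i R_{i+1}]$ living in $\Ib_\x H_*(\B(n,k,\Sc)) \Ib_{\x'}$, where $\x' = (\x \setminus \{i\}) \cup \{i+1\}$. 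I would verify this class is nonzero by exhibiting the length-one generating interval $[i,i]$ between $\x$ and $\x'$ (with $i \in \Sc$), which via Theorem~\ref{thm:OSzHomology} produces the basis element $\phi(C_i)$; under the isomorphism $G$ from Corollary~\ref{cor:OSzQuiverEquivDG} this equals $C_i R_{i+1}$, since Definition~\ref{def:Recursive} gives $\gamma_{\x,\x'} = R_{i+1}$. The remaining case $\Sc \cap [1,n-1] = \varnothing$ forces $\Sc = \{n\}$, and a symmetric argument via Lemma~\ref{lem:SomeMasseyProds} Case~(2) with $i = n-1$ produces the nonzero Massey product $[C_n L_{n-1}]$. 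In either case the nonvanishing Massey product obstructs formality.
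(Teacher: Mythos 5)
Your proposal is correct and follows essentially the same route as the paper: the same treatment of the formal cases (including the key observation that no generating intervals exist when $k=n$, with your quotient $\B(n,n)/(U_i : i\in\Sc)$ being exactly the paper's target $H_*(\B(n,n,\Sc))$), and the same obstruction via the Massey admissible sequences of Lemma~\ref{lem:SomeMasseyProds} and \cite[Lemma 2.1.22]{LOTBimod}, with your length-one generating interval argument just making explicit why $[C_i R_{i+1}]$ is a basis element of the homology.
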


\begin{proof}
If $\Sc = \varnothing$ or $k = 0$, then $\B(n,k,\Sc)$ has no differential, so it is formal. If $k = n+1$, then $H_*(\B(n,k,\Sc)) \cong \F_2[U_i \,|\, i \notin \Sc]$ and the inclusion of these polynomials into $\B(n,k,\Sc)$ is a quasi-isomorphism (even a homotopy equivalence).

If $k = n$, then there are no generating intervals from $\x$ to $\y$ for any $\x,\y \in V(n,n)$. Thus, no $C_i$ labels appear in the basis for any summand $\Ib_{\x} H_*(\B(n,n,\Sc)) \Ib_{\y}$ from Theorem~\ref{thm:OSzHomology}.
Define a homomorphism from $\B(n,n,\Sc)$ to $H_*(\B(n,n,\Sc))$ by sending any $C_i$ generator to $0$ and by sending all other generators (which are contained in $\ker \de$) to their homology classes. For each defining relation of $\B(n,n,\Sc)$, either all terms or no terms of the relation involve a $C_i$ generator. Thus, the map is well defined, and it induces an isomorphism on homology by construction.

On the other hand, assume that $\Sc$ is nonempty and that $1 \leq k \leq n-1$. If $\Sc \neq \{n\}$, then Lemma~\ref{lem:SomeMasseyProds} gives us a Massey admissible sequence $([L_i],[R_i],[R_{i+1}])$. In the notation of \cite[Lemma 2.1.22]{LOTBimod}, let $\xi_{01} = L_i$, $\xi_{12} = R_i$, $\xi_{23} = R_{i+1}$, $\xi_{02} = C_i$, and $\xi_{13} = 0$. We have $\overline{\mu}_3([L_i],[R_i],[R_{i+1}]) = [\xi_{02} \xi_{23}] = [C_i R_{i+1}]$ by \cite[Lemma 2.1.22]{LOTBimod}. Since $[C_i R_{i+1}]$ is a basis element of $\Ib_{\x} H_*(\B(n,k,\Sc)) \Ib_{\y}$ and this product exists for any $\A_{\infty}$ structure $\overline{\mu}$ on $H_*(\B(n,k,\Sc))$ that is $\A_{\infty}$ homotopy equivalent to $\B(n,k,\Sc)$, the algebra $\B(n,k,\Sc)$ cannot be formal.

Similarly, if $\Sc \neq \{1\}$, then we have a product $\overline{\mu}_3([R_{i+1}],[L_{i+1}],[L_i]) = [C_{i+1} L_i]$ from Lemma~\ref{lem:SomeMasseyProds}, implying that $\B(n,k,\Sc)$ is not formal. We have already covered all possible cases; for completeness, we note that the remaining items of Lemma~\ref{lem:SomeMasseyProds} give us $\overline{\mu}_3([L_i],[R_i],[U_{i+1}]) = [C_i U_{i+1}]$ and $\overline{\mu}_3([R_{i+1}],[L_{i+1}],[U_i]) = [C_{i+1} U_i]$.
\end{proof}

\subsubsection{Truncated algebras}\label{sec:TruncatedFormality}

We now investigate formality for $\B_r(n,k,\Sc)$, $\B_l(n,k,\Sc)$, and $\B'(n,k,\Sc)$, starting with $\B_r(n,k,\Sc)$. First, we deal with an especially tricky subcase.

\begin{lemma}\label{lem:OneSidedTrickyCase}
If $1\in\Sc$, then the dg algebra $\B_r(n,n-1,\Sc)$ is formal.
\end{lemma}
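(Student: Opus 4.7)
The plan is to exhibit $\B_r(n,n-1,\Sc)$ as formal by an explicit quasi-isomorphism construction. The key structural observation is that $V_r(n,n-1)=\{\x^j:=[1,n]\setminus\{j\}:j\in[1,n]\}$ forms a linear chain with edges $L_i,R_i$ for $i\in[2,n]$ between $\x^{i-1}$ and $\x^i$, and that $\x^1$ is the unique vertex satisfying $\x^1\cap\{0,1\}=\varnothing$, so $U_1\Ib_{\x^1}=0$ and $C_1\Ib_{\x^1}$ is a cycle.

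First, I would use Theorem~\ref{thm:OSzHomology} to describe $H_*(\B_r(n,n-1,\Sc))$: at each summand $\Ib_{\x^j}\B_r(n,n-1,\Sc)\Ib_{\x^{j'}}$, the unique generating interval between $\x^j$ and $\x^{j'}$ is $[1,m]$ with $m=\min(j,j')$, and since $1\in\Sc$ lies in this interval, basis elements have the form $\phi(p\cdot(C_1 U_2\cdots U_m)^{\varepsilon})$ for $\varepsilon\in\{0,1\}$ and $p$ a monomial in $U_i$ for $i\notin\Sc$.

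Next, I would obtain explicit cycle representatives by ``threading'' $C_1$ through $\x^1$. The bounce-point formula following Proposition~\ref{prop:BothFactorsArbitrary} yields $\gamma_{\x^j,\x^1}\cdot\gamma_{\x^1,\x^{j'}}=U_2\cdots U_m\cdot\gamma_{\x^j,\x^{j'}}$, so $\gamma_{\x^j,\x^1}\cdot C_1\Ib_{\x^1}\cdot\gamma_{\x^1,\x^{j'}}=C_1 U_2\cdots U_m\cdot\gamma_{\x^j,\x^{j'}}$ realizes each $C_1$-type basis class as a cycle that factors through $\x^1$. In the simplest case $\Sc=\{1\}$, the $\F_2$-span $\widetilde{\A}$ of these representatives (together with the non-$C$ cycle representatives $p\cdot\gamma_{\x^j,\x^{j'}}$) is closed under multiplication, inherits trivial differential, and the inclusion $\widetilde{\A}\hookrightarrow\B_r(n,n-1,\{1\})$ is a quasi-isomorphism by a direct basis comparison, establishing formality.

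The main obstacle is extending this sub-dga construction to general $\Sc\ni 1$: when $\Sc\cap[2,n]\neq\varnothing$, the products $L_iR_i=U_i$ for such $i$ are boundaries in $\B_r(n,n-1,\Sc)$ (equal to $\de C_i$) yet nonzero as algebra elements, so no sub-dga of $\B_r(n,n-1,\Sc)$ can literally match the homology multiplication on the nose. I would handle this either by constructing a zig-zag of dga quasi-isomorphisms through an intermediate free dga resolution, or by using explicit homotopy transfer to produce an $\A_\infty$-quasi-isomorphism to $H_*(\B_r(n,n-1,\Sc))$ and verifying that all higher products $\mu_k$ (for $k\geq 3$) vanish. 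The vanishing should follow from the observation that any Massey-admissible sequence in $H_*(\B_r(n,n-1,\Sc))$ must factor through $\x^1$ (where $U_1=0$), forcing cancellation in the Massey formula of \cite[Lemma 2.1.22]{LOTBimod} via the identity $\de(C_1C_j)=C_1U_j+U_1C_j=C_1U_j$ at $\x^1$.
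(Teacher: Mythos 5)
Your setup is sound: you correctly identify the chain structure of $V_r(n,n-1)$, the special role of $\x^1=[2,n]$ (where $U_1=0$ and $C_1$ is a cycle), the basis of the homology via Theorem~\ref{thm:OSzHomology}, and the cycle representatives $\gamma_{\x^j,\x^1}\,C_1\,\gamma_{\x^1,\x^{j'}}$. You also correctly diagnose why a direct (sub-)dga quasi-isomorphism fails once $\Sc\cap[2,n]\neq\varnothing$: the products $L_iR_i=U_i=\de C_i$ are nonzero boundaries, so no sub-dga with zero differential can reproduce the homology multiplication. The paper makes exactly this point (see the remark following the lemma).

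However, the proof has a genuine gap precisely at the step you flag as ``the main obstacle.'' You name two strategies --- a zig-zag through an intermediate dga, or homotopy transfer with vanishing of all higher products --- but you execute neither, and the second as described would not work. Vanishing of Massey products (the canonical, choice-independent operations) does not by itself imply formality; you would need to exhibit a specific transferred $\A_\infty$ structure with \emph{all} $\mu_k=0$ for $k\geq 3$, and the heuristic that ``Massey-admissible sequences must factor through $\x^1$'' addresses neither non-admissible sequences nor the non-canonical higher operations. The paper instead carries out the first strategy concretely: it builds an intermediate quiver algebra $\A$ from the subgraph of $\Gamma_r(n,n-1,\Sc)$ in which all $U_1$- and $C_1$-loops are deleted except a single $C_1$-loop at $\x_1=[2,n]$, imposes the induced relations, and then proves that both the tautological map $\kappa\colon\A\to\B_r(n,n-1,\Sc)$ and the map $\lambda\colon\A\to H_*(\B_r(n,n-1,\Sc))$ (killing the $C_l$-loops for $l>1$) are quasi-isomorphisms, via an explicit basis computation for $\Ib_{\x_i}\A\Ib_{\x_j}$. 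That construction and the accompanying basis argument are the substance of the proof and are absent from your proposal; as written, your argument establishes formality only in the case $\Sc=\{1\}$.
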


\begin{proof}
If $n = 1$ the differential is zero, so assume $n \geq 2$. Let $\A$ be the path algebra of the subgraph $\Gamma_{\A}$ of $\Gamma_r(n,n-1,\Sc)$ in which we omit all $U_1$ and $C_1$ loops except for a single $C_1$ loop at the vertex $\x_1 = [2,n]$, modulo the two-sided ideal generated by the elements of $\td{\mc R}_r$ involving only edges in $\Gamma_{\A}$. Explicitly, we take the quotient by the following relations:
\begin{itemize}
\item $U$ central relations not involving $U_1$,
\item loop relations not involving $U_1$,
\item $C$ central relations not involving $C_1$,
\item $C_1$ central relations of the form $C_1C_i=C_iC_1$ and $C_1U_i=U_iC_1$ for $i\neq 1$ (all occurring at vertex $\x_1$), and
\item $C^2$ vanishing relations not involving $C_1$ at a vertex other than $\x_1$.
\end{itemize}
Define a homomorphism of dg algebras $\kappa: \A \to \B_r(n,n-1,\Sc)$ by sending each edge in $\Gamma_{\A}$ to the corresponding generator of $\B_r(n,n-1,\Sc)$. Since $\kappa$ sends the relation set for $\A$ into the relation set for $\B_r(n,n-1,\Sc)$, $\kappa$ is well-defined.

We want to show that $\kappa$ is a quasi-isomorphism; to do this, we compute the homology of $\A$. For $i \in [1,n]$, let $\x_i = [1,n] \setminus \{i\}$. We claim that any path in $\Gamma_{\A}$ from $\x_i$ to $\x_j$ is equal, modulo the relations defining $\A$, to a product of $U_l$ and $C_l$ loops for $l > 1$ with either:
\begin{enumerate}
\item\label{it:FormalityType1Elements} $\gamma_{\x_i,\x_j}$, or
\item\label{it:FormalityType2Elements} $\gamma_{\x_i,\x_1} C_1 \gamma_{\x_1,\x_j}$.
\end{enumerate}

First let $\gamma$ be a path in $\Gamma_{\A}$ from $\x_i$ to $\x_j$ with no edge labeled $C_1$. Since $\A$ has all $U$ central relations and $C$ central relations not involving $U_1$ or $C_1$, $\gamma$ is equal in $\A$ to a product of $U_l$ and $C_l$ loops for $l > 1$ (in any order) with a path $\gamma'$ from $\x_i$ to $\x_j$ having only $R$- or $L$-labeled edges. If $\gamma'$ has exactly $|i-j|$ edges, then $\gamma' = \gamma_{\x_i,\x_j}$. If $\gamma'$ has more than $|i-j|$ edges, then it contains a consecutive pair of edges with labels $(R_k,L_k)$ or $(L_k, R_k)$ for some $k \in [2,n]$. We can use the loop relations of $\A$ to replace $\gamma'$ with an equivalent shorter path while adding a $U_k$ loop to the product for $\gamma$, proving the claim for paths containing no $C_1$ edge by induction on the length of $\gamma'$.

If $\gamma$ is a path in $\Gamma_{\A}$ containing more than one $C_1$ edge, let $\gamma'$ be a subpath of $\gamma$ containing all edges between two consecutive instances of $C_1$ (non-inclusive). The starting and ending vertex of $\gamma'$ are $\x_1$, so by the above argument, $\gamma'$ is equivalent to a product of $U_l$ and $C_l$ loops for $l > 2$ with the identity path $\gamma_{\x_1, \x_1}$ modulo the relations defining $\A$. The $C_1$ central relations existing in $\A$ then imply that $\gamma$ is equal in $\A$ to a path with two consecutive $C_1$ edges at the vertex $\x_1$, which is zero by the one instance of a $C_1^2$ vanishing relation existing in $\A$.

Finally, if $\gamma$ is a path in $\Gamma_{\A}$ from $\x_i$ to $\x_j$ with exactly one $C_1$ edge, write $\gamma = \gamma' C_1 \gamma''$, where $C_1$ stands for the single-edge path from $\x_1$ to itself with label $C_1$. By the argument for paths with no $C_1$ edges, $\gamma'$ and $\gamma''$ are equal in $\A$ to $\gamma_{\x_i,\x_1}$ and $\gamma_{\x_1,\x_j}$ respectively, times some number of $U_l$ and $C_l$ loops for $l > 1$. Thus, $\gamma$ is equal in $\A$ to a product of $\gamma_{\x_i, \x_1} C_1 \gamma_{\x_1,\x_j}$ with $U_l$ and $C_l$ loops for $l > 1$ as desired.

Thus, products of the elements \eqref{it:FormalityType1Elements} and \eqref{it:FormalityType2Elements} above with $U_l$ and $C_l$ loops for $l > 1$ form a spanning set for $\Ib_{\x_i} \A \Ib_{\x_j}$ over $\F_2$. Choose one such product of each type for each pair of a monomial in $U_l$ for $l > 1$ and a square-free monomial in $C_l$ for $l \in [2,n] \cap \Sc$. We want to show that the set $\beta$ of these products is linearly independent over $\F_2$, and thus forms a basis for $\Ib_{\x_i} \A \Ib_{\x_j}$.

Since the algebra homomorphism $\kappa: \A \to \B_r(n,n-1,\Sc)$ is, in particular, an $\F_2$-linear map, it suffices to show that $\kappa$ maps the elements of $\beta$ to distinct elements of a basis for $\Ib_{\x_i} \B_r(n,n-1,\Sc) \Ib_{\x_j}$. In fact, the image under $\kappa$ of each element of $\beta$ is one of the basis elements for $\Ib_{\x_i} \B_r(n,n-1,\Sc) \Ib_{\x_j}$ given in Proposition~\ref{prop:OSzBasis} (recall that we have $R_l \cdots R_2 C_1 L_2 \cdots L_l = C_1 U_2 \cdots U_l$ in $\B_r(n,n-1,\Sc)$), and one can check that $\kappa$ restricted to $\beta$ is injective.

Therefore, we have a basis of $\Ib_{\x_i} \A \Ib_{\x_j}$ over $\F_2$. The subset of basis elements that do not involve $C_1$ (i.e. those from \eqref{it:FormalityType1Elements}) is in differential-preserving bijection with the set of basis elements for $\Ib_{\x_i - 1}\B(n-1,n-1,\Sc-1)\Ib_{\x_j-1}$ from Proposition~\ref{prop:OSzBasis}, where $\Sc - 1 = \{l - 1 \,|\, l \in \Sc \cap [2,n]\}$. Since $\x_1=[2,n]$, we have
\[
\de ( \gamma_{\x_i,\x_1} C_1 \gamma_{\x_1,\x_j} ) = 0,
\]
so the basis elements for $\Ib_{\x_i} \A \Ib_{\x_j}$ that do involve $C_1$ (i.e. those from \eqref{it:FormalityType2Elements}) are also in differential-preserving bijection with basis elements for $\Ib_{\x_i - 1}\B(n-1,n-1,\Sc-1)\Ib_{\x_j-1}$.

Using these bijections, we can deduce a basis for $\Ib_{\x_i} H_*(\A) \Ib_{\x_j}$ from Theorem~\ref{thm:OSzHomology}; note that there are no generating intervals from $\x_i-1$ to $\x_j-1$. The basis elements are $p[\gamma_{\x_i,\x_j}]$ and $p[\gamma_{\x_i,\x_1} C_1 \gamma_{\x_1,\x_j}]$ where $p$ ranges over all monomials in $[U_l]$ for $l \in [2,n] \setminus \Sc$. Using Theorem~\ref{thm:OSzHomology} again, we see that $\kappa$ sends these elements to a basis for $\Ib_{\x_i} H_*(\B_r(n,n-1,\Sc)) \Ib_{\x_j}$ (note that $[1, \min(i,j)]$ is the only generating interval from $\x_i$ to $\x_j$). It follows that $\kappa$ is a quasi-isomorphism.

We can also define a homomorphism of dg algebras $\lambda: \A \to H_*(\B_r(n,n-1,\Sc))$ as follows. Generators of $\A$ labeled $R_i$, $L_i$, and $U_i$ are in the kernel of $\partial$. The generator of $\A$ labeled $C_1$ is a loop at the vertex $\x_1 = [2,n]$, so it is also in the kernel of $\partial$. Let $\lambda$ send each of these generators to the homology class of its image under $\kappa$ and send all other generators of $\A$ (loops labeled $C_l$ for $l > 1$ in $\Sc$) to zero. Any defining relation in $\A$ not involving any such $C_l$ is in the kernel of $\kappa$, so it is in the kernel of $\lambda$.  Meanwhile, any defining relation in $\A$ involving some $C_l$ ($l>1$ in $\Sc$) is in fact wholly divisible by $C_l$, and as such gets mapped to zero as well. Thus, $\lambda$ is well-defined.

To see that $\lambda$ respects the differential, note that $H_*(\B_r(n,n-1,\Sc))$ has zero differential, so it is enough to show that $\lambda\circ\de=0$. Since $\lambda$ is either the zero map or the quotient to homology on each generator of $\A$, it sends boundaries to zero, i.e.~$\lambda \circ \de = 0$.

By Theorem~\ref{thm:OSzHomology}, $\lambda$ sends the basis elements for $\Ib_{\x_i} H_*(\A) \Ib_{\x_j}$ listed above to a basis for $\Ib_{\x_i} H_*(\B_r(n,n-1,\Sc)) \Ib_{\x_j}$, so $\lambda$ is a quasi-isomorphism. In summary, we have a zig-zag of quasi-isomorphisms
\[
\B_r(n,n-1,\Sc) \xleftarrow{\kappa} \A \xrightarrow{\lambda} H_*(\B_r(n,n-1,\Sc)),
\]
so $\B_r(n,n-1,\Sc)$ is formal.
\end{proof}

\begin{remark}
It appears impossible to define a quasi-isomorphism directly from $\B_r(n,n-1,[1,n])$ to its homology or vice-versa.
\end{remark}

\begin{theorem}\label{thm:RightTruncationFormality}
The dg algebra $\B_r(n,k,\Sc)$ is formal if and only if one of the following conditions holds:
\begin{itemize}
\item $\Sc = \varnothing$, $\Sc = \{1\}$, $k = 0$, or $k = n$;
\item $k = n-1$ and $1\in\Sc$.
\end{itemize}
\end{theorem}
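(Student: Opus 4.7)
The plan is to treat formality and non-formality separately, following the template of Theorem~\ref{thm:UntruncatedFormality}. Three of the formality cases will be essentially automatic: for $\Sc = \varnothing$ or $k = 0$ the differential vanishes, and for $k = n$ the only vertex in $V_r(n,n)$ is $[1,n]$, so $\B_r(n,n,\Sc)$ reduces to a Koszul-type dg algebra isomorphic to $\F_2[U_1,\dots,U_n]\otimes \Lambda(C_i : i \in \Sc)$ with $\partial C_i = U_i$, which admits the standard quasi-isomorphism to its homology $\F_2[U_i : i \notin \Sc]$. The case $k = n-1$ with $1 \in \Sc$ is already Lemma~\ref{lem:OneSidedTrickyCase}, which leaves $\Sc = \{1\}$ as the main formality case to prove.

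The key structural observation for $\Sc = \{1\}$ is that neither $R_1$ nor $L_1$ lies in $\Gamma_r$, since each would require a vertex containing $0$; consequently every $U_1$ in a path comes from a $U_1$ loop at a vertex with $1\in\x$, while $C_1$ loops at vertices with $1\notin\x$ are automatic cycles by the $U$ vanishing relation. Imitating Lemma~\ref{lem:OneSidedTrickyCase}, I would let $\Gamma_\A$ be the subgraph of $\Gamma_r(n,k,\{1\})$ obtained by deleting every $U_1$ loop and every $C_1$ loop at a vertex with $1\in\x$, set $\A = \Quiv(\Gamma_\A, \td{\mc R}_r \cap \Path(\Gamma_\A))$, and take the inclusion $\kappa \colon \A \to \B_r(n,k,\{1\})$. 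Then $\A$ has zero differential and $\kappa$ is a dg homomorphism. To show $\kappa$ is a quasi-isomorphism, I would produce explicit cycle representatives in $\A$ for each homology basis element of Theorem~\ref{thm:OSzHomology}; the interesting representatives are those of the form $\phi(p\cdot C_1 U_2 \cdots U_{l_1})$ at a vertex $\y$ with $1 \in \y$, which I would realize by the detour $R_{l_1} R_{l_1-1} \cdots R_2 \cdot C_1 \cdot L_2 L_3 \cdots L_{l_1}$ starting at $\y$. This path lies in $\A$ because its intermediate vertex supporting the $C_1$ loop has $1 \notin \x$, and it reduces in $\B_r(n,k,\{1\})$ to $C_1 U_2 \cdots U_{l_1}$ via repeated use of $R_i L_i = U_i$ and the centrality of $C_1$. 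A basis count analogous to the one in Lemma~\ref{lem:OneSidedTrickyCase} then completes the quasi-isomorphism claim.

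For non-formality, in each remaining case I would exhibit a nonzero Massey product using Lemma~\ref{lem:SomeMasseyProds} or a variant adapted to the $\B_r$ setting. When $1 \leq k \leq n-2$ and $\Sc \neq \varnothing, \{1\}$, I would pick $j \in \Sc \cap [2,n]$ and choose a vertex $\x \in V_r(n,k)$ with $\x \cap [j-1,j+1] = \{j\}$, which exists because $k \leq n-2$ leaves enough room after excluding $\{0, j-1, j, j+1\}$. Lemma~\ref{lem:SomeMasseyProds} then applies directly to give $\overline{\mu}_3([L_j],[R_j],[R_{j+1}]) = [C_j R_{j+1}]$, with the mirror sequence $([R_n],[L_n],[L_{n-1}])$ used when $j = n$; in either case the result is a nonzero basis element of the homology.

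The main obstacle is the remaining subcase $k = n-1$ with $1 \notin \Sc$ and $\Sc \neq \varnothing$, since $V_r(n,n-1)$ consists of vertices missing exactly one element of $[1,n]$ and no such vertex satisfies $\x \cap [j-1,j+1] = \{j\}$ for $j \in [2,n-1]$, so Lemma~\ref{lem:SomeMasseyProds} does not apply. I would address this by introducing a new Massey admissible sequence $([R_j], [L_j], [U_1 U_2 \cdots U_{j-1}])$ based at $\x = [1,n] \setminus \{j\}$ for any $j \in \Sc \subset [2,n]$. Vanishing of $[R_j][L_j] = [U_j]$ uses $U_j = \partial C_j$, while vanishing of $[L_j][U_1 \cdots U_{j-1}]$ is already visible in $\B_r$, because $U_1 \cdots U_{j-1}$ is precisely the generating-interval monomial $p_G$ at the pair $([1,n] \setminus \{j-1\}, \x)$. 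A degree check against the homology basis of Theorem~\ref{thm:OSzHomology} establishes Massey admissibility, and \cite[Lemma 2.1.22]{LOTBimod} yields $\overline{\mu}_3 = [C_j U_1 \cdots U_{j-1}]$, the basis element of $\Ib_\x H_*(\B_r(n,n-1,\Sc)) \Ib_\x$ with $\varepsilon_a = 1$ on the unique generating interval $[1,j]$ at $\x$, thereby obstructing formality.
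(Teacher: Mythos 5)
Your case division and overall strategy match the paper's, but two of your cases are handled by genuinely different arguments, and one contains a small but real error. For $\Sc=\{1\}$, the paper does not build an auxiliary quiver algebra: it defines a section of the quotient-to-homology map directly, sending the basis classes $[p]$ and $[C_1\cdots U_l\, p]$ of $\Ib_{\x}H_*(\B_r(n,k,\Sc))\Ib_{\y}$ (Theorem~\ref{thm:OSzHomology}) to the corresponding elements of $\Ib_{\x}\B_r(n,k,\Sc)\Ib_{\y}$ and checking multiplicativity. Your zig-zag through $\A=\Quiv(\Gamma_\A,\td{\mc R}_r\cap\Path(\Gamma_\A))$ is workable, but note that your $\Gamma_\A$ retains $C_1$ loops at \emph{every} vertex with $1\notin\x$ (unlike Lemma~\ref{lem:OneSidedTrickyCase}, where only one such vertex exists), and the $C$ central relations for edges incident to vertices with $1\in\x$ are not in $\Path(\Gamma_\A)$; so the basis count you defer to --- in particular, identifying two paths whose $C_1$ loops sit at different intermediate vertices separated by vertices containing $1$ --- requires a genuine argument, not just an analogy. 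The paper's direct section avoids this entirely. For the case $k=n-1$, $1\notin\Sc$, your Massey sequence $([R_j],[L_j],[U_1\cdots U_{j-1}])$ differs from the paper's $([R_l\cdots R_2],[L_2\cdots L_l],[U_1])$ with $l=\min\Sc$; both produce the same nonzero class $[C_lU_1\cdots U_{l-1}]$, and yours arguably makes the vanishing of $[L_j][U_1\cdots U_{j-1}]$ more transparent via the generating-interval monomial.

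The error: your claim that the sequence works ``for any $j\in\Sc$'' is false. If $j\neq\min\Sc$, pick $i_0\in\Sc$ with $i_0<j$; then $U_1\cdots U_{j-1}=\de\bigl(C_{i_0}U_1\cdots\widehat{U_{i_0}}\cdots U_{j-1}\bigr)$ at $\x_j$, so $[U_1\cdots U_{j-1}]=0$ in homology and your triple product is trivially zero (and the Maslov degree bookkeeping also changes, since $U_{i_0}$ contributes $-2$). Taking $j=\min\Sc$, as the paper does, repairs the argument with no other changes.
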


\begin{proof}
The cases $\Sc = \varnothing$, $k = 0$, and $k = n$ follow as in Theorem~\ref{thm:UntruncatedFormality}. The case $k = n-1$ and $1 \in \Sc$ follows from Lemma~\ref{lem:OneSidedTrickyCase}.

If $\Sc = \{1\}$ and $k < n$, let $\x,\y \in V_r(n,k)$ (following the notation of Definition~\ref{def:TruncatedOSzAlgs}). By Theorem~\ref{thm:OSzHomology}, we have a basis for $\Ib_{\x} H_*(\B_r(n,k,\Sc)) \Ib_{\y}$ consisting of classes $[p]$ where $p$ is a monomial in the $U_i$ variables not divisible by $U_1$ or the monomial for any generating interval, as well as classes $[C_1 \cdots U_l p]$ where $[1,l]$ is the first generating interval from $\x$ to $\y$. Note that a generating interval $[1, l]$ exists because $k < n$. Define a map from $\Ib_{\x} H_*(\B_r(n,k,\Sc)) \Ib_{\y}$ to $\Ib_{\x} \B_r(n,k,\Sc) \Ib_{\y}$ by sending $[p]$ to $p$ and $[C_1 \cdots U_l p]$ to $C_1 \cdots U_l p$. By definition, this map induces an isomorphism on homology; one can check that it is also compatible with the multiplication on $H_*(\B_r(n,k,\Sc))$ and $\B_r(n,k,\Sc)$. Thus, $\B_r(n,k,\Sc)$ is quasi-isomorphic to its homology in this case.

On the other hand, assume $\Sc \cap [2,n] \neq \varnothing$ and $1 \leq k \leq n-2$. As in the proof of Theorem~\ref{thm:UntruncatedFormality}, Lemma~\ref{lem:SomeMasseyProds} gives us a canonical higher multiplication $\overline{\mu}_3([L_i],[R_i],[R_{i+1}]) = [C_i R_{i+1}]$ or $\overline{\mu}_3([R_{i+1}],[L_{i+1}],[L_i]]) = [C_{i+1} L_i]$ for some $i$, so $\B_r(n,k,\Sc)$ is not formal. 

Lastly, let $k=n-1$. Assuming that $\Sc$ is nonempty and $1 \notin \Sc$, we show that there exists a nonzero triple Massey product.
Let $l = \min \Sc$; we must have $l>1$ because $1 \notin \Sc$. We claim that there is a Massey admissible sequence labeled
\begin{equation}
\label{eq:ExtraMasseySequence}
([R_l \cdots R_2], [L_2 \cdots L_l], [U_1])
\end{equation}
such that the left idempotent of $[R_l \cdots R_2]$ is $\x_l = [1,n]\sm\set{l}$. The right idempotent of $[L_2 \cdots L_l]$ and both the left and right idempotents of $[U_1]$ are also $\x_l$. The right idempotent of $[R_l \cdots R_2]$ and the left idempotent of $[L_2 \cdots L_l]$ are $\x_1 = [1,n]\sm\set1$.
The homology classes $[R_l \cdots R_2]$ and $[L_2 \cdots L_l]$ both have homological degree $-1$ and Alexander multi-degree $\frac12(e_2 + \cdots + e_l)$, while $[U_1]$ has homological degree $0$ and Alexander multi-degree $e_1$. By Theorem \ref{thm:OSzHomology}, $\Ib_{\x_l} H_*(\B_r(n,k,\Sc)) \Ib_{\x_l}$ vanishes in homological degree $-1$ and Alexander multi-degree $e_2+\cdots+e_l$, and $\Ib_{\x_1} H_*(\B_r(n,k,\Sc)) \Ib_{\x_l}$ vanishes in homological degree $0$ and Alexander multi-degree $e_1 + \frac12(e_2+\cdots+e_l)$.
Moreover,
\[
\overline{\mu}_2([R_l \cdots R_2], [L_2 \cdots L_l]) = [U_2 \cdots U_l] = [\de(U_2 \cdots U_{l-1} C_l)] = 0,
\]
and $\overline{\mu}_2([L_2 \cdots L_l], [U_1]) = 0$ because $[U_1]$ can be commuted to the left but $U_1 = 0$ at the vertex $\x_1$ even before taking homology.
Thus, the sequence in \eqref{eq:ExtraMasseySequence} is Massey admissible.
The triple Massey product is computed following \cite[Lemma 2.1.22]{LOTBimod}: let $\xi_{01} = R_l \cdots R_2$, $\xi_{12} = L_2 \cdots L_l$, $\xi_{23} = U_1$, $\xi_{02} = U_2 \cdots U_{l-1}C_l$, and $\xi_{13} = 0$. We have
\[
\overline{\mu}_3([R_l \cdots R_2], [L_2 \cdots L_l], [U_1]) = [\xi_{02}\xi_{23}] = [U_1 \cdots U_{l-1}C_l].
\]
Note that this element is nonzero in $\Ib_{\x_l} H_*(\B_r(n,k,\Sc)) \Ib_{\x_l}$ by Theorem \ref{thm:OSzHomology}, since $[1,l]$ is a generating interval from $\x_l$ to itself.
\end{proof}

Using the symmetry $\rho: \B_l(n,k,\Sc) \xrightarrow{\cong} \B_r(n,k,\rho(\Sc))$, we can deduce the following theorem from Theorem \ref{thm:RightTruncationFormality}.
\begin{theorem}\label{thm:LeftTruncationFormality}
The dg algebra $\B_l(n,k,\Sc)$ is formal if and only if one of the following conditions holds:
\begin{itemize}
\item $\Sc = \varnothing$, $\Sc = \{n\}$, $k = 0$, or $k = n$;
\item $k = n-1$ and $n \in \Sc$.
\end{itemize}
\end{theorem}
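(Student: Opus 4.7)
The plan is to deduce the theorem directly from Theorem~\ref{thm:RightTruncationFormality} by transporting the classification through the symmetry $\rho$ constructed in Section~\ref{sec:OSzSymmetries}. Specifically, Section~\ref{sec:OSzSymmetries} established that $\rho$ restricts to an isomorphism of dg algebras
\[
\rho: \B_l(n,k,\Sc) \xrightarrow{\cong} \B_r(n,k,\rho(\Sc)),
\]
where $\rho(\Sc) = \{n+1-i \mid i \in \Sc\}$. Since formality is manifestly invariant under isomorphism of dg algebras (any zig-zag of quasi-isomorphisms between $\B_r(n,k,\rho(\Sc))$ and its homology composes with $\rho$ and the induced map on homology to yield a corresponding zig-zag for $\B_l(n,k,\Sc)$), it suffices to determine when $\B_r(n,k,\rho(\Sc))$ is formal.

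By Theorem~\ref{thm:RightTruncationFormality}, $\B_r(n,k,\rho(\Sc))$ is formal if and only if either (a) $\rho(\Sc) = \varnothing$, $\rho(\Sc) = \{1\}$, $k=0$, or $k=n$, or (b) $k = n-1$ and $1 \in \rho(\Sc)$. I then translate each condition back through the involution $\rho$ on subsets of $[1,n]$: $\rho(\Sc) = \varnothing$ is equivalent to $\Sc = \varnothing$; $\rho(\Sc) = \{1\}$ is equivalent to $\Sc = \{n\}$ since $\rho(\{n\}) = \{n+1-n\} = \{1\}$; the conditions $k=0$ and $k=n$ are unaffected; and $1 \in \rho(\Sc)$ is equivalent to $n \in \Sc$. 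Assembling these equivalences yields precisely the list of conditions in the theorem.

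There is essentially no obstacle here beyond bookkeeping; the nontrivial content, including the Massey product constructions that obstruct formality in the non-formal cases, is already packaged inside Theorem~\ref{thm:RightTruncationFormality}, and $\rho$ transports these witnesses (in both the formal and non-formal directions) between the two truncated algebras.
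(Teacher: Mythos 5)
Your proposal is correct and matches the paper's argument exactly: the paper also deduces this theorem from Theorem~\ref{thm:RightTruncationFormality} via the isomorphism $\rho: \B_l(n,k,\Sc) \xrightarrow{\cong} \B_r(n,k,\rho(\Sc))$, with the same translation of conditions under $\rho(\Sc)$. Your write-up simply makes the bookkeeping explicit.
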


For the doubly-truncated algebra $\B'(n,k,\Sc)$, we again consider an interesting subcase.

\begin{lemma}\label{lem:TwoSidedTrickyCase}
If both $1\in\Sc$ and $n\in\Sc$, the algebra $\B'(n,n-2,\Sc)$ is formal.
\end{lemma}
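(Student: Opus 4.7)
The plan is to adapt the intermediate-algebra argument of Lemma~\ref{lem:OneSidedTrickyCase} to accommodate two special vertices at once. Within $V'(n,n-2)$, the vertex $\x_1 := [2,n-1]$ is the unique vertex satisfying $\x \cap \{0,1\} = \varnothing$, and hence the unique vertex at which $U_1$ (and so $\de C_1$) vanishes; symmetrically, $\x_{n-1} := [1,n-2]$ is the unique vertex at which $U_n$ vanishes. These are therefore the only vertices at which $C_1$ and $C_n$ become cycles. I shall assume $n \geq 3$ throughout, since the cases $n \leq 2$ are covered by the trivial $k=0$ formality.

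First I define the intermediate dg algebra $\A := \Quiv(\Gamma_\A, \td{\mc R}_\A)$, where $\Gamma_\A$ is the subgraph of $\Gamma'(n,n-2,\Sc)$ obtained by removing every $U_1$, $U_n$, $C_1$, and $C_n$ loop except a single $C_1$ loop at $\x_1$ and a single $C_n$ loop at $\x_{n-1}$. The relation set $\td{\mc R}_\A$ consists of those relations in $\td{\mc R}'_\Sc$ involving only edges of $\Gamma_\A$, plus one extra relation for each pair $\x, \y \in V'(n,n-2)$:
\[
\gamma_{\x,\x_1}\, C_1\, \gamma_{\x_1,\x_{n-1}}\, C_n\, \gamma_{\x_{n-1},\y} \;=\; \gamma_{\x,\x_{n-1}}\, C_n\, \gamma_{\x_{n-1},\x_1}\, C_1\, \gamma_{\x_1,\y}.
\]
This extra relation is needed because $C_1$ and $C_n$ now live at different vertices of $\Gamma_\A$, so the $C$-central relations in $\td{\mc R}'_\Sc$ that would ordinarily let $C_1$ and $C_n$ swap past intervening $R$ and $L$ edges are not inherited by $\td{\mc R}_\A$. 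Both sides of the extra relation nevertheless coincide in $\B'(n,n-2,\Sc)$---they both equal $C_1 C_n$ times the same monomial in the $U_i$'s times $f_{\x,\y}$---so the edge-wise map $\kappa\colon \A \to \B'(n,n-2,\Sc)$ remains well-defined.

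Following the structure of Lemma~\ref{lem:OneSidedTrickyCase}, I will classify paths in $\Gamma_\A$ by the number of $C_1$ and $C_n$ edges they contain. Paths with two or more instances of $C_1$ (respectively $C_n$) collapse to zero via $C_1^2 = 0$ at $\x_1$ combined with the inherited $C$-central relations at $\x_1$ (respectively via $C_n^2=0$ at $\x_{n-1}$), exactly as in the one-sided argument. Paths with at most one of each reduce modulo $\td{\mc R}_\A$ to one of four canonical forms, possibly times monomials in $U_l, C_l$ loops for $l \in [2,n-1]$: the bare form $\gamma_{\x,\y}$, the two one-sided forms $\gamma_{\x,\x_1} C_1 \gamma_{\x_1,\y}$ and $\gamma_{\x,\x_{n-1}} C_n \gamma_{\x_{n-1},\y}$, and the two-sided form $\gamma_{\x,\x_1} C_1 \gamma_{\x_1,\x_{n-1}} C_n \gamma_{\x_{n-1},\y}$, where the extra relation forces the $C_1$-first ordering in this last case. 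I will verify that $\kappa$ sends the resulting spanning set of $\A$ bijectively onto the basis of $\Ib_\x \B'(n,n-2,\Sc) \Ib_\y$ from Proposition~\ref{prop:OSzBasis}; this both shows that the spanning set is a basis of $\A$ and that $\kappa$ is an isomorphism of the underlying $\F_2$-modules. Since each canonical basis element is a cycle in $\A$ (both $C_1$ at $\x_1$ and $C_n$ at $\x_{n-1}$ have vanishing differential), this basis simultaneously computes $H_*(\A)$ and matches the basis of $H_*(\B'(n,n-2,\Sc))$ from Theorem~\ref{thm:OSzHomology}; hence $\kappa$ is a dg quasi-isomorphism.

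Finally, I will define $\lambda\colon \A \to H_*(\B'(n,n-2,\Sc))$ by sending each $R_i, L_i$ to its homology class, each $U_l$ to $[U_l]$ if $l \notin \Sc$ and to $0$ otherwise, the distinguished $C_1$ and $C_n$ loops to $[C_1]$ and $[C_n]$, and every remaining $C_l$ loop to zero. The target has zero differential, so I only need to check that $\lambda$ kills every defining relation and every boundary: relations involving a killed $C_l$ are wholly divisible by it, the one-sided relations are handled exactly as in Lemma~\ref{lem:OneSidedTrickyCase}, and the extra relation descends to the identity $[C_1][C_n] = [C_n][C_1]$ in $H_*$. By construction $\lambda$ carries the $\A$-basis onto the homology basis of Theorem~\ref{thm:OSzHomology}, so it is a quasi-isomorphism, and the zig-zag $\B'(n,n-2,\Sc) \xleftarrow{\kappa} \A \xrightarrow{\lambda} H_*(\B'(n,n-2,\Sc))$ establishes formality. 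The main obstacle will be the (1,1) case of the spanning-to-basis argument, where the extra relation in $\A$ is essential to prevent a failure of injectivity of $\kappa$.
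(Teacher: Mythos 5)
Your overall strategy---an intermediate algebra $\A$ with one distinguished $C_1$ loop at $\x_1=[2,n-1]$ and one distinguished $C_n$ loop at $\x_{n-1}=[1,n-2]$, connected to $\B'(n,n-2,\Sc)$ and to its homology by a zig-zag of quasi-isomorphisms---is exactly the paper's. But there is a genuine error in your relation set: the ``extra relation''
\[
\gamma_{\x,\x_1}\, C_1\, \gamma_{\x_1,\x_{n-1}}\, C_n\, \gamma_{\x_{n-1},\y} \;=\; \gamma_{\x,\x_{n-1}}\, C_n\, \gamma_{\x_{n-1},\x_1}\, C_1\, \gamma_{\x_1,\y}
\]
is \emph{false} in $\B'(n,n-2,\Sc)$ for general pairs, so your map $\kappa$ is not well-defined and the rest of the argument collapses. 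Writing $\x=\x_i=[1,n-1]\setminus\{i\}$ and $\y=\x_j$, a direct computation with the relations of Definition~\ref{def:OSzStyleDef} shows that for $i>j$ the left-hand side maps to $C_1C_nU_2\cdots U_{n-1}f_{\x_i,\x_j}$ while the right-hand side maps to $C_1C_nU_2\cdots U_j\,U_{i+1}\cdots U_{n-1}f_{\x_i,\x_j}$; these differ by the factor $U_{j+1}\cdots U_i$, which is a product over crossed lines from $\x_i$ to $\x_j$ and therefore acts freely, so the two elements are distinct. (Smallest instance: $n=3$, $\x=\{2\}$, $\y=\{1\}$, where the two sides are $C_1C_3L_2$ and $C_1C_3U_2L_2$.) The correct move is to impose the swap relation only for loops, i.e.\ only when $\x=\y=\x_i$, where both sides genuinely equal $C_1U_2\cdots U_{n-1}C_n$; the general-pair identity is then \emph{derived}, and it reads $\gamma_{\x_i,\x_1}C_1\gamma_{\x_1,\x_{n-1}}C_n\gamma_{\x_{n-1},\x_j}=U_{j+1}\cdots U_i\,\gamma_{\x_i,\x_{n-1}}C_n\gamma_{\x_{n-1},\x_1}C_1\gamma_{\x_1,\x_j}$ when $i>j$, with the monomial on the other side when $i<j$.

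This error propagates into your choice of canonical forms. Because the $C_1$-first element is a $U$-multiple of the $C_n$-first element when $i>j$ (and not conversely), always taking the $C_1$-first form as the two-sided basis element cannot work: its image under $\kappa$ is $U_{j+1}\cdots U_i$ times the minimal basis element of Proposition~\ref{prop:OSzBasis}, so your proposed spanning set would miss $(C_1U_2\cdots U_{\min(i,j)})(U_{\max(i,j)+1}\cdots U_{n-1}C_n)\gamma_{\x_i,\x_j}$ itself, contradicting surjectivity of $\kappa$. You must take the $C_1$-first form when $i\leq j$ and the $C_n$-first form when $j<i$. A secondary, smaller gap: a path containing two $C_1$ loops separated by a $C_n$ loop does not die ``exactly as in the one-sided argument,'' since the intervening loop at $\x_1$ is not a $U$-multiple of the identity; one needs the swap relation to bring the two $C_1$'s together before invoking $C_1^2=0$.
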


\begin{proof}
The algebra is zero if $n = 1$ and has no differential if $n = 2$, so assume $n \geq 3$.  Let $\x_i$ denote the vertex $[1,n-1]\setminus\{i\}$, and let $\gammahole{i}{j}$ denote the path $\gamma_{\x_i,\x_j}$.  In parallel fashion to the proof of Lemma~\ref{lem:OneSidedTrickyCase}, we consider the subgraph $\Gamma_\Cc$ of $\Gamma'(n,n-2,\Sc)$ in which we omit all loops with labels $U_1,U_n,C_1,C_n$ except for a single $C_1$ loop at $\x_1$ and a single $C_n$ loop at $\x_{n-1}$.  We will let $\Cc$ be the path algebra of $\Gamma_\Cc$ modulo the following relations:
\begin{itemize}
\item $U$ central relations not involving $U_1$ or $U_n$,
\item loop relations not involving $U_1$ or $U_n$,
\item $C$ central relations not involving $C_1$ or $C_n$,
\item $C_1$ central relations of the form $C_1C_i=C_iC_1$ and $C_1U_i=U_iC_1$ for $i\notin\{1,n\}$ (all occurring at vertex $\x_1$),
\item $C_n$ central relations of the form $C_nC_i=C_iC_n$ and $C_nU_i=U_iC_n$ for $i\notin\{1,n\}$ (all occurring at vertex $\x_{n-1}$),
\item $C^2$ vanishing relations not involving $C_1$ (respectively $C_n$) at a vertex other than $\x_1$ (respectively $\x_{n-1}$), and
\item the ``Pong" relations $\gammahole{i}{1} C_1 \gammahole{1}{n-1} C_n \gammahole{n-1}{i} = \gammahole{i}{n-1} C_n \gammahole{n-1}{1} C_1 \gammahole{1}{i}$ for $i\in[1,n-1]$.
\end{itemize}
Unlike in Lemma~\ref{lem:OneSidedTrickyCase}, the Pong relations are not in the relation set $\td{\mc R}'$ defining $\B'(n,n-2,\Sc)$.

As in Lemma \ref{lem:OneSidedTrickyCase}, we have a well-defined dg algebra homomorphism $\kappa':\Cc\rightarrow\B'(n,n-2,\Sc)$ that sends edges of $\Gamma_\Cc$ to corresponding generators in $\B'(n,n-2,\Sc)$. While the Pong relations are not in $\td{\mc R}'$, they are in the two-sided ideal generated by $\td{\mc R}'$; the elements $\gammahole{i}{1} C_1 \gammahole{1}{n-1} C_n \gammahole{n-1}{i}$ and $\gammahole{i}{n-1} C_n \gammahole{n-1}{1} C_1 \gammahole{1}{i}$ are both equal to $C_1 U_2 \cdots U_{n-1} C_n$ in $\B'(n,n-2,\Sc)$.

We again seek a basis for $\Ib_{\x_i}\Cc\Ib_{\x_j}$.  This time around, we consider the following elements:
\begin{enumerate}
\item\label{it:Formality' basis no C} $\gammahole{i}{j}$,
\item\label{it:Formality' basis C1} $\gammahole{i}{1} C_1 \gammahole{1}{j}$,
\item\label{it:Formality' basis Cn} $\gammahole{i}{n-1} C_n \gammahole{n-1}{j}$,
\item\label{it:Formality' basis C1Cn} $\gammahole{i}{1} C_1 \gammahole{1}{n-1} C_n \gammahole{n-1}{j}$ for $i\leq j$, or
\item\label{it:Formality' basis CnC1} $\gammahole{i}{n-1} C_n \gammahole{n-1}{1} C_1 \gammahole{1}{j}$ for $j<i$.
\end{enumerate}
We define a set $\beta'$ analogous to $\beta$ in Lemma \ref{lem:OneSidedTrickyCase} by choosing, for each monomial in $U_l$ and $C_l$ ($l\neq 1,n$) not divisible by any $C_l^2$, a corresponding path of each type above.  To prove that $\beta'$ spans $\Ib_{\x_i}\Cc\Ib_{\x_j}$, we consider various cases.  If we have a path $\gamma\in\Ib_{\x_i}\Path(\Gamma_\Cc)\Ib_{\x_j}$ that contains no $C_1$ or $C_n$ loops, or only one such loop, then it is equivalent to a $U_l$- and $C_l$-multiple of element \eqref{it:Formality' basis no C}, \eqref{it:Formality' basis C1}, or \eqref{it:Formality' basis Cn} as in the proof of Lemma \ref{lem:OneSidedTrickyCase}.  Similarly, if $\gamma\in\Ib_{\x_i}\Path(\Gamma_\Cc)\Ib_{\x_j}$ has two or more $C_1$ loops with no $C_n$ loops occurring between them (or $C_n$ loops with no $C_1$ loops between them), then $\gamma=0$ in $\Ib_{\x_i}\Cc\Ib_{\x_j}$ by arguments like those in the proof of Lemma \ref{lem:OneSidedTrickyCase}.

In the genuinely new case where $\gamma\in\Ib_{\x_i}\Path(\Gamma_\Cc)\Ib_{\x_j}$ contains a single $C_1$ loop followed by a single $C_n$ loop (and no other $C_1$ or $C_n$ loops), the arguments in the proof of Lemma \ref{lem:OneSidedTrickyCase} allow us to write $\gamma$ as a $U_l$- and $C_l$-multiple of $\gammahole{i}{1} C_1 \gammahole{1}{n-1} C_n \gammahole{n-1}{j}$.  If $i\leq j$, we are done ($\gamma$ is a multiple of element \eqref{it:Formality' basis C1Cn}).  If $i>j$, we use the further relations
\begin{align*}
\gammahole{i}{1} C_1 \gammahole{1}{n-1} C_n \gammahole{n-1}{j} &= \gammahole{i}{j}\gammahole{j}{1} C_1 \gammahole{1}{n-1} C_n \gammahole{n-1}{j}\\
&= \gammahole{i}{j} \gammahole{j}{n-1} C_n \gammahole{n-1}{1} C_1 \gammahole{1}{j}\\
&= U_{j+1}\cdots U_i \gammahole{i}{n-1} C_n \gammahole{n-1}{1} C_1 \gammahole{1}{j}
\end{align*}
where the second equality follows from the Pong relations, and the others follow directly from the definitions and loop relations.  We see that $\gamma$ is a $U_l$- and $C_l$-multiple of element \eqref{it:Formality' basis CnC1}.  The case where $\gamma$ contains a single $C_n$ loop followed by a single $C_1$ loop is handled similarly.

Finally, in the case that $\gamma$ contains two $C_1$ loops separated by a $C_n$ loop (or vice versa), the Pong relations can be used to swap the order of a $C_1$ with a $C_n$. We get two consecutive $C_1$ or $C_n$ edges, causing $\gamma$ to be zero in $\Cc$ as above.

Thus $\beta'$ is a spanning set. The linear independence of the elements in $\beta'$ follows as in Lemma~\ref{lem:OneSidedTrickyCase}, since $\kappa'$ maps each element of $\beta'$ to a distinct basis element in $\B'(n,n-2,\Sc)$. We conclude that $\beta'$ is a basis for $\Ib_{\x_i}\Cc\Ib_{\x_j}$.

Note that, after fixing $\x_i$ and $\x_j$, we consider only one of the two elements \eqref{it:Formality' basis C1Cn} and \eqref{it:Formality' basis CnC1}; in either case, we find the image under $\kappa'$ to be a $U_l$- and $C_l$-multiple of an element of the form $(C_1U_2\cdots U_{\min(i,j)})(U_{\max(i,j)+1}\cdots U_{n-1}C_n) \gammahole{i}{j}$ in $\B'(n,n-2,\Sc)$.  The intervals $[1,\min(i,j)]$ and $[\max(i,j)+1, n]$ are precisely the generating intervals in $\B'(n,n-2,\Sc)$ from $\x_i$ to $\x_j$.

Using these facts together with Proposition \ref{prop:OSzBasis} and Theorem \ref{thm:OSzHomology}, one can show that $\kappa'$ is a quasi-isomorphism as in the proof of Lemma \ref{lem:OneSidedTrickyCase}.  This time the basis of $\Ib_{\x_i} H_*(\Cc) \Ib_{\x_j}$ is deduced by comparing $\Ib_{\x_i} \Cc \Ib_{\x_j}$ to four copies of $\Ib_{\x_i-1} \B(n-2,n-2,\Sc-1) \Ib_{\x_j-1}$; each copy of the latter complex corresponds to a type of basis element of $\Ib_{\x_i} \Cc \Ib_{\x_j}$ listed above (namely types \eqref{it:Formality' basis no C}, \eqref{it:Formality' basis C1}, \eqref{it:Formality' basis Cn}, or \eqref{it:Formality' basis C1Cn}/\eqref{it:Formality' basis CnC1}).

The remainder of the proof follows along the lines of the proof of Lemma \ref{lem:OneSidedTrickyCase}.  We have a well-defined dg algebra homomorphism $\lambda':\Cc \rightarrow H_*(\B'(n,n-2,\Sc))$ that sends edges labeled $C_l$ for $l\neq 1,n$ to zero and sends all other edges (which are in $\ker(\de)$) to the homology classes of their images under $\kappa'$.  The map $\lambda'$ is well-defined and respects the differential by the same arguments as in the proof of Lemma \ref{lem:OneSidedTrickyCase}. Theorem \ref{thm:OSzHomology} is used in a similar fashion to show that $\lambda'$ maps a basis for $\Ib_{\x_i}H_*(\Cc)\Ib_{\x_j}$ to a basis for $\Ib_{\x_i}H_*(\B'(n,n-2,\Sc))\Ib_{\x_j}$.  

Thus, we again have a zig-zag of quasi-isomorphisms
\[\B'(n,n-2,\Sc) \xleftarrow{\kappa'} \Cc \xrightarrow{\lambda'} H_*(\B'(n,n-2,\Sc)),\]
showing that $\B'(n,n-2,\Sc)$ is formal.
\end{proof}

\begin{remark}
Our terminology in the proof of Lemma~\ref{lem:TwoSidedTrickyCase} follows Ozsv{\'a}th--Szab{\'o}. In their more general ``Pong algebra,'' to be introduced in \cite{OSzPong}, the above Pong relations are not zero. Instead, the Pong algebra is curved, and these relations form the curvature in one summand of the algebra.
\end{remark}

\begin{theorem}\label{thm:DoubleTruncationFormality}
The dg algebra $\B'(n,k,\Sc)$ is formal if and only if one of the following conditions holds:
\begin{itemize}
\item $\Sc = \varnothing, \{1\}, \{n\}$, or $\{1,n\}$;
\item $k = 0$ or $k = n-1$;
\item $k = n-2$ and $\set{1,n} \subset \Sc$. 
\end{itemize}
\end{theorem}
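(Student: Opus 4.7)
The plan is to mirror the structure of Theorems~\ref{thm:UntruncatedFormality} and \ref{thm:RightTruncationFormality}: exhibit explicit quasi-isomorphisms in the listed formal cases, and obstruct formality in the remaining cases via nonzero canonical triple Massey products built from Lemma~\ref{lem:SomeMasseyProds} or extended versions thereof.

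For the ``if'' direction, $\Sc = \varnothing$ is immediate (trivial differential), and when $k = 0$ the unique vertex $\varnothing$ forces all $U_i$ (and hence all $\de C_i$) to vanish by the $U$ vanishing relations. When $k = n-1$, the unique vertex is $[1,n-1]$, the algebra has the free commutative form $\F_2[U_1,\dots,U_n] \otimes \Lambda(C_i : i \in \Sc)/(U_1\cdots U_n, \, \de C_i = U_i)$, and the homology basis of Theorem~\ref{thm:OSzHomology} lifts directly to a dg quasi-isomorphism. For $\Sc = \{1\}$ (and, symmetrically, $\Sc = \{n\}$), we mimic the $\Sc = \{1\}$ argument from Theorem~\ref{thm:RightTruncationFormality}: since $0, n \notin \x$ for every $\x \in V'(n,k)$, the generator $C_1$ can only attach to the leftmost generating interval between any pair $(\x, \y) \in V'(n,k)^2$, so the rule $[\phi(p)] \mapsto \phi(p)$, $[C_1 U_2 \cdots U_l \phi(p)] \mapsto C_1 U_2 \cdots U_l \phi(p)$ extends to a dg quasi-isomorphism. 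For $\Sc = \{1, n\}$, the generators $C_1$ and $C_n$ attach to disjoint leftmost and rightmost generating intervals, so the two lifts combine into a single quasi-isomorphism. Finally, $k = n-2$ with $\{1, n\} \subset \Sc$ is precisely Lemma~\ref{lem:TwoSidedTrickyCase}.

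For the ``only if'' direction we must obstruct formality when $\Sc \cap [2, n-1] \neq \varnothing$, $k \in [1, n-2]$, and not ($k = n-2$ with $\{1, n\} \subset \Sc$). When $k \leq n-3$, we reuse Massey sequences from Lemma~\ref{lem:SomeMasseyProds} after finding a suitable host vertex in $V'(n, k)$: if $i \in \Sc \cap [2, n-2]$ and $\x \in V'(n,k)$ satisfies $\x \cap [i-1, i+1] = \{i\}$ (possible because $\x \subset [1, n-1] \setminus \{i-1, i+1\}$ leaves $n-4$ slots for the remaining $k - 1$ dots), then $([L_i], [R_i], [R_{i+1}])$ is Massey admissible with canonical product $[C_i R_{i+1}] \neq 0$, obstructing formality as in Theorem~\ref{thm:UntruncatedFormality}. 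When $\Sc \cap [2, n-2] = \varnothing$ but $n-1 \in \Sc$ (the case $2 \in \Sc$ is symmetric), we instead use $([R_{n-1}], [L_{n-1}], [L_{n-2}])$, with $L_{n-1} L_{n-2} = 0$ by the two-line pass relation and the Massey product evaluating to a nonzero $[C_{n-1} L_{n-2}]$ by Theorem~\ref{thm:OSzHomology}.

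The main obstacle is the boundary case $k = n-2$ with $\Sc \cap [2, n-1] \neq \varnothing$ and $\{1, n\} \not\subset \Sc$. Here the vertices of $V'(n, n-2)$ are the $\x_j := [1, n-1] \setminus \{j\}$, which are too ``full'' to host any of the short triples from Lemma~\ref{lem:SomeMasseyProds} with $i \in [2, n-2]$: one cannot have both $i-1$ and $i+1$ absent from $\x_j$. We instead adapt the extended Massey product from the $k = n-1$ subcase of Theorem~\ref{thm:RightTruncationFormality}: assuming $1 \notin \Sc$, set $l = \min \Sc$ (which lies in $[2, n-1]$) and consider the sequence $([R_l \cdots R_2], [L_2 \cdots L_l], [U_1])$ based at $\x_l$. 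Massey admissibility follows from a degree count using Theorem~\ref{thm:OSzHomology}, combined with $[R_l \cdots R_2] \cdot [L_2 \cdots L_l] = [U_2 \cdots U_l] = [\de(U_2 \cdots U_{l-1} C_l)] = 0$ and $[L_2 \cdots L_l] \cdot [U_1] = 0$ because $U_1$ vanishes at $\x_1$ by the $U$ vanishing relations. The Massey product evaluates to $[U_1 \cdots U_{l-1} C_l]$, a nonzero basis element corresponding to the generating interval $[1, l]$ at $\x_l$. The symmetric sequence with $[U_n]$ handles the case $n \notin \Sc$. Carefully verifying these admissibility checks via the gradings of Section~\ref{sec:OSz gradings}, and ensuring non-interference from the second generating interval $[l+1, n]$ at $\x_l$, will form the main technical content of the proof.
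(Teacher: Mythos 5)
Your proposal is correct and follows essentially the same route as the paper's proof: explicit (zig-zags of) quasi-isomorphisms in the formal cases (with the $k=n-2$, $\{1,n\}\subset\Sc$ case delegated to Lemma~\ref{lem:TwoSidedTrickyCase}), Lemma~\ref{lem:SomeMasseyProds} for $1\le k\le n-3$, and the longer Massey admissible sequence $([R_l\cdots R_2],[L_2\cdots L_l],[U_1])$ (and its reflection) for $k=n-2$. Your extra care in checking that a host vertex for the short Massey sequences exists in $V'(n,k)$, and in switching to the second bullet of Lemma~\ref{lem:SomeMasseyProds} when only $n-1\in\Sc$ is available (since $R_n$ does not survive truncation), is a welcome refinement of a point the paper leaves implicit.
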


\begin{proof}
The cases $\Sc = \varnothing$ and $k = 0$ follow as in Theorem~\ref{thm:UntruncatedFormality}.

When $\Sc$ is nonempty and $k = n-1$, a basis for $H_*(\B'(n,n-1,\Sc))$ is given by elements $\left[p\bigg(\frac{C_j U_1 \cdots U_n}{U_j}\bigg)^{\varepsilon}\right]$ where $p$ is a monomial in the $U_i$ variables for $i \notin \Sc$, $\varepsilon \in \{0,1\}$, and $j$ is any element of $\Sc$ (the resulting basis element for the homology is independent of the choice of $j$). Choosing one particular $j \in \Sc$, the map sending
\[
\left[p\bigg(\frac{C_j U_1 \cdots U_n}{U_j}\bigg)^{\varepsilon}\right] \mapsto p\bigg(\frac{C_j U_1 \cdots U_n}{U_j}\bigg)^{\varepsilon}
\]
is a quasi-isomorphism from $H_*(\B'(n,n-1,\Sc))$ to $\B'(n,n-1,\Sc)$.

The case $\Sc = \{1\}$ is treated as in Theorem~\ref{thm:RightTruncationFormality}; the case $\Sc = \{n\}$ follows by applying the symmetry $\rho$. For the case $\Sc = \{1,n\}$, only a slight modification is needed: the quasi-isomorphism sends
\begin{itemize}
\item $[p] \mapsto p$,
\item $[C_1 \cdots U_l p] \mapsto C_1 \cdots U_l p$,
\item $[U_{l'} \cdots C_n p] \mapsto U_{l'} \cdots C_n p$, and
\item $[(C_1 \cdots U_l)(U_{l'} \cdots C_n)p] \mapsto (C_1 \cdots U_l)(U_{l'} \cdots C_n)p$.
\end{itemize}

The case $k = n-2$ and $\set{1,l} \subset \Sc$ follows from Lemma~\ref{lem:TwoSidedTrickyCase}. Conversely, we show that in any of the remaining cases there is a canonical non-vanishing triple Massey product.

If $\Sc \cap [2,n-1] \neq \varnothing$ and $1 \leq k \leq n-3$, then Lemma~\ref{lem:SomeMasseyProds} gives us a canonical higher multiplication $\overline{\mu}_3([L_i],[R_i],[R_{i+1}]) = [C_i R_{i+1}]$ or $\overline{\mu}_3([R_{i+1}],[L_{i+1}],[L_i]]) = [C_{i+1} L_i]$ for some $i$, so $\B'(n,k,\Sc)$ is not formal.

If $k = n-2$, $1 \notin \Sc$, and $\Sc \cap [2,n-1]$ is nonempty, then as in the proof of Theorem \ref{thm:RightTruncationFormality} we have a Massey admissible sequence
\[
([R_{\min{\Sc}} \cdots R_2], [L_2 \cdots L_{\min{\Sc}}], [U_1])
\]
inducing a non-vanishing triple product.
Lastly, if $k = n-2$, $n \notin \Sc$, and $ \Sc \cap [2,n-1]$ is nonempty, a symmetric argument shows that
\[
([L_{\max{\Sc}} \cdots L_{n-1}], [R_{n-1} \cdots R_{\max\Sc}], [U_n])
\]
is a Massey admissible sequence with non-vanishing triple product.
\end{proof}

\begin{remark}\label{rem:FormalityWithOtherGradings}
We have used the refined Alexander grading when discussing formality. A priori, formality of $\B(n,k,\Sc)$ when given the unrefined Alexander grading could be stronger than formality given the refined Alexander grading, which could be stronger than formality given the single Alexander grading. In fact, one can check that in all cases where formality is proven above, the results hold even for the unrefined Alexander grading; in particular, all of our quasi-isomorphisms respect the unrefined grading. However, our arguments for Massey admissibility break down in the singly graded case, so that non-formality results proved above do not necessarily hold. It would be interesting to determine whether $\B(n,k,\Sc)$ is sometimes formal when given the single Alexander grading but not formal when given multiple Alexander gradings.
\end{remark}

\appendix
\section{Algebraic background}\label{app:Algebra}

In this section, we review some useful algebraic and category-theoretic definitions for readers who may be unfamiliar with them. For some motivation, consider the familiar ``torus algebra'' $\A(\mc Z, 1)$ from bordered Floer homology, which can be described as the $\mathbb{F}_2$-algebra of paths in the directed graph of Figure~\ref{fig:TorusAlg} modulo the relations $\rho_2 \rho_1 = \rho_3 \rho_2 = 0$ (path algebras with relations are discussed more formally in Section~\ref{sec:QuiverAlgs}).

\begin{figure}
\includegraphics[scale=0.5]{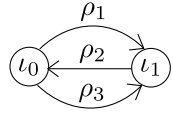}
\caption{The torus algebra as described in \cite[Figure 4]{HRW}.}
\label{fig:TorusAlg}
\end{figure}

It is often important to remember that $\A(\mc Z, 1)$ comes with a distinguished pair of orthogonal idempotents, corresponding to the constant paths at the vertices $\iota_0$ and $\iota_1$ in Figure~\ref{fig:TorusAlg}. One can remember this fact by viewing $\A(\mc Z, 1)$ as an algebra over $\mathbb{F}_2^{\times 2}$, but an equivalent and sometimes more natural way to remember the idempotent data is to interpret $\A(\mc Z,1)$ as a category with two objects (corresponding to the basic idempotents). One wants to be able to recover $\A(\mc Z, 1)$ as the direct sum of the (four) morphism spaces in the category. Because $\A(\mc Z,1)$ is defined over the ground field $\mathbb{F}_2$, these morphism spaces should be $\mathbb{F}_2$-vector spaces rather than just sets. 

Thus, the category corresponding to $\A(\mc Z, 1)$ should be an $\mathbb{F}_2$-linear category. An $\mathbb{F}_2$-linear category is informally a category whose morphism spaces are $\mathbb{F}_2$-vector spaces, rather than sets, and whose composition maps are $\mathbb{F}_2$-linear maps. If we had a more general dg algebra instead of $\A(\mc Z,1)$, the corresponding category should be a differential graded, or dg, category over $\mathbb{F}_2$, which is informally a category whose morphism spaces are chain complexes of $\mathbb{F}_2$-vector spaces and whose composition maps are chain maps. We define linear categories and dg categories in Section~\ref{sec:LinearDGDefs}; we review our intended relationship between algebras and categories in Section~\ref{sec:AlgsAndCats}.

In Section~\ref{sec:B0Section}, we find it useful to work with algebras and categories defined over a polynomial ring $\mathbb{F}_2[U_1,\ldots,U_n]$. Thus we give the below definitions over a general commutative ring $\ring$ of characteristic $2$.

\subsection{Algebras}\label{sec:AppendixAlgebras}

Let $\ring$ be a commutative ring of characteristic $2$.
A \emph{$\ring$-algebra} is a ring $\mc A$ equipped with a ring homomorphism $\ring \to \mc A$. 
\begin{remark}
\label{rem:SAS}
Equivalently, a $\ring$-algebra is a monoid in the monoidal category of $(\ring,\ring)$-bimodules; see below for some basics on monoidal categories. Note that our definition differs slightly from another standard definition taking a $\ring$-algebra to be a monoid in the monoidal category of $\ring$-modules, or equivalently a ring $\mc A$ equipped with a ring homomorphism from $\ring$ to $Z(\mc A)$.
\end{remark}

A \emph{differential algebra} over $\ring$ is a $\ring$-algebra $\mc A$ equipped with a $\ring$-linear map $\partial: \mc A \to \mc A$ satisfying $\partial^2 = 0$ and $\partial(ab) = (\partial a)b + a(\partial b)$ for all $a,b \in \mc A$. If $\A$ is a differential $\ring$-algebra, we can take its homology to get a $\ring$-algebra $H_*(\A)$. 

Let $G$ be a group and let $\lambda$ be an element of the center of $G$. A $(G,\lambda)$-graded \emph{differential graded algebra} or \emph{dg algebra} over $\ring$ is a differential $\ring$-algebra $\A$ equipped with a decomposition $\A = \oplus_{g \in G} \A_g$ as $\ring$-modules, such that $\A_g \cdot \A_{g'} \subset \A_{gg'}$ and $\partial(\A_g) \subset \A_{\lambda^{-1} g}$ for all $g, g' \in G$. If differentials are not present, we can define $G$-graded $\ring$-algebras similarly. If $\A$ is a $(G,\lambda)$-graded dg $\ring$-algebra, its homology $H_*(\A)$ is a $G$-graded $\ring$-algebra.

If $\mc A$ and $\mc A'$ are differential $\ring$-algebras, a homomorphism of differential algebras from $\mc A$ to $\mc A'$ is a homomorphism $f: \mc A \to \mc A'$ of $\ring$-algebras satisfying $\partial' \circ f = f \circ \partial$. If $\mc A$ and $\mc A'$ are $(G,\lambda)$-graded dg $\ring$-algebras, a homomorphism of dg algebras from $\mc A$ to $\mc A'$ is a homomorphism $f$ of differential algebras such that $f(A_g) \subset A'_g$ for all $g \in G$. Homomorphisms of $G$-graded $\ring$-algebras (without differentials) are defined similarly. We can consider the following categories:
\begin{itemize}
\item ($\ring$-algebras, homomorphisms)
\item ($G$-graded $\ring$-algebras, homomorphisms)
\item (differential $\ring$-algebras, homomorphisms)
\item ($(G,\lambda)$-graded dg $\ring$-algebras, homomorphisms).
\end{itemize}

A homomorphism $f: \mc A \to \mc A'$ of differential or dg algebras induces a map $H_*(f)$ from $H_*(\mc A)$ to $H_*(\mc A')$. Indeed, homology gives us functors $H_*$ from (differential $\ring$-algebras, homomorphisms) to ($\ring$-algebras, homomorphisms) and from ($(G,\lambda)$-graded dg $\ring$-algebras, homomorphisms) to ($G$-graded $\ring$-algebras, homomorphisms).

\begin{definition}
Let $f: \mc A \to \mc A'$ be a homomorphism of differential or $(G,\lambda)$-graded dg algebras over $\ring$. If the induced map $H_*(f): H_*(\mc A) \to H_*(\mc A')$ is an isomorphism, $f$ is called a \emph{quasi-isomorphism}.
\end{definition}

\subsection{Linear and dg categories}\label{sec:LinearDGDefs}

As mentioned above, it is often convenient to have an analogue of Section~\ref{sec:AppendixAlgebras} for categories. In this section we will review the theory of $\ring$-linear categories, $G$-graded $\ring$-linear categories, differential categories, and $(G,\lambda)$-graded dg categories. All four of these constructions can be treated in a unified way by viewing them as categories enriched in four different monoidal categories. We will review what we need of enriched category theory and apply it to the four examples of interest. For a more detailed overview, see \cite[Chapters 2.1 and 2.6]{EilenbergKelly}. 

\subsubsection{Monoidal categories}

Recall that the Cartesian product $M_1 \times M_2$ of two categories $M_1$ and $M_2$ has objects and morphisms given by pairs of objects and morphisms of the two factors, with composition defined componentwise. A monoidal category is a category $M$ equipped with a functor $\otimes: M \times M \to M$, an object $I \in M$, a natural isomorphism $\alpha: (- \otimes -) \otimes - \to - \otimes (- \otimes -)$ called the associator, and natural isomorphisms $\lambda: I \otimes - \to -$ and $\rho: - \otimes I \to -$ called the left and right unitor respectively, satisfying:
\[
\alpha_{w,x,y \otimes z} \circ \alpha_{w \otimes x, y, z} = (\id_w \otimes \alpha_{x,y,z}) \circ \alpha_{w, x \otimes y, z} \circ (\alpha_{w,x,y} \otimes \id_z)
\]
as morphisms from $((w \otimes x) \otimes y) \otimes z$ to $w \otimes (x \otimes (y \otimes z))$ for all objects $w,x,y,z$ of $M$ (the pentagon identity) as well as
\[
\rho_x \otimes \id_y = (\id_x \otimes \lambda_y) \circ \alpha_{x,I,y}
\]
as morphisms from $(x \otimes I) \otimes y$ to $x \otimes y$ for all objects $x,y$ of $M$ (the triangle identity).

Recall that $\ring$ denotes a commutative ring of characteristic $2$.
\begin{example}\label{ex:MonoidalCats}
As with algebras, we will consider four examples:
\begin{itemize}
\item
The category of $\ring$-modules and $\ring$-linear maps has a monoidal structure with $\otimes$ given by the tensor product of $\ring$-modules and $I$ given by $\ring$ as a module over itself. 

\item
For a group $G$, the category of $G$-graded $\ring$-modules and degree-preserving $\ring$-linear maps has a similar monoidal structure: we have 
\[
(M \otimes N)_g := \bigoplus_{g_1, g_2 \in G: g_1 g_2 = g} M_{g_1} \otimes N_{g_2}.
\]
The monoidal unit $I$ is $\ring$ concentrated in the identity degree.

\item
The category of differential $\ring$-modules and homomorphisms of differential modules (i.e. $\ring$-module maps $f$ with $\partial \circ f = f \circ \partial$) also has a monoidal structure: $\otimes$ is given by the tensor product of differential $\ring$-modules, and $I$ is given by $\ring$ as a module over itself with zero differential. 

\item
The category of $(G,\lambda)$-graded dg $\ring$-modules and homomorphisms of dg modules (i.e. degree-preserving homomorphisms of differential modules) has a similar monoidal structure, defined as in the above two items. The monoidal unit $I$ is $\ring$ with zero differential and concentrated in the identity degree.
\end{itemize}
\end{example}

\subsubsection{Monoidal functors}

One can also consider functors between monoidal categories.
\begin{definition}
Let $(M,\otimes,I,\alpha,\rho,\lambda)$ and $(M',\otimes',I',\alpha',\rho',\lambda')$ be monoidal categories. A (lax) \emph{monoidal functor} from $M$ to $M'$ is a triple $(H,\mu,\epsilon)$ where $H: M \to M'$ is a functor, $\mu: H(-) \otimes' H(-) \to H(- \otimes -)$ is a natural transformation of functors from $M \times M$ to $M'$,  and $\epsilon: I' \to H(I)$ is a morphism in $M'$, such that
\[
H(\alpha_{x,y,z}) \circ \mu_{x \otimes y, z} \circ (\mu_{x,y} \otimes' \id_{H(z)}) = \mu_{x, y \otimes z} \circ (\id_{H(x)} \otimes' \mu_{y,z}) \circ \alpha'_{H(x),H(y),H(z)}
\]
as morphisms from $(H(x) \otimes' H(y)) \otimes ' H(z)$ to $H(x \otimes (y \otimes z))$ for all objects $x,y,z$ of $M$,
\[
\lambda'_{H(x)} = H(\lambda_x) \circ \mu_{I,x} \circ (\epsilon \otimes' \id_{H(x)})
\]
as morphisms from $I' \otimes' H(x)$ to $H(x)$ for all objects $x$ of $M$, and
\[
\rho'_{H(x)} = H(\rho_x) \circ \mu_{x,I} \circ (\id_{H(x)} \otimes' \epsilon)
\]
as morphisms from $H(x) \otimes' I'$ to $H(x)$ for all objects $x$ of $M$.
\end{definition}

\begin{example}\label{ex:HomologyMonoidalFunctors}
Homology gives us a monoidal functor from (differential $\ring$-modules, homomorphisms) to ($\ring$-modules, $\ring$-linear maps) sending a differential $\ring$-module $N$ to $H_*(N)$ and sending a homomorphism of differential modules to the induced map on homology. The natural transformation $\mu$ is given by the map from $H_*(N_1) \otimes H_*(N_2)$ to $H_*(N_1 \otimes N_2)$ sending $[n_1] \otimes [n_2]$ to $[n_1 \otimes n_2]$ for each pair $(N_1,N_2)$ of differential $\ring$-modules.

We have a similar monoidal functor from ($(G,\lambda)$-graded dg $\ring$-modules, homomorphisms) to ($G$-graded $\ring$-modules, degree-preserving $\ring$-linear maps).
\end{example}

\subsubsection{Categories enriched in a monoidal category}

\begin{definition}
If $(M,\otimes,I,\alpha,\lambda,\rho)$ is a monoidal category, then a \emph{category} $C$ \emph{enriched in} $M$ consists of the following data:
\begin{itemize}
\item A class of objects $\Ob C$;
\item For $X,Y \in \Ob C$, an object $\Hom(X,Y)$ of $M$;
\item For $X \in \Ob C$, a morphism $j_X: I \to \Hom(X,X)$ in $M$;
\item For $X, Y, Z \in \Ob C$, a morphism $\circ_{X,Y,Z}: \Hom(Y,Z) \otimes \Hom(X,Y) \to \Hom(X,Z)$ in $M$.
\end{itemize}
We require that, for $W,X,Y,Z \in \Ob C$, we have
\[
\circ_{W,X,Z} \circ (\circ_{X,Y,Z} \otimes \id_{\Hom(W,X)}) = \circ_{W,Y,Z} \circ (\id_{\Hom(Y,Z)} \otimes \circ_{W,X,Y}) \circ \alpha_{\Hom(Y,Z),\Hom(X,Y),\Hom(W,X)}
\]
as morphisms in $M$ from $(\Hom(Y,Z) \otimes \Hom(X,Y)) \otimes \Hom(W,X)$ to $\Hom(W,Z)$. We also require that, for $X,Y \in \Ob C$, we have 
\[
\lambda_{\Hom(X,Y)} = \circ_{X,Y,Y} \circ (j_Y \otimes \id_{\Hom(X,Y)})
\]
as morphisms in $M$ from $I \otimes \Hom(X,Y)$ to $\Hom(X,Y)$ and
\[
\rho_{\Hom(X,Y)} = \circ_{X,X,Y} \circ (\id_{\Hom(X,Y)} \otimes j_X)
\]
as morphisms in $M$ from $\Hom(X,Y) \otimes I$ to $\Hom(X,Y)$.
\end{definition}

We will consider four types of enriched category; recall that homomorphisms of differential and dg $\ring$-modules were defined in Example~\ref{ex:MonoidalCats}.
\begin{example}
A $\ring$-linear category is a category enriched in ($\ring$-modules, $\ring$-linear maps). A $G$-graded $\ring$-linear category is a category enriched in ($G$-graded $\ring$-modules, degree-preserving $\ring$-linear maps).

A differential category over $\ring$ is a category enriched in (differential $\ring$-modules, homomorphisms). A $(G,\lambda)$-graded dg category over $\ring$ (a.k.a.~$\ring$-linear dg category) is a category enriched in ($(G,\lambda)$-graded dg $\ring$-modules, homomorphisms).
\end{example}

\subsubsection{Change of enrichment}

We can change enrichment via monoidal functors as follows.
\begin{definition}\label{def:ChangeOfEnrichment}
If $C$ is a category enriched in the monoidal category $M$ and $(H,\mu,\epsilon): M \to M'$ is a monoidal functor, we obtain a $M'$-enriched category $C'$ with the same objects as $C$ by defining $\Hom_{C'}(X,Y) = H(\Hom_C(X,Y))$ for objects $X,Y$ of $C$. The identity morphisms $j'_X: I_{M'} \to \Hom_{C'}(X,X)$ are defined to be
\[
I_{M'} \xrightarrow{\epsilon} H(I_M) \xrightarrow{H(j_X)} H(\Hom_C(X,X)) = \Hom_{C'}(X,X).
\]
The composition morphisms are defined as
\[
\Hom_{C'}(Y,Z) \otimes' \Hom_{C'}(X,Y) \xrightarrow{\mu} H(\Hom_C(Y,Z) \otimes \Hom_C(X,Y)) \xrightarrow{H(\circ)} \Hom_{C'}(X,Z),
\]
where $\mu = \mu_{\Hom_C(Y,Z), \Hom_C(X,Y)}$.
\end{definition}

\begin{example}
Given a differential category $C$ over $\ring$, we can get a $\ring$-linear category $H_*(C)$ by applying Definition~\ref{def:ChangeOfEnrichment} to the homology monoidal functor from Example~\ref{ex:HomologyMonoidalFunctors}. Similarly, from a $(G,\lambda)$-graded dg category $C$ over $\ring$, we can get a $G$-graded $\ring$-linear category $H_*(C)$. 
\end{example}

\begin{remark}
Concretely, $H_*(C)$ is obtained from $C$ by taking the homology of the morphism spaces of $C$. The theory of monoidal functors and change of enrichment ensures that the resulting $\ring$-linear category is well-defined, although one could also check this fact directly.
\end{remark}

\subsubsection{Enriched functors}

We can also consider enriched functors between enriched categories. 
\begin{definition}\label{def:EnrichedFunctor}
If $C$ and $C'$ are categories enriched in a monoidal category $M$, an \emph{enriched functor} $F$ from $C$ to $C'$ is the data of an object $F(X)$ of $C'$ for every object $X$ of $C$, as well as a morphism $F_{X,Y}: \Hom_C(X,Y) \to \Hom_{C'}(F(X),F(Y))$ in $M$ for all pairs of objects $(X,Y)$ of $C$, such that the diagrams
\[
\xymatrix{\Hom_C(Y,Z) \otimes \Hom_C(X,Y) \ar[rr]^-{\circ_{X,Y,Z}} \ar[d]_-{F_{Y,Z} \otimes F_{X,Y}} & & \Hom_C(X,Z) \ar[d]^-{F_{X,Z}} \\ 
\Hom_{C'}(F(Y),F(Z)) \otimes \Hom_{C'}(F(X),F(Y)) \ar[rr]_-{\circ_{F(X),F(Y),F(Z)}} & & \Hom_{C'}(F(X),F(Z))}
\]
and
\[
\xymatrix{& I \ar[dl]_-{j_X} \ar[dr]^-{j_{F(X)}} & \\
\Hom_C(X,X) \ar[rr]^-{F_{X,X}} & & \Hom_{C'}(F(X),F(X))}
\]
commute for all objects $X,Y,Z$ of $C$.
\end{definition}

\begin{example}
We define \emph{$\ring$-linear functors}, \emph{$G$-graded $\ring$-linear functors}, \emph{differential functors}, and \emph{$(G,\lambda)$-graded dg functors} using the general notion of enriched functor from Definition~\ref{def:EnrichedFunctor}.
\end{example}

One can define composition of enriched functors as well as identity enriched functors in a natural way. Thus, for a given monoidal category $M$, we have a category whose objects are $M$-enriched categories and whose morphisms are enriched functors. In particular, we have the following four categories:
\begin{itemize}
\item ($\ring$-linear categories, $\ring$-linear functors)
\item ($G$-graded $\ring$-linear categories, $G$-graded $\ring$-linear functors)
\item (differential categories over $\ring$, differential functors)
\item ($(G,\lambda)$-graded dg categories over $\ring$, dg functors).
\end{itemize}

\subsubsection{Changing enrichment on functors}

If we change enrichment on categories, we can change enrichment on functors correspondingly.
\begin{definition}\label{def:ChangeEnrichmentOfFunctor}
Suppose $C$ and $C'$ are $M$-enriched categories and $F: C \to C'$ is an enriched functor. Let $H: M \to M'$ be a monoidal functor; write $H(C)$ and $H(C')$ for the $M'$-enriched categories obtained from $C$ and $C'$ by change of enrichment. Define an enriched functor $H(F): H(C) \to H(C')$ by letting $H(F) := F$ on objects, and defining 
\[
H(F)_{X,Y}: H(\Hom_C(X,Y)) \to H(\Hom_{C'}(F(X),F(Y)))
\]
to be $H(F)_{X,Y} := H(F_{X,Y})$. We leave it to the reader to verify commutativity of the required diagrams.
\end{definition}

Change of enrichment is compatible with composition of enriched functors and identity enriched functors, so $H$ gives us a functor from ($M$-enriched categories, enriched functors) to ($M'$-enriched categories, enriched functors). 

\begin{example}\label{ex:RLinearFunctorsEtc}
Given a differential functor $F: C \to C'$ where $C$ and $C'$ are differential categories over $\ring$, Definition~\ref{def:ChangeEnrichmentOfFunctor} gives us a $\ring$-linear functor $H_*(F): H_*(C) \to H_*(C')$. Similarly, if $F$ is a functor between $(G,\lambda)$-graded dg categories over $\ring$, we get a $G$-graded $\ring$-linear functor $H_*(F)$.
\end{example}

To summarize, homology gives us a functor from (differential categories over $\ring$, differential functors) to ($\ring$-linear categories, $\ring$-linear functors). In the graded case, it gives us a functor from ($(G,\lambda)$-graded dg categories over $\ring$, dg functors) to ($G$-graded $\ring$-linear categories, $G$-graded $\ring$-linear functors).

\subsection{Algebras and categories}\label{sec:AlgsAndCats}

Let $V$ be a finite set. Following \cite[Appendix C.2]{DGQuotients}, if $\mc C$ is any $\ring$-linear category with object set $V$, let 
\[
\Alg_{\mc C} := \bigoplus_{v_1, v_2 \in V} \Hom_{\mc C}(v_2,v_1).
\]
We may view $\Alg_{\mc C}$ as an algebra over $\ring$; multiplication is induced by the composition map
\[
\Hom_{\mc C}(v_2,v_1) \otimes \Hom_{\mc C}(v_3,v_2) \to \Hom_{\mc C}(v_3,v_1)
\]
and is zero when two morphisms are not composable.  Since $V$ is finite, $\Alg_{\mc C}$ is unital; the unit is $\sum_{v \in V} \Ib_v$ where $\Ib_v \in \Hom_{\mc C}(v,v)$ is the identity morphism. These elements $\Ib_v$ are idempotents in $\Alg_{\mc C}$ that are pairwise orthogonal: we have $\Ib_{v_1} \Ib_{v_2} = 0$ if $v_1 \neq v_2$. Also note that the natural map $\ring \to \Alg_{\mc C}$ has image in the center of $\Alg_{\mc C}$, since the morphism spaces in $\mc C$ are modules over $\ring$ (not just bimodules).

It can be even more useful to view $\Alg_{\mc C}$ as an algebra over $\IdemRing$, where $\IdemRing = \ring^V$ is the ring of functions from $V$ into $\ring$. By slight abuse of notation, denote the basis element of $\IdemRing$ corresponding to $v \in V$ by $\Ib_v$ as well. We have a $\ring$-algebra homomorphism from $\IdemRing$ to $\Alg_{\mc C}$ sending $\Ib_v$ to $\Ib_v$. Thus, we can view $\Alg_{\mc C}$ as an $\IdemRing$-algebra; the natural map $\ring \to \IdemRing \to \Alg_{\mc C}$ is the structure map of $\Alg_{\mc C}$ as a $\ring$-algebra, so it has image in $Z(\Alg_{\mc C})$. 

Conversely, if $\mc A$ is a $\IdemRing$-algebra such that the natural map $\ring \to \IdemRing \to \mc A$ has image in $Z(\mc A)$, one can define a $\ring$-linear category $\Cat_{\mc A}$ with set of objects $V$ by defining $\Hom_{\Cat_{\mc A}}(v_2,v_1) := \Ib_{v_1}  \mc A \Ib_{v_2}$. The condition on the map $\ring \to \IdemRing \to \mc A$ ensures that composition is a $\ring$-linear map whose domain is a tensor product of $\ring$-modules (not just of $\ring$-bimodules).

The operations $\mc C \mapsto \Alg_{\mc C}$ and $\mc A \mapsto \Cat_{\mc A}$ extend to inverse equivalences of categories between ($\ring$-linear categories with object set $V$, $\ring$-linear functors that are the identity on objects) and ($\IdemRing$-algebras, homomorphisms), where we assume as usual that $\IdemRing$-algebras $\mc A$ satisfy the standard condition on $\ring \to \IdemRing \to \mc A$. Similarly, we have inverse equivalences of categories between ($G$-graded $\ring$-linear categories with object set $V$, $G$-graded $\ring$-linear functors that are the identity on objects) and ($G$-graded $\IdemRing$-algebras, homomorphisms).

Adding in differentials, the same constructions give inverse equivalences of categories between (differential categories with object set $V$, differential functors that are the identity on objects) and (differential algebras over $\IdemRing$, homomorphisms) as well as between ($(G,\lambda)$-graded dg categories with object set $V$, dg functors that are the identity on objects) and (dg algebras over $\IdemRing$, homomorphisms). 

\begin{remark}
Explicitly, if $F: {\mc C}_1 \to {\mc C}_2$ is a dg functor that is the identity on objects, then $F$ gives us a chain map from $\Hom_{{\mc C}_1}(v_2,v_1)$ to $\Hom_{{\mc C}_2}(v_2,v_1)$ for all $v_1, v_2 \in V$, compatible with composition. These chain maps assemble into a dg $\IdemRing$-algebra homomorphism from $\Alg_{{\mc C}_1}$ to $\Alg_{{\mc C}_2}$. In the other direction, if $f: {\mc A}_1 \to {\mc A}_2$ is a dg $\IdemRing$-algebra homomorphism, then $f$ restricts to a chain map from $\Ib_{v_1} {\mc A}_1 \Ib_{v_2}$ to $\Ib_{v_1} {\mc A}_2 \Ib_{v_2}$ for all $v_1, v_2 \in V$, giving us a dg functor from $\Cat({\mc A}_1)$ to $\Cat({\mc A}_2)$ that is the identity on objects.
\end{remark}

The equivalences $\Alg$ and $\Cat$ interact with homology as follows.
\begin{proposition}\label{prop:CatsAlgsHomology}
The diagram of functors
\[
\xymatrix{
\textrm{(diff cats with objs }V \textrm{, diff functors id.~on objs)} \ar@<1ex>[r]^-{\Alg} \ar[d]_-{H_*} & \textrm{(diff }\IdemRing \textrm{-algs, homomorphisms)} \ar[l]^-{\Cat} \ar[d]^-{H_*} \\
(\ring\textrm{-linear cats with objs }V \textrm{, }\ring\textrm{-linear functors id.~on objs)} \ar@<1ex>[r]^-{\Alg} & (\IdemRing\textrm{-algs, homomorphisms) \ar[l]^-{\Cat}}
}
\]
is commutative up to natural isomorphism of functors. Analogous statements hold when gradings are present. 
\end{proposition}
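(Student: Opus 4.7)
The plan is to exhibit a natural isomorphism between the two composite functors $H_* \circ \Alg$ and $\Alg \circ H_*$ (and analogously for the $\Cat$ direction) by unwinding the definitions on both sides. Everything reduces to the single observation that, because the object set $V$ is finite, homology commutes with the (finite) direct sum used in the definition of $\Alg$.

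First, for a differential category $\mc C$ with $\Ob \mc C = V$, I would define the comparison morphism componentwise as the canonical isomorphism
\[
\eta_{\mc C} : H_*(\Alg_{\mc C}) \;=\; H_*\!\bigg(\bigoplus_{v_1,v_2 \in V} \Hom_{\mc C}(v_2,v_1)\bigg) \;\xrightarrow{\;\cong\;}\; \bigoplus_{v_1,v_2 \in V} H_*\bigl(\Hom_{\mc C}(v_2,v_1)\bigr) \;=\; \Alg_{H_*(\mc C)}.
\]
The middle equality holds because $V$ is finite, and the last equality is the definition of $H_*(\mc C)$ coming from applying Definition~\ref{def:ChangeOfEnrichment} to the monoidal homology functor of Example~\ref{ex:HomologyMonoidalFunctors}.

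Second, I would verify that $\eta_{\mc C}$ is a homomorphism of $\IdemRing$-algebras. The multiplication on $H_*(\Alg_{\mc C})$ is induced from that on $\Alg_{\mc C}$, sending $[a]\cdot[b]$ to $[ab]$ on cycles. The multiplication on $\Alg_{H_*(\mc C)}$ comes from composition in $H_*(\mc C)$, which by Definition~\ref{def:ChangeOfEnrichment} is the composite
\[
\Hom_{H_*(\mc C)}(Y,Z) \otimes \Hom_{H_*(\mc C)}(X,Y) \xrightarrow{\mu} H_*\bigl(\Hom_{\mc C}(Y,Z) \otimes \Hom_{\mc C}(X,Y)\bigr) \xrightarrow{H_*(\circ)} \Hom_{H_*(\mc C)}(X,Z),
\]
where $\mu$ sends $[f_1]\otimes [f_2]$ to $[f_1\otimes f_2]$ by Example~\ref{ex:HomologyMonoidalFunctors}. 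Chasing these definitions shows that both multiplications send $[a]\otimes[b]$ to $[ab]$ under the identification $\eta_{\mc C}$. The unit element $\sum_{v \in V}\Ib_v$ of $\Alg_{\mc C}$ is a cycle (each $\Ib_v$ satisfies $\partial \Ib_v = 0$ in any differential category, since $j_v \colon I \to \Hom_{\mc C}(v,v)$ is a chain map with $I$ concentrated in degree zero with trivial differential), and $\eta_{\mc C}$ sends its homology class to the unit of $\Alg_{H_*(\mc C)}$ by construction of $j_v^{H_*(\mc C)}$ in Definition~\ref{def:ChangeOfEnrichment}.

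Third, I would check naturality in $\mc C$. For a differential functor $F : \mc C \to \mc C'$ which is the identity on objects, both $H_*(\Alg_F)$ and $\Alg_{H_*(F)}$ act summand-by-summand as $H_*(F_{v_2,v_1}) : H_*(\Hom_{\mc C}(v_2,v_1)) \to H_*(\Hom_{\mc C'}(v_2,v_1))$ by Definition~\ref{def:ChangeEnrichmentOfFunctor} and Example~\ref{ex:RLinearFunctorsEtc}, so the square commutes on the nose. The corresponding statement for $\Cat$ then follows formally from the fact that $\Alg$ and $\Cat$ are mutually inverse equivalences. In the graded setting, the same construction works verbatim once one observes that the monoidal functor $H_*$ on $(G,\lambda)$-graded dg $\ring$-modules preserves the direct-sum decomposition by $G$-degree, so $\eta_{\mc C}$ is automatically degree-preserving. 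There is no real obstacle here — the argument is entirely formal bookkeeping — and the only step that requires any care is confirming that the monoidal structure map $\mu$ of $H_*$ really is given on classes by $[f_1]\otimes[f_2]\mapsto[f_1\otimes f_2]$, so that composition in $H_*(\mc C)$ matches the induced ring multiplication on $H_*(\Alg_{\mc C})$.
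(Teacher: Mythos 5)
Your proof is correct and is precisely the routine definition-chase the paper has in mind: the authors leave this verification to the reader, and your argument supplies it, with the key point (homology commutes with the finite direct sum defining $\Alg$, and the monoidal structure map $\mu$ of $H_*$ sends $[a]\otimes[b]$ to $[a\otimes b]$ so the two multiplications agree) correctly identified. The reduction of the $\Cat$ square to the $\Alg$ square via the inverse equivalences is also the natural way to finish.
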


\begin{proof}
Left to reader.
\end{proof}

Thus, for a dg category $C$, we have $H_*(\Alg_C) \cong \Alg_{H_*(C)}$, and for a dg algebra $A$, we have $H_*(\Cat_A) \cong \Cat_{H_*(A)}$. 

\subsubsection{Orthogonal idempotents}

Let $\A$ be a $\ring$-algebra whose structure map $\ring \to \A$ has image in $Z(\A)$; let $O$ be a subset of $\A$ consisting of pairwise orthogonal idempotents satisfying $1_\A = \sum_{\Ib \in O} \Ib$. We can view $\A$ as a $\ring^O$-algebra via the $\ring$-algebra homomorphism from $\ring^O$ to $\A$ sending the basis element of $\ring^O$ corresponding to $\Ib \in O$ to the actual element $\Ib \in \A$. The fact that the elements of $O$ are orthogonal idempotents ensures that this $\ring$-linear map respects algebra multiplication; the fact that the elements of $O$ sum to $1 \in \A$ ensures that it respects the units of the algebras. The natural map $\ring \to \ring^O \to \A$ is the original structure map of $\A$, so it has image in $Z(\A)$.

Similar statements hold for $G$-graded, differential, and $(G,\lambda)$-graded dg algebras over $\ring$. In the presence of a differential, we require that $\partial(\Ib) = 0$ for all $\Ib \in O$. In this case, we have a homomorphism of differential $\ring$-algebras from $\ring^O$ to $\A$, i.e. a differential $\ring^O$-algebra structure on $\A$.

Corresponding to the $\ring^O$-algebra $\A$, by above we have a $\ring$-linear category $\Cat_{\A}$ such that $\A \cong \Alg_{\Cat_{\A}}$. The isomorphism from $\A$ to $\Alg_{\Cat_{\A}}$ sends $a \in A$ to $(\Ib a \Jb)_{\Ib, \Jb}$. In particular, we have the following lemma.
\begin{lemma}\label{lem:OrthogonalIdempotents}
Let $\A$ be a $\ring$-algebra with a finite subset $O$ consisting of pairwise orthogonal idempotents satisfying $1_\A = \sum_{\Ib \in O} \Ib$.
Writing $\Alg_{\Cat_{\A}}$ as $\bigoplus_{\Ib, \Jb} \Ib \A \Jb$, the map
\begin{align*}
\A &\to \bigoplus_{\Ib, \Jb} \Ib \A \Jb \\
a & \mapsto (\Ib a \Jb)_{\Ib, \Jb}
\end{align*}
is an isomorphism of $\ring^O$-algebras. Similar statements hold for $G$-graded, differential, and $(G,\lambda)$-graded dg algebras over $\ring$.
\end{lemma}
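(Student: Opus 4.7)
The plan is to verify the four standard properties ($\ring^O$-linearity, multiplicativity, injectivity, surjectivity) of the displayed map directly, leveraging the two hypotheses on $O$: pairwise orthogonality gives multiplicativity, and the sum condition $1_{\A} = \sum_{\Ib \in O} \Ib$ gives bijectivity via the reconstruction formula $a = \sum_{\Ib, \Jb \in O} \Ib a \Jb$.

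First, I would observe that the map $\Phi \colon \A \to \bigoplus_{\Ib, \Jb} \Ib \A \Jb$ sending $a \mapsto (\Ib a \Jb)_{\Ib, \Jb}$ is well-defined because $O$ is finite, and clearly $\ring$-linear. To see it is $\ring^O$-linear, note that the $\ring^O$-bimodule structure on $\A$ is determined by the idempotents in $O$ via $\Ib \cdot a = \Ib a$ and $a \cdot \Jb = a \Jb$, and the component structure on the codomain is constructed analogously; so compatibility is immediate. For multiplicativity, given $a,b \in \A$, the $(\Ib, \Kb)$-component of $\Phi(ab)$ is $\Ib(ab)\Kb = \Ib a \big(\sum_{\Jb \in O} \Jb\big) b \Kb = \sum_{\Jb} (\Ib a \Jb)(\Jb b \Kb)$, which is exactly the $(\Ib, \Kb)$-component of the product $\Phi(a) \Phi(b)$ computed in $\Alg_{\Cat_{\A}}$ using the pairwise orthogonality of $O$ (products of components across non-matching middle idempotents vanish). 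The unit $1_{\A}$ is sent to the tuple whose $(\Ib, \Jb)$-component is $\Ib \cdot \Jb = \delta_{\Ib, \Jb} \Ib$, which is the unit of $\Alg_{\Cat_{\A}}$.

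For bijectivity, I would exhibit an explicit two-sided inverse $\Psi \colon \bigoplus_{\Ib, \Jb} \Ib \A \Jb \to \A$ given by $\Psi((a_{\Ib, \Jb})) = \sum_{\Ib, \Jb} a_{\Ib, \Jb}$. Then $\Psi(\Phi(a)) = \sum_{\Ib, \Jb} \Ib a \Jb = \big(\sum_{\Ib} \Ib\big) a \big(\sum_{\Jb} \Jb\big) = 1_{\A} \cdot a \cdot 1_{\A} = a$, using the hypothesis $1_{\A} = \sum_{\Ib} \Ib$. Conversely, for $(a_{\Ib, \Jb})$ with $a_{\Ib,\Jb} \in \Ib \A \Jb$, one computes $\Ib' \Psi((a_{\Ib, \Jb})) \Kb' = \sum_{\Ib, \Jb} \Ib' a_{\Ib, \Jb} \Kb' = a_{\Ib', \Kb'}$ by orthogonality (since $a_{\Ib, \Jb} = \Ib a_{\Ib, \Jb} \Jb$), so $\Phi(\Psi((a_{\Ib, \Jb}))) = (a_{\Ib, \Jb})$.

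For the analogues with extra structure, the same formulas apply verbatim: in the $G$-graded case, $\Phi$ respects gradings because the idempotents $\Ib \in O$ are (by convention) homogeneous of trivial degree, so $\Ib a \Jb$ lies in the same $G$-component as $a$. In the differential case, the requirement $\partial(\Ib) = 0$ for all $\Ib \in O$ (imposed just before the lemma) together with the Leibniz rule gives $\partial(\Ib a \Jb) = \Ib \partial(a) \Jb$, so $\Phi$ intertwines the differentials componentwise; the dg case combines the two. The main obstacle is purely bookkeeping — specifically, verifying that under the identification $\Alg_{\Cat_{\A}} = \bigoplus_{\Ib, \Jb} \Hom_{\Cat_{\A}}(\Jb, \Ib) = \bigoplus_{\Ib, \Jb} \Ib \A \Jb$, the composition in $\Cat_{\A}$ matches the multiplication formula used above — but this is immediate from the definition of $\Cat_{\A}$.
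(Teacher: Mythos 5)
Your proof is correct and amounts to the same argument the paper has in mind: the paper simply deduces the lemma from the $\Alg$/$\Cat$ equivalence set up just above it, asserting that the isomorphism $\A \cong \Alg_{\Cat_{\A}}$ is $a \mapsto (\Ib a \Jb)_{\Ib,\Jb}$, and your direct verification (multiplicativity via $1_{\A} = \sum_{\Jb} \Jb$, the explicit inverse by summing components, and the graded/differential compatibilities) is exactly the computation that justifies this. No gaps.
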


If $\A$ has a differential, the homology of $\A$ admits a similar decomposition. From the above isomorphism $\A \xrightarrow{\cong} \Alg_{\Cat_{\A}}$, we get isomorphisms 
\[
H_*(\A) \xrightarrow{\cong} H_*(\Alg_{\Cat_{\A}}) \xrightarrow{\cong} \Alg_{H_*(\Cat_{\A})} \xrightarrow{\cong} \Alg_{\Cat_{H_*(\A)}}
\]
using Proposition~\ref{prop:CatsAlgsHomology}. The first map sends $[a]$ to $[(\Ib a \Jb)_{\Ib,\Jb}]$. The second map sends this to $([\Ib a \Jb])_{\Ib,\Jb}$. The third map sends this to $(\Ib [a] \Jb)_{\Ib,\Jb}$. We get the following analogue of Lemma~\ref{lem:OrthogonalIdempotents}.
\begin{lemma}\label{lem:OrthogonalIdempotentsHomology}
Let $\A$ be a differential algebra over $\ring$ with a finite subset $O$ consisting of pairwise orthogonal idempotents satisfying $1_\A = \sum_{\Ib \in O} \Ib$.
Writing $\Alg_{\Cat_{H_*(\A)}}$ as $\bigoplus_{\Ib, \Jb} \Ib H_*(\A) \Jb$, the map
\begin{align*}
H_*(\A) &\to \bigoplus_{\Ib, \Jb} \Ib H_*(\A) \Jb \\
[a] & \mapsto (\Ib [a] \Jb)_{\Ib, \Jb}
\end{align*}
is an isomorphism of $\ring^O$-algebras. A similar statement holds for $(G,\lambda)$-graded dg algebras over $\ring$.
\end{lemma}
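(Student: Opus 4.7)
The plan is to assemble the isomorphism in the statement as a composition of three isomorphisms that have already been (or can be) established, and then to verify that the composition sends $[a]$ to the tuple $(\Ib [a] \Jb)_{\Ib,\Jb}$ by tracking a representative through each step.

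First, I would invoke Lemma~\ref{lem:OrthogonalIdempotents} in the differential version to get an isomorphism of differential $\ring^O$-algebras
\[
\A \xrightarrow{\cong} \Alg_{\Cat_{\A}} = \bigoplus_{\Ib,\Jb} \Ib \A \Jb, \qquad a \mapsto (\Ib a \Jb)_{\Ib, \Jb}.
\]
Since this is a homomorphism of differential algebras, applying $H_*$ yields an isomorphism of $\ring^O$-algebras $H_*(\A) \xrightarrow{\cong} H_*(\Alg_{\Cat_{\A}})$ sending $[a]$ to $[(\Ib a \Jb)_{\Ib,\Jb}]$. Next, I would use the natural isomorphism $H_*(\Alg_C) \cong \Alg_{H_*(C)}$ from Proposition~\ref{prop:CatsAlgsHomology} applied to $C = \Cat_{\A}$; this sends $[(\Ib a \Jb)_{\Ib,\Jb}]$ to $([\Ib a \Jb])_{\Ib,\Jb}$, an element of $\Alg_{H_*(\Cat_{\A})} = \bigoplus_{\Ib,\Jb} H_*(\Ib \A \Jb)$.

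Third, I would identify $\Alg_{H_*(\Cat_{\A})}$ with $\Alg_{\Cat_{H_*(\A)}} = \bigoplus_{\Ib,\Jb} \Ib H_*(\A) \Jb$ via the canonical isomorphism $H_*(\Ib \A \Jb) \cong \Ib H_*(\A) \Jb$ (see Remark~\ref{rem:HIAJ=IHAJ}; equivalently, multiplication by $\Ib$ on the left and $\Jb$ on the right commutes with the differential because $\partial(\Ib) = \partial(\Jb) = 0$, so the inclusion $\Ib \A \Jb \hookrightarrow \A$ induces an isomorphism onto $\Ib H_*(\A) \Jb$). Under this identification, $[\Ib a \Jb]$ corresponds to $\Ib [a] \Jb$, so the composition sends $[a]$ to $(\Ib [a] \Jb)_{\Ib,\Jb}$, which is precisely the map in the statement. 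The graded case proceeds identically using the graded versions of Lemma~\ref{lem:OrthogonalIdempotents} and Proposition~\ref{prop:CatsAlgsHomology}.

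The only point requiring genuine care is the third step, namely verifying that applying $H_*$ to a finite direct sum of chain complexes commutes with the direct-sum decomposition (which is clear over a field, and in particular over $\ring = \F_2$) and that this identification is compatible with the algebra structure coming from composition. This is essentially a bookkeeping check and should not present a serious obstacle; no new ideas beyond what is already developed in Section~\ref{sec:AlgsAndCats} are needed.
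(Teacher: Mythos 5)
Your proposal is correct and follows essentially the same route as the paper: the text preceding the lemma builds exactly the composite $H_*(\A) \to H_*(\Alg_{\Cat_{\A}}) \to \Alg_{H_*(\Cat_{\A})} \to \Alg_{\Cat_{H_*(\A)}}$ using Lemma~\ref{lem:OrthogonalIdempotents} and Proposition~\ref{prop:CatsAlgsHomology}, and tracks $[a] \mapsto [(\Ib a \Jb)] \mapsto ([\Ib a \Jb]) \mapsto (\Ib[a]\Jb)$ just as you do. (One tiny quibble: commuting $H_*$ with finite direct sums needs no field hypothesis, so your parenthetical about $\ring=\F_2$ is unnecessary --- which is just as well, since $\ring$ may be a polynomial ring here.)
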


\begin{remark}
\label{rem:HIAJ=IHAJ}
Under the hypotheses of Lemma \ref{lem:OrthogonalIdempotentsHomology}, let $\Ib$ and $\Jb$ in $O$ be two orthogonal idempotents. Then, the map above canonically identifies $H_*(\Ib \A \Jb)$ with $\Ib H_*(\A)\Jb$, so we can use these expressions interchangeably.
\end{remark}

\subsubsection{Quasi-isomorphisms}

We will not need the general notion of quasi-equivalence of dg categories; the following restricted definition will suffice.
\begin{definition}\label{def:DGCatQI}
Let ${\mc C}_1$ and ${\mc C}_2$ be differential categories over $\ring$ with object set $V$. Let $F$ be a differential functor from ${\mc C}_1$ to ${\mc C}_2$ that is the identity on objects of ${\mc C}_1$. We call $F$ a \emph{quasi-isomorphism} if the induced functor $H_*(F): H_*({\mc C}_1) \to H_*({\mc C}_2)$ (see Example~\ref{ex:RLinearFunctorsEtc} above) is an isomorphism of $\ring$-linear categories. We define quasi-isomorphisms of $(G,\lambda)$-graded dg categories similarly, again restricting our attention to functors that are the identity on objects.
\end{definition}

The inverse equivalences between (differential categories over $\ring$ with object set $V$, differential functors that are the identity on objects) and (differential algebras over $\IdemRing$, homomorphisms) from Section~\ref{sec:AlgsAndCats} send quasi-isomorphisms of differential categories over $\ring$ to quasi-isomorphisms of differential algebras over $\IdemRing$ and vice-versa, by Proposition~\ref{prop:CatsAlgsHomology}. Similar statements hold in the dg setting.

\bibliographystyle{alpha}
\bibliography{biblio}

\end{document}